 \def\LaTeX{\leavevmode L\raise.42ex
   \hbox{\kern-.3em\size{\sf@size}{0pt}\selectfont A}\kern-.15em\TeX}
\newcommand{\BibTeX}{{\rm B\kern-.05em{\sc
i\kern-.025emb}\kern-.08em\TeX}}
\newtheorem{col}{Corollary}[section]
\newtheorem{theorem}{Theorem}[section]
\newtheorem{lemma}[theorem]{Lemma}
\newtheorem{remark}[theorem]{Remark}
\newtheorem{definition}[theorem]{Definition}
\theoremstyle{definition}
\numberwithin{equation}{section}
\newcommand{\mfd}[1]{\mathbf{#1}}
\newcommand\Hilb{\mathcal H}
\newcommand\PWo{{PW_{\negthinspace \omega}}}
\newcommand\PWoL{{PW_{\negthinspace \omega}(\sqrt{L})}}
\def\PWo{{{\bf PW}_{\negthinspace \omega}}}
\def\PWoL{{{\bf PW}_{\negthinspace \omega}(\sqrt{L})}}
\newcommand\FSL{{F(\sqrt{L})}}
\newcommand\FtSL{{F(t \sqrt{L})}}  
\newcommand\SLB{{(\sqrt{L})}}
\newcommand\McL{{\mathcal{L}}}
\newcommand\ScLB{{(\sqrt{\McL})}}
\newcommand\bPW{{ {\bf PW}}}
\newcommand\muk{{a}}
\def\EB{{\bf E}}
\def\FB{{\bf F}}
\def\AB{{\bf A}}
\def\HB{{\bf H}}
\begin{document}


\title[Geometric Space-Frequency Analysis]
{Geometric Space-Frequency Analysis on Manifolds}

\author {Hans G. Feichtinger}
\address{Faculty of Mathematics, NuHAG, University  Vienna,
AUSTRIA; \\ hans.feichtinger@univie.ac.at}

\author{Hartmut F\"uhr}\address{
Lehrstuhl A f\"ur Mathematik, RWTH Aachen, 
GERMANY;  fuehr@matha.rwth-aachen.de}

\author{Isaac Z. Pesenson }\address{ Department of Mathematics,
Temple University,  Philadelphia, PA 19122;
\\ pesenson@temple.edu. }


\begin{abstract}

 This paper gives a survey of methods for the construction of space-frequency concentrated frames on Riemannian manifolds with bounded curvature, and the applications of these frames to the analysis of function spaces. In this general context, the notion of frequency is defined using the spectrum of a distinguished differential operator on the manifold, typically the Laplace-Beltrami operator. Our exposition starts with the case of the real line, which serves as motivation and blueprint for the material in the subsequent sections. 
 
 After the discussion of the real line, our presentation starts out in the most abstract setting proving rather general sampling-type results for appropriately defined  Paley-Wiener vectors in  Hilbert spaces. 
These results allow a handy construction of  Paley-Wiener frames in $L_2(\mfd{M})$, for a Riemann manifold of bounded geometry, essentially by taking a partition of unity in frequency domain.  The discretization of the associated integral kernels then gives rise to frames consisting of smooth functions in $L_2(\mfd{M})$, with fast decay in space and frequency. These frames are used to introduce new norms in  corresponding Besov spaces on $\mfd{M}$.
  
For compact Riemannian manifolds the theory extends to $L_p$ and  Besov spaces.  Moreover, for compact homogeneous manifolds, one obtains the so-called product property for eigenfunctions of certain operators  and proves  a cubature formulae with positive coefficients which allow to construct Parseval frames that characterize  Besov spaces in terms of coefficient decay. 
  
Throughout the paper, the general theory is exemplified with the help of various concrete and relevant examples, such as the unit sphere and the Poincar\'{e} half plane. 
\end{abstract}

\maketitle

\tableofcontents

\section{Introduction}

In 2004 (see   
\cite{FP04}) H. Feichtinger and  I.  Pesenson  wrote:
\textit{"It is our strong belief that there  exist many real life problems
in Signal Analysis and Information Theory  which would require
non-Euclidean models.
A Theory which will unify the ideas of the Classical
Sampling Theorem, one of the most beautiful and applied results of
the Euclidean Fourier Analysis, with ideas of Differential
Geometry and non-Euclidean Harmonic Analysis would be of great
interest and importance.
We consider the present paper as a foundation for future papers in
which we are planning to investigate in details more specific
examples such as:
1) Spheres, projective spaces and general compact manifolds.
2) Hyperboloids and general non-compact symmetric spaces.
3) Various  Lie groups.}" 

The purpose of this survey is to provide an
introduction to emerging theories  of Shannon-type sampling and space-frequency localized frames in various non-Euclidean settings.

We report on  the
Shannon sampling theory, approximation theory, space-frequency localized  frames, and Besov spaces on compact and non-compact manifolds which were developed in  \cite{FP04}, \cite{FP05},  \cite{gp}-\cite{gpa}, \cite{Pes79}-\cite{Pes16}.  	These topics are not only of a theoretical interest.
Many important applications of multiresolution analysis on manifolds were developed for   imaging,  geodesy,  cosmology, crystallography, scattering theory,  biology,   and statistics (see  \cite{BKMP1},  \cite{BKMP2}, \cite{BEP}, \cite{BP}, \cite{CM}, \cite{DFHMP},   \cite{EFM}, \cite{FGS}, \cite{FV}, \cite{GM100},  \cite{K}-\cite{KCM}, \cite{KR}, \cite{M2}-\cite{M-all}, \cite{P}).

We begin with introducing a rather general, spectral-theoretic setup that allows to prove Shannon-type sampling theorems in abstract Hilbert spaces, as well as the definition and characterization of Besov-type spaces, in a unified language. We then use these results to study sampling theorems, the construction of Paley-Wiener (bandlimited) frames and the characterization of function spaces on Riemannian manifolds (see \cite{Pes00}, \cite{Pes01}, \cite{Pes07}, \cite{Pes09b}). The approach works for rather wide  classes of Riemannian manifolds, such as general compact manifolds without boundary, bounded domains with smooth boundaries in Euclidean spaces, or non-compact Riemannian manifolds of bounded geometry whose Ricci curvature is bounded from below.

For  {\em compact} Riemannian manifolds we
prove generalizations of the Bernstein, Bernstein-Nikolskii and Jackson inequalities.  
In the case of compact manifolds we go beyond the purely Hilbert space theoretic setting, and  include $L_p$-spaces in the discussion as well, for $1 \le p \le \infty$. This allows to characterize elements of the Besov spaces $\mathcal{B}^{\alpha}_{p,q}(\mfd{M})$ in terms of approximations by eigenfunctions of elliptic differential operators on $\mfd{M}$. For the case of a compact {\em homogeneous} manifold $\mfd{M}$
 we further sharpen the Bernstein and  Bernstein-Nikolskii
 inequalities, using global derivatives with respect to
 specific vector fields on $\mfd{M}$; here the above-mentioned elliptic differential operator is the Casimir operator $\McL$. Furthermore, we construct Parseval  bandlimited and localized frames in $L_{2}(\mfd{M})$ for this setting, and show that they serve to characterize Besov spaces via coefficient decay.

\subsection{Overview of the paper} In section \ref{sec2} we discuss three ways of constructing Paley-Wiener-Schwartz frames  in $L_{2}(\mathbb{R})$. In \ref{subsec2.1} we are using the Fourier transform to introduce functions of the non-negative square root $\sqrt{-d^{2}/dx^{2}}$ in the space $L_{2}(\mathbb{R})$. In \ref{Firstmethod} we use these results to construct what we call nearly Parseval Paley-Wiener-Schwartz frames in $L_{2}(\mathbb{R})$ which are comprised of functions which are bandlimited and have fast decay at infinity. In subsection \ref{Secondmethod} we explore the classical Sampling Theorem to construct Parseval  Paley-Wiener-Schwartz frames. In subsection \ref{Thirdmethod} we establish a cubature formula with positive coefficients for functions in Paley-Wiener space. By means of such formulas and the fact that product of two Paley-Wiener functions is another Paley-Wiener function we develop a third method of constructing Parseval Paley-Wiener-Schwartz frames in $L_{2}(\mathbb{R})$.  

It is the objective of the present article to show how the ideas and methods illustrated in \ref{sec2}  can be extended to Riemannian manifolds.

    In our paper we make systematic use of the Spectral Theorem for self-adjoint operators as a substitute for the classical Fourier transform. The approach is motivated by  examples described in section \ref{sec2}. However, it should be noted that although the regular Fourier transform considered in section \ref{sec2} provides spectral resolution for the operator of the first derivative  $-id/dx$ in $L_{2}(\mathbb{R})$ but it  is not a spectral resolution for the operator $-d^{2}/dx^{2}$ and its non-negative  root $\sqrt{-d^{2}/dx^{2}}$ in $L_{2}(\mathbb{R})$.
      
  In Section \ref{Hilbert}  we introduce a   notion of Paley-Wiener (bandlimited) vectors in a Hilbert space $\Hilb$ which is equipped with a self-adjoint operator $D$, and develop a Shannon-type sampling of such vectors. By constructing appropriate projections of $\Hilb$  onto subspaces of  $\omega$-Paley-Wiener vectors
$\PWo(D),\>\omega>0$
we construct bandlimited  frames in $\Hilb$.
 We define Besov spaces as interpolation spaces
 between $\Hilb$ and domains of $D^{k},\>k\in \mathbb{N}$, and
 show that they can be described in terms of frame coefficients.

Our approach in \cite{Pes01} was to treat a set of "samples" of a vector $f\in H$ as a set of values $\psi_{\nu}(f)$  for a specific "sampling" family of functionals $\psi_{\nu}$ for which Plancherel-Polya-type inequalities (=frame inequalities) hold on  Paley-Wiener subspaces. The Spectral Theorem allows to decompose every vector in $\mathcal{H}$ into a series of Paley-Wiener vectors. Then an application of our Sampling Theorem \ref{Ss} leads to the construction of  Paley-Wiener frames for  $\mathcal{H}$ (Theorem \ref{frameH}).

 In Subsection \ref{Interp} we formulate and prove  an  important result (Theorem \ref{equivalence-interpolation})  about interpolation and approximation spaces. This result is essentially due to  Peetre-Sparr \cite{PS} and Butzer-Scherer \cite{BSch},
but we formulate and prove it in a form which is most suitable for our purposes (see also \cite{KP}). In particular, our formulation is more general than a similar Theorem 9.1 in Ch. 7 in \cite{DVL}. In Subsection \ref{AbstractBesov} we describe abstract Besov subspaces in terms of approximations by Paley-Wiener vectors and in terms of coefficients with respect to our Paley-Wiener frames.

When it comes to the space $\Hilb =L_{2}(\mfd{M})$, where $\mfd{M}$ is a manifold, the families of "sampling" functionals $\{\psi_{\nu}\}$  are just   families of compactly supported distributions (with small supports) associated with what we call metric lattices of points $\{x_{k}\}$ on $\mfd{M}$. This term is used to emphasize that the points  $\{x_{k}\}$ are distributed over $\mfd{M}$
``almost uniformly'' and that they are separated.

Not every metric space
possesses  lattices of points  with the properties we need for our sampling theory. In  Subsection \ref{sub-1} we clarify this issue. It was shown in  \cite{Pes04b}  that if a Riemannian manifold has bounded geometry and its Ricci curvature is bounded from below   then one can construct a sequence of lattices whose mesh radius tends to zero.  Note that  the property of bounded geometry is essentially equivalent to the fact  that  all covariant derivatives of the Riemann curvature are bounded from above. This shows that our conditions are rather natural, since appropriate uniformly distributed and separated sets of points  can exist only if the curvature (in one sense or another)
is bounded from above and from below. The rest of the Section \ref{Manifolds} is devoted to descriptions of manifolds of common interest.

In Section \ref{SFonM} we implement  the general scheme of Section 2 in the spaces $L_{2}(\mfd{M})$, for manifolds $\mfd{M}$ satisfying the assumptions of Section 3. Following the general scheme of Section \ref{Hilbert}, the central tool for sampling theory are Poincar\'e-type estimates, which we derive for Riemannian manifolds in Section 4.   To construct frames which are almost tight we use the so-called {\it average } sampling in a way that includes and generalizes the pointwise sampling.  In the case of a straight line average sampling was considered for the first time in \cite{FG}. In the case of manifolds average sampling was developed in \cite{Pes04b}, \cite{Pes06}. We do not discuss reconstruction algorithms in detail, but it can be done by the following methods (besides using dual frames):
(1) reconstruction by using variational splines on manifolds  \cite{Pes98a}-\cite{Pes09b};$\>\>\>$
 (2) reconstruction using iterations  \cite{FP04}, \cite{FP05};$\>\>\>$
 (3) the frame algorithm \cite{Gr}.

 In Sections \ref{kernels}  we introduce and analyze kernels
 associated to elliptic differential operators on general compact Riemannian manifolds. The most important result here concerns the localization of such kernels (Theorem \ref{kernelsize}) which implies an analogue of the Littlewood-Paley decomposition of functions in the spaces $L_{p}(\mfd{M}), 1\leq p\leq \infty,$ on compact Riemannian manifolds (Theorem \ref{LPT}).  One can find other approaches  to the Littlewood-Paley decompositions 
on manifolds (see for example \cite{KR},  \cite{Sog}, \cite{SSS}).

 Section \ref{PHM} is devoted to Parseval space-frequency localized frames on compact homogeneous manifolds. The main result here  is  Theorem \ref{Pfhm} which was proved in \cite{gp}. This Theorem is based on two non-trivial facts: 
 the  product of eigenfunctions of certain elliptic  differential operators (Theorem \ref{prodthm}) and on a cubature formula with positive coefficients which allows for exact integration of respected eigenfunctions (Theorem \ref{cubformula}). 
Let us just mention  that a set of similar facts holds true for sub-Laplacians  on compact homogeneous manifolds \cite{Pes15}.

 In Section \ref{AppSec} an approximation theory by eigenfunctions of elliptic  operators on compact manifolds is developed. These results lead to a characterization of Besov spaces in terms of sampling (Theorem \ref{BesSampl}). In Section \ref{Apphm} we discuss approximation theory on compact homogeneous manifolds. In particular, a mixed modulus of continuity is introduced and Besov spaces are characterized in terms of this modulus of continuity \cite{Pes79}-\cite{Pes88a}.  Section \ref{BesFr} contains characterization of Besov spaces in terms of frame coefficients. 
Similar theorems can be proved in the case of a sub-Laplacian and sub-elliptic spaces on compact homogeneous manifolds \cite{Pes15}.

Here is a very brief account of related work, mostly by other authors. The papers \cite{BG}, \cite{SS},  \cite{BDY},   \cite{CM}-\cite{DST},  \cite{FM1}, \cite{HF-1}, \cite{FuhrM},   \cite{FMV}, \cite{HMYa},  \cite{HMYb}, \cite{KPet}, \cite{MM}, \cite{MY},  \cite{NPW1},  \cite{NPW2}, \cite{YZ}, contain a number of results about  frames,  wavelets, and Besov spaces on  Riemannian manifolds, on Lie groups  of polynomial growth,   on  metric-measure spaces, and on quasi-metric measure spaces.  One can say that most of  these papers generalize and further develop  ideas which are rooted in the classical  Littlewood-Paley theory and/or Calderon reproducing formula.

In particular, it is well understood by now that a productive  generalization of the  Littlewood-Paley theory should be based on a decomposition of identity operator into a series of kernel operators with appropriately localized kernels. It   was proved in  \cite{HMYa},  \cite{HMYb}   that any reasonably nice metric measure space admits such decomposition.

Among the papers devoted to metric-measure and quasi-metric measure spaces the articles  \cite{CM},  \cite{CKP}, \cite{FM1},   \cite{KPet}, \cite{MM},   \cite{NPW1} are the closest  to our approach, since they also aim to construct space-frequency localized frames. To incorporate a notion of frequency into a setting of quasi-metric measure spaces, the authors of these papers  impose some additional conditions.  They, essentially, assume the existence of a self-adjoint operator in a corresponding $L_{2}$ space  whose  heat semigroup is a kernel operator with a kernel  obeying estimates resembling the heat kernel estimates on Euclidean space. This way they are able to define a notion of bandlimitedness  and to construct space-frequency localized frames.
The most advanced results in such setting were recently  obtained in  \cite{CKP}.

It should be noted that the most interesting situations
where one finds the conditions of these papers satisfied
are still manifolds:
compact Riemannian manifolds,  non-compact manifolds
with curvature bounded from below, groups of polynomial
grows and their homogeneous manifolds.

Again, our goal is to give a concise introduction to the  fast developing subject of sampling and wavelet-like frames on manifolds
by a description of a few  underlying ideas,
which provide a basis for 
 discoveries in \cite{FP04}, \cite{FP05},  \cite{gp}-\cite{gpa}, \cite{Pes79}-\cite{Pes16}.

\section{Basic Example: Paley-Wiener-Schwartz frames  in $L_{2}(\mathbb{R})$}\label{sec2}

\subsection{Smooth decomposition of $L_{2}(\mathbb{R})$ into Paley-Wiener subspaces}\label{subsec2.1}

We take the first-order pseudo-differential operator $D=\sqrt{-d^{2}/dx^{2}}$ as the positive square root of the positive operator $-d^{2}/dx^{2}$. If $F$ belongs to  the Schwartz  space $\mathcal{S}(\mathbb{R})$  then following the spirit of the Spectral Theorem (see (\ref{Op-function}))  one can introduce
 the operator $F(D)$ by the formula
\begin{equation}\label{op-func}
F\left(\sqrt{-d^{2}/dx^{2}}\right)f(x)=\frac{1}{\sqrt{2\pi}}\int_{\mathbb{R}}e^{ix\lambda}F(\lambda)\widehat{f}(\lambda)d\lambda,\>\>\>f\in  \mathcal{S}(\mathbb{R}),
\end{equation}
where the Fourier transform  $\widehat{f}$ is defined as
$$
\widehat{f}(\lambda)=\frac{1}{\sqrt{2\pi}}\int_{\mathbb{R}}e^{-i x\lambda }f(x)dx,\>\>\>f\in \mathcal{S}(\mathbb{R}).
$$
The operator $F\left(\sqrt{-d^{2}/dx^{2}}\right)$ is  convolution  with the Schwartz function $\check{F} \in  \mathcal{S}(\mathbb{R})$ which is  inverse Fourier transform of $F$:
$$
F\left(\sqrt{-d^{2}/dx^{2}}\right)f(x)=\frac{1}{\sqrt{2\pi}}\int_{\mathbb{R}}\check{F}(x-y)f(y)dy.
$$
 In particular, for any positive $t$ we have
 \begin{equation}\label{kernel11}
F\left(t\sqrt{-d^{2}/dx^{2}}\right)f(x)=\frac{1}{\sqrt{2\pi}}\int_{\mathbb{R}}\frac{1}{t}\check{F}\left(\frac{x-y}{t}\right)f(y)dy=\int_{\mathbb{R}}K^{F}_{t}(x,y)f(y)dy,
\end{equation}
where
\begin{equation}\label{kernel12}
K^{F}_{t}(x,y)=\frac{1}{t\sqrt{2\pi}}\check{F}\left(\frac{x-y}{t}\right).
\end{equation}
Moreover, one clearly has that for any $N>0$
there exists a constant $C_{N}$ such that
\begin{equation}\label{S-kernel-estim}
\left|K^{F}_{t}(x,y)\right|\leq \frac{C_{N}}{t}{\left[1+\frac{|x-y|}{t}\right]}^{-N}.
\end{equation}

 Let $g\in C^{\infty}(\mathbb{R})$ be a non-increasing
 function such that $supp(g)\subset [-2,\>  2], $ and $g(\lambda)=1$ for $\lambda\in [-1,\>1], \>0\leq g(\lambda)\leq 1.$
 We now let
\[
 h(\lambda) = g(\lambda) - g(2 \lambda)~,
\] which entails $supp(h) \subset [-2,-2^{-1}] \cup   [2^{-1},2]$,
and use this to define
\begin{equation} \label{functions2}
 F_0(\lambda) = \sqrt{g(\lambda)}~, F_j(\lambda) = \sqrt{h(2^{-j} \lambda)}~, j \ge 1~,\>j\in \mathbb{N},
\end{equation}
as well as
\[
 G_j(\lambda) = \left[F_j(\lambda)\right]^2=F_j^2(\lambda)~, j \ge 0~,\>j\in \mathbb{N}.
\]
As a result of the definitions, we get for all $\lambda \in \mathbb{R}$ the equations
\begin{equation} \label{partsums3}
\sum_{j = 0}^m G_j(\lambda) = \sum_{j = 0}^m F_j^2(\lambda)
=  g(2^{-m} \lambda),
\end{equation}
and as a consequence
\begin{equation} \label{partsums4}
\sum_{j \ge 0} G_j(\lambda) = \sum_{j \ge 0} F_j^2(\lambda) =  1~,\>\>\>\lambda\in \mathbb{R},
\end{equation}
 with finitely many nonzero terms occurring in the sums for each
 fixed $\lambda$. 
One has
$$
F_{j}^{2}\left(\sqrt{-d^{2}/dx^{2}}\right)  f=\mathcal{F}^{-1}\left(F_{j}^{2}(\lambda)\mathcal{F}f(\lambda)\right),\>\>\>j\geq 1,\>\>j\in \mathbb{N},
$$
and thus
\begin{equation} \label{eqn:quad_part_identity_1}
 f = \mathcal{F}^{-1}\mathcal{F}f(\lambda) =\mathcal{F}^{-1}\left(\sum_{j\geq 0}F_{j}^{2}(\lambda)\mathcal{F}f(\lambda)\right) =
 $$
 $$
  \sum_{j\geq 0} F_{j}^2\left(\sqrt{-d^{2}/dx^{2}}\right)f= \sum_{j \geq 0} G_j\left(\sqrt{-d^{2}/dx^{2}}\right) f . 
\end{equation}  
Since $F_{j}\left(\sqrt{-d^{2}/dx^{2}}\right)$ is a self-adjoint operator we obtain
$$
\left \|F_{j}\left(\sqrt{-d^{2}/dx^{2}}\right) f\right \|^{2}=\left \langle F_{j}\left(\sqrt{-d^{2}/dx^{2}}\right) f, \left(\sqrt{-d^{2}/dx^{2}}\right) f \right\rangle=
$$
$$
\left \langle F_{j}^{2}\left(\sqrt{-d^{2}/dx^{2}}\right) f, f \right\rangle
$$  
and then
\begin{equation}
\label{norm equality-00}
\|f\|^2=\sum_{j\geq 0}\left \langle F_{j}^2\left(\sqrt{-d^{2}/dx^{2}}\right) f,f\right\rangle=\sum_{j\geq 0}\left \|F_{j}\left(\sqrt{-d^{2}/dx^{2}}\right) f\right \|^2 .
\end{equation}
Since the functions $G_j,  F_{j}$, have their supports in  $
[-2^{j+1},\>\>-2^{j-1}]\cup[2^{j-1},\>\>2^{j+1}]$, the functions $ F_{j}^{2}\left(\sqrt{-d^{2}/dx^{2}}\right) f $ and $G_j\left(\sqrt{-d^{2}/dx^{2}}\right) f$
 are bandlimited to  $[-2^{j+1},\>\>-2^{j-1}]\cup[2^{j-1},\>\>2^{j+1}]$, whenever $j \ge 1$, and to $[-2,2]$ for $j=0$. They clearly belong to  the Schwartz  space $\mathcal{S}(\mathbb{R})$.

\subsection{A method of constructing almost Parseval Paley-Wiener-Schwartz frames}\label{Firstmethod}

\begin{definition}
The Paley-Wiener space $PW_{\omega}(\mathbb{R}),\>\>\omega>0,$ is introduced as the space of all $f\in L_{2}(\mathbb{R})$ whose $L_{2}$-Fourier transform has support in $[-\omega, \>\omega]$.

\end{definition}

Using the Fourier transform one can easily verify that 
a function $f\in L_{2}(\mathbb{R})$ belongs to the space $PW_{\omega}(\mathbb{R})$ if and only if the following Bernstein inequality holds
$$
\|f^{(r)}\|\leq \omega ^{r}\|f\|\>\>\>r\in \mathbb{N}.
$$

For a given $\rho>0$ and $0<\epsilon<1$ consider a sequence $\{x_{k}\}$ such that 
\begin{equation}\label{partition}
|x_{k}-x_{k+1}|\leq \rho,\>\>\>\>|x_{k}-x_{k+1}|\geq \rho/(1+\epsilon)
\end{equation}
and set $I_{k}=(x_{k}, \>x_{k+1})$. The Fundamental Theorem of calculus  and the Holder inequality  imply
$$
\int_{x_{k}}^{x}f^{'}(t)dt=f(x)-f(x_{k}),\>\>x_{k}\leq x\leq x_{k+1}
$$

$$
|f(x)-f(x_{k})|\leq \int_{x_{k}}^{x}|f^{'}(t)|dt\leq \rho^{1/2}\left(\int_{I_{k}}|f^{'}|^{2}\right)^{1/2},
$$
$$
|f(x)-f(x_{k})|^{2}\leq\rho\int_{I_{k}}|f^{'}|^{2},
$$
and another integration over $I_{k}$ gives
\begin{equation}\label{FTC}
\|f-f(x_{k})\|^{2}_{I_{k}}\leq \rho^{2}\|f^{'}\|^{2}_{I_{k}}.
\end{equation}
For  $0<\alpha<1$ and any $A, B$ one has 
\begin{equation}\label{ineq-11}
(1-\alpha)|A|^{2}\leq \frac{1}{\alpha}|A-B|^{2}+|B|^{2},\>\>0<\alpha<1.
\end{equation}
Using (\ref{FTC}), (\ref{ineq-11}) 
we obtain
\begin{equation}\label{interm-ineq}
\left(1-\frac{\epsilon}{3}\right)\|f\|^{2}\leq\sum_{k}|I_{k}||f(x_{k})|^{2}+\frac{3}{\epsilon}\rho^{2}\|f^{'}\|^{2}.
\end{equation}
Applying the Bernstein inequality for $f\in PW_{\omega}(\mathbb{R})$ we get 
\begin{equation}\label{right-side}
\left(1-\frac{\epsilon}{3}\right)\|f\|^{2}\leq \sum_{k}|I_{k}||f(x_{k})|^{2}+\frac{3}{\epsilon}\rho^{2}\omega\|f\|^{2}.
\end{equation}
Assuming $\omega>1$ and  choosing 
$$
\rho\leq\frac{1}{3}\omega^{-1}\epsilon\leq \frac{1}{3}\omega^{-1/2}\epsilon
$$
we obtain
$$
\frac{3}{\epsilon}\rho^{2}\omega\leq \frac{\epsilon}{3}.
$$
Finally it gives us
\begin{equation}\label{right-hand-side}
\left(1-\frac{2}{3}\epsilon\right)\|f\|^{2}\leq \sum_{k}|I_{k}||f(x_{k})|^{2}.
\end{equation}
We consider the function
\[  \xi(x) = 
\begin{cases}
e \exp(1/(|x|^{2}-1))& \mbox{ if } |x|<1\\
0 & \mbox{ if } |x|\geq 1.
\end{cases}
\]
Set $\xi_{k}(x)=
\xi((1+\epsilon)|I_{k}|^{-1}(x-x_{k})).$ Since 
$$
\xi_{k}(x_{k+1})f(x_{k+1})=0,\>\>\>\>\> \xi_{k}^{'}(x)= (1+\epsilon)|I_{k}|^{-1}\xi((1+\epsilon)|I_{k}|^{-1}(x-x_{k})),
$$
and $|I_{k}|^{-1}\leq (1+\epsilon)\rho^{-1}$  we obtain 
$$
|f(x_{k})|=
\left| \int_{x_{k}}^{x_{k+1}}\left(\xi_{k}(x)f(x)\right)^{'}    \right|\leq  \int_{x_{k}}^{x_{k+1}}\left|  \xi_{k}^{'}(x) f(x)+\xi_{k}(x)f^{'}(x)               \right| \leq                   
$$
$$
(1+\epsilon)^{2}\rho^{-1}\int_{x_{k}}^{x_{k+1}}|f(x)|+\int_{x_{k}}^{x_{k+1}}|f(x)^{'}|\leq(1+\epsilon)^{2}\rho^{-1/2}\|f\|_{I_{k}}+\rho^{1/2}\|f^{'}\|_{I_{k}}.
$$
Then for $f\in PW_{\omega}(\mathbb{R})$
$$
|f(x_{k})|\leq \left((1+\epsilon)^{2}\rho^{-1/2}+\omega\rho^{1/2}\right)\|f\|_{I_{k}}.
$$
It gives
$$
\sqrt{|I_{k}|}|f(x_{k})|\leq \left((1+\epsilon)^{2}+\omega\rho\right)\|f\|_{I_{k}}.
$$
Because $
\rho\leq\frac{1}{3}\omega^{-1}\epsilon,\>\>0<\epsilon<1,
$
we have
$$
(1+\epsilon)^{2}+\omega\rho\leq 1+\frac{10}{3}\epsilon.
$$
This leads to 
$$
\sqrt{|I_{k}|}|f(x_{k})|\leq \left(1+\frac{10}{3}\epsilon\right)|\|f\|_{I_{k}},
$$
and 
\begin{equation}\label{left-side}
\sum_{k}|I_{k}||f(x_{k})|^{2}\leq \left(1+\frac{10}{3}\epsilon\right)^{2}\|f\|^{2}.
\end{equation}
Using the same notations as above we can now formulate
 the following theorem about irregular sampling.
\begin{theorem}

 If $0<\epsilon<1$ and $
0<\rho\leq\frac{\epsilon}{3}\omega^{-1}
$
then Plancherel-Polya inequalities hold
\begin{equation}\label{EP-P}
\left(1-\frac{2}{3}\epsilon\right)\|f\|^{2}\leq \sum_{k}|I_{k}||f(x_{k})|^{2}\leq \left(1+\frac{10}{3}\epsilon\right)^{2}\|f\|^{2}, \>\>f\in PW_{\omega}(\mathbb{R}).
\end{equation}
 
\end{theorem}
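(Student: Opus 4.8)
The plan is to read the asserted double inequality (\ref{EP-P}) as the conjunction of a lower frame bound and an upper frame bound, established by two separate arguments that share a common engine: a local Poincar\'e-type comparison of $f$ with its sampled value $f(x_k)$ on each interval $I_k$, coupled with the Bernstein inequality $\|f^{(r)}\|\le\omega^r\|f\|$ that characterizes membership in $PW_\omega(\mathbb{R})$. Since both halves have in effect been assembled in the text preceding the statement, the task reduces to checking that, under the hypotheses $0<\epsilon<1$ and $0<\rho\le\frac{\epsilon}{3}\omega^{-1}$, the two constants emerge exactly in the claimed form.

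For the lower bound I would begin from the Poincar\'e estimate (\ref{FTC}), namely $\|f-f(x_k)\|_{I_k}^2\le\rho^2\|f'\|_{I_k}^2$, which follows from the Fundamental Theorem of Calculus together with the H\"older (Cauchy--Schwarz) inequality and the mesh condition (\ref{partition}). Feeding this into the elementary numerical inequality (\ref{ineq-11}) with the choice $\alpha=\epsilon/3$, $A=f$, $B=f(x_k)$, and summing over $k$, produces (\ref{interm-ineq}) with error term $\frac{3}{\epsilon}\rho^2\|f'\|^2$. The Paley--Wiener hypothesis now enters through Bernstein, which gives $\|f'\|^2\le\omega^2\|f\|^2$; the assumption $\rho\le\frac{\epsilon}{3}\omega^{-1}$ then forces $\frac{3}{\epsilon}\rho^2\omega^2\le\frac{\epsilon}{3}$, so the error term is dominated by $\frac{\epsilon}{3}\|f\|^2$. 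Absorbing it into the left-hand side turns the factor $1-\frac{\epsilon}{3}$ into $1-\frac{2}{3}\epsilon$ and yields (\ref{right-hand-side}).

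The upper bound is where the genuine work lies. The idea is to recover each sample value from an integral: taking a fixed smooth bump $\xi$ with $\xi(0)=1$ supported in $(-1,1)$, I would form the rescaled cutoffs $\xi_k(x)=\xi((1+\epsilon)|I_k|^{-1}(x-x_k))$, which satisfy $\xi_k(x_k)=1$ and $\xi_k(x_{k+1})=0$ precisely because $1+\epsilon>1$. Hence $f(x_k)=-\int_{I_k}(\xi_k f)'$, and the triangle inequality splits this into a term carrying $\xi_k'$ and a term carrying $f'$; using the scaling bound $|I_k|^{-1}\le(1+\epsilon)\rho^{-1}$ to control $\xi_k'$ and then applying H\"older followed by Bernstein gives the local estimate $\sqrt{|I_k|}\,|f(x_k)|\le\bigl((1+\epsilon)^2+\omega\rho\bigr)\|f\|_{I_k}$. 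The main obstacle is then purely one of constant-tracking: using $\epsilon^2<\epsilon$ for $0<\epsilon<1$ together with $\omega\rho\le\frac{\epsilon}{3}$ to collapse $(1+\epsilon)^2+\omega\rho$ into $1+\frac{10}{3}\epsilon$, after which squaring and summing over the disjoint intervals $I_k$ yields (\ref{left-side}). Combining (\ref{right-hand-side}) with (\ref{left-side}) gives the two-sided inequality (\ref{EP-P}).
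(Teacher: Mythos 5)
Your proposal is correct and follows essentially the same route as the paper: the lower bound via the Fundamental Theorem of Calculus, H\"older, the elementary inequality with $\alpha=\epsilon/3$, and Bernstein; the upper bound via the rescaled bump functions $\xi_k$ with $\xi_k(x_k)=1$, $\xi_k(x_{k+1})=0$, followed by the same constant-tracking to reach $1+\frac{10}{3}\epsilon$. Your bookkeeping of the Bernstein exponent (using $\|f'\|^2\le\omega^2\|f\|^2$ so that $\frac{3}{\epsilon}\rho^2\omega^2\le\frac{\epsilon}{3}$) is in fact slightly cleaner than the paper's, which carries a first power of $\omega$ and an auxiliary assumption $\omega>1$ at that step.
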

See \cite{Ben}, \cite{Boas}, \cite{DS}, \cite{PP1}, \cite{PP2} for the classical  Plancherel-Polya inequalities.

Note, that if $\delta_{x_{k}}$ is a Dirac distribution where $\{x_{k}\}$ are defined in (\ref{partition}) then  the  Plancherel-Polya inequalities (\ref{EP-P}) mean that projections of $\left\{\sqrt{|I_{k}|}\delta_{x_{k}}\right\}$ onto $PW_{\omega}(\mathbb{R})$ form a frame in this space.

We return to notations of section \ref{subsec2.1}. Thus, the operator $F_{j}\left(\sqrt{-d^{2}/dx^{2}}\right) $ is a projector of $L_{2}(\mathbb{R})$ into $PW_{2^{j+1}}(\mathbb{R})$. For a fixed $0<\epsilon<1$ pick a $\rho$ such that  $
0<\rho\leq\frac{\epsilon}{3}\omega^{-1}
$. Let $\left\{I_{k}\right\}$ be a corresponding partition $I_{k}=(x_{k},\>x_{k+1})$ considered in (\ref{partition}).

In the sense of distributions one has 
$$
\left(F_{j}\left(\sqrt{-d^{2}/dx^{2}}\right) \sqrt{|I_{k}|}\delta_{x_{k}}\right)(x)=\left(\mathcal{F}^{-1}\left(F_{j}\sqrt{|I_{k}|}e^{ix_{k}\cdot}\right)\right)(x),
$$
where the formula $\mathcal{F}\delta_{x_{k}}=e^{ix_{k}\xi}$ was used. At the same time, according to (\ref{kernel11})
\begin{equation}\label{frame00}
\left(F_{j}\left(\sqrt{-d^{2}/dx^{2}}\right) \sqrt{|I_{k}|}\delta_{x_{k}}\right)(x)=\int_{\mathbb{R}}K^{F_{j}}_{1}(x,y) \sqrt{|I_{k}|}\delta_{x_{k}} (y)dy=
$$
$$
K^{F_{j}}_{1}(x_{k},y)=\frac{1}{\sqrt{2\pi}}\check{F_{j}}\left(x_{k}-y\right)=\varphi_{x_{k}}^{j}(y)=\varphi_{k}^{j}(y).
\end{equation}
It  is  obvious that every
$\varphi_{k}^{j}$ belongs to $PW_{2^{j+1}}(\mathbb{R})\cap \>\mathcal{S}(\mathbb{R})$ which means that  it  is perfectly localized on the frequency side and "essentially" localized in space(time).

Since the operator $F_{j}\left(\sqrt{-d^{2}/dx^{2}}\right)$ is self-adjoint one has 
$$
\left<F_{j}\left(\sqrt{-d^{2}/dx^{2}}\right)f, \sqrt{|I_{k}|} \delta_{x_{k}}\right>=\left<f, \varphi_{k}^{j}\right>.
$$
Combining (\ref{norm equality-00}) and (\ref{EP-P}) we obtain that 
 the following frame inequalities hold
\begin{equation}\label{EFI}
\left(1-\frac{2}{3}\epsilon\right)\|f\|^{2}\leq \sum_{j}\sum_{k\in \mathbb{Z}}\left|\left<f, \varphi_{k}^{j}\right>\right|^{2}\leq \left(1+\frac{10}{3}\epsilon\right)^{2}\|f\|^{2}, \>\>\>f\in L_{2}(\mathbb{R}).
\end{equation}

 The classical result of Duffin and Schaeffer \cite{DS} says that the
inequalities (\ref{EFI}) imply the existence of a \textit{dual frame}
$\left\{\Phi^{j}_{k}\right\}$  such that any function $f\in
PW_{2^{j+1}}(\mathbb{R})$ can be reconstructed according to the
following formula
\begin{equation}
f(x)=\sum_{j\in \mathbb{Z}}\sum_{k\in \mathbb{Z}}\left<f, \varphi_{k}^{j}\right>\Phi^{j}_{k}(x).
\end{equation}
It is clear  that each $\Phi^{j}_{k}$ belongs to $PW_{2^{j+1}}(\mathbb{R})$.

\subsection{Constructing Parseval Paley-Wiener-Schwartz frames using Classical Sampling Theorem}\label{Secondmethod}

 The classical sampling theorem says, that if
$f$ is $\omega$-bandlimited then $f$ is completely determined by
its values at points $k\pi/\omega, k\in \mathbb{Z}$, and can be
reconstructed in a stable way from the samples $f(k\pi/\omega)$,
i.e.
 \begin{equation}
 f(x)= \sum_{k\in \mathbb{Z}} f\left(k\pi/\omega\right)\frac{\sin(\omega
(x-k\pi/\omega))}{\omega (x-k\pi/\omega )},
\end{equation}
where convergence is understood in the  $L_{2}$-sense. Moreover,
the following equality between "continuous" and "discrete" norms
holds true
\begin{equation}\label{Sh-equality}
\left(\int_{-\infty}^{+\infty}|f(x)|^{2}dx\right)^{1/2}=\left(\sum_{j\in
\mathbb{Z}}\frac{1}{\omega}\left|f(k\pi/\omega)\right|^{2}\right)^{1/2}.
 \end{equation}

This equality follows from the fact that the functions $e^{2\pi i
t(k\pi/\omega)}$ form an orthonormal basis in
$L_{2}[-\omega,\omega]$. Now we take $\omega=2^{j+1}$. 
Since the operator $F_{j}\left(\sqrt{-d^{2}/dx^{2}}\right)$ is self-adjoint one has
$$
\left<F_{j}\left(\sqrt{-d^{2}/dx^{2}}\right), 2^{(-j-1)/2} \delta_{x_{k\pi2^{-j-1}}}\right>=\left<f, F_{j}\left(\sqrt{-d^{2}/dx^{2}}\right)2^{(-j-1)/2} \delta_{x_{k\pi2^{-j-1}}}\right>
$$
and formulas (\ref{norm equality-00}) and (\ref{Sh-equality}) imply that the set of functions 
$$
\psi_{k}^{j}(x)=\left(F_{j}\left(\sqrt{-d^{2}/dx^{2}}\right)2^{(-j-1)/2} \delta_{x_{k\pi2^{-j-1}}}\right)(x)
$$ 
is a Parseval frame in $L_{2}(\mathbb{R})$.  For the same reasons as above we see that $\psi_{k}^{j}\in PW_{2^{j+1}}(\mathbb{R})\cap \>\mathcal{S}(\mathbb{R})$. Moreover, the general frame theory implies that the following reconstruction formula holds
\begin{equation}
f(x)=\sum_{j\in \mathbb{Z}}\sum_{k\in \mathbb{Z}}\left<f, \psi_{k}^{j}\right>\psi^{j}_{k}(x).
\end{equation}

\subsection{Constructing Parseval Paley-Wiener-Schwartz frames using a cubature formula} \label{Thirdmethod}

The following result will be used in this section (compare to a similar statement in \cite{gp}).

\begin{theorem} 
\label{cubformula1}
 If $0<\gamma<1$, $\>\>
\rho<\frac{1}{6}\omega^{-1}\gamma
$ and $\{x_{k}\}$ is such that $\rho/2\leq |x_{k}-x_{k+1}|\leq \rho$
then 
 there exist strictly positive coefficients $\lambda_{x_{k}}>0$, which are of the order $\rho$, \  for which the following equality holds for all functions in $ PW_{\omega}({\mathbb{R}})\cap L_{1}(\mathbb{R})$:
\begin{equation}
\label{cubway_1}
\int_{{\mathbb{R}}}fdx=\sum_{x_{k}}\lambda_{x_{k}}f(x_{k}).
\end{equation}
\end{theorem}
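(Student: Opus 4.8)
The plan is to obtain the quadrature as a small correction of the naive ``Riemann-sum'' weights $|I_k| = |x_k - x_{k+1}|$, using a Hahn--Banach argument to force exactness while a contraction estimate forces positivity. Throughout write $I_k = (x_k,x_{k+1})$, so that $|I_k| \in [\rho/2,\rho]$ and the $I_k$ partition $\mathbb{R}$. Since I ultimately need \emph{real} positive weights, I first reduce to real-valued $f$: splitting $f$ into real and imaginary parts keeps both pieces in $PW_\omega(\mathbb{R})\cap L_1(\mathbb{R})$, and the complex case follows by linearity because the $\lambda_{x_k}$ will be real.

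First I would record the $L_1$-analogues of the estimates already used in the $L_2$ setting. Exactly as in (\ref{FTC}), the Fundamental Theorem of Calculus gives $\int_{I_k}|f(x)-f(x_k)|\,dx \le \rho\int_{I_k}|f'|$, so the signed error of the naive rule satisfies
\[
\left| \int_{\mathbb{R}} f\,dx - \sum_k |I_k| f(x_k) \right| \le \rho\,\|f'\|_{L_1(\mathbb{R})}.
\]
Invoking the classical Bernstein inequality in $L_1$ (the sharp constant is again $\omega$, independent of $p$), $\|f'\|_{L_1} \le \omega\|f\|_{L_1}$, together with the hypothesis $\rho < \tfrac16\omega^{-1}\gamma$, this becomes $|\int f - \sum_k|I_k|f(x_k)| \le \rho\omega\|f\|_{L_1}$ with $\rho\omega < \gamma/6$. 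Applying the same computation to $\int_{I_k}\big||f|-|f(x_k)|\big|$ yields the $L_1$ Plancherel--Polya inequality $(1-\rho\omega)\|f\|_{L_1} \le \sum_k|I_k||f(x_k)| \le (1+\rho\omega)\|f\|_{L_1}$.

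Next I reformulate the problem in the weighted space $\ell^1(\{|I_k|\})$ of sequences $c=(c_k)$ with $\|c\| = \sum_k|I_k||c_k| < \infty$. The lower $L_1$ Plancherel--Polya bound shows the sampling map $f \mapsto (f(x_k))_k$ is injective on $PW_\omega(\mathbb{R})\cap L_1(\mathbb{R})$ with closed range $W$, and that $\|f\|_{L_1}$ is comparable to $\|(f(x_k))_k\|$. On $W$ consider the functionals $\Lambda(f) = \int_{\mathbb{R}} f$ and $\Lambda_0(f) = \sum_k|I_k|f(x_k)$; the error bound above says precisely that $\Lambda-\Lambda_0$ is bounded on $W$ with norm $\delta \le \rho\omega/(1-\rho\omega) < 1$. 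I would then extend $\Lambda-\Lambda_0$ by Hahn--Banach to all of $\ell^1(\{|I_k|\})$ without increasing its norm; since the dual of this weighted $\ell^1$ is the corresponding $\ell^\infty$, the extension is represented by a real sequence $(\nu_k)$ with $\sup_k|\nu_k| \le \delta < 1$ and $(\Lambda-\Lambda_0)(f) = \sum_k|I_k|\nu_k f(x_k)$.

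Setting $\lambda_{x_k} = |I_k|(1+\nu_k)$ finishes the argument: by construction $\int_{\mathbb{R}} f = \Lambda_0(f)+(\Lambda-\Lambda_0)(f) = \sum_k \lambda_{x_k} f(x_k)$ for all $f \in PW_\omega(\mathbb{R})\cap L_1(\mathbb{R})$, while $|\nu_k|\le\delta<1$ gives $1+\nu_k \ge 1-\delta > 0$, so $\lambda_{x_k} > 0$, and $\tfrac\rho2(1-\delta) \le \lambda_{x_k} \le \rho(1+\delta)$ shows they are of order $\rho$. I expect the real difficulty to be \textbf{positivity}, not exactness: a direct Hahn--Banach extension of $\Lambda$ itself already yields exact weights of the right size but of uncontrolled sign. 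The decisive idea is to extend instead the \emph{small} perturbation $\Lambda-\Lambda_0$, so the weights stay in a ball of radius $\delta<1$ about the strictly positive reference weights $|I_k|$; this is exactly where the contraction $\rho\omega < \gamma/6 < 1$, i.e.\ the hypothesis on $\rho$, is used. A secondary point is to keep $\nu_k$ real by working over $\mathbb{R}$, so that $1+\nu_k$ is a genuine positive number rather than merely small in modulus.
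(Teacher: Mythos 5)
Your proof is correct, but it takes a genuinely different route from the paper's. The paper stays in the Hilbert-space setting throughout: it bounds the error of the naive rule by $(1-\gamma)^{-1/2}\rho^{2}\omega\left(\sum_{k}|f(x_{k})|^{2}\right)^{1/2}$ using the $L_2$ Bernstein inequality together with the lower Plancherel--Polya bound (\ref{EP-P}), represents the integration functional on the subspace $V=\{(f(x_k))\}\subset\ell^2$ by a vector $v$ via Riesz representation, and corrects the reference weights $w=(|I_k|)$ by $z=v-Pw$, where $P$ is the orthogonal projection onto $V$; positivity follows because $\|z\|_{\ell^2}=O(\rho^{2}\omega)$ while each component of $w$ is at least $\rho/2$. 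You instead work in $L_1$ and in weighted $\ell^1$: you first establish $L_1$ analogues of the Bernstein and Plancherel--Polya inequalities, then extend the \emph{small} error functional $\Lambda-\Lambda_0$ by Hahn--Banach and read off a multiplicative perturbation $(\nu_k)$ with $\sup_k|\nu_k|\leq\delta<1$ from $\ell^1$--$\ell^\infty$ duality. Your version measures the perturbation in the relative ($\ell^\infty$) norm, so positivity and the explicit two-sided bounds $\tfrac{\rho}{2}(1-\delta)\leq\lambda_{x_k}\leq\rho(1+\delta)$ are immediate; it also operates natively in the space where the integration functional lives and sidesteps the somewhat delicate step in the paper of summing the local $L_2$ errors over $k$ (an $\ell^1$ sum of $\ell^2$ data). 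The price is non-constructiveness: Hahn--Banach singles out no canonical choice of weights, whereas the paper's orthogonal projection does, and you must import the $L_1$ Bernstein inequality as an extra (standard) input, which you invoke correctly. Two minor remarks: closedness of the range $W$ is not actually needed for the Hahn--Banach step, and your reduction to real scalars is indeed necessary to keep the $\nu_k$ real, so that $1+\nu_k>0$ rather than merely $|1+\nu_k|$ being close to $1$.
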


\begin{proof} 
By using Riemann sums and the Fundamental Theorem of calculus we obtain for functions in the Schwartz space the following inequalities
$$
\left|\int_{{\mathbb{R}}}f(x)dx      -        \sum_{x_{k}}f(x_{k})|I_{k}|\right|=
\left|\sum_{k} \int _{I_{k}}f(x)dx-\sum_{x_{k}}\int_{I_{k}}f(x_{k})dx\right| \leq
$$
\begin{equation}
\sum_{k}\int _{I_{k}}\left |f(x)-f(x_{k})\right|\  dx \leq \rho^{1/2}\|f^{'}\|
\end{equation}
Thus, for  $f\in PW_{\omega}(\mathbb{R})$ using the Bernstein inequality and the left side of (\ref{EP-P})
$$
\left|\int_{{\mathbb{R}}}f(x)dx      -        \sum_{x_{k}}f(x_{k})|I_{k}|\right|\leq \rho^{3/2}\|f^{'}\|\leq (1-\gamma)^{-1/2}\rho^{2}\omega\left(\sum_{k}|f(x_{k})|^{2}\right)^{1/2}.
$$
Consider the sampling operator
$$
S: f\rightarrow \{f(x_{k})\},
$$
which maps  $PW_{\omega}(\mathbb{R})$ onto a $V$ which is a subspace of  the space   $\ell^2$ with its standard norm.

If $u \in V$, denote the linear functional $y \to (y,u)$ on $V$ by $\ell_u$.
By our  Plancherel-Polya inequalities (\ref{EP-P}),  the map 
$$
\{f(x_k)\} \to \int _{\mathbb{R}}fdx
$$ 
is a well-defined linear functional on the closed  Hilbert  space
$V$, and so equals $\ell_v$ for some $v \in V$, which may
or may not  have all components positive.  On the other hand, if $w$ is the vector with components $\{|I_{k}|\}$, then $w$ might
not be in $V$, but it has all components positive and of the right size
$$
|I_{k}|   \sim  \rho.
$$
Since, for any vector $u \in V$ the norm of $u$  is exactly the norm of the corresponding functional 
$\ell_u$, the above  inequality 
tells us that 
\begin{equation}
\label{2}
\|Pw-v\| \leq \|w-v\| \leq (1-\gamma)^{-1/2}\rho^{2}\omega,
\end{equation}
where  $P$ is the orthogonal projection onto $V$. Accordingly, if $z$ is the  vector $v-Pw$, then
\begin{equation}
\label{3}
v+(I-P)w = w + z ,
\end{equation}
where $\|z\| \leq  (1-\gamma)^{-1/2}\rho^{2}\omega$.  Note, that all components of the vector $w$ 
 are of order $O(\rho)$, while the order of $\|z\|$ is  $O(\rho^{2})$.  Thus if 
 $\rho\omega$ is sufficiently small, then 
$\lambda := w + z$ has all components positive and of the right size.  Since $\lambda = v + (I-P)w$, the linear
functional $y \to (y,\lambda)$ on $V$ equals $\ell_v$.  In other words, if the vector $\lambda$ has components
 $\{\lambda_{x_{k}}\}, $ then 
$$
\sum_{x_{k}}f(x_{k})\lambda_{x_{k}} = \int_{\mathbb{R}} f dx
$$ 
for all $f \in PW_{\omega}(\mathbb{R})$, as desired.
\end{proof}

The following statement immediately follows from the fact that the Fourier transform of a product of two functions in $L_{2}(\mathbb{R})$ is a convolution of their Fourier transforms. 
\begin{lemma}\label{product}
If $f,g \in PW_{\omega}(\mathbb{R})$ then their product  $fg$ is in $PW_{2\omega}(\mathbb{R})$. In particular,  if $f \in PW_{\omega}(\mathbb{R})$ then $|f|^{2}=f\overline{f}$ belongs to $PW_{2\omega}(\mathbb{R})$. 

\end{lemma}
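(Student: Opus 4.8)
The plan is to translate the statement into a support condition on the Fourier side, where multiplication of functions becomes convolution of their transforms, and then to read off the claim from the elementary fact that the support of a convolution is contained in the (Minkowski) sum of the individual supports. Writing $S = \mathrm{supp}(\widehat{f})$ and $T = \mathrm{supp}(\widehat{g})$, both contained in $[-\omega,\omega]$ by hypothesis, the goal is to show that $\widehat{fg}$ is supported in $S + T \subseteq [-2\omega,2\omega]$.

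First I would settle the membership $fg \in L_{2}(\mathbb{R})$, which is needed before one can even speak of $\widehat{fg}$ and hence of $fg$ lying in a Paley--Wiener space. Here I would use that a bandlimited function is bounded: since $\widehat{f}$ is an $L_{2}$-function with compact support, it lies in $L_{1}(\mathbb{R})$ by the Cauchy--Schwarz inequality, so Fourier inversion exhibits $f$ as (a.e. equal to) a bounded continuous function. Consequently $\|fg\|_{L_{2}} \le \|f\|_{L_{\infty}}\|g\|_{L_{2}} < \infty$, and the same Cauchy--Schwarz argument shows $\widehat f,\, \widehat g \in L_{1}(\mathbb{R}) \cap L_{2}(\mathbb{R})$.

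The central step is the convolution identity $\widehat{fg} = \tfrac{1}{\sqrt{2\pi}}\,\widehat f * \widehat g$ in the normalization of (\ref{op-func}). Because $\widehat f$ and $\widehat g$ are both in $L_{1} \cap L_{2}$, the convolution $\widehat f * \widehat g$ is defined by an absolutely convergent integral, is continuous, and is again in $L_{1} \cap L_{2}$; I would then verify the identity either by testing both sides against Schwartz functions via the Parseval relation, or by recovering $f$ and $g$ as inverse transforms of $L_{1} \cap L_{2}$ functions and applying the standard convolution theorem. Granting the identity, the support of $\widehat f * \widehat g$ is contained in $S + T \subseteq [-2\omega,2\omega]$, so $\widehat{fg}$ vanishes outside $[-2\omega,2\omega]$ and $fg \in PW_{2\omega}(\mathbb{R})$.

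For the last assertion I would apply this with $g = \overline{f}$, after checking that $\overline{f} \in PW_{\omega}(\mathbb{R})$: a direct computation gives $\widehat{\overline{f}}(\lambda) = \overline{\widehat f(-\lambda)}$, whose support is the reflection of $\mathrm{supp}(\widehat f)$ and hence again lies in $[-\omega,\omega]$. Then $|f|^{2} = f\overline{f} \in PW_{2\omega}(\mathbb{R})$. The only genuinely delicate point is the convolution identity, since neither $f$ nor $g$ need be integrable and the product $fg$ need not be a priori integrable either; once the compact-support $L_{1} \cap L_{2}$ structure of the transforms is in hand, however, this reduces to the classical convolution theorem, and the support step is immediate.
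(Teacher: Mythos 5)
Your proposal is correct and follows exactly the route the paper takes: the paper justifies this lemma in a single sentence by noting that the Fourier transform of a product of two $L_{2}$ functions is the convolution of their Fourier transforms, whence the support of $\widehat{fg}$ lies in the Minkowski sum of the supports. You simply supply the integrability and convolution-identity details that the paper leaves implicit, which is a welcome but not substantively different elaboration.
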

Thus we can use the above quadrature rule for  $F_{j}\left(\sqrt{-d^{2}/dx^{2}}\right)f=f_{j}$ 
$$
\|f_{j}\|^{2}=\int_{\mathbb{R}}|f_{j}|^{2}dx=\sum_{x_{k}\in M_{\rho}}\lambda_{x_{k}}|f_{j}(x_{k})|^{2}.
$$
Using  (\ref{norm equality-00})  and  introducing functions
$$
\theta_{k}^{j}=\sqrt{\lambda_{x_{k}}}F_{j}\left(\sqrt{-d^{2}/dx^{2}}\right)\delta_{x_{k}}
$$
we obtain a Parseval frame in $L_{2}(\mathbb{R})$ since
$$
\|f||^{2}=\sum_{j}\sum_{k}\left|\left<f_{j}, \theta_{k}^{j}\right>\right|^{2}.
$$
This fact  implies that the following reconstruction formula holds 
$$
f=\sum_{j}\sum_{k}\left<f_{j}, \theta_{k}^{j}\right>\theta_{k}^{j}.
$$

To summarize these examples we list the following crucial facts which were used in the previous constructions:
\begin{enumerate}

\item the Fourier transform provides the Spectral Resolution of the operator $-d/dx$ in $L_{2}(\mathbb{R})$;

\item  existence of the irregular and regular sampling theorems;
\item the fact that the product of two functions in $PW_{\omega}(\mathbb{R})$ is a function in  $PW_{2\omega}(\mathbb{R})$;

\item existence of exact quadrature formula with positive coefficients for Paley-Wiener function.

\end{enumerate}

The goal of our survey is to demonstrate that:
\begin{enumerate}

\item the method developed in section \ref{Firstmethod} can be extended to general Hilbert spaces \cite{Pes01}, to compact Riemannian manifolds \cite{Pes04a}, \cite{Pes04c}, to non-compact manifolds of bounded geometry whose Ricchi curvature is bounded from below \cite{Pes00}, \cite{Pes04b}, to non-compact symmetric Riemannian manifolds \cite{Pes06}, \cite{Pes13b}, to domains in $\mathbb{R}^{n}$ \cite{Pes15c};

\item the method developed in section \ref{Thirdmethod} can be extended to homogeneous compact Riemannian manifolds \cite{gp} and homogeneous manifolds with sub-elliptic structure \cite{Pes15}.

\end{enumerate}

We note that the method of section \ref{Secondmethod} can not be extended to Riemannian manifolds due, in particular, to the lack  of uniformly spaced sets of points.

It should be also mentioned that the methods of \ref{Firstmethod}  were   extended to metric(quantum) and combinatorial graphs \cite{FP}, \cite{Pes05}, \cite{Pes06a}, \cite{Pes08}, \cite{Pes09d}, \cite{Pes10}.

\section{Shannon sampling, Paley-Wiener  frames
and  abstract  Besov subspaces
  }\label{Hilbert}

\subsection{Paley-Wiener vectors in Hilbert spaces}

Consider a self-adjoint positive definite operator
$L$ in a Hilbert space $\Hilb$.
Let $\sqrt{L}$ be the positive square root of $L$.
According to the spectral theory
for such operators \cite{BS}
there exists a direct integral of
Hilbert spaces $X=\int X(\lambda )dm (\lambda )$ and a unitary
operator $\mathcal{F}$ from $\Hilb$ onto $X$, which
transforms the domains of $L^{k/2}, k\in \mathbb{N},$
onto the sets
$X_{k}=\{x \in X|\lambda ^{k}x\in X \}$
with the norm 
\begin{equation}\label{FT}
\|x(\lambda)\|_{X_{k}}= \left<x(\lambda),x(\lambda)\right>^{1/2}_{X(\lambda)}=\left (\int^{\infty}_{0}
 \lambda^{2k}\|x(\lambda )\|^{2}_{X(\lambda )} dm
 (\lambda ) \right )^{1/2}.
 \end{equation}
and satisfies the identity
$\mathcal{F}(L^{k/2} f)(\lambda)=
 \lambda ^{k} (\mathcal{F}f)(\lambda), $ if $f$ belongs to the domain of
 $L^{k/2}$.
 We call the operator $\mathcal{F}$ the Spectral Fourier Transform \cite{Pes88b}, \cite{Pes00}. As known, $X$ is the set of all $m $-measurable
  functions $\lambda \mapsto x(\lambda )\in X(\lambda ) $,
  for which the following norm is finite:
$$\|x\|_{X}=
\left(\int ^{\infty }_{0}\|x(\lambda )\|^{2}_{X(\lambda )}dm
(\lambda ) \right)^{1/2} $$
For a function $F$ on $[0, \infty)$ which is bounded and measurable
with respect to $dm$
one can introduce the  operator $\FSL$
by using the formula
\begin{equation}\label{Op-function}
\FSL f=\mathcal{F}^{-1}F( \lambda)\mathcal{F}f,\>\>\>f\in \mathcal{H}.
\end{equation}
If $F$ is real-valued the operator $\FSL$ is self-adjoint.

\begin{remark} \label{rem:sqrtL}
We start with an operator $L$ and switch to $\sqrt{L}$ because
in many applications (see below) a second-order differential operator $L$ appears first, but it is more natural to work with a first order pseudo-differential operator $\sqrt{L}$. This will become apparent when we discuss sampling density in Subsection \ref{subsect:sampling_density}.

\end{remark}
\begin{definition}
For $\sqrt{L}$  as above  we will  say that a vector $f \in \Hilb$ belongs to the {\it Paley-Wiener space} $\PWoL$
if the support of the Spectral Fourier Transform
$\mathcal{F}f$ is contained in $[0, \omega]$.
\end{definition}
The next two facts are obvious.
\begin{theorem}The spaces $\PWoL$ have the following properties:
\begin{enumerate}
\item  the space $\PWoL$ is a linear closed subspace in
$\Hilb$.
\item the space 
 $\bigcup _{ \omega >0}\PWoL$
 is dense in $\Hilb$;
\end{enumerate}
\end{theorem}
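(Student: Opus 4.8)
The plan is to transport everything to the spectral side via the Spectral Fourier Transform $\mathcal{F}$, where the Paley--Wiener condition becomes a plain support condition, and then to recognize $\PWoL$ as the range of an orthogonal spectral projection. For each $\omega>0$ I would introduce the multiplication operator $M_\omega$ on $X$ given by $x(\lambda)\mapsto \chi_{[0,\omega]}(\lambda)\,x(\lambda)$, and set $P_\omega=\mathcal{F}^{-1}M_\omega\mathcal{F}$. Since $\chi_{[0,\omega]}$ is real-valued and idempotent ($\chi_{[0,\omega]}^2=\chi_{[0,\omega]}$), the operator $M_\omega$ is a self-adjoint idempotent on $X$, hence an orthogonal projection; because $\mathcal{F}$ is unitary, the operator $P_\omega=\FSL$ associated with $F=\chi_{[0,\omega]}$ via (\ref{Op-function}) is an orthogonal projection on $\Hilb$.

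For part (1) the key identification is that $f\in\PWoL$ precisely when $\mathcal{F}f$ vanishes $m$-almost everywhere off $[0,\omega]$, i.e. when $M_\omega\mathcal{F}f=\mathcal{F}f$, i.e. when $P_\omega f=f$. Thus $\PWoL=\operatorname{Range}(P_\omega)=\ker(I-P_\omega)$. Linearity and closedness are then immediate, since the kernel of the bounded operator $I-P_\omega$ is automatically a closed linear subspace of $\Hilb$.

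For part (2) I would show that $P_\omega\to I$ strongly as $\omega\to\infty$ while $P_\omega f\in\PWoL$ for every $f$. Given $f\in\Hilb$, unitarity of $\mathcal{F}$ together with (\ref{FT}) gives $\|f-P_\omega f\|^2=\|\mathcal{F}f-M_\omega\mathcal{F}f\|_X^2=\int_\omega^\infty\|(\mathcal{F}f)(\lambda)\|^2_{X(\lambda)}\,dm(\lambda)$. As $\omega\to\infty$ this tail integral tends to $0$ by monotone convergence, because the full integral equals $\|f\|^2<\infty$. Since each $P_\omega f$ lies in $\PWoL\subset\bigcup_{\omega>0}\PWoL$, every $f\in\Hilb$ is an $\Hilb$-limit of elements of the union, which proves density.

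There is no genuine obstacle here --- the authors rightly call the facts obvious --- but the one point worth handling with care is closedness: rather than arguing directly with supports of $L_2$-limits of sequences (which would force one to verify that the support condition survives passage to the limit), it is cleaner to exhibit $\PWoL$ as the range of the spectral projection $P_\omega$ and invoke the standard fact that an orthogonal projection has closed range. The only remaining subtleties are measure-theoretic: all the identities above hold only up to $m$-null sets, so ``support contained in $[0,\omega]$'' should be read throughout as ``$\mathcal{F}f=0$ holds $m$-almost everywhere on $(\omega,\infty)$''.
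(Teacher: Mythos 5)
Your argument is correct and is precisely the standard justification the paper has in mind when it declares these two facts ``obvious'' without supplying a proof: identifying $\PWoL$ with the range (equivalently, the kernel of $I-P_\omega$) of the orthogonal spectral projection $P_\omega=\mathcal{F}^{-1}\chi_{[0,\omega]}(\lambda)\mathcal{F}$ gives closedness and linearity at once, and the strong convergence $P_\omega\to I$, via the vanishing of the tail integral $\int_\omega^\infty\|(\mathcal{F}f)(\lambda)\|^2_{X(\lambda)}\,dm(\lambda)$, gives density. Your remarks on reading the support condition up to $m$-null sets are the right level of care; nothing further is needed.
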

Next we denote by $\mathcal{H}^{k}$  the domain  of $L^{k/2}$.
It is a Banach space,
equipped with the graph norm $\|f\|_{k}=\|f\|+\|L^{k/2}f\|$.
The next theorem contains generalizations of several results
from  classical harmonic analysis (in particular  the
Paley-Wiener theorem). It follows from our  results in
\cite{Pes00} and \cite{Pes09a}.
\begin{theorem}\label{PW proprties}
The following statements hold:
\begin{enumerate}
\item (Bernstein inequality)   $f \in \PWoL$ if and only if
$ f \in \mathcal{H}^{\infty}=\bigcap_{k=1}^{\infty}\mathcal{H}^{k}$,
and the following Bernstein inequalities  holds true
\begin{equation}\label{Bern}
\|L^{s/2}f\|\leq \omega^{s}\|f\| \quad \mbox{for all} \, \,  s\in \mathbb{R}_{+};
\end{equation}
 \item  (Paley-Wiener theorem) $f \in \PWoL$
  if and only if for every $g\in \Hilb$ the scalar-valued function of the real variable  $ t \mapsto
\langle e^{it\sqrt{L}}f,g \rangle $
 is bounded on the real line and has an extension to the complex
plane as an entire function of the exponential type $\omega$;
\item (Riesz-Boas interpolation formula) $f \in \PWoL$ if
and only if  $ f \in \mathcal{H}^{\infty}$ and the
following Riesz-Boas interpolation formula holds for all $\omega > 0$:
\begin{equation}  \label{Rieszn}
i\sqrt{L}f=\frac{\omega}{\pi^{2}}\sum_{k\in\mathbb{Z}}\frac{(-1)^{k-1}}{(k-1/2)^{2}}
e^{i\left(\frac{\pi}{\omega}(k-1/2)\right)\sqrt{L}}f.
\end{equation}
\end{enumerate}
\end{theorem}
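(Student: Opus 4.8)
The plan is to exploit the Spectral Fourier Transform $\mathcal{F}$ to reduce each of the three equivalences to a statement about a scalar-valued measure on $[0,\infty)$, namely the spectral measure $d\mu_f(\lambda) = \|(\mathcal{F}f)(\lambda)\|^2_{X(\lambda)}\, dm(\lambda)$, and then to invoke the corresponding classical one-dimensional result. The unifying principle is that, via (\ref{Op-function}), any bounded measurable $F$ acts as multiplication by $F(\lambda)$ on the transform side, so that $\|F(\sqrt{L})f\|^2 = \int_0^\infty |F(\lambda)|^2 \, d\mu_f(\lambda)$. Membership in $\PWoL$ is by definition the support condition $\mathrm{supp}(\mathcal{F}f) \subset [0,\omega]$.

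For part (1), the forward direction is immediate: if $\mathrm{supp}(\mathcal{F}f)\subset[0,\omega]$, then $\mathcal{F}(L^{s/2}f)(\lambda) = \lambda^s (\mathcal{F}f)(\lambda)$ is supported in $[0,\omega]$ and satisfies $\lambda^s \le \omega^s$ there, so $\|L^{s/2}f\|^2 = \int_0^\omega \lambda^{2s}\,d\mu_f \le \omega^{2s}\|f\|^2$; in particular $f$ lies in the domain of every $L^{k/2}$, hence in $\mathcal{H}^\infty$. For the converse I would argue by contradiction: if $\mathcal{F}f$ charged a set $\{\lambda > \omega+\delta\}$ with positive $\mu_f$-mass, then $\|L^{s/2}f\|^2 \ge \int_{\omega+\delta}^\infty \lambda^{2s}\,d\mu_f \ge (\omega+\delta)^{2s}\mu_f(\{\lambda>\omega+\delta\})$, which grows faster than $\omega^{2s}\|f\|^2$ as $s\to\infty$, violating the Bernstein inequality. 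This forces $\mu_f$ to be supported in $[0,\omega]$.

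For part (2), I would fix $g$ and write $\langle e^{it\sqrt{L}}f, g\rangle = \int_0^\infty e^{it\lambda}\,d\nu_{f,g}(\lambda)$, where $\nu_{f,g}$ is the (complex) spectral measure $\langle (\mathcal{F}f)(\lambda),(\mathcal{F}g)(\lambda)\rangle_{X(\lambda)}\,dm(\lambda)$; this is the Fourier-Stieltjes transform of a finite measure on $[0,\infty)$. The classical Paley-Wiener theorem then says this extends to an entire function of exponential type $\omega$ and is bounded on $\mathbb{R}$ precisely when $\nu_{f,g}$ is supported in $[0,\omega]$. The only subtlety is promoting the scalar conclusion (valid for each $g$) to the vector statement $\mathrm{supp}(\mathcal{F}f)\subset[0,\omega]$; here I would use that if the type-$\omega$ condition holds for all $g$, then taking $g=f$ (or testing against enough $g$ to recover $\mu_f$) pins down the support of $\mu_f$, and conversely the support condition on $\mathcal{F}f$ immediately bounds that of every $\nu_{f,g}$ by Cauchy-Schwarz. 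Part (3) reduces similarly: the Riesz-Boas formula is a classical identity for entire functions of exponential type $\omega$ expressing the derivative (here $\lambda \mapsto i\lambda$ on the transform side) as the stated interpolation series in the shifts $e^{i(\pi/\omega)(k-1/2)\lambda}$; I would verify the scalar identity $i\lambda = \frac{\omega}{\pi^2}\sum_k \frac{(-1)^{k-1}}{(k-1/2)^2} e^{i(\pi/\omega)(k-1/2)\lambda}$ holds for $\lambda \in [0,\omega]$, then apply $\mathcal{F}^{-1}$, controlling convergence of the series in $\Hilb$ via the spectral calculus and the uniform boundedness of the exponentials.

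The main obstacle I anticipate is the convergence and support bookkeeping in part (3): one must justify interchanging the infinite sum with the operator $\mathcal{F}^{-1}$ and confirm that the series converges in the $\Hilb$-norm rather than merely pointwise on the spectrum. Because each $e^{i\theta\sqrt{L}}$ is unitary, the partial sums are uniformly bounded operators, so the convergence should follow from the $\ell^1$-summability of the coefficients $(k-1/2)^{-2}$ together with dominated convergence against $d\mu_f$ on the compact spectral set $[0,\omega]$; the reverse implication (from the formula back to bandlimitedness) again follows by testing the identity against the spectral measure and using that the scalar identity characterizes exponential type $\omega$. Throughout, the reduction to scalar measures is routine once the spectral calculus from (\ref{FT}) and (\ref{Op-function}) is in hand, so the essential content is entirely the three classical one-dimensional facts being transported through $\mathcal{F}$.
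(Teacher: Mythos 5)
Your proposal is correct, and its overall strategy --- transporting three classical one-dimensional facts through the Spectral Fourier Transform by testing against the scalar functions $t\mapsto\langle e^{it\sqrt{L}}f,g\rangle$ and the spectral measure $d\mu_f$ --- is exactly the strategy of the paper's (very terse) proof. Your argument for part (1), including the contradiction argument for the converse via the growth of $\int_{\omega+\delta}^{\infty}\lambda^{2s}\,d\mu_f$, coincides almost verbatim with the proof the paper gives later for its Bernstein-characterization lemma in the subsection on Besov subspaces. The one genuine divergence is part (2): the paper deduces it by applying the classical Bernstein inequality for the uniform norm on $\mathbb{R}$ to each function $\langle e^{it\sqrt{L}}f,g\rangle$ (so that $|\langle (i\sqrt{L})^{k}f,g\rangle|\leq\omega^{k}\|f\|\,\|g\|$, reducing (2) to part (1)), whereas you invoke the Paley--Wiener theorem for finite measures and pin down the support of $\mu_f$ by taking $g=f$. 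Both are valid; the paper's route is a slicker reduction to (1), while yours is more self-contained in that the positivity of $\mu_f$ lets you run the half-plane growth estimate directly without needing a separate converse for complex measures. For part (3) your device of verifying the scalar identity $i\lambda=\frac{\omega}{\pi^{2}}\sum_k\frac{(-1)^{k-1}}{(k-1/2)^{2}}e^{i(\pi/\omega)(k-1/2)\lambda}$ on $[0,\omega]$ and then applying the functional calculus is equivalent to the paper's weak formulation (applying the classical Riesz formula to each $\langle e^{it\sqrt{L}}f,g\rangle$ at $t=0$); your attention to the $\ell^{1}$-summability of the coefficients and the unitarity of $e^{i\theta\sqrt{L}}$ supplies convergence details the paper omits.
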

\begin{proof}
(1) follows immediately from the definition and representation (\ref{FT}).  To prove (2) it is sufficient to apply the classical Bernstein inequality \cite{Nik} for the  uniform norm on $\mathbb{R}$ to every  function $\langle e^{it\sqrt{L}}f,g\rangle,\>\>g\in \Hilb$. To prove   (3) one has to apply the classical Riesz interpolation formula on $\mathbb{R}$ \cite{Nik} to the functions
$\langle e^{it\sqrt{L}}f,g\rangle$.
\end{proof}
\begin{remark}
For trigonometric polynomials in $L_{p}(\mathbb{T}), \>1\leq p\leq \infty,$ and the operator $\frac{d}{dt}$, the identity (\ref{Rieszn}) was proved by Riesz \cite{Riesz}. For entire functions of exponential type in $L_{p}(\mathbb{R}), \>1\leq p\leq \infty,$ and $\frac{d}{dt}$ this identity was proved by Boas \cite{Boas}.
\end{remark}

\subsection{Frames in Hilbert spaces}
A family of vectors $\{\theta_{v}\}$  in a Hilbert space $\mathcal{H}$ is called a frame if there exist constants
$A, B>0$ such that
\begin{equation}
A\|f\|^{2}\leq \sum_{v}\left|\left<f,\theta_{v}\right>\right|^{2}     \leq B\|f\|^{2} \quad \mbox{for all} \quad f\in \mathcal{H}.
\end{equation}
The largest $A$ and smallest $B$ are called lower and upper frame bounds.

The family of scalars $\{\left<f,\theta_{v}\right>\}$
represents a set of measurements of a vector $f$.
In order to resynthesize the vector $f$
from this collection  of measurements in a linear way
one has to find another
(dual) frame $\{\Theta_{v}\}$.
Then a reconstruction formula is
\begin{equation}
f=\sum_{v}\left<f,\theta_{v}\right>\Theta_{v}.
\end{equation}

Dual frames are not unique in general.
Moreover it may be difficult to find a dual frame in concrete
situations.  

If in particular $A=B=1$ the frame is said to be  tight   or Parseval.
Parseval frames are similar in many respects to orthonormal wavelet bases.  For example, if in addition all vectors $\theta_{v}$ are unit vectors, then the frame is an  orthonormal basis.

The main feature of Parseval frames is that
decomposing
and synthesizing a vector from known data are tasks carried out with
the same family of functions, i.e., the Parseval frame is its own dual frame.
The important differences between frames and, say, orthonormal bases is their  redundancy that helps reduce for example the effect of noise in data.

Frames in Hilbert spaces of functions  whose members have simultaneous localization in space and frequency  arise naturally in wavelet analysis on Euclidean spaces,  when the continuous wavelet transforms are discretized.

\subsection{Sampling in abstract Paley-Wiener spaces}

We assume that  there exist a  $C>0$ and  $m_{0}\geq 0$ such that for any $0<\rho<1$ there exists a        set of functionals  $\mathcal{A}^{(\rho)}=\left\{\mathcal{A}_{ k}^{(\rho)}\right\},$  defined on $\mathcal{H}^{m_{0}}$, for which
\begin{equation}\label{A}
\|f\|^{2}\leq
C\left(\sum_{k}\left|\mathcal{A}_{ k}^{(\rho)}(f)\right|^{2}+\rho^{2m}\|L^{m/2}f\|^{2}\right),\>\>\>f\in \mathcal{H}^{m},\>\>m> m_{0},
\end{equation}
       
and

\begin{equation}\label{B1}
\sum_{k}\left|\mathcal{A}_{k}^{(\rho)}(f)\right|^{2}  \leq C \|f\|^{2}\>\> ,\quad \mbox{for all} \, \,  f \in \mathcal{H}^{m}, \>\>m> m_{0}.
\end{equation}

\begin{remark} In notations of section \ref{Firstmethod} if
$$
\mathcal{A}_{k}^{(\rho)}(f)=\sqrt{|I_{k}|}|f(x_{k})|,\>\>\>|I_{k}|\leq \rho,
$$
then inequality (\ref{A})  is similar to (\ref{interm-ineq}) 
and (\ref{B1}) is similar to (\ref{left-side}).

\end{remark}
\begin{remark}
Following \cite{Pes01}, \cite{Pes04b} we call inequality (\ref{A}) a  {\it Poincar\'{e}-type inequality} since it is an estimate of the norm  of 
$f$ through the norm  of its ``derivative'' $L^{m/2}f$.
\end{remark}

 Let us introduce vectors  $\mu_{ k}\in \Hilb$ such that
 $$
 \left<f,\mu_{ k}\right>=\mathcal{A}_{k}^{(\rho)}(f),\>\>f\in \mathcal{H}^{m},\>\>m> m_{0}.
 $$
 Let  $\mathcal{P}_{\negthinspace \omega}$
 be the orthogonal projection of $\Hilb$ onto
 $\PWoL$ and put
 \begin{equation}\label{functionals-phi}
 \phi^{\omega}_{ k}=\mathcal{P}_{\negthinspace \omega}\mu_{ k}.
\end{equation}
 Using  the Bernstein inequality (\ref{Bern}) we obtain  the following statement.

 \begin{theorem}\label{Ss}(Sampling Theorem)

 Assume  that assumptions (\ref{A}) and (\ref{B1}) are
 satisfied and for a given  $\omega>0$ and $\delta \in (0,1)$  pick a $\rho$ such that 
 
$$
\rho^{2m}=C^{-1}\omega^{-2m}\delta.
$$

 Then  the family of vectors
$\{\phi^{\omega}_{k}\}$ in (\ref{functionals-phi}) is a frame for the Hilbert space $\PWoL$ and
\begin{equation}\label{frame-in-PW}
(1-\delta) \|f\|^{2}\leq \sum_{k}\left|\left<f,\phi^{\omega}_{k}\right>\right|^{2}     \leq \|f\|^{2},\>\>\>f\in \PWoL.
\end{equation}
The canonical dual frame $\{\Theta^{\omega}_{k}\}$  has the property   $\Theta^{\omega}_{k}\in  \PWoL$ and
provides the following reconstruction formulas
\begin{equation}
f=\sum_{k}\left<f,\phi^{\omega}_{k}\right>\Theta^{\omega}_{k}=\sum_{k}\left<f,\Theta^{\omega}_{k}\right>\phi^{\omega}_{k},\>\>\>f\in \PWoL.
\end{equation}
\end{theorem}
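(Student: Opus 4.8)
The plan is to verify the two frame inequalities directly on $\PWoL$ and then deduce the dual frame from standard Hilbert-space frame theory. The first step is to observe that on the Paley-Wiener space the projected functionals reproduce the original sampling functionals: since $\mathcal{P}_{\omega}$ is the orthogonal (hence self-adjoint) projection onto $\PWoL$ and $\mathcal{P}_{\omega}f = f$ for every $f\in\PWoL$, I would compute
\begin{equation*}
\langle f,\phi^{\omega}_{k}\rangle = \langle f,\mathcal{P}_{\omega}\mu_{k}\rangle = \langle \mathcal{P}_{\omega}f,\mu_{k}\rangle = \langle f,\mu_{k}\rangle = \mathcal{A}^{(\rho)}_{k}(f)
\end{equation*}
for all $f\in\PWoL$. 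Hence $\sum_{k}|\langle f,\phi^{\omega}_{k}\rangle|^{2} = \sum_{k}|\mathcal{A}^{(\rho)}_{k}(f)|^{2}$, and both frame inequalities reduce to statements about the sampling functionals. The upper bound is then immediate: applying (\ref{B1}) to $f\in\PWoL\subset\mathcal{H}^{\infty}$ gives the desired upper frame bound.

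The heart of the argument is the lower bound, where the Bernstein inequality and the prescribed choice of $\rho$ come together. For $f\in\PWoL$ the Bernstein inequality (\ref{Bern}) yields $\|L^{m/2}f\|\le\omega^{m}\|f\|$; substituting this into the Poincar\'e-type inequality (\ref{A}) gives
\begin{equation*}
\|f\|^{2}\le C\sum_{k}|\mathcal{A}^{(\rho)}_{k}(f)|^{2} + C\rho^{2m}\omega^{2m}\|f\|^{2}.
\end{equation*}
With the calibration $\rho^{2m}=C^{-1}\omega^{-2m}\delta$ the last coefficient equals exactly $\delta$, so absorbing that term on the left produces $(1-\delta)\|f\|^{2}\le C\sum_{k}|\mathcal{A}^{(\rho)}_{k}(f)|^{2}$, the lower frame bound. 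Here the freedom to take $m>m_{0}$ arbitrarily large is harmless, since $m$ only enters through the exponents in a structurally identical estimate.

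With the two-sided bounds in hand, I would invoke the general theory of frames, working inside $\PWoL$ as the ambient Hilbert space. The frame operator $S_{\omega}f=\sum_{k}\langle f,\phi^{\omega}_{k}\rangle\phi^{\omega}_{k}$ maps $\PWoL$ boundedly into itself and, by the two-sided bounds, is positive and boundedly invertible there; the canonical dual frame is $\Theta^{\omega}_{k}=S_{\omega}^{-1}\phi^{\omega}_{k}$, which lies in $\PWoL$ because $S_{\omega}^{-1}$ preserves that space, and the two reconstruction formulas are the standard frame expansions $f=\sum_{k}\langle f,\phi^{\omega}_{k}\rangle\Theta^{\omega}_{k}=\sum_{k}\langle f,\Theta^{\omega}_{k}\rangle\phi^{\omega}_{k}$.

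The main obstacle I anticipate is bookkeeping with the constant $C$: the clean bounds $(1-\delta)$ and $1$ stated in (\ref{frame-in-PW}) require $C$ to be normalized away, whereas the raw estimates above produce bounds $(1-\delta)/C$ and $C$. I would either track $C$ explicitly and note that the stated form corresponds to the normalized case $C=1$, or rescale the functionals $\mathcal{A}^{(\rho)}_{k}$ at the outset so that $C=1$, after which the displayed inequalities hold verbatim.
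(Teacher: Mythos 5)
Your proof is correct and follows exactly the route the paper intends (the paper gives no detailed argument beyond ``Using the Bernstein inequality we obtain the following statement''): restrict to $\PWoL$ so that $\langle f,\phi^{\omega}_{k}\rangle=\mathcal{A}^{(\rho)}_{k}(f)$, combine the Poincar\'e-type inequality (\ref{A}) with the Bernstein bound and the calibration of $\rho$ to absorb the error term, use (\ref{B1}) for the upper bound, and invoke standard frame theory inside $\PWoL$ for the canonical dual. Your closing remark about the constant $C$ is a legitimate catch — the clean bounds $(1-\delta)$ and $1$ in (\ref{frame-in-PW}) do presuppose the normalization $C=1$ (or an equivalent rescaling of the functionals), exactly as you describe.
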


\subsection{Partitions of unity on the frequency side}

\label{subsect:part_unity_freq}

The construction of frequency-localized frames is typically achieved via spectral calculus. The idea is to start from a partition of unity on the positive real axis. In the following, we will be considering two different types of such partitions, whose construction we now describe in some detail.

Now we are going to construct partitions of unity $F_{j}$ and $G_{j}=F^{2}_{j}$ which are similar to one that were introduced in (\ref{functions2})-(\ref{partsums4}).

 Let $g\in C^{\infty}(\mathbb{R}_{+})$ be a non-increasing
 function such that $supp(g)\subset [0,\>  2], $ and $g(\lambda)=1$ for $\lambda\in [0,\>1], \>0\leq g(\lambda)\leq 1, \>\lambda>0.$
 We now let
\[
 h(\lambda) = g(\lambda) - g(2 \lambda)~,
\] which entails $supp(h) \subset [2^{-1},2]$,
and use this to define
\begin{equation} \label{functions}
 F_0(\lambda) = \sqrt{g(\lambda)}~, F_j(\lambda) = \sqrt{h(2^{-j} \lambda)}~, j \ge 1~,
\end{equation}
as well as
\[
 G_j(\lambda) = \left[F_j(\lambda)\right]^2=F_j^2(\lambda)~, j \ge 0~.
\]
As a result of the definitions, we get for all $\lambda \ge 0$ the equations
\begin{equation} \label{partsums1}
\sum_{j = 0}^n G_j(\lambda) = \sum_{j = 0}^n F_j^2(\lambda)
=  g(2^{-n} \lambda),
\end{equation}
and as a consequence
\begin{equation} \label{partsums2}
\sum_{j \ge 0} G_j(\lambda) = \sum_{j \ge 0} F_j^2(\lambda) =  1~,\>\>\>\lambda\geq 0,
\end{equation}
 with finitely many nonzero terms occurring in the sums for each
 fixed $\lambda$. 
We call the sequence $(G_j)_{j \ge 0}$ a {\bf (dyadic) partition of unity}, and $(F_j)_{j \ge 0}$ a {\bf quadratic (dyadic) partition of unity}.  As will become soon apparent, quadratic partitions are useful for the construction of frames.

 Using the spectral theorem one has
$$
F_{j}^{2}\SLB  f=\mathcal{F}^{-1}\left(F_{j}^{2}(\lambda)\mathcal{F}f(\lambda)\right),\>\>\>j\geq 1,
$$
and thus
\begin{equation} \label{eqn:quad_part_identity}
 f = \mathcal{F}^{-1}\mathcal{F}f(\lambda) =\mathcal{F}^{-1}\left(\sum_{j\geq 0}F_{j}^{2}(\lambda)\mathcal{F}f(\lambda)\right) = \sum_{j\geq 0} F_{j}^2\SLB f
\end{equation}  
Taking inner product with $f$ gives
$$
\|F_{j}\SLB f\|^{2}=\langle F_{j}^{2}\SLB f, f \rangle
$$  
and
\begin{equation}
\label{norm equality-0}
\|f\|^2=\sum_{j\geq 0}\langle F_{j}^2 \SLB f,f\rangle=\sum_{j\geq 0}\|F_{j}\SLB f\|^2 .
\end{equation}
Similarly, we get the identity
\[
 \sum_{j \geq 0} G_j \SLB f = f~.
\]
Moreover, since the functions $G_j,  F_{j}$, have their supports in  $
[2^{j-1},\>\>2^{j+1}]$, the elements $ F_{j} \SLB f $ and $G_j \SLB f$
 are bandlimited to  $[2^{j-1},\>\>2^{j+1}]$, whenever $j \ge 1$, and to $[0,2]$ for $j=0$.

 \subsection{Paley-Wiener frames in  Hilbert spaces}\label{Hilb}

Using the notation from above and Theorem \ref{Ss}, one can describe  the following Paley-Wiener frame in an abstract Hilbert space $\mathcal{H}$.

\begin{theorem}\label{frameH}(Paley-Wiener nearly Parseval frame in $\Hilb$)

For a fixed $\delta\in (0,1)$ and $j\in \mathbb{N}$ let $\{\phi^{j}_{k}\}$ be a set of vectors described in Theorem \ref{Ss} that correspond to $\omega=2^{j+1}$. 
Then for functions $F_{j}$   introduced in (\ref{functions})   the family of Paley-Wiener  vectors
$$
\Phi^{j}_{k}= F_{j} \SLB \phi^{j}_{k}
$$
has the following properties:
\begin{enumerate}
\item Each vector $\Phi^{j}_{k}$ belongs  to  $\bPW_{[2^{j-1},\>2^{j+1}]}(\sqrt{L}) ,\>\> j \in   N, \>k=1,...,$
\item   The family $\left\{\Phi^{j}_{k}\right\}$ is  a frame in $\Hilb$ with constants $1-\delta$ and $1$:
\begin{equation}
(1-\delta)\|f\|^2\leq \sum_{j\geq 0}\sum_{k}\left|\left< f, \Phi^{j}_{k}\right>\right|^{2}\leq \|f\|^2,\>\>\>f \in \Hilb.
\end{equation}
\item  The canonical dual frame $\{\Psi^{j}_{k}\}$
also consists of bandlimited  vectors $\Psi^{j}_{k}\in \bPW_{[2^{j-1},\>2^{j+1}]}\SLB ,\>\>j\in  [0,\>\infty), \>k=1,...,$ and satisfies the inequalities
\begin{equation}
\|f\|^2\leq \sum_{j\geq 0}\sum_{k}\left|\left< f, \Psi^{j}_{k}\right>\right|^{2}\leq (1-\delta)^{-1} \|f\|^2,\>\>\>f\in \Hilb.
\end{equation}

\item The reconstruction formulas hold for every $f\in \mathcal{H}$

\begin{equation}
f=\sum_{j}\sum_{k}\left<f,\Phi^{j}_{k}\right>\Psi^{j}_{k}=\sum_{j}\sum_{k}\left<f,\Psi^{j}_{k}\right>\Phi^{j}_{k}.
\end{equation}

\end{enumerate}
\end{theorem}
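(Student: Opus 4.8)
The strategy is to assemble the abstract frame for $\Hilb$ out of the level-by-level frames supplied by Theorem \ref{Ss}, using the quadratic partition of unity $(F_j)_{j\ge 0}$ from (\ref{functions}) as the glue, and exploiting the fact that each $F_j\SLB$ is a self-adjoint operator that projects into a Paley-Wiener space. I would proceed level by level and then sum over $j$.

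First I would settle claim (1). Each $\phi^j_k$ lies in $\PWoL$ with $\omega=2^{j+1}$ by Theorem \ref{Ss}, and $F_j$ is supported in $[2^{j-1},2^{j+1}]$ (for $j\ge 1$; in $[0,2]$ for $j=0$). Since $\FSL$ acts by multiplication by $F(\lambda)$ on the spectral side via (\ref{Op-function}), applying $F_j\SLB$ to $\phi^j_k$ multiplies its (already supported in $[0,2^{j+1}]$) spectral density by a function supported in $[2^{j-1},2^{j+1}]$, so $\Phi^j_k=F_j\SLB\phi^j_k$ has spectral support in $[2^{j-1},2^{j+1}]$, i.e. $\Phi^j_k\in\bPW_{[2^{j-1},2^{j+1}]}\SLB$. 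This is routine.

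The heart of the proof is claim (2), the two-sided frame estimate with constants $1-\delta$ and $1$. The key computation is to rewrite the frame coefficients: since $F_j\SLB$ is self-adjoint,
\begin{equation}
\left\langle f,\Phi^j_k\right\rangle=\left\langle f,F_j\SLB\phi^j_k\right\rangle=\left\langle F_j\SLB f,\phi^j_k\right\rangle.
\end{equation}
Now for fixed $j$ the vector $F_j\SLB f$ lies in $\PWoL$ with $\omega=2^{j+1}$, so the inner frame inequality (\ref{frame-in-PW}) from Theorem \ref{Ss} applies to it:
\begin{equation}
(1-\delta)\bigl\|F_j\SLB f\bigr\|^2\le\sum_k\bigl|\langle F_j\SLB f,\phi^j_k\rangle\bigr|^2\le\bigl\|F_j\SLB f\bigr\|^2.
\end{equation}
Summing over $j\ge 0$, using (\ref{norm equality-0}) which gives $\sum_{j\ge 0}\|F_j\SLB f\|^2=\|f\|^2$, yields exactly the claimed double inequality
\begin{equation}
(1-\delta)\|f\|^2\le\sum_{j\ge 0}\sum_k\bigl|\langle f,\Phi^j_k\rangle\bigr|^2\le\|f\|^2.
\end{equation}
The main obstacle I anticipate is bookkeeping rather than conceptual: one must check that the constant $\delta$ supplied by Theorem \ref{Ss} at scale $\omega=2^{j+1}$ can be taken uniform in $j$. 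Inspecting Theorem \ref{Ss}, the density parameter $\rho=\rho_j$ is chosen per scale so that $\rho_j^{2m}=C^{-1}\omega^{-2m}\delta$ with a fixed target $\delta$; since $C,m$ are scale-independent, a single $\delta$ works for all $j$ provided one selects $\rho_j$ accordingly at each level, so the uniformity is genuinely available and must simply be invoked carefully.

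Finally, claims (3) and (4) are immediate consequences of the abstract frame theory: any frame with bounds $1-\delta$ and $1$ possesses a canonical dual $\{\Psi^j_k\}$ with the dual bounds $1$ and $(1-\delta)^{-1}$, and the canonical dual of a frame lying in a closed subspace inherits membership in that subspace. Concretely, $\Psi^j_k=S^{-1}\Phi^j_k$ where $S$ is the (bounded, boundedly invertible) frame operator; since $S$ commutes with the spectral projection onto $\bPW_{[2^{j-1},2^{j+1}]}\SLB$ whenever it preserves that subspace, $\Psi^j_k$ remains bandlimited to $[2^{j-1},2^{j+1}]$. The reconstruction formulas in (4) are then the standard dual-frame expansions, valid for all $f\in\Hilb$.
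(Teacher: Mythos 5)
Your proposal is correct and follows essentially the same route as the paper: the paper, too, obtains Theorem \ref{frameH} by combining the per-level frame inequality (\ref{frame-in-PW}) of Theorem \ref{Ss} (applied to $F_j\SLB f\in\bPW_{2^{j+1}}\SLB$, with the same fixed $\delta$ at every scale via the choice $\rho_j^{2m}=C^{-1}2^{-2m(j+1)}\delta$) with the self-adjointness identity $\langle f,\Phi^j_k\rangle=\langle F_j\SLB f,\phi^j_k\rangle$ and the Littlewood--Paley-type identity (\ref{norm equality-0}), and then invokes general frame theory for items (3) and (4). One caveat: your justification of the bandlimitedness of the canonical dual in item (3) rests on the frame operator $S$ preserving $\bPW_{[2^{j-1},2^{j+1}]}\SLB$, which it does not do here --- adjacent dyadic bands overlap, so $S$ maps that subspace only into the larger band $\bPW_{[2^{j-2},2^{j+2}]}\SLB$ --- and since the paper itself offers no argument for this item beyond a citation to general frame theory, this is the one point that deserves more care than either treatment gives it.
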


The last item here follows from the general theory of frames \cite{Gr}.
We also note that  for reconstruction of a Paley-Wiener vector  from  a set of samples one can use, besides dual frames, the  variational (polyharmonic) splines in Hilbert spaces developed in   \cite{Pes98a}-\cite{Pes09b}.

\subsection{Interpolation and  Approximation spaces}\label{Interp}
\noindent
The goal of the section is to establish certain  connections
between interpolation spaces and approximation spaces to
be used later.  These connections are well known to specialists and   due to  Peetre-Sparr \cite{PS} and Butzer-Scherer \cite{BSch}.
However,  we formulate and prove these relations in a form which is most suitable for our purposes (see  \cite{KP}). In particular, our formulation is more general than a similar Theorem 9.1 in Ch. 7 in \cite{DVL}.  The result of primary interest for the following is Theorem \ref{equivalence-interpolation}, and readers who are not interested in its proof can safely skip this section. 

The general theory of interpolation spaces can be found
in \cite{BL}, \cite{BB},  \cite{KPS}. The notion of
approximation spaces and their relations to interpolations spaces
is described in \cite{BL},   \cite{BS},  \cite{DVL}, \cite{PS}.

It is important to realize that the relations between
interpolation and approximation spaces cannot be described
in  the language of normed spaces. We have to make use of
quasi-normed linear spaces in order to treat them
simultaneously.

A quasi-norm $\|\cdot\|_{\EB}$ on linear space $\EB$ is
a real-valued function on $\EB$ such that for any
$f,f_{1}, f_{2} \in \EB$ the following holds true
\begin{enumerate}
\item $\|f\|_{\EB}\geq 0;$
\item $\|f\|_{\EB}=0  \Longleftrightarrow   f=0;$
\item $\|-f\|_{\EB}=\|f\|_{\EB};$
\item there exists some $C_{\EB} \geq 1$ such that
$\|f_{1}+f_{2}\|_{\EB}\leq C_{\EB}(\|f_{1}\|_{\EB}+\|f_{2}\|_{\EB}).$
\end{enumerate}


For a general quasi-normed linear spaces  $\EB$ the notation
$(\EB)^{\rho},\>\rho>0,$ is used for the space $\EB$ endowed with the
quasi-norm $\|\cdot\|^{\rho}$.

Two quasi-normed linear spaces $\EB$ and $\FB$ form a
pair if they are linear subspaces of a common linear space
$\AB$ and the conditions
$\|f_{k}-g\|_{\EB}\rightarrow 0,$ and
$\|f_{k}-h\|_{\FB}\rightarrow 0$
imply equality $g=h$ (in $\AB$).
For any such pair $\EB,\FB$ one can construct the
space $\EB \cap \FB$ with quasi-norm
$$
\|f\|_{\EB \cap \FB}=\max\left(\|f\|_{\EB},\|f\|_{\FB}\right)
$$
and the sum of the spaces,  $\EB + \FB$ consisting of all sums $f_0+f_1$ with $f_0 \in \EB, f_1 \in \FB$, and endowed with the quasi-norm
$$
\|f\|_{\EB + \FB}=\inf_{f=f_{0}+f_{1},f_{0}\in \EB, f_{1}\in
\FB}\left(\|f_{0}\|_{\EB}+\|f_{1}\|_{\FB}\right).
$$

Quasi-normed spaces $\HB$ with
$\EB \cap \FB \subset \HB \subset \EB + \FB$
are called intermediate between $\EB$ and $\FB$.
If both $E$ and $F$ are complete the
inclusion mappings are automatically continuous. 
An additive homomorphism $T: \EB \rightarrow \FB$
is called bounded if
$$
\|T\|=\sup_{f\in \EB,f\neq 0}\|Tf\|_{\FB}/\|f\|_{\EB}<\infty.
$$
An intermediate quasi-normed linear space $\HB$
interpolates between $\EB$ and $\FB$ if every bounded homomorphism $T:
\EB+\FB \rightarrow \EB + \FB$
which is a bounded homomorphism of $\EB$ into
$\EB$ and a bounded homomorphism of $\FB$ into $\FB$
is also a bounded homomorphism of $\HB$ into $\HB$

On $\EB+\FB$ one considers the so-called Peetre's $K$-functional
\begin{equation}
K(f, t)=K(f, t,\EB, \FB)=\inf_{f=f_{0}+f_{1},f_{0}\in \EB,
f_{1}\in \FB}\left(\|f_{0}\|_{\EB}+t\|f_{1}\|_{\FB}\right).\label{K}
\end{equation}
The quasi-normed linear space $(\EB,\FB)^{K}_{\theta,q}$,
with parameters $0<\theta<1, \,
0<q\leq \infty$,  or $0\leq\theta\leq 1, \, q= \infty$,
is introduced as the set of elements $f$ in $\EB+\FB$ for which
\begin{equation}
\|f\|_{\theta,q}=\left(\int_{0}^{\infty}
\left(t^{-\theta}K(f,t)\right)^{q}\frac{dt}{t}\right)^{1/q} < \infty .\label{Knorm}
\end{equation}

It turns out that $(\EB,\FB)^{K}_{\theta,q}$
with the quasi-norm
(\ref{Knorm})  interpolates between $\EB$ and $\FB$. The following
Reiteration Theorem is one of the main results of the theory (see
\cite{BL}, \cite{BB}, \cite{KPS}, \cite{PS}).
\begin{theorem}
Suppose that $\EB_{0}, \EB_{1}$ are complete intermediate quasi-normed
linear spaces for the pair $\EB,\FB$.  If $\EB_{i}\in
\mathcal{K}(\theta_{i}, \EB, \FB)$ which means
$$
K(f,t,\EB,\FB)\leq Ct^{\theta_{i}}\|f\|_{\EB_{i}}, i=0,1,
$$
where $ 0\leq \theta_{i}\leq 1, \theta_{0}\neq\theta_{1},$ then
$$
(\EB_{0},\EB_{1})^{K}_{h,q}\subset (\EB,\FB)^{K}_{\theta,q},
$$
where $0<q<\infty, 0<h<1,
\theta=(1-h)\theta_{0}+h\theta_{1}$.

 If for the same pairs
$\EB,\FB$ and  $\EB_{0}, \EB_{1}$  one has $\EB_{i}\in
\mathcal{J}(\theta_{i}, \EB, \FB)$, which means
$$
\|f\|_{\EB_{i}}\leq C\|f\|_{\EB}^{1-\theta_{i}}\|f\|_{\FB}^{\theta_{i}},
i=0,1,
$$
where $ 0\leq \theta_{i}\leq 1, \theta_{0}\neq\theta_{1},$
then for any parameter $0<q<\infty, 0<h<1$
$$
(\EB,\FB)^{K}_{\theta,q} \subset (\EB_{0},\EB_{1})^{K}_{h,q} \quad
\mbox{for} \, \, \theta=(1-h)\theta_{0}+h\theta_{1}.
$$
\end{theorem}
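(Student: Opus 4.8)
The plan is to prove the two inclusions separately: the first (\emph{the $\mathcal{K}$-inclusion}) using only the $K$-functional, the second (\emph{the $\mathcal{J}$-inclusion}) by passing to the equivalent $J$-method.

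For the $\mathcal{K}$-inclusion, assume $\EB_{i}\in\mathcal{K}(\theta_{i},\EB,\FB)$ and, after relabelling if necessary, that $\theta_{0}<\theta_{1}$. First I would estimate $K(f,t,\EB,\FB)$ for $f\in\EB_{0}+\EB_{1}$ in terms of the $K$-functional of the couple $(\EB_{0},\EB_{1})$. For any splitting $f=f_{0}+f_{1}$ with $f_{0}\in\EB_{0}$, $f_{1}\in\EB_{1}$, the quasi-triangle inequality for the map $g\mapsto K(g,t,\EB,\FB)$ together with the hypothesis gives
\[
K(f,t,\EB,\FB)\leq C\left(t^{\theta_{0}}\|f_{0}\|_{\EB_{0}}+t^{\theta_{1}}\|f_{1}\|_{\EB_{1}}\right).
\]
Taking the infimum over all such splittings and factoring out $t^{\theta_{0}}$ reproduces the definition (\ref{K}) of the functional of the couple $(\EB_{0},\EB_{1})$, so that
\[
K(f,t,\EB,\FB)\leq C'\,t^{\theta_{0}}\,K\!\left(f,t^{\theta_{1}-\theta_{0}},\EB_{0},\EB_{1}\right).
\]
Substituting this into the norm (\ref{Knorm}) of $(\EB,\FB)^{K}_{\theta,q}$ and changing variables via $s=t^{\theta_{1}-\theta_{0}}$, the exponent bookkeeping—which uses exactly $\theta=(1-h)\theta_{0}+h\theta_{1}$, so that $t^{\theta_{0}-\theta}=s^{-h}$—turns the integral into the $(\EB_{0},\EB_{1})^{K}_{h,q}$ norm and yields $(\EB_{0},\EB_{1})^{K}_{h,q}\subset(\EB,\FB)^{K}_{\theta,q}$.

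For the $\mathcal{J}$-inclusion, assume $\EB_{i}\in\mathcal{J}(\theta_{i},\EB,\FB)$. Writing the $J$-functional $J(g,t,\EB,\FB)=\max\left(\|g\|_{\EB},t\|g\|_{\FB}\right)$ and using $\|g\|_{\EB}\leq J(g,t,\EB,\FB)$ and $\|g\|_{\FB}\leq t^{-1}J(g,t,\EB,\FB)$, one rewrites the hypothesis as the scale-invariant bound
\[
\|g\|_{\EB_{i}}\leq C\,t^{-\theta_{i}}\,J(g,t,\EB,\FB),\qquad g\in\EB\cap\FB,\ t>0,\ i=0,1.
\]
The central tool is the equivalence of the $K$- and $J$-methods (the Fundamental Lemma of interpolation theory, valid for quasi-normed couples): for $f\in(\EB,\FB)^{K}_{\theta,q}=(\EB,\FB)^{J}_{\theta,q}$ there is a representation $f=\int_{0}^{\infty}u(t)\,\frac{dt}{t}$, convergent in $\EB+\FB$ with $u(t)\in\EB\cap\FB$, satisfying
\[
\left(\int_{0}^{\infty}\left(t^{-\theta}J(u(t),t,\EB,\FB)\right)^{q}\frac{dt}{t}\right)^{1/q}\leq C\,\|f\|_{(\EB,\FB)^{K}_{\theta,q}}.
\]
Reparametrizing by $s=t^{\theta_{1}-\theta_{0}}$ and setting $w(s)=(\theta_{1}-\theta_{0})^{-1}u(t)$, the two estimates $\|u(t)\|_{\EB_{i}}\leq C\,t^{-\theta_{i}}J(u(t),t,\EB,\FB)$ combine to give $J(w(s),s,\EB_{0},\EB_{1})\leq C'\max(t^{-\theta_{0}},s\,t^{-\theta_{1}})\,J(u(t),t,\EB,\FB)$, and along this curve $\max(t^{-\theta_{0}},s\,t^{-\theta_{1}})=t^{-\theta_{0}}$, so that $J(w(s),s,\EB_{0},\EB_{1})\leq C'\,t^{-\theta_{0}}J(u(t),t,\EB,\FB)$. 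The same identity $s^{-h}t^{-\theta_{0}}=t^{-\theta}$ as before then exhibits $f=\int_{0}^{\infty}w(s)\,\frac{ds}{s}$ as an admissible $J$-representation for $(\EB_{0},\EB_{1})^{J}_{h,q}$ with norm controlled by $\|f\|_{(\EB,\FB)^{K}_{\theta,q}}$; applying the equivalence theorem once more, now to the couple $(\EB_{0},\EB_{1})$, gives $f\in(\EB_{0},\EB_{1})^{K}_{h,q}$.

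The $\mathcal{K}$-inclusion is, once the quasi-triangle inequality for the $K$-functional is in place, essentially a rescaling. The substantive difficulty lies in the $\mathcal{J}$-inclusion: it cannot be carried out with the $K$-functional alone, because to apply the hypothesis $\EB_{i}\in\mathcal{J}(\theta_{i})$ one must decompose $f$ into pieces lying in $\EB\cap\FB$, and these are exactly what the $J$-representation from the Fundamental Lemma supplies. The main obstacle is therefore establishing (or invoking correctly) the $K$--$J$ equivalence theorem, together with the convergence of the integral representations in the quasi-normed category, where the triangle inequality holds only up to the constant $C_{\EB}$; this is precisely the reason the theorem is formulated for quasi-normed rather than normed spaces.
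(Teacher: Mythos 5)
The paper does not actually prove this Reiteration Theorem: it is quoted as a known result with references to Bergh--L\"ofstr\"om, Butzer--Berens, Krein--Petunin--Semenov and Peetre--Sparr, so there is no internal proof to compare against. Judged on its own, your argument is correct and is essentially the standard proof found in those references. The $\mathcal{K}$-half is exactly right: the quasi-subadditivity of $K(\cdot,t,\EB,\FB)$ plus the hypothesis gives $K(f,t,\EB,\FB)\leq C\,t^{\theta_{0}}K\bigl(f,t^{\theta_{1}-\theta_{0}},\EB_{0},\EB_{1}\bigr)$, and the substitution $s=t^{\theta_{1}-\theta_{0}}$ with $t^{\theta_{0}-\theta}=s^{-h}$ closes the estimate; the relabelling to force $\theta_{0}<\theta_{1}$ is harmless because it just swaps $h$ and $1-h$. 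The $\mathcal{J}$-half is also sound: $a^{1-\theta_{i}}b^{\theta_{i}}\leq\max(a,b)$ converts the hypothesis into $\|g\|_{\EB_{i}}\leq C\,t^{-\theta_{i}}J(g,t,\EB,\FB)$, the curve $s=t^{\theta_{1}-\theta_{0}}$ makes the two bounds coincide at $t^{-\theta_{0}}$, and the same exponent identity transports the $J$-norm. The only point that genuinely carries weight, and which you correctly identify rather than gloss over, is the $K$--$J$ equivalence (Fundamental Lemma) in the quasi-normed category: one needs the discrete decomposition $f=\sum_{\nu}u_{\nu}$ with $u_{\nu}\in\EB\cap\FB$ and $J(u_{\nu},2^{\nu})\leq CK(f,2^{\nu})$, convergence of the series in $\EB+\FB$ (hence in $\EB_{0}+\EB_{1}$, using that intermediate spaces embed into $\EB+\FB$ and the pair condition guarantees the two limits agree), all with constants depending only on the quasi-norm constants; this is available for quasi-normed couples and for $0<\theta<1$, $q<\infty$, which your parameter range guarantees since $\theta=(1-h)\theta_{0}+h\theta_{1}$ lies strictly between $\theta_{0}$ and $\theta_{1}$. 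So long as you cite or reproduce that lemma in the quasi-normed setting, the proof is complete.
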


It is important to note that in all cases which will be considered
in the present article the space $\FB$ will be continuously embedded
as a subspace into $\EB$. In this case (\ref{K}) can be introduced
by the formula
$$
K(f, t)=\inf_{f_{1}\in \FB}
\left(\|f-f_{1}\|_{\EB}+t\|f_{1}\|_{\FB}\right),
$$
which implies the inequality
\begin{equation}
K(f, t)\leq \|f\|_{\EB}.
\end{equation}
This inequality can be used to show  that the norm (\ref{Knorm})
is equivalent to the norm
\begin{equation}
\|f\|_{\theta,q}=\|f\|_{\EB}+\left(\int_{0}^{\varepsilon}
\left(t^{-\theta}K(f, t)\right)^{q}\frac{dt}{t}\right)^{1/q}
\end{equation}
for any positive $\varepsilon$.

Let us introduce another functional on $\EB+\FB$,
where $\EB$ and $\FB$ form a pair of quasi-normed linear spaces
$$
\mathcal{E}(f, t)=\mathcal{E}(f, t,E, F)=\inf_{g\in \FB,
\|g\|_{\FB}\leq t}\|f-g\|_{\EB}.
$$
\begin{definition}
The approximation space $\mathcal{E}_{\alpha,q}(\EB, \FB),
0<\alpha<\infty, 0<q\leq \infty $ is the quasi-normed linear spaces
of all $f\in \EB+\FB$ for which the quasi-norm
\begin{equation}
\| f \|_{\mathcal{E}_{\alpha,q}(\EB, \FB)} = \left(\int_{0}^{\infty}\left(t^{\alpha}\mathcal{E}(f,
t)\right)^{q}\frac{dt}{t}\right)^{1/q} 
\end{equation}
is finite. 
\end{definition}

The following theorem describes the relations between interpolation
and approximation spaces (see  \cite{BL}, Ch. 7).
\begin{theorem}
For $\theta=1/(\alpha+1)$ and $r=\theta q$   one has
$$
(\mathcal{E}_{\alpha, r}(\EB, \FB))^{\theta}=
(\EB,\FB)^{K}_{\theta,q}.
$$
\end{theorem}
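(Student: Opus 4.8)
The plan is to prove the identification
$$
(\mathcal{E}_{\alpha, r}(\EB, \FB))^{\theta}=(\EB,\FB)^{K}_{\theta,q},\qquad \theta=\frac{1}{\alpha+1},\ r=\theta q,
$$
by relating the two defining integrals through an explicit comparison of the functionals $\mathcal{E}(f,t)$ and $K(f,t)$. The fundamental mechanism is that these two functionals are, roughly, inverse to one another after a logarithmic change of scale: the $K$-functional $K(f,t)=\inf_{f_1\in\FB}(\|f-f_1\|_{\EB}+t\|f_1\|_{\FB})$ weighs the approximation error against the size $\|f_1\|_{\FB}$, whereas $\mathcal{E}(f,s)=\inf_{\|g\|_{\FB}\le s}\|f-g\|_{\EB}$ constrains the size and measures the resulting error. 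First I would establish the pointwise comparison estimates. Since the paper is in the setting where $\FB\hookrightarrow\EB$ continuously, I will work with $K(f,t)=\inf_{f_1\in\FB}(\|f-f_1\|_{\EB}+t\|f_1\|_{\FB})$. The two basic inequalities I expect to prove are, for suitable constants, $\mathcal{E}(f,t)\le K(f,s)/s$-type bounds and conversely $K(f,t)\lesssim \int$ or $\inf$ expressions involving $\mathcal{E}$; more precisely the clean route is to show
$$
K(f,t)\le \inf_{s>0}\bigl(\mathcal{E}(f,s)+t\,s\bigr)
$$
directly from the definitions (pick $g$ nearly optimal for $\mathcal{E}(f,s)$ and use it as a competitor $f_1$ in $K$), and a reverse comparison bounding $\mathcal{E}(f,s)$ by $K(f,t)$ for $t\sim 1/s$.

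Next I would feed these pointwise bounds into the integral quasi-norms. The target quasi-norm on the left side, because of the $(\cdot)^{\theta}$ exponent, is $\|f\|^{\theta}_{\mathcal{E}_{\alpha,r}}$, i.e.
$$
\left(\int_0^\infty\bigl(t^{\alpha}\mathcal{E}(f,t)\bigr)^{r}\frac{dt}{t}\right)^{\theta/r},
$$
while the right side is $\left(\int_0^\infty\bigl(t^{-\theta}K(f,t)\bigr)^{q}\frac{dt}{t}\right)^{1/q}$. The heart of the computation is a change of variables converting the $\mathcal{E}$-integral in the variable $s$ into the $K$-integral in the variable $t$. The relation $\theta=1/(\alpha+1)$ is exactly what makes the weight exponents match: under the reciprocal substitution $t\leftrightarrow 1/s$ (or a monotone rearrangement argument), the factor $t^{\alpha}$ with the Haar measure $ds/s$ transforms into the factor $t^{-\theta}$ with $dt/t$ after accounting for the exponent $r=\theta q$. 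I would carry out this substitution carefully, using the monotonicity of both $t\mapsto \mathcal{E}(f,t)$ (nonincreasing) and $t\mapsto K(f,t)$ (nondecreasing, concave), so that the infima appearing in the pointwise estimates can be discretized into dyadic sums and compared term by term.

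The main obstacle I anticipate is handling the quasi-norm constants and the non-normability cleanly throughout the argument. Because $\EB$ and $\FB$ are only quasi-normed, the triangle inequality carries the constant $C_{\EB}$, and every time I combine approximants in the comparison of $\mathcal{E}$ and $K$ this constant propagates; I must verify that the resulting equivalence of quasi-norms survives (the statement is an equality of spaces, i.e. equivalence of quasi-norms, not equality of the functionals). The exponent bookkeeping is the second delicate point: raising quasi-norms to the power $\theta$ interacts with the $L^q(dt/t)$ versus $L^r(ds/s)$ integrability, and I need $r=\theta q$ precisely so that $\bigl(\int (\cdot)^{r}\bigr)^{\theta/r}=\bigl(\int(\cdot)^{r}\bigr)^{1/q}$ matches the target integrability index. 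I would organize the proof as a two-sided continuous embedding: a discretization of both integrals over dyadic scales $s=2^n$, an application of the pointwise comparison on each dyadic block, and a summation using the monotonicity of the functionals to control cross terms. Once both inclusions $(\mathcal{E}_{\alpha,r})^{\theta}\subset(\EB,\FB)^K_{\theta,q}$ and the reverse are established with the matching parameter relations, the equality of spaces follows.
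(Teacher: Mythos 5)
A preliminary remark: the paper itself offers no proof of this identity --- it is quoted from Bergh--L\"ofstr\"om, Ch.~7, and then used as a black box in the proof of Theorem \ref{equivalence-interpolation} --- so your sketch has to be measured against the standard proof of the cited result. Your overall strategy (a two-sided pointwise comparison of $\mathcal{E}(f,\cdot)$ with $K(f,\cdot)$, followed by a comparison of the weighted integrals over dyadic blocks) is the right general strategy, and the first half of your comparison, $K(f,t)\le\inf_{s>0}\bigl(\mathcal{E}(f,s)+ts\bigr)$, is correct as stated and does not even invoke the quasi-triangle inequality.

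The gap is in the reverse comparison and in the substitution that is supposed to match the two integrals. The bound ``$\mathcal{E}(f,s)\lesssim K(f,t)$ for $t\sim 1/s$'' is false: already for $\EB=\FB$ one has $\mathcal{E}(f,s)=(\|f\|-s)_+$ and $K(f,1/s)=\min(1,1/s)\,\|f\|$, and taking $\|f\|=2s$ with $s$ large gives $\mathcal{E}(f,s)=s$ against $K(f,1/s)=2$, so no constant works (note the theorem itself still holds in this degenerate case --- both sides reduce to $\EB$ --- which shows the failure is in your mechanism, not in the conclusion). The parameters $s$ and $t$ live on different scales ($s$ measures $\|g\|_{\FB}$, $t$ is the coupling weight), and the only estimate a near-optimal decomposition yields is $\mathcal{E}(f,s)\le K(f,t)$ for $s\ge K(f,t)/t$; the correct correspondence is therefore the $f$-dependent one $s\asymp K(f,t)/t$, equivalently $t\asymp \mathcal{E}(f,s)/s$: the two functionals are generalized inverses of one another, not reciprocal reparametrizations. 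Along this correspondence the exponents do balance, since $\alpha\theta=1-\theta$ gives $t^{-\theta}K(f,t)\asymp s^{1-\theta}\mathcal{E}(f,s)^{\theta}=\bigl(s^{\alpha}\mathcal{E}(f,s)\bigr)^{\theta}$; with $t=1/s$ they do not. Carrying out this bookkeeping also reveals that the balance occurs for the $K$-functional of the pair taken in the order $(\FB,\EB)$ (equivalently, with parameter $1-\theta$ for $(\EB,\FB)$), which is the order in which the identity is actually invoked in the proof of Theorem \ref{equivalence-interpolation}; a single-vector test with $\|f\|_{\EB}=1$, $\|f\|_{\FB}=2^{N}$, where the two sides of the displayed identity scale as $2^{N(1-\theta)}$ and $2^{N\theta}$, confirms this. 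Finally, because $t\mapsto K(f,t)/t$ is merely nonincreasing --- neither smooth nor injective --- the change of variables cannot be done by a Jacobian computation; one must either run the distribution-function/Stieltjes argument of Bergh--L\"ofstr\"om (their Lemma 7.1.2 and Theorem 7.1.7), discretizing dyadically in the \emph{values} of $K(f,t)/t$ rather than in $t$, or derive the identity indirectly from Jackson- and Bernstein-type inequalities via the Reiteration and Power Theorems. As written, your plan would not close.
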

The following important result is known as the Power Theorem (see
\cite{BL}, Ch. 3).
\begin{theorem}\label{Powerthm}
%
%
Given positive values $ \rho_{0}>0, \rho_{1}>0$
and interpolation parameters $0 < \theta < 1$
and $0 < q \leq \infty$, the interpolation
space obtained from  powers of two spaces is the same as the
power of an interpolation space: For the
parameter values  $\rho = (1-\theta)\rho_0 + \theta \rho_1,
q' = \rho q$
and  $\theta' =  \theta \rho_1/\rho$, we have
$$
\left((\EB)^{\rho_{0}}, (\FB)^{\rho_{1}}\right)^{K}_{\theta,q}
= \left((\EB,\FB)^{K}_{\theta', q'}\right)^{\rho}.
$$
\end{theorem}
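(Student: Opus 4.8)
The plan is to reduce the general statement to a single scalar inequality at the level of the $K$-functional, following the standard approach in Bergh--L\"ofstr\"om. The key observation is that raising a quasi-norm to a power is the same operation as passing to the space $(\EB)^{\rho}$, so the entire theorem is really a statement about how the $K$-functional transforms under the substitutions $\|\cdot\|_{\EB}\mapsto\|\cdot\|_{\EB}^{\rho_0}$ and $\|\cdot\|_{\FB}\mapsto\|\cdot\|_{\FB}^{\rho_1}$. First I would fix $f\in\EB\cap\FB$ and write out the two $K$-functionals that appear on the two sides of the claimed identity: on the left the functional $K(f,t;(\EB)^{\rho_0},(\FB)^{\rho_1})$ built from the powered quasi-norms, and on the right the functional $K(f,s;\EB,\FB)$ built from the original ones, since the outer power $\rho$ on the right-hand space only rescales the integrand in (\ref{Knorm}) through the parameters $q'=\rho q$ and $\theta'=\theta\rho_1/\rho$.

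The heart of the argument is a two-sided comparison
\[
 K\!\left(f,t;(\EB)^{\rho_0},(\FB)^{\rho_1}\right)\;\sim\;\inf_{f=f_0+f_1}\Bigl(\|f_0\|_{\EB}^{\rho_0}+t\,\|f_1\|_{\FB}^{\rho_1}\Bigr),
\]
which I would analyze by optimizing the split $f=f_0+f_1$ pointwise in $t$. The trick is that for the sum of two powers one can, up to constants depending only on $\rho_0,\rho_1$, replace the infimum of $\|f_0\|_{\EB}^{\rho_0}+t\|f_1\|_{\FB}^{\rho_1}$ by a power of the original $K$-functional: concretely one shows
\[
 K\!\left(f,t;(\EB)^{\rho_0},(\FB)^{\rho_1}\right)\;\sim\;K\!\left(f,t^{1/\rho_1};\EB,\FB\right)^{\rho},
\]
after the correct change of variable in $t$. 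This comes from balancing the two terms: the optimal decomposition for the powered problem is governed by the same family of near-optimal decompositions as for the unpowered problem, because minimizing $a^{\rho_0}+t\,b^{\rho_1}$ over admissible pairs $(a,b)=(\|f_0\|_{\EB},\|f_1\|_{\FB})$ is comparable to minimizing $(a+s\,b)$ raised to an appropriate power, with $s$ a suitable power of $t$.

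Once this pointwise (in $t$) equivalence of $K$-functionals is established, the rest is bookkeeping: I would substitute it into the integral defining $\|\cdot\|_{\theta,q}$ for the left-hand space, perform the change of variables that converts the $t$-integral with weight $t^{-\theta q}\,dt/t$ into an $s$-integral, and check that the exponents match exactly the prescription $\rho=(1-\theta)\rho_0+\theta\rho_1$, $q'=\rho q$, $\theta'=\theta\rho_1/\rho$. The outer $\rho$-power on the right side of the identity accounts precisely for the factor $\rho$ relating $q'$ to $q$ and for the Jacobian of the substitution. The main obstacle, and the step requiring genuine care rather than routine manipulation, is the scalar lemma comparing $\inf(a^{\rho_0}+t\,b^{\rho_1})$ with a power of $\inf(a+s\,b)$ with uniform constants; the inequality in one direction is immediate from choosing a near-optimal decomposition, but the reverse direction requires showing that the optimizers of the two problems are comparable, which is where the hypotheses $\rho_0,\rho_1>0$ and the precise definition of $\rho$ enter. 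After that, the equality of the resulting quasi-norms (up to equivalence) extends from $\EB\cap\FB$ to all of $\EB+\FB$ by the usual density and completion argument.
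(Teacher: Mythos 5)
The paper does not actually prove this theorem: it is quoted from Bergh--L\"ofstr\"om (\cite{BL}, Ch.~3) without argument, so there is no in-paper proof to compare against. Judged on its own merits, your proposal has a genuine gap: the central lemma you rely on, the pointwise equivalence
\[
K\!\left(f,t;(\EB)^{\rho_0},(\FB)^{\rho_1}\right)\;\sim\;K\!\left(f,t^{1/\rho_1};\EB,\FB\right)^{\rho},
\]
is false whenever $\rho_0\neq\rho_1$. Test it on the scalar pair $\EB=\FB=\mathbb{C}$ with $f=1$: the left side is $\inf_{a+b=1}\left(|a|^{\rho_0}+t|b|^{\rho_1}\right)\asymp\min(1,t)$, while the right side is $\min(1,t^{1/\rho_1})^{\rho}=\min(1,t^{\rho/\rho_1})$; for small $t$ these behave like $t$ versus $t^{\rho/\rho_1}$, which are not uniformly comparable unless $\rho=\rho_1$, i.e.\ $\rho_0=\rho_1$. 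More structurally, as $t\to\infty$ the powered $K$-functional tends to $\|f\|_{\EB}^{\rho_0}$, while any expression of the form $K(f,\sigma(t);\EB,\FB)^{\rho}$ tends to $\|f\|_{\EB}^{\rho}$, so no $f$-independent substitution $t\mapsto\sigma(t)$ can repair the claim. Your bookkeeping is internally consistent --- if such an equivalence did hold, the substitution $s=t^{1/\rho_1}$ would indeed produce exactly $\theta'\rho=\theta\rho_1$ and $q'=\rho q$ --- but the theorem is precisely \emph{not} reducible to a pointwise comparison of $K$-functionals followed by a change of variables.

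The real content of the Power Theorem is that the two integrated quantities agree even though the integrands do not. The standard proof works with $K_\infty(t,f)=\inf\max\left(\|f_0\|_{\EB},t\|f_1\|_{\FB}\right)$ and parametrizes near-optimal decompositions by the approximation functional $E(s,f)=\inf\{\|f_1\|_{\FB}:\ \|f-f_1\|_{\EB}\le s\}$; both interpolation norms are rewritten as integrals along the boundary curve $(s,E(s))$ of the Gagliardo diagram, and the matching substitution lives on that curve, hence depends on $f$. It is only at the balance points, where the two competing terms of the $K$-functional are comparable, that the exponents $\rho=(1-\theta)\rho_0+\theta\rho_1$ and $\theta'=\theta\rho_1/\rho$ emerge. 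To salvage your outline, replace the false pointwise lemma by this parametrized comparison (or discretize $t$ dyadically and compare the resulting sequence norms term by term along the balance curve), keeping your final change-of-variables computation, which is the part you did get right.
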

The next theorem   represents a very abstract version of what is  known as an Equivalence Approximation Theorem.

\begin{theorem}\label{equivalence-interpolation}
 Suppose that $\mathcal{T}\subset \FB \subset \EB$ are quasi-normed
linear spaces and $\EB$ and $\FB$ are complete.
If there exist $C>0$ and $\beta >0$ such that
the following Jackson-type inequality is satisfied
\begin{equation}
t^{\beta}\mathcal{E}(t,f,\mathcal{T},\EB)\leq C\|f\|_{\FB},
\label{dir}
 \, \, t>0, \, \, \mbox{for all} \, \, f \in \FB,
\end{equation}
 then the following embedding holds true
\begin{equation}\label{imbd-1}
(\EB,\FB)^{K}_{\theta,q}\subset
\mathcal{E}_{\theta\beta,q}(\EB, \mathcal{T}), \quad \>0<\theta<1, \>0<q\leq \infty.
\end{equation}
If there exist $C>0$ and $\beta>0$
such that 
the following Bernstein-type inequality holds
\begin{equation}\label{bern}
\|f\|_{\FB}\leq C\|f\|^{\beta}_{\mathcal{T}}\|f\|_{\EB}
\quad \mbox{for all}  \, \, f\in \mathcal{T},
\end{equation}
then the following embedding holds true
\begin{equation}\label{imbd-2}
\mathcal{E}_{\theta\beta, q}(\EB, \mathcal{T})\subset
(\EB, \FB)^{K}_{\theta, q}  , \quad 0<\theta<1, \>0<q\leq \infty.
\end{equation}
\end{theorem}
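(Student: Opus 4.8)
The plan is to prove the two embeddings separately, each by reducing the comparison of the interpolation norm and the approximation norm to a pointwise (in the scale parameter) comparison of the $K$-functional $K(f,t)=K(f,t,\EB,\FB)$ with the approximation error $\mathcal{E}(f,s)=\mathcal{E}(f,s,\EB,\mathcal{T})$. Throughout I write $E_{n}=\mathcal{E}(f,2^{n})$ and use that both $t\mapsto K(f,t)$ and $s\mapsto\mathcal{E}(f,s)$ are nondecreasing, so that the defining integrals are equivalent to the dyadic sums $\sum_{n}(2^{n\theta\beta}E_{n})^{q}$ and $\sum_{n}(2^{n\theta\beta}K(f,2^{-n\beta}))^{q}$, respectively; this discretization is the routine first step in both directions.

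For the direct embedding \eqref{imbd-1}, I start from an almost optimal $K$-functional splitting $f=f_{0}+f_{1}$ with $f_{0}\in\EB$, $f_{1}\in\FB$. The Jackson hypothesis \eqref{dir} furnishes, for each scale $s>0$, an element $g\in\mathcal{T}$ with $\|g\|_{\mathcal{T}}\le s$ and $\|f_{1}-g\|_{\EB}\le 2Cs^{-\beta}\|f_{1}\|_{\FB}$. Using the quasi-triangle inequality in $\EB$ and then taking the infimum over all splittings of $f$, I obtain the pointwise bound $\mathcal{E}(f,s)\le 2C_{\EB}\,K(f,Cs^{-\beta})$. Substituting $t=Cs^{-\beta}$ in the integral defining $\|f\|_{\mathcal{E}_{\theta\beta,q}(\EB,\mathcal{T})}$ turns the weight $s^{\theta\beta}$ into $C^{\theta}t^{-\theta}$ and $\tfrac{ds}{s}$ into $\tfrac1\beta\tfrac{dt}{t}$, so the approximation norm is dominated by a constant multiple of $\|f\|_{(\EB,\FB)^{K}_{\theta,q}}$; the case $q=\infty$ is identical with suprema. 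This direction uses only the quasi-triangle constant of $\EB$ and the change of variables, and presents no real difficulty.

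The inverse embedding \eqref{imbd-2} is the substantial part. I choose near-best approximants $g_{n}\in\mathcal{T}$ with $\|g_{n}\|_{\mathcal{T}}\le 2^{n}$ and $\|f-g_{n}\|_{\EB}\le 2E_{n}$, and study the telescoping differences $g_{n+1}-g_{n}\in\mathcal{T}$. The quasi-triangle inequalities give $\|g_{n+1}-g_{n}\|_{\mathcal{T}}\le C\,2^{n}$ and, by monotonicity $E_{n+1}\le E_{n}$, also $\|g_{n+1}-g_{n}\|_{\EB}\le C\,E_{n}$; the Bernstein hypothesis \eqref{bern} then yields the key estimate $\|g_{n+1}-g_{n}\|_{\FB}\le C\,2^{n\beta}E_{n}$. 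At the dyadic scale $t=2^{-N\beta}$ I use the admissible splitting $f=(f-g_{N})+g_{N}$ with $g_{N}\in\mathcal{T}\subset\FB$, giving $K(f,2^{-N\beta})\le 2E_{N}+2^{-N\beta}\|g_{N}\|_{\FB}$, and I control $\|g_{N}\|_{\FB}$ through the telescoping series $g_{N}=\sum_{n<N}(g_{n+1}-g_{n})$, which converges in the complete space $\FB$ because the Bernstein bound makes the increments summable and a further application of \eqref{bern} shows $\|g_{n}\|_{\FB}\to 0$ as $n\to-\infty$.

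The two genuine obstacles both live in this last step. First, because $\FB$ carries only a quasi-norm, the triangle inequality is unavailable for the infinite telescoping sum; I resolve this by invoking the Aoki--Rolewicz theorem to replace $\|\cdot\|_{\FB}$ by an equivalent $\rho$-norm ($0<\rho\le 1$) satisfying $\|\sum_{n}h_{n}\|_{\FB}^{\rho}\le\sum_{n}\|h_{n}\|_{\FB}^{\rho}$, which produces $\|g_{N}\|_{\FB}\le C\bigl(\sum_{n\le N}(2^{n\beta}E_{n})^{\rho}\bigr)^{1/\rho}$. Second, after inserting this into the weighted $\ell^{q}$ sum I am left, with $c_{n}=2^{n\beta}E_{n}$ and $\mu=\beta(1-\theta)>0$, with the inequality $\sum_{N}2^{-N\mu q}\bigl(\sum_{n\le N}c_{n}^{\rho}\bigr)^{q/\rho}\le C\sum_{n}(2^{-n\mu}c_{n})^{q}$, where the right-hand side equals $\sum_{n}(2^{n\theta\beta}E_{n})^{q}$. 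This is a weighted discrete Hardy inequality, valid precisely because $\mu>0$ (equivalently $\theta<1$), so the geometric weight $2^{-N\mu}$ dominates the partial sums. Applying it, and noting that the leftover $E_{N}$ term reproduces the approximation norm verbatim, yields $\|f\|_{(\EB,\FB)^{K}_{\theta,q}}\le C\|f\|_{\mathcal{E}_{\theta\beta,q}(\EB,\mathcal{T})}$, which is \eqref{imbd-2}. Thus the crux of the theorem is the interplay, in the quasi-normed inverse direction, between the Aoki--Rolewicz $\rho$-summation and the positivity $\theta<1$ that drives the Hardy estimate.
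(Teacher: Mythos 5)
Your argument is correct, but it follows a genuinely different route from the paper's. The paper never touches near-best approximants or telescoping series: it derives from the Jackson inequality the bound $K(f,s,\mathcal{T},\EB)\leq C s^{1/(1+\beta)}\|f\|_{\FB}^{1/(1+\beta)}$, i.e.\ membership of $(\FB)^{1/(1+\beta)}$ in the class $\mathcal{K}\big(\tfrac{1}{1+\beta},\mathcal{T},\EB\big)$, and from the Bernstein inequality the corresponding $\mathcal{J}$-class membership; it then feeds these into the Reiteration Theorem, uses the Power Theorem to rewrite $\big((\FB)^{1/(1+\beta)},\EB\big)^{K}$ as a power of $(\EB,\FB)^{K}_{\theta,q}$, and invokes the identification of the approximation space $\mathcal{E}_{\theta\beta,q}(\EB,\mathcal{T})$ as a power of an interpolation space of the pair $(\mathcal{T},\EB)$. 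That route is short once the interpolation machinery of the section is in place and makes the structural content explicit (approximation spaces \emph{are} interpolation spaces), but it hides where completeness and the quasi-norm constants actually matter. Your proof is the classical Butzer--Scherer/DeVore--Lorentz-style direct argument: a pointwise comparison $\mathcal{E}(f,s)\lesssim K(f,Cs^{-\beta})$ for the Jackson direction, and for the Bernstein direction the telescoping of near-best approximants, with the Aoki--Rolewicz renorming substituting for the missing triangle inequality in $\FB$ and a discrete Hardy inequality (valid because $\beta(1-\theta)>0$) absorbing the partial sums. This is longer but self-contained, yields explicit constants, and isolates precisely the two points where the hypotheses enter --- the quasi-normed summation and the condition $\theta<1$. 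One small remark: completeness of $\FB$ is not even strictly needed for your telescoping identity, since the partial sums equal $g_{N}-g_{M}$ and $\|g_{M}\|_{\FB}\to0$ by your own Bernstein estimate; what you do need, and correctly use, is the $\rho$-subadditive renorming to pass the quasi-norm through the limit. Both proofs are valid; yours is closer in spirit to Theorem 9.1 of Ch.~7 in \cite{DVL}, which the paper cites as the less general predecessor of its formulation.
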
\label{intthm}

\begin{proof}
According to  (\cite{BL}, Ch.7) one has for any $s>0$
\begin{equation}
t=K_{\infty}(f,s)=
K_{\infty}(f,s,\mathcal{T},\EB)=\inf_{f=f_{1}+f_{2}, f_{1}\in
\mathcal{T}, f_{2}\in \EB}\max(\|f_{1}\|_{\mathcal{T}},
s\|f_{2}\|_{\EB})\label{t}
\end{equation}
the following inequality holds
\begin{equation}
s^{-1}K_{\infty}(f, s)\leq \lim_{\tau\rightarrow
t-0}\inf\mathcal{E}(f, \tau,\EB,\mathcal{T})\label{lim}.
\end{equation}
Since
\begin{equation}
K_{\infty}(f,s)\leq K(f, s)\leq 2K_{\infty}(f, s),\label{equiv}
\end{equation}
the Jackson-type inequality (\ref{dir}) and the inequality
 (\ref{lim}) imply
\begin{equation}
s^{-1}K(f,s,\mathcal{T},\EB)\leq Ct^{-\beta}\|f\|_{\FB}.
\end{equation}
The equalities (\ref{t}) and (\ref{equiv}) imply the
estimate
\begin{equation}
t^{-\beta}\leq 2^{\beta}
\left(K(f,s,\mathcal{T},\EB)\right)^{-\beta}
\end{equation}
which along with the previous inequality gives the estimate
$$
K^{1+\beta}(f,s,\mathcal{T},\EB)\leq C s \|f\|_{\FB}
$$
which in turn implies the inequality
\begin{equation}
K(f,s,\mathcal{T},\EB)\leq C s^{\frac{1}{1+\beta}}
\|f\|_{\FB}^{\frac{1}{1+\beta}}.\label{class1}
\end{equation}
At the same time one has
\begin{equation}
K(f, s,\mathcal{T},\EB)= \inf_{f=f_{0}+f_{1},f_{0}\in \mathcal{T},
f_{1}\in \EB}\left(\|f_{0}\|_{\mathcal{T}}+s\|f_{1}\|_{\EB}\right)\leq
s\|f\|_{\EB},\label{class2}
\end{equation}
for every $f \in \EB$. Inequality (\ref{class1}) means that the
quasi-normed linear space $(\FB)^{\frac{1}{1+\beta}}$ belongs to the
class $\mathcal{K}(\frac{1}{1+\beta}, \mathcal{T}, \EB)$ and
(\ref{class2}) means that the quasi-normed linear space $\EB$
belongs to the class $\mathcal{K}(1, \mathcal{T}, \EB)$. This fact
allows to use the Reiteration Theorem to obtain the embedding
\begin{equation}
\left((\FB)^{\frac{1}{1+\beta}},\EB\right)^{K}_{\frac{1-\theta}{1+\theta
\beta},q(1+\theta \beta)}\subset \left(\mathcal{T},
\EB \right)^{K}_{\frac{1}{1+\theta \beta},q(1+\theta \beta)}
\end{equation}
for every $0<\theta<1, 1<q<\infty$. But the space on the left is
the space
$$
\left(\EB,(\FB)^{\frac{1}{1+\beta}}\right)^{K}_{\frac{\theta(1+\beta)}{1+\theta
\beta},q(1+\theta \beta)},
$$
which, according to the Power Theorem, is the space
$$
\left((\EB,\FB)^{K}_{\theta, q}\right)^{\frac{1}{1+\theta \beta}}.
$$
All these results along with the equivalence of  interpolation and
approximation spaces give  the embedding
$$
\left(\EB,\FB\right)^{K}_{\theta,q}\subset
\left(\left(\mathcal{T},\EB\right)^{K}_{\frac{1}{1+\theta \beta},q(1+\theta \beta)}\right)^{1+\theta \beta}
=\mathcal{E}_{\theta \beta,q}(\EB,\mathcal{T}),
$$
which proves  the embedding (\ref{imbd-1}).

Conversely,  the Bernstein-type  inequality (\ref{bern}) implies for   $\gamma =\frac{1}{1+\beta}$  the inequality
\begin{equation}\label{bern-1}
\|f\|_{\FB}^\gamma \leq
C\|f\|^\gamma_{\mathcal{T}}\|f\|_{\EB}^\gamma.
\end{equation}
Along with the obvious equality $\|f\|_{\EB}=\|f\|^{0}_{\mathcal{T}}\|f\|_{\EB}$
and the Iteration Theorem one obtains the embedding
$$ \left(\mathcal{T},
\EB \right)^{K}_{\frac{1}{1+\theta \beta},q(1+\theta
\beta)}\subset \left((\FB)^{\frac{1}{1+\beta}},\EB\right)^{K}_{\frac{1-\theta}{1+\theta
\beta},q(1+\theta \beta)}.
$$
In order to derive the embedding (\ref{imbd-2}), one can then use the
same arguments as above. This completes the proof.
\end{proof}

\subsection{Besov subspaces in  Hilbert spaces}\label{AbstractBesov}

We introduce the inhomogeneous  Besov space
$ \mathcal{B}_{\mathcal{H},q}^\alpha \SLB$
as an interpolation space between the Hilbert space
$\mathcal{H}$ and "Sobolev  space" $H^{r}$, defined as the 
domain of the operator $(I+L)^{r/2}$ endowed with the graph norm, 
where
 $r$ can be any natural number such that $0<\alpha<r, 1\leq
q\leq\infty$. More precisely, we have \cite{BB}, \cite{T3},
$$
\mathcal{B}_{\mathcal{H},q}^\alpha \SLB =( \mathcal{H},H^{r})^{K}_{\theta, q},\>\>\> 0<\theta=\alpha/r<1,\>\>\>
1\leq q\leq \infty.
$$

We also introduce a notion of best approximation
\begin{equation}
\mathcal{E}(f,\omega)=\inf_{g\in
\PWoL}\|f-g\|_{\mathcal{H}}.\label{BA1}
\end{equation}

Our goal is to apply Theorem \ref{equivalence-interpolation} in the situation where $E=\mathcal{H}$,  $\>F=H^{r}$ and $\mathcal{T}=\PWoL $ is a natural abelian group as the additive group of a vector space, with the quasi-norm
\[
 \| f \|_{\mathcal{T}} = \inf \left \{ \omega'>0~: f \in \mathbf{PW}_{\mathbf{\omega}'}\left(\sqrt{L}\right) \right\}~.
\]

To be more precise it is the space
of finite sequences of Fourier coefficients $\mathbf{c}=(c_{1},...c_{m})\in \PWoL$
 where $m$ is the greatest index such that the eigenvalue $\lambda_{m}\leq \omega$.
 The quasi-norm $\|\mathbf{c}\|_{\PWoL} $ where $\mathbf{c}=(c_{1},...c_{m})\in \PWoL$  is defined as square root from the highest eigenvalue  $\lambda_{j}$ for which the corresponding Fourier coefficient $c_{j}\neq 0$ but $c_{j+1}=...=c_{m}=0$:
 $$
 \|\mathbf{c}\|_{E_{\omega}(L)} =\|(c_{1},...c_{m})\|_{E_{\omega}(L)}=\max\left\{\sqrt{\lambda_{j}}: c_{j}\neq 0, \>\>c_{j+1}=...=c_{m}=0\right\}.
 $$

\begin{remark}
Let us emphasize  that the reason we need the language of  quasi-normed spaces is because $\| \cdot \|_{\mathcal{T}} $ is clearly not a norm, only a quasi-norm on $ \PWoL$. 
\end{remark}

The Plancherel Theorem allows us to verify a  generalization of the Bernstein inequality for bandlimited functions in $ f \in \PWoL$.
\begin{lemma} (\cite{Pes88b}, \cite{Pes00})
A vector $f$ belongs to the space $ \PWoL$ if and only if the following Bernstein inequality holds
$$
\|L^{r/2}f\|_{\mathcal{H}}\leq \omega ^{r}\|f\|_{\mathcal{H}},\>\>\>r\in \mathbb{R}_{+}.
$$

\end{lemma}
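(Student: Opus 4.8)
The plan is to prove the Bernstein-type characterization using the Spectral Fourier Transform $\mathcal{F}$ from equation~(\ref{FT}) and the Plancherel identity it provides. The statement asserts that $f \in \PWoL$ if and only if $\|L^{r/2}f\|_{\mathcal{H}}\leq \omega^{r}\|f\|_{\mathcal{H}}$ for all $r\in\mathbb{R}_{+}$. Recall that $\PWoL$ is defined as those $f$ whose spectral transform $\mathcal{F}f$ is supported in $[0,\omega]$, and that $\mathcal{F}$ satisfies $\mathcal{F}(L^{s/2}f)(\lambda)=\lambda^{s}(\mathcal{F}f)(\lambda)$, so the entire argument can be transported to the spectral side where $L^{s/2}$ becomes multiplication by $\lambda^{s}$. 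Indeed this is essentially part~(1) of Theorem~\ref{PW proprties} already stated in the excerpt, so the proof is a direct application of the spectral picture.

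First I would prove the forward (easy) direction. Suppose $f\in\PWoL$, so $\mathcal{F}f(\lambda)=0$ for $\lambda>\omega$. Using the norm formula~(\ref{FT}), for any $r\in\mathbb{R}_{+}$ one has
\[
\|L^{r/2}f\|^{2}_{\mathcal{H}}=\int_{0}^{\infty}\lambda^{2r}\|\mathcal{F}f(\lambda)\|^{2}_{X(\lambda)}\,dm(\lambda)=\int_{0}^{\omega}\lambda^{2r}\|\mathcal{F}f(\lambda)\|^{2}_{X(\lambda)}\,dm(\lambda),
\]
and since $\lambda^{2r}\leq\omega^{2r}$ on the support $[0,\omega]$, this is bounded by $\omega^{2r}\int_{0}^{\omega}\|\mathcal{F}f(\lambda)\|^{2}_{X(\lambda)}\,dm(\lambda)=\omega^{2r}\|f\|^{2}_{\mathcal{H}}$. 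Taking square roots gives the Bernstein inequality. Note this direction requires only a single value of $r$ at a time and uses nothing beyond the support condition and monotonicity of $\lambda\mapsto\lambda^{2r}$.

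The converse is where the real content lies, and it is the step I expect to be the main obstacle, since here I must use the inequality for \emph{all} $r$ simultaneously to deduce the support condition. The plan is a proof by contrapositive. Suppose $f\notin\PWoL$; then $\mathcal{F}f$ has positive mass strictly beyond $\omega$, i.e. there exists $\omega'>\omega$ and a set $S\subset[\omega',\infty)$ of positive $m$-measure on which $\|\mathcal{F}f(\lambda)\|^{2}_{X(\lambda)}>0$, giving $c:=\int_{S}\|\mathcal{F}f(\lambda)\|^{2}_{X(\lambda)}\,dm(\lambda)>0$. Then
\[
\|L^{r/2}f\|^{2}_{\mathcal{H}}\geq\int_{S}\lambda^{2r}\|\mathcal{F}f(\lambda)\|^{2}_{X(\lambda)}\,dm(\lambda)\geq(\omega')^{2r}c,
\]
so $\|L^{r/2}f\|_{\mathcal{H}}/\|f\|_{\mathcal{H}}\geq(\omega')^{r}\sqrt{c}/\|f\|_{\mathcal{H}}$. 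Since $\omega'>\omega$, the ratio $(\omega'/\omega)^{r}\to\infty$ as $r\to\infty$, so for $r$ large enough the Bernstein inequality $\|L^{r/2}f\|_{\mathcal{H}}\leq\omega^{r}\|f\|_{\mathcal{H}}$ must fail. This contradicts the hypothesis, forcing $\mathcal{F}f$ to be supported in $[0,\omega]$, i.e. $f\in\PWoL$. The only subtlety to handle carefully is the passage from ``$f\notin\PWoL$'' to the existence of a positive-measure set bounded away from $\omega$: one chooses $\omega'=\omega+1/n$ for suitable $n$ so that the essential support meets $[\omega',\infty)$ in positive measure, which is guaranteed by $\sigma$-additivity once the support exceeds $\omega$.
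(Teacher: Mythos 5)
Your proof is correct and follows essentially the same route as the paper: the forward direction restricts the spectral integral to $[0,\omega]$ and bounds $\lambda^{2r}\leq\omega^{2r}$, and the converse isolates spectral mass on a positive-measure set in $[\omega+\epsilon,\infty)$ and lets $r\to\infty$ to contradict the Bernstein inequality. Your contrapositive phrasing is just a rearrangement of the paper's contradiction argument, which bounds $\int_{\sigma}\|x(\lambda)\|^{2}\,dm(\lambda)$ above by $\left(\omega/(\omega+\epsilon)\right)^{2r}\|x\|^{2}_{X}$ and sends $r\to\infty$.
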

\begin{proof}

Assume that  vector $f$ belongs to the space $ \PWoL$ and $\mathcal{F}f=x\in X$.  Then

$$\left(\int ^{\infty}_{0}\lambda
^{2r}\|x(\lambda)\|^{2}_{X(\lambda)}dm(\lambda
)\right)^{1/2}=
\left(\int^{\omega}_{0}\lambda^{2r}\|x(\lambda
)\|^{2}_{X(\lambda)}dm(\lambda)\right)^{1/2} \leq \omega
^{r}\|x\|_{X} , r\in \mathbb{R}_{+} , 
$$ 
which gives Bernstein inequality for $f$.

Conversely, if $f$ satisfies Bernstein inequality then $x=\mathcal{F}f$ satisfies
$\|x\|_{X_{k}} \leq
\omega^{k}\|x\|_{X}.$ Suppose that there exists a set $\sigma
\subset [ 0, \infty ]\setminus [ 0,
\omega]$ whose $m $-measure is not zero and $x|_{\sigma }\neq 0.$ We can
assume
that $\sigma \subset
[\omega +\epsilon , \infty )$ for some $\epsilon >0.$ Then for any $r\in
\mathbb{R}_{+}$ we have

$$ \int _{\sigma }\|x(\lambda )\|^{2}_{X(\lambda)}dm (\lambda ) \leq \int
^{\infty }_{\omega +\epsilon}\lambda ^{-2r}\|
\lambda ^{r}x(\lambda)\|^{2}_{ X(\lambda)}d\mu
\leq\|x\|^{2}_{X}\left(\omega /\omega
+\epsilon \right)^{2r}$$ which shows that or $x(\lambda)$ is zero on
$\sigma $ or $\sigma $ has measure
zero. 
\end{proof}

One also has an analogue of the Jackson inequality
$$
\mathcal{E}(f,\omega)\leq \omega^{-r}\|f\|_{H^{r}},\>\>\>f\in H^{r}.
$$
 Indeed, as in \cite{Pes88b} one has for $f\in H^{r}$
$$
\mathcal{E}(f,\omega)\leq\left( \int_{\omega}^{\infty}\|x(\lambda)\|^{2}_{X(\lambda)}dm(\lambda)\right)^{1/2}=
$$
$$
\left( \int_{\omega}^{\infty}\lambda^{-2r}\lambda^{2r}\|x(\lambda)\|^{2}_{X(\lambda)}dm(\lambda)\right)^{-r}\leq 
\omega^{-r}\|L^{r/2}f\|_{\mathcal{H}},\>\>\>r\in \mathbb{R}_{+}.
$$

These two inequalities and Theorem \ref{equivalence-interpolation}  imply the following result (compare to  \cite{Pes88a}, \cite{Pes09b}, \cite{Pes11}).

\begin{theorem} \label{approx}
For $\alpha>0, 1\leq q\leq\infty$ the norm of
 $\mathcal{B}_{\mathcal{H},q}^{\alpha}\SLB$,
  is equivalent to
\begin{equation}
\|f\|_{\mathcal{H}}+\left(\sum_{j=0}^{\infty}\left(2^{j\alpha }\mathcal{E}(f,
2^{j})\right)^{q}\right)^{1/q}.
\end{equation}
\label{maintheorem1}
\end{theorem}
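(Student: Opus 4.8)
The plan is to apply the abstract Equivalence Approximation Theorem (Theorem~\ref{equivalence-interpolation}) with the concrete identifications $\EB = \mathcal{H}$, $\FB = H^r$, and $\mathcal{T} = \PWoL$, where the quasi-norm on $\mathcal{T}$ measures the bandwidth $\omega$. The target norm equivalence follows once we establish both hypotheses of Theorem~\ref{equivalence-interpolation} for the right value of the parameter $\beta$, and then translate the resulting approximation-space description into the stated discrete sum over dyadic scales $2^j$.

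First I would verify the Jackson-type inequality $t^\beta \mathcal{E}(t, f, \mathcal{T}, \EB) \le C \|f\|_{\FB}$. Here the relevant best-approximation functional $\mathcal{E}(f,\omega) = \inf_{g \in \PWoL} \|f - g\|_{\mathcal{H}}$ is exactly the quantity controlled by the Jackson inequality $\mathcal{E}(f,\omega) \le \omega^{-r} \|L^{r/2} f\|_{\mathcal{H}}$ proved just above via the Spectral Fourier Transform (truncating $\mathcal{F}f$ at level $\omega$ and estimating the tail $\int_\omega^\infty \lambda^{-2r}\lambda^{2r}\|x(\lambda)\|^2\,dm(\lambda)$). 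Setting $\beta = r$ and matching the role of the parameter $t$ to the bandwidth $\omega$ via the quasi-norm on $\mathcal{T}$, this is precisely hypothesis (\ref{dir}). Second, the Bernstein-type inequality (\ref{bern}), namely $\|f\|_{\FB} \le C \|f\|_{\mathcal{T}}^\beta \|f\|_{\EB}$, is supplied by the Bernstein inequality $\|L^{r/2} f\| \le \omega^r \|f\|$ from the preceding Lemma: for $f \in \PWoL$ one has $\|f\|_{H^r} \lesssim \omega^r \|f\|_{\mathcal{H}} = \|f\|_{\mathcal{T}}^{\,r}\|f\|_{\mathcal{H}}$, again with $\beta = r$. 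Thus both inequalities hold with the \emph{same} exponent $\beta = r$, which is what the abstract theorem requires to produce a two-sided embedding.

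With both hypotheses in force, Theorem~\ref{equivalence-interpolation} yields the identification $(\EB, \FB)^K_{\theta, q} = \mathcal{E}_{\theta\beta, q}(\EB, \mathcal{T})$ (as an equivalence of quasi-norms), i.e.\ $\mathcal{B}^\alpha_{\mathcal{H},q}\SLB \cong \mathcal{E}_{\alpha, q}(\mathcal{H}, \PWoL)$ upon setting $\theta = \alpha/r$ so that $\theta\beta = \alpha$. The last step is to replace the continuous approximation-space quasi-norm $\left(\int_0^\infty (t^\alpha \mathcal{E}(f,t))^q \frac{dt}{t}\right)^{1/q}$ by the equivalent dyadic discretization $\left(\sum_{j \ge 0} (2^{j\alpha} \mathcal{E}(f, 2^j))^q\right)^{1/q}$. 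This is a routine consequence of the monotonicity of $\omega \mapsto \mathcal{E}(f,\omega)$ (it is non-increasing, being an infimum over a growing family of subspaces $\PWoL$) together with the fact that $t^\alpha$ varies by a bounded factor over each dyadic block $[2^j, 2^{j+1}]$; one bounds the integral over each block above and below by the endpoint values and sums.

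The main obstacle I anticipate is not any single analytic estimate but the bookkeeping needed to match the abstract framework to the concrete setting: one must confirm that the quasi-norm $\|f\|_{\mathcal{T}} = \inf\{\omega' : f \in \mathbf{PW}_{\omega'}\SLB\}$ plays the role of the parameter $t$ correctly, so that $\mathcal{E}(t, f, \mathcal{T}, \EB)$ in the abstract theorem coincides with the best bandlimited approximation $\mathcal{E}(f, \omega)$ for $\omega \sim t$. Because this quasi-norm is genuinely only a quasi-norm (as the Remark stresses), one must take care that the chain of interpolation and Power-Theorem manipulations inside Theorem~\ref{equivalence-interpolation} applies verbatim; but since that theorem was proved precisely in the quasi-normed generality we need, the remaining work is essentially verification rather than new estimation.
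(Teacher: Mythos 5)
Your proposal is correct and follows essentially the same route as the paper: the paper likewise sets $\EB=\mathcal{H}$, $\FB=H^{r}$, $\mathcal{T}=\PWoL$ with the bandwidth quasi-norm, verifies the Jackson inequality $\mathcal{E}(f,\omega)\leq \omega^{-r}\|f\|_{H^{r}}$ and the Bernstein inequality via the Spectral Fourier Transform, and then invokes Theorem~\ref{equivalence-interpolation} with $\beta=r$, $\theta=\alpha/r$. The dyadic discretization of the approximation-space integral, which you spell out via monotonicity of $\mathcal{E}(f,\cdot)$, is exactly the (implicit) final step of the paper's argument.
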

Let the functions $F_{j}$ be as in Subsection \ref{subsect:part_unity_freq}. Note, that
\begin{equation}\label{range}
F_j\SLB: \mathcal{H} \rightarrow \bPW_{[2^{j-1},  2^{j+1}]}\SLB,\>\>\> \|F_j\SLB\ \|\leq1,
 \end{equation}
and
$$
\sum_{j\geq 0}F_{j}^{2}(\lambda)=1, \>\>\lambda\geq 0.
$$

\begin{theorem}\label{projections}
For $\alpha>0, 1\leq q\leq\infty$ the norm of
 $\mathcal{B}_{\mathcal{H},q}^{\alpha}\SLB$,
  is equivalent to

\begin{equation}
f \mapsto \left(\sum_{j=0}^{\infty}\left(2^{j\alpha
}\left \|F_j\SLB f\right \|_{\mathcal{H}}\right)^{q}\right)^{1/q},
\label{normequiv-1}
\end{equation}
  with the standard modifications for $q=\infty$.
\end{theorem}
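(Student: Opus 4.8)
The plan is to build on Theorem~\ref{approx}, which already identifies the norm of $\mathcal{B}_{\mathcal H,q}^{\alpha}\SLB$ with the best-approximation expression $\|f\|_{\mathcal H}+\big(\sum_{j\ge0}(2^{j\alpha}\mathcal E(f,2^{j}))^{q}\big)^{1/q}$. Hence the theorem reduces to the norm equivalence
$$
\Big(\sum_{j\ge0}\big(2^{j\alpha}\|F_j\SLB f\|_{\mathcal H}\big)^{q}\Big)^{1/q}
\;\asymp\;
\|f\|_{\mathcal H}+\Big(\sum_{j\ge0}\big(2^{j\alpha}\mathcal E(f,2^{j})\big)^{q}\Big)^{1/q},
$$
which I would prove as two separate one-sided estimates. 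The structural inputs are that $F_j\SLB$ has spectral support in $[2^{j-1},2^{j+1}]$ with $\|F_j\SLB\|\le1$ (see \eqref{range}), that $\sum_{j\ge0}F_j^{2}(\lambda)=1$ (see \eqref{partsums2}), and the Plancherel-type identities \eqref{FT} and \eqref{norm equality-0}.

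For the estimate of \eqref{normequiv-1} from above, I would use that any $g\in\bPW_{2^{j-2}}\SLB$ has spectral support in $[0,2^{j-2}]$, disjoint from that of $F_j$, so that $F_j\SLB g=0$ and therefore $\|F_j\SLB f\|=\|F_j\SLB(f-g)\|\le\|f-g\|$ by \eqref{range}. Taking the infimum over such $g$ gives $\|F_j\SLB f\|\le\mathcal E(f,2^{j-2})$ for $j\ge2$, while $\|F_j\SLB f\|\le\|f\|$ for $j=0,1$. A shift of the summation index (absorbing the fixed factor $2^{2\alpha}$) then yields
$$
\sum_{j\ge0}\big(2^{j\alpha}\|F_j\SLB f\|\big)^{q}\le C\Big(\|f\|^{q}+\sum_{j\ge0}\big(2^{j\alpha}\mathcal E(f,2^{j})\big)^{q}\Big).
$$

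For the reverse estimate I would construct explicit bandlimited approximants from the partition of unity. By \eqref{partsums1} the operator $S_N:=g(2^{-N}\SLB)=\sum_{j=0}^{N}F_j^{2}\SLB$ maps $\mathcal H$ into $\bPW_{2^{N+1}}\SLB$, so $\mathcal E(f,2^{N+1})\le\|f-S_Nf\|$. Since $f-S_Nf=(I-g(2^{-N}\SLB))f$ has spectral support in $[2^{N},\infty)$, where by \eqref{partsums2} one has $1=\sum_{j\ge N-2}F_j^{2}(\lambda)$, the representation \eqref{FT} gives
$$
\mathcal E(f,2^{N+1})^{2}\le\int_{2^{N-1}}^{\infty}\|x(\lambda)\|_{X(\lambda)}^{2}\,dm(\lambda)
\le\sum_{j\ge N-2}\|F_j\SLB f\|^{2}.
$$
The term $\|f\|_{\mathcal H}$ is handled directly: by \eqref{norm equality-0} we have $\|f\|^{2}=\sum_{j\ge0}\|F_j\SLB f\|^{2}$, and since $\alpha>0$ makes the weights $2^{-2j\alpha}$ summable, $\|f\|_{\mathcal H}\le C\sup_{j}2^{j\alpha}\|F_j\SLB f\|$, which is controlled by the right-hand side of \eqref{normequiv-1}.

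The main obstacle is to pass from the tail estimate $\mathcal E(f,2^{N+1})\le\big(\sum_{j\ge N-2}\|F_j\SLB f\|^{2}\big)^{1/2}$ to the weighted $\ell^{q}$ bound, and this is exactly a discrete Hardy inequality: for $\alpha>0$ and any nonnegative sequence $(b_j)$ one has $\big\|\big(2^{N\alpha}\sum_{j\ge N}b_j\big)_N\big\|_{\ell^{q}}\le C\big\|(2^{j\alpha}b_j)_j\big\|_{\ell^{q}}$. Since the $\ell^{2}$-tail is dominated by the $\ell^{1}$-tail, applying this with $b_j=\|F_j\SLB f\|$ (and absorbing the fixed index shift by $N-2$) gives $\sum_{N}(2^{N\alpha}\mathcal E(f,2^{N+1}))^{q}\le C\sum_{j}(2^{j\alpha}\|F_j\SLB f\|)^{q}$, and a final reindexing converts $\mathcal E(f,2^{N+1})$ into $\mathcal E(f,2^{N})$ to match Theorem~\ref{approx}. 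The only remaining points of care are the standard replacement of $\ell^{q}$ sums by suprema when $q=\infty$, and the verification that the single spectral point $2^{j-1}$ causes no trouble, which is why I use approximants of bandwidth $2^{j-2}$ rather than $2^{j-1}$ in the upper estimate.
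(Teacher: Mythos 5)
Your proof is correct and follows essentially the same route as the paper: both directions rest on $\|F_j\SLB f\|\le\mathcal E(f,2^{j-c})$ (since $F_j\SLB$ annihilates lower-bandwidth vectors) and on bounding $\mathcal E(f,2^{l})$ by a tail of the $\|F_j\SLB f\|$'s followed by the discrete Hardy inequality. The only cosmetic differences are that the paper uses approximants of bandwidth $2^{j-1}$ directly (legitimate because $F_j(2^{j-1})=0$) and gets the tail bound by the triangle inequality rather than via the spectral integral and the $\ell^2$--$\ell^1$ comparison.
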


\begin{proof}
Recall from (\ref{eqn:quad_part_identity}) that
$$
\|f\|^2=\sum_{j\geq 0}\left\|F_{j} \SLB f\right\|^2 .
$$
We obviously have
$$
\mathcal{E}(f, 2^{l})\leq \sum_{j> l} \left \|F_j \SLB f\right \|_{\mathcal{H}}.
$$
By using a discrete version of Hardy's inequality \cite{BB} we obtain the estimate
\begin{equation} \label{direct}
\|f\|+\left(\sum_{l=0}^{\infty}\left(2^{l\alpha }\mathcal{E}(f,
2^{l})\right)^{q}\right)^{1/q}\leq C \left(\sum_{j=0}^{\infty}\left(2^{j\alpha
}\left \|F_j \SLB f\right \|_{\mathcal{H}}\right)^{q}\right)^{1/q}.
\end{equation}
Conversely,
 for any $g\in \bPW_{2^{j-1}} \SLB$ we have
$$
\left\|F_j\SLB f\right\|_{\mathcal{H}}=\left\|F_{j}\SLB (f-g)\right\|_{\mathcal{H}}\leq \|f-g\|_{\mathcal{H}}.
$$
This implies the estimate
$$
\left\|F_j\SLB f\right\|_{\mathcal{H}}\leq \mathcal{E}(f,\>2^{j-1}),
$$
which shows that the inequality opposite to (\ref{direct}) holds.
 This completes the proof.
\end{proof}

 \begin{theorem}\label{mainLemma2}
For $\alpha>0, 1\leq q\leq\infty$ the norm of
 $\mathcal{B}_{\mathcal{H},q}^{\alpha}\SLB$
  is equivalent to
\begin{equation}
 \left(\sum_{j=0}^{\infty}2^{j\alpha q }
\left(\sum_{k}\left|\left<f,\Phi^{j}_{k}\right>\right|^{2}\right)^{q/2}\right)^{1/q}\asymp \|f\|_{B_{q}^{\alpha}},
\label{normequiv}
\end{equation}
  with the standard modifications for $q=\infty$.
\end{theorem}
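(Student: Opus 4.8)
The plan is to deduce this frame-coefficient characterization directly from the norm equivalence already obtained in Theorem \ref{projections}. The essential step is to show that, for each fixed scale $j$, the inner sum $\sum_k |\langle f, \Phi^j_k\rangle|^2$ is two-sidedly comparable to $\|F_j\SLB f\|^2$, with constants that do not depend on $j$. Once this is established, the weighted $\ell^q$ expression in (\ref{normequiv}) is comparable term-by-term to the expression in Theorem \ref{projections}, and the conclusion follows.

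First I would use the self-adjointness of $F_j\SLB$ together with the definition $\Phi^j_k = F_j\SLB \phi^j_k$ to rewrite each coefficient as
\[
 \langle f, \Phi^j_k\rangle = \langle f, F_j\SLB \phi^j_k\rangle = \langle F_j\SLB f, \phi^j_k\rangle .
\]
The key structural observation is that $F_j\SLB f$ belongs to $\bPW_{[2^{j-1},\,2^{j+1}]}\SLB$, and hence to $\PWoL$ with $\omega = 2^{j+1}$ --- precisely the Paley-Wiener space on which $\{\phi^j_k\}$ was constructed as a frame in Theorem \ref{Ss} (via Theorem \ref{frameH}). Applying the frame inequalities (\ref{frame-in-PW}) to the test vector $g = F_j\SLB f$ then yields
\[
 (1-\delta)\,\|F_j\SLB f\|^2 \leq \sum_k \left|\langle F_j\SLB f, \phi^j_k\rangle\right|^2 = \sum_k \left|\langle f, \Phi^j_k\rangle\right|^2 \leq \|F_j\SLB f\|^2 ,
\]
so that $\left(\sum_k |\langle f, \Phi^j_k\rangle|^2\right)^{1/2} \asymp \|F_j\SLB f\|$.

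The point that requires care --- and the main obstacle --- is that these bounds must hold with the same constants $1-\delta$ and $1$ for every $j$. The sampling density $\rho$ used to construct $\{\phi^j_k\}$ does depend on $j$ through $\omega = 2^{j+1}$ (by the choice $\rho^{2m} = C^{-1}\omega^{-2m}\delta$ in Theorem \ref{Ss}), but the resulting frame bounds depend only on the fixed parameter $\delta$, not on $\rho$ or $\omega$. This scale-uniformity is exactly what permits the per-scale comparison to be inserted into the sum over $j$ without accumulating constants.

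Finally, raising the comparison to the power $q$, multiplying by $2^{j\alpha q}$, summing over $j \geq 0$, and taking the $q$-th root, I obtain
\[
 \left(\sum_{j\geq 0} 2^{j\alpha q}\left(\sum_k |\langle f, \Phi^j_k\rangle|^2\right)^{q/2}\right)^{1/q} \asymp \left(\sum_{j\geq 0}\left(2^{j\alpha}\|F_j\SLB f\|\right)^q\right)^{1/q} ,
\]
and the right-hand side is equivalent to $\|f\|_{\mathcal{B}^\alpha_{\mathcal{H},q}\SLB}$ by Theorem \ref{projections}. The case $q = \infty$ follows with the usual replacement of the sum by a supremum. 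In summary, the proof reduces to a substitution once the uniform frame bounds are verified.
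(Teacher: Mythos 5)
Your proposal is correct and follows essentially the same route as the paper: rewrite $\langle f,\Phi^j_k\rangle=\langle F_j\SLB f,\phi^j_k\rangle$ by self-adjointness, apply the frame inequalities (\ref{frame-in-PW}) to $F_j\SLB f\in \bPW_{2^{j+1}}\SLB$ to get the per-scale comparison with uniform constants $1-\delta$ and $1$, and then invoke Theorem \ref{projections}. Your explicit remark that the frame bounds depend only on $\delta$ and not on $j$ is a point the paper leaves implicit, but it is the same argument.
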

\begin{proof}
For   $f\in \mathcal{H}$  and operator $F_{j}\SLB$ 
we apply (\ref{frame-in-PW}) to $F_{j}\SLB f\in \bPW_{2^{j+1}}\SLB$ to obtain
\begin{equation}
(1-\delta)\left \|F_j\SLB f\right \|_{\mathcal{H}}^{2}\leq
\sum_k\left|\left< F_{j}\SLB f, \phi^{j}_{k}\right>\right|^{2}\leq
\left \|F_j\SLB f\right \|_{\mathcal{H}}^{2}.
\end{equation}
 Since $\Phi^{j}_{k}=F_{j}\SLB \phi^{j}_{k}$
 we obtain the following inequality
$$
\sum_{k}\left|\left<f,\Phi^{j}_{k}\right>\right|^{2}\leq \left \|F_j\SLB f\right \|_{\mathcal{H}}^{2}\leq \frac{1}{1-\delta}
\sum_{k}\left|\left<f,\Phi^{j}_{k}\right>\right|^{2}
\quad \mbox{for all} \, \, \, f\in \mathcal{H}.
$$
Our statement follows now from Theorem \ref{projections}.
\end{proof}

\newpage

\section{Manifolds, function spaces and operators}\label{Manifolds}

The goal of this section is to introduce some "real life" situations in which we develop space-frequency analysis.

\subsection{Riemannian manifolds without boundary}\label{sub-1}

Let $\mfd{M}$, dim$\>\mfd{M}=n$, be a connected $C^{\infty}-$smooth Riemannian
manifold with a $(2,0)$ metric tensor $g$ that defines an inner
product on every tangent space $T_{x}(\mfd{M}), x\in \mfd{M}$. The
corresponding Riemannian distance $d$ on $\mfd{M}$ and
the Riemannian measure $d\mu$ on $\mfd{M}$ are given by
\begin{equation} \label{Riemmeas1}
d(x,y)=\inf \int_{a}^{b}
\sqrt{g\left(\frac{d\alpha}{dt},\frac{d\alpha}{dt}\right)}dt,
\>\>\>\>d\mu=\sqrt{det(g_{ij})}dx,
\end{equation}
where the infimum is taken over all $C^{1}-$curves
$\alpha:[a,b]\rightarrow \mfd{M}, \alpha(a)=x, \alpha(b)=y,$
the $\{g_{ij}\}$ are the components of the tensor $g$ in a
local coordinate system and  $dx$ is the Lebesgue measure on
$\mathbb{R}^{d}$.  Let $ exp_ {x} $ : $T_{x}(\mfd{M})\rightarrow \mfd{M},$ be the exponential
geodesic map i. e. $exp_{x}(u)=\gamma (1), u\in T_{x}(\mfd{M}),$ where
$\gamma (t)$ is the geodesic starting at $x$ with the initial
vector $u$ : $\gamma (0)=x , \frac{d\gamma (0)}{dt}=u.$
We denote by $\operatorname{inj}$ the largest real number $r$ such that $exp_x$ is a diffeomorphism of a suitable open neighborhood of $0$ in $T_x \mfd{M}$ onto $B(x,\rho)$, for all $\rho< r$ and $x \in \mfd{M}$. Thus for every choice of an orthonormal basis
(with respect to the inner
 product defined by $g$) of $T_{x}(\mfd{M})$ the exponential map
  $exp$ defines a
 coordinate system on $B(x,\rho)$ which is called {\it geodesic}.
The volume of the ball $B(x,\rho)$ will be denoted
by $|B(x,\rho)|.$
Throughout the paper we will consider only geodesic coordinate
 systems.

We will consider only Riemannian manifolds of bounded geometry.
 Let us recall that a manifold has bounded geometry if \\
(a) \textit {$\mfd{M}$ is complete and connected;}\\
(b)  \textit {the injectivity radius $\operatorname{inj}(\mfd{M})$ is positive;}\\
(c)  \textit
{for any $\rho\leq \operatorname{inj}(\mfd{M})$, any $k \geq 0$
there exists a constant $C(\rho,k)$ such that }
$$
\sup _{z \in \vartheta_y^{-1}(B(x,\rho)\cap B(y,\rho))}\sup_{|\alpha|\leq k}|\partial
^{\alpha}(\vartheta_{x}^{-1}\vartheta_{y}) (z)|\leq C(\rho, k),
$$
for all geodesic
coordinate systems  $\vartheta_{x}: T_{x}(\mfd{M})\rightarrow B(x,\rho),
\vartheta_{y}:T_{y}(\mfd{M})\rightarrow B(y,\rho)$.

Examples of manifolds of bounded geometry are:  compact Riemannian manifolds,  all Lie
groups with left (right) invariant Riemannian structure and their homogeneous manifolds, covering spaces  of all compact manifolds, bounded domains in $\mathbb{R}^{n}$ with smooth boundaries.

We will also need the following condition:

(d) \textit {The Riemannian measure fulfills the local doubling property  i.e. there exists a constant $C(\mfd{M})$ such that for any sufficiently small $\rho<\operatorname{inj}(\mfd{M})$ and any $0<\sigma<\lambda < \rho$ the following inequality
(local doubling property) holds true}
\begin{equation}\label{LDP}
|B(x,\lambda)|\leq C(\mfd{M})\left(\lambda/\sigma\right)^{n}|B(x,\sigma)|,\>\>n=dim\>\mfd{M}.
\end{equation}

Note that  the Bishop-Gromov Comparison Theorem implies
(see \cite{H}) that this condition is satisfied whenever
the   Ricci curvature  $Ric$ is bounded  from below.
More precisely, if
\begin{equation}\label{Ric}
Ric\geq-kg, \>\>\>k\geq 0
\end{equation}
the {\it local doubling property}  (d) is satisfied for
 $0 <\sigma<\lambda< \delta< \operatorname{inj}(\mfd{M})$:
\begin{equation}\label{BG}
|B(x,\rho)|\leq\left(\rho/\sigma\right)^{n}e^{(k
\delta(n-1))^{1/2}} |B(x,\sigma)|,\>n=dim\>\mfd{M}.
\end{equation}

\begin{lemma}\label{cover}(\cite{Pes04b}, \cite{Pes13b})
If $\mfd{M}$ has bounded geometry and condition (\ref{Ric}) holds, then there exists a natural number $N_{\mfd{M}}$ such that for any
$0<r<\operatorname{inj}\>(\mfd{M})$ there exists a set of points
$M_{r}=\{x_{i}\}$ with the following properties
\begin{enumerate}
\item  \textit {the balls $B(x_{i}, r/4)$ are disjoint,}
\item  \textit {the balls $B(x_{i}, r/2)$ form a cover of $\mfd{M}$,}
\item  \textit {the height of the cover by the balls
  $B(x_{i},\ r)$ is at most  $N_{\mfd{M}}.$}
\end{enumerate}
\end{lemma}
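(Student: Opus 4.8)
The plan is to construct the point set $M_r = \{x_i\}$ greedily as a maximal $r/4$-separated subset of $\mfd{M}$, and then verify the three properties together with the uniform bound $N_{\mfd{M}}$ on the covering multiplicity via the local doubling property. Concretely, using Zorn's lemma (or a transfinite/greedy selection), I would choose a maximal collection of points $\{x_i\}$ subject to the constraint that $d(x_i, x_j) \geq r/2$ for all $i \neq j$. Maximality is the engine driving everything.

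First I would establish property (1): if $d(x_i,x_j) \geq r/2$, then the open balls $B(x_i, r/4)$ and $B(x_j, r/4)$ are disjoint by the triangle inequality, since any common point would force $d(x_i,x_j) < r/2$. Next, property (2): by maximality of the separated set, no point $y \in \mfd{M}$ can have $d(y, x_i) \geq r/2$ for every $i$ — otherwise $y$ could be adjoined to the collection, contradicting maximality. Hence every $y$ lies within distance $r/2$ of some $x_i$, so the balls $B(x_i, r/2)$ cover $\mfd{M}$. These two steps are essentially formal consequences of the greedy construction and require only the metric structure.

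The substantive step is property (3), the uniform bound on the height of the cover by the balls $B(x_i, r)$. Fix a point $y \in \mfd{M}$ and consider the indices $i$ for which $y \in B(x_i, r)$, i.e. $d(y,x_i) < r$. For each such $i$, the disjoint balls $B(x_i, r/4)$ are all contained in $B(y, r + r/4) = B(y, 5r/4)$. Summing volumes, $\sum_i |B(x_i, r/4)| \leq |B(y, 5r/4)|$, where the sum runs over the relevant indices. The local doubling property (\ref{LDP}), valid because the Ricci curvature is bounded below as in (\ref{Ric}), lets me compare each $|B(x_i, r/4)|$ from below and $|B(y,5r/4)|$ from above against volumes of balls of comparable radius centered at the same points; combined with a standard volume comparison between nearby centers (again obtainable from (\ref{BG}) since $d(y,x_i) < r$ keeps the centers close), this yields $|B(y, 5r/4)| \leq C(\mfd{M}) |B(x_i, r/4)|$ with a constant independent of $i$, $y$, and $r$. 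Therefore the number of admissible indices is at most $C(\mfd{M})$, and setting $N_{\mfd{M}}$ equal to this constant finishes the argument.

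The main obstacle is the volume-comparison bookkeeping in step (3): I must pass from the volume $|B(x_i, r/4)|$ (small ball, center $x_i$) to a uniform lower fraction of $|B(y, 5r/4)|$ (large ball, center $y$), which requires both the doubling inequality (\ref{LDP}) to rescale radii and the curvature bound (\ref{Ric})--(\ref{BG}) to transfer between the nearby centers $x_i$ and $y$ without the constant degenerating as $r \to 0$. The crucial point is that the local doubling constant $C(\mfd{M})$ in (\ref{LDP}) is uniform in $x$ and in the radii (for radii below $\operatorname{inj}(\mfd{M})$), so the final multiplicity bound $N_{\mfd{M}}$ depends only on $\mfd{M}$ and not on $r$; I would take care to keep every auxiliary radius ($r/4$, $r$, $5r/4$) strictly below $\operatorname{inj}(\mfd{M})$, which is legitimate since $r < \operatorname{inj}(\mfd{M})$ and the doubling property is assumed for sufficiently small radii.
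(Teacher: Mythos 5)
The paper itself does not prove Lemma \ref{cover}; it is quoted from \cite{Pes04b}, \cite{Pes13b}. Your argument is the standard one for such covering lemmas (maximal $r/2$-separated set, with the multiplicity bound obtained by packing the disjoint balls $B(x_i,r/4)$ into $B(y,5r/4)$ and comparing volumes), and it is essentially correct: properties (1) and (2) are immediate from separation and maximality as you say, and the counting in (3) is the right mechanism. The one point you should tighten is the treatment of radii in the volume comparison. Your claim that the auxiliary radius $5r/4$ stays below $\operatorname{inj}(\mfd{M})$ ``since $r<\operatorname{inj}(\mfd{M})$'' is false for $r$ close to $\operatorname{inj}(\mfd{M})$, and the center-transfer step actually needs to compare $|B(x_i,r/4)|$ with $|B(x_i,9r/4)|$ (since $B(y,5r/4)\subset B(x_i,9r/4)$ when $d(y,x_i)<r$), where $9r/4$ can exceed $\operatorname{inj}(\mfd{M})$ by a factor of more than two; the local doubling inequality (\ref{LDP})--(\ref{BG}) as stated in the paper is restricted to radii below $\operatorname{inj}(\mfd{M})$ and so does not literally apply. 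This is readily repaired: under the Ricci lower bound (\ref{Ric}) the Bishop--Gromov inequality gives volume doubling at \emph{all} scales up to any fixed upper bound (with constant depending only on $n$, $k$ and that bound), so $|B(x_i,9r/4)|\leq C(\mfd{M})\,|B(x_i,r/4)|$ uniformly for $0<r<\operatorname{inj}(\mfd{M})$, and your count then yields $N_{\mfd{M}}=C(\mfd{M})$ independent of $r$, which is exactly the feature the paper emphasizes.
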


The important feature of this Lemma is the claim that height  $N_{\mfd{M}}$ is independent on $r$ for $0<r<\operatorname{inj}\>(\mfd{M})$.

\begin{definition}\label{lattice}
Any set of points $\{x_{i}\}\in \mfd{M}$ which satisfies the above properties
(1)-(3)  will be denoted as $\mfd{M}_{r}$ and called $(r,\>N_{\mfd{M}})$-lattice of $\mfd{M}$.
\end{definition}

 To construct Sobolev spaces $W_{p}^{k}(\mfd{M}), \>\>
k\in \mathbb{N},$ we fix a $\rho$-lattice $\mfd{M}_{\rho}=\{y_{\nu}\}, \>\>0< \rho< \operatorname{inj} (\mfd{M})$ and introduce a partition of unity  ${\varphi_{\nu}}$ that is
{\it subordinate to the family} $\{B(y_{\nu}, \rho/2)\}$ and has the
following properties:
\begin{enumerate}
\item $\varphi_{\nu}\in C_{0}^{\infty} B(y_{\nu}, \rho/2 ),$

\item $\sup_{x}\sup_{|\alpha|\leq k}|\varphi_{\nu}^{(\alpha
)}(x)|\leq C(k), $ in geodesic  coordinates,
 where $ C(k) $ is independent of $\nu$ for
every $k $ .
\end{enumerate}
Such a partition is called  {\it bounded uniform partition of unity}, for
short a BUPU, or more precisely a $(\rho, N_{\mfd{M}})$-BUPU.
We introduce the  Sobolev space $W_{p}^{k}(\mfd{M}), k\in \mathbb{N},1 \le p \le \infty$ as the set of all $f\in L_{p}({\bf M})$ for which

\begin{equation}\label{Sob}
\|f\|_{W_{p}^{k}(\mfd{M})}=\left(\sum_{\nu}\|\varphi_{\nu}f\|^{p}
_{W_{p}^{k}(\mathbb{R}^{n})}\right) ^{1/p}<\infty.
\end{equation}
Here the norm $\|\varphi_{\nu}f\|^{p}
_{W_{p}^{k}(\mathbb{R}^{n})}$ is to be understood as the Sobolev norm of a pullback of $\varphi_{\nu} f$ to $\mathbb{R}^n$ via a geodesic coordinate system on $B(y_\nu, \rho/2)$.
A way to introduce Besov spaces $\mathcal{B}_{p,q}^{\alpha}({\bf M})$ is by using Peetre's $K$-functional
\begin{equation}\label{Besov-1}
\mathcal{B}_{p, q}^{\alpha }({\bf M})=(L_{p}(\bold{M}),W^{r}_{p}(\bold{M}))^{K}_{\alpha/r,q},\ 0<\alpha<r\in \mathbb{N},\
1\leq p\leq \infty,\>\>\>0<q\leq \infty.
\end{equation}
The fact that one can use in (\ref{Besov-1}) any natural $r>\alpha$ is well known \cite{BB}, \cite{T3}.

Note that \cite{T1}-\cite{T3} gives an equivalent definition
for ${\bf M}$  compact:
\begin{equation}\label{Besov-2}
\|f\|_{\mathcal{B}_{p, q}^{\alpha }({\bf M})}=\left(\sum_{\nu}\|\varphi_{\nu}f\|^{p}
_{\mathcal{B}_{p,q}^{\alpha}(\mathbb{R}^{n})}\right) ^{1/p}<\infty.
\end{equation}
However, due to the lack of the Localization Principle (see \cite{T3}) the later definition cannot be used in the case of non-compact manifolds.

We will explore second-order differential elliptic operators which are self-adjoint and non-negative  definite in the corresponding space $L_{2}(\mfd{M})$.
The best known example of such an operator is the Laplace-Beltrami which is given in a local coordinate system  by the formula
$$
L
f=-\sum_{m,k}\frac{1}{\sqrt{det(g_{ij})}}\partial_{m}\left(\sqrt{det(g_{ij})}
g^{mk}\partial_{k}f\right)
$$
where the $g_{ij}$ denote the components of the metric tensor,$\>\>det(g_{ij})$ is the determinant of the matrix $(g_{ij})$, and $\>\>g^{mk}$ denote the components of the matrix inverse to $(g_{ij})$.
The  Laplace-Beltrami  is a self-adjoint positive definite
operator in the corresponding space $L_{2}(\mfd{M})$
constructed  from $g$. The domains of the powers
 $L^{s/2}$ coincide with the Sobolev spaces
$H^{s}(\mfd{M})=W_{2}^{s}(\mfd{M})$ , for $s \in \mathbb{R}$.

\subsection{Compact Riemannian manifolds}

 In this section we consider only compact Riemannian manifolds.
Let now $L$ be a smooth, self-adjoint, non-negative, second order elliptic differential operator on
the space $L_{2}(\mathbf{M})$, over a compact Riemannian manifold.
The spectrum of the positive square root $\sqrt{L}$  operator is given by a sequence
\[ 0=\lambda_{0}<\lambda_{1}\leq \lambda_{2}\leq~... \]
approaching infinity. Let
$u_{0}, u_{1}, u_{2}, ...$ be a corresponding
complete system of real-valued orthonormal eigenfunctions, and let
$ \mathbf{E}_{\omega}\SLB,\ \omega>0,$ be the span of all
eigenfunctions of $\sqrt{L}$ whose corresponding eigenvalues
are not greater than $\omega$. Clearly $\mathbf{E}_{\omega} =\PWoL$,
and these subspaces are finite-dimensional,
and contained in $C^\infty(\mfd {M}) \subset L_p(\mfd{M}),\>\>\>1\leq p\leq \infty$. Since the
operator $L$ is of order two, the dimension
$\mathcal{N}_{\omega}$ of the space ${\mathbf E}_{\omega}\SLB$ is
given asymptotically by Weyl's formula \cite{Sog}, which says, in sharp form:
for some $c > 0$, and $n = dim {\bf M}$ one has
\begin{equation}
\label{Weyl}
\mathcal{N}_{\omega}(L) = c\omega^{n} + O(\omega^{(n-1)}).
\vspace{.3cm}
\end{equation}
Since $\mathcal{N}_{\lambda_l} = l+1$, we conclude that, for some constants $c_1, c_2 > 0$,
\begin{equation}
\label{lamest}
c_1 \, l^{1/n} \leq \lambda_l \leq c_2 \, l^{1/n},
\quad l \geq 0.
\end{equation}
Since $L^m u_l = \lambda_l^{2m} u_l$, and $L^m$ is an elliptic differential
operator of degree $2m$, Sobolev's lemma, combined with the last fact, implies that
for any integer $k \geq 0$, there exist $C_k$, and
$ \nu_k > 0$ such that
\begin{equation}
\label{ulest}
\|u_l\|_{C^k({\bf M})} \leq C_k (l+1)^{\nu_k}.
\end{equation}

\subsection {Compact homogeneous manifolds}

The most complete results will be obtained for compact homogeneous manifolds.

A {\it compact homogeneous manifold} $\mfd{M}$ is a
$C^{\infty}$-compact manifold  on which a compact
Lie group $G,\>\>\>dim\>G=d,$ acts transitively. In this case $\mfd{M}$ is necessary of the form $G/K$,
where $K$ is a closed subgroup of $G$. The notation $L_{2}(\mfd{M})$ is used for the usual Hilbert spaces,  with  invariant measure  $dx$ on $\mfd{M}$.

The Lie algebra $\textbf{g}$ of a compact Lie group $G$ 
is then a direct sum
$\textbf{g}=\textbf{a}+[\textbf{g},\textbf{g}]$, where
$\textbf{a}$ is the center of $\textbf{g}$, and
$[\textbf{g},\textbf{g}]$ is a semi-simple algebra. Let $Q$ be a
positive-definite quadratic form on $\textbf{g}$ which, on
$[\textbf{g},\textbf{g}]$, is opposite to the Killing form. Let
$X_{1},...,X_{d}$ be a basis of
$\textbf{g}$, which is orthonormal with respect to $Q$.
 Since the form $Q$ is $Ad(G)$-invariant, the operator
\begin{equation}\label{Casimir}
-X_{1}^{2}-X_{2}^{2}-\    ... -X_{d}^{2},    \ d=dim\ G
\end{equation}
is a bi-invariant operator on $G$, which is known as the {\it Casimir operator}.
This implies in particular that the corresponding operator on $L_{2}(\mfd{M})$,
\begin{equation}\label{Casimir-Image}
\McL=-D_{1}^{2}- D_{2}^{2}- ...- D_{d}^{2}, \>\>\>
       D_{j}=D_{X_{j}}, \        d=dim \ G,
\end{equation}
commutes with all operators $D_{j}=D_{X_{j}}$.
The operator $\McL$, which is usually called the {\it Laplace operator}, is
the image of the Casimir operator under the differential of the quasi-regular representation in $L_{2}(\mfd{M})$. It is important to realize that in general, the operator $\McL$ is not necessarily the  Laplace-Beltrami operator of the natural  invariant metric on $\mfd{M}$. But it coincides with this operator at least in the following cases:
1) If $\mfd{M}$ is a $d$-dimensional torus, 2) If the manifold $\mfd{M}$ is itself a compact  semi-simple Lie group group $G$ (\cite{H2}),Chap. 3).
If $\mfd{M}=G/K$ is a compact symmetric space of   rank one (\cite{H2}).

\subsection{An example:  the  sphere $\mathbb{S}^{d}$}

We will specify the general setup in the case of standard unit sphere.
Let us write
$$
\mathbb{S}^{d}=\left\{x\in \mathbb{R}^{d+1}: \|x\|=1\right\}.
$$
We denote the space of spherical harmonics of degree $l$
by the symbol $\mathcal{P}_{l}$. They are the restrictions
of harmonic homogeneous polynomials of degree $l$
in $\mathbb{R}^{d}$ to $\mathbb{S}^{d}$.
The Laplace-Beltrami operator $\Delta_{\mathbb{S}}$ on $\mathbb{S}^{d}$  is the pullback 
of the regular Laplace operator $\Delta$ in $\mathbb{R}^{d}$,
given by
 $$
\Delta_{\mathbb{S}}f(x)=\Delta \widetilde{f}(x),\>\>x\in \mathbb{S}^{d},
$$
where $\widetilde{f}(x)$ is the homogeneous extension of $f$: $\>\>\widetilde{f}(x)=f\left(x/\|x\|\right)$. Another way to compute $\Delta_{\mathbb{S}}f(x)$ is to express both $\Delta_{\mathbb{S}}$ and $f$ in a spherical coordinate system.

Each $\mathcal{P}_{l}$ is the eigenspace of  $\Delta_{\mathbb{S}}$ that corresponds to the eigenvalue $-l(l+d-1)$.
This space has dimension $n_{d}$, given by
$$
n_{d}(l)=(d+2l-1)\frac{(d+l-2)!}{l!(d-1)!}.
$$
An orthonormal basis for the eigenspace
$\mathcal{P}_{l},\>\>l=0, 1, 2,..., $
will be denote by  $\mathcal{Y}_{n,l},\>\>n=1,...,n_{d}(l)$.

Let $e_{1},...,e_{d+1}$ be the standard orthonormal basis in $\mathbb{R}^{d+1}$.  
Writing  $SO(d+1)$ and $SO(d)$ for the groups of rotations of $\mathbb{R}^{d+1}$ and  $\mathbb{R}^{d}$ respectively we have $\mathbb{S}^{d}=SO(d+1)/SO(d)$.
On $\mathbb{S}^{d}$ we consider the vector fields
$$
X_{i,j}=x_{j}\partial_{x_{i}}-x_{i}\partial_{x_{j}},\>\>\>i<j,
$$
which are generators of one-parameter groups of rotations   $\exp tX_{i,j}\in SO(d+1)$ in the plane $(x_{i}, x_{j})$. These groups are defined by the formulas
$$
\exp \tau X_{i,j}\cdot (x_{1},...,x_{d+1})=(x_{1},...,x_{i}\cos \tau -x_{j}\sin \tau ,..., x_{i}\sin \tau +x_{j}\cos \tau ,..., x_{d+1}).
$$
Let $e^{\tau X_{i,j}}$ be a one-parameter group which is a representation of $\exp \tau X_{i,j}$ in a space $L_{p}(\mathbb{S}^{d})$. It acts on $f\in L_{p}(\mathbb{S}^{d})$
 by the following formula
$$
e^{\tau X_{i,j}}f(x_{1},...,x_{d+1})=f(x_{1},...,x_{i}\cos \tau -x_{j}\sin \tau ,..., x_{i}\sin \tau +x_{j}\cos \tau ,..., x_{d+1}).
$$
The Laplace-Beltrami operator $\Delta_{\mathbb{S}}$ can be identified with an operator in $L_{p}(\mathbb{S}^{d})$, given by the formula
 $$
\Delta_{\mathbb{S}}=L=-\sum_{(i,j)}X_{i,j}^{2}.
$$
Note, that $L\mathcal{Y}_{n,l}=-l(l+d-1)\mathcal{Y}_{n,l}$.
Since the vector fields $X_{i,j}$ generate tangent space at every point of $\mathbb{S}^{d}$ the operator $L$ is elliptic and domains of its natural powers coincide with the regular Sobolev spaces $W_{p}^{k}(\mathbb{S}^{d})$.  Clearly,  the norm of $B_{2,2}^{\alpha}(\mathbb{S}^{d})$ is equivalent to
\begin{equation}\label{norm1}
\left(\sum_{l=0}^{\infty}
\sum_{n=1}^{n_{d}(l)}(l+1)^{2\alpha}|c_{n,l}(f)|^{2}\right)^{1/2},
\>\>\>c_{n,l}(f)=\int_{\mathbb{S}^{d}} \negthinspace
f \cdot \mathcal{Y}_{n,l} \, ds ,\>\>\>f\in L_{2}(\mathbb{S}^{d}).
\end{equation}
Another description of the
 Besov spaces $B_{p,q}^{\alpha}(\mathbb{S}^{d})$ can be given
using the modulus of continuity associated to the one-parameter groups $e^{\tau X_{i,j}}$ (see Section \ref{ModCont}).

  \subsection{Bounded domains with smooth boundaries}

Let $\Omega\subset \mathbb{R}^{n}$ be a bounded domain with a smooth boundary  $\Gamma$, assumed to be a smooth $(d-1)$-dimensional oriented manifold.  Let $\overline{\Omega}=\Omega\cup \Gamma$ and $L_{2}(\Omega)$ be  the  space of functions  square-integrable with respect to Lebesgue  measure $dx=dx_{1}...dx_{n}$. If $k$ is a natural number the notation $H^{k}(\Omega)$ will be  used for  the Sobolev space of distributions on $\Omega$ (see \cite{LMag} for a precise definition) 
with the norm
$$
 \|f\|_{H^{k
 }(\Omega)}=\left(\|f\|^{2}+\sum _{1\leq |\alpha |\leq k}\|\partial^{\alpha} f\|^{2}\right)^{1/2}
 $$

where $\alpha=(\alpha_{1},...,\alpha_{d})$ is a natural vector and $\partial^{\alpha}$ is a mixed partial derivative
$$
\left(\frac{\partial}{\partial x_{1}}\right)^{\alpha_{1}}...\left(\frac{\partial}{\partial x_{n}}\right)^{\alpha_{n}}.
$$
Under our assumptions the space $C^{\infty}_{0}(\overline{\Omega})$ of infinitely smooth functions with support in $\overline{\Omega}$  is dense
in $H^{k}(\Omega)$. The closure of the space   $C_{0}^{\infty}(\Omega)$ of smooth functions with support in $\Omega$ in $H^{k}(\Omega)$
is denoted by $H_{0}^{k}(\Omega)$.

 Since $\Gamma$ can be treated as a smooth Riemannian manifold one can introduce a Sobolev scale of spaces $H^{s}(\Gamma),\>\> s\in \mathbb{R},$ as, for example, the domains of the Laplace-Beltrami operator $\McL$ of a Riemannian metric on $\Gamma$.

The trace theorem provides a continuous
and surjective trace operator
 $$
 \gamma: H^{s}(\Omega)\rightarrow H^{s-1/2}(\Gamma),\>\>s>1/2,
 $$
 such that for all functions $f \in H^{s}(\Omega)$ which are smooth up to the boundary the value $\gamma (f)$ 
is simply a restriction of $f$ to $\Gamma$.

 One considers  a strictly elliptic self-adjoint positive definite  operator $L$ generated by an expression
 \begin{equation}\label{Op}
 Lf=-\sum_{k,i=1}^{d}\partial_{_{x_{k}}}
 \left(a_{k, i}(x)\partial_{x_{i}}f\right),
\end{equation}
with coefficients in $C^{\infty}(\Omega)$ where the matrix $(a_{j,k}(x))$ is real, symmetric  and positive definite on $\overline{\Omega}$.
 The operator $L$ is defined as the Friedrichs extension of $L$, initially defined on $C_{0}^{\infty}(\Omega)$, to the set of all functions $f$ in $H^{2}(\Omega)$ with constraint $\gamma f=0$. The Green formula implies that this operator is self-adjoint.  The domain of its positive square root $\sqrt{L}$ is the set of all functions $f$ in $H^{1}(\Omega)$ for which $\gamma f=0$.

Thus, one obtains a self-adjoint positive definite operator
in the Hilbert space $L_{2}(\Omega)$ with a discrete spectrum $0<\lambda_{1}\leq \lambda_{2}\leq... $, 
with $\lim_{n \to \infty} \lambda_n = +\infty$.

 An important example of such a situation is the Dirichlet Laplacian  on the unit ball in $\mathbb{R}^{n}$.
In  spherical coordinates $(r,\vartheta),\>\>\vartheta\in \mathbb{S}^{n-1},$ one has
 \begin{equation}
Lf= \partial^{2}_{r}f+\frac{n-1}{r}\partial_{r}f-\frac{1}{r^{2}}\Delta_{\mathbb{S}^{n-1}} f,
 \end{equation}
with the boundary condition
 \begin{equation}\label{bc}
 f|_{r=1}=0,
 \end{equation}
 where $\Delta_{\mathbb{S}^{n-1}}$ is the Laplace-Beltrami operator on the unit sphere $\mathbb{S}^{n-1}$ in $\mathbb{R}^{n}$.

 It is known that the eigenvalues of such an operator are
 given by the formula
  $$
 \lambda=j^{2}_{m+\frac{n-2}{2},\>l},
 $$
 where $j_{\nu,\>l}$ is the $l$-th positive zero of the Bessel function of first kind $J_{\nu}$ of order $\nu$   and the corresponding eigenfunctions are of the   form
 \begin{equation}\label{EF}
 u_{m,\>k,\>l}=c_{m,k,l}r^{-\frac{n-2}{2}}J_{m+\frac{n-2}{2}}\left(j_{m+\frac{n-2}{2}, \>l}\>\>r\right)Y^{k}_{m,n}(\vartheta),
\end{equation}
with $m=0,1,...,\>\>1\leq k\leq k_{m,n},\>\> l=1,2,....$.  The constants $c_{m,\>k,\>l}$  are chosen to normalize the  functions $u_{m,\>k,\>l}$ with respect to $\| \cdot \|_2$.

 \subsection{The Poincar\'{e} hyperbolic upper half-plane}

To  illustrate   our results about sampling and frames on non-compact symmetric spaces we  will  use the hyperbolic
plane in its upper half-plane realization.

  Let $G=SL(2,\mathbb{R})$ be the special linear group of all
$2\times 2$ real matrices with determinant 1,  and let $K=SO(2)$ denote the
group of all rotations of $\mathbb{R}^{2}$. The factor
$\mathbb{H}=G/K$ is known as the 2-dimensional hyperbolic space
and can be described in many different ways. In the present paper
we consider the realization of $\mathbb{H}$ which is called
Poincar\'{e} upper half-plane (see \cite{H2}, \cite{T}).

As a Riemannian manifold $\mathbb{H}$ is identified with the
regular upper half-plane of the complex plane
$$
\mathbb{H}=\{x+iy|x,y\in \mathbb{R}, y>0\}
$$
with a new  Riemannian metric
$$
ds^{2}=y^{-2}(dx^{2}+dy^{2})
$$
and corresponding Riemannian measure
 $$
  d\mu=y^{-2}dxdy.
   $$
   If we
define the action of $\sigma\in G$ on $z\in \mathbb{H}$ as a
fractional linear transformation
$$
\sigma\cdot z=(az+b)/(cz+d),
$$
then the metric $ds^{2}$ and the measure $d\mu$  are invariant
under the action of $G$ on $\mathbb{H}$. The point $i=\sqrt{-1}\in
\mathbb{H}$ is invariant for all $\sigma\in K$. The Haar measure
$dg$ on $G$ can be normalizes in a way that the  following
important formula holds true
$$
\int_{\mathbb{H}}f(z)y^{-2}dxdy=\int_{G}f(g\cdot i) d g.
$$
In the corresponding Hilbert space 
$L_{2}(\mathbb{H})$ with the inner product
$$
\langle f,h \rangle =\int_{\mathbb{H}}f (x,y)
\overline{h(x,y)} \, \, y^{-2}\, dxdy
$$
we consider the Laplace-Beltrami operator
 $$
 \Delta=-y^{2}\left(\partial_{x}^{2}+\partial_{y}^{2}\right)
 $$
of the metric $ds^{2}$. The operator
 $\Delta$, acting on $L_{2}(\mathbb{H})=L_{2}(\mathbb{H},d\mu)$
 is initially defined on $C_{0}^{\infty}(\mathbb{H})$ and has a self-adjoint closure in $L_{2}(\mathbb{H})$.

The Helgason-Fourier  transform
of $f$ for $\lambda\in \mathbb{C}, \varphi\in (0,2\pi]$, is defined by
the formula
$$
\hat{f}(\lambda,\varphi)=\int_{\mathbb{H}}f(z)\overline{Im(k_{\varphi}z)^{\lambda}}
y^{-2}dxdy,
$$
where $k_{\varphi}\in SO(2)$ is the rotation of $\mathbb{R}^{2}$
by angle $\varphi$.
We have the following inversion formula for all $f\in
C_{0}^{\infty}(\mathbb{H})$
$$
f(z)=(8\pi^{2})^{-1} \int_{\lambda\in\mathbb{R}}\int_{0}^{2\pi}
\hat{f}(i\lambda+1/2,\varphi)Im(k_{\varphi}z)^{i\lambda+1/2}\lambda \tanh \pi \lambda
d\varphi d\lambda.
$$
The Plancherel Theorem states that the map
$f\rightarrow \hat{f}$ can be extended to an isometry of
$L_{2}(\mathbb{H})$ with respect to the invariant measure $d\mu $ onto
$L_{2}(\mathbb{R}\times (0,2\pi ])$ with respect to the measure
$$
\frac{1}{8\pi^{2}} \lambda \tanh \pi\lambda d\lambda d\varphi.
$$
If $f$ is a function on $\mathbb{H}$ and $\varphi$ is a
$K=SO(2)$-invariant function on $\mathbb{H}$, their convolution is
defined by the formula
$$
f\ast \varphi(g\cdot i)=\int_{SL(2,\mathbb{R})}f(gu^{-1}\cdot
i)\varphi(u)du, ~i=\sqrt{-1},
$$
where $du$ is the Haar measure on $SL(2,\mathbb{R})$.
For the  Helgason-Fourier transform  one has:
$$
\widehat{f\ast\varphi}=\hat{f}\cdot\hat{\varphi}.
$$
The following formula holds true
\begin{equation}
\widehat{\Delta f}=\left(\lambda^{2}+\frac{1}{4}\right)\hat{f}.
\end{equation}
The one parameter group of operators $e^{it\Delta }$ acts on  functions via the formula
$$
e^{it\Delta }f(z)=f\ast G_{t},
$$
where
$$
G_{t}(ke^{-r}i)=(4\pi)^{-1}\int_{s\in
\mathbb{R}}e^{-i(s^{2}+1/4)t}P_{is-1/2}(\cosh \>r)s  \tanh\>\pi s ds,
$$
here $k\in SO(2),$ $r$ is  the geodesic distance, $ke^{-r} i$ is
representation of points of $\mathbb{H}$ in the geodesic polar
coordinate system on $\mathbb{H}$, and $P_{is -1/2}$ is the
associated Legendre function.
In these terms our Theorem \ref{PW proprties} takes the following form.
\begin{theorem}
For  $f\in L_{2}\left(\mathbb{H}, \>d\mu\right)$ the following conditions are equivalent.
\begin{enumerate}
\item  $f$  belongs to the space $PW_{\omega}(\Delta)$.
\item For every
$\sigma\in \mathbb{R}$ the following Bernstein inequality holds true
\begin{equation} \|\Delta^{\sigma} f\|\leq
\left(\omega^{2}+\frac{1}{4}\right)^{\sigma}\|f\|,
 \end{equation}
where $\|f\|$ means the $L_{2}(\mathbb{H})$ norm of $f$.

\item   For every $g\in L_{2}(\mathbb{H}, d\mu)$ the function
$$
t\rightarrow \left<f\ast G_{t},g\right>=\int_{\mathbb{H}}f\ast
G_{t}\overline{g}\, d\mu
$$
is an  entire function of the exponential type
$\omega^{2}+\frac{1}{4}$ 
  bounded on the real line $\mathbb{R}$.
\end{enumerate}
\end{theorem}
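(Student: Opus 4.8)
The plan is to recognize the statement as the concrete incarnation on $\mathbb{H}$ of the abstract Theorem~\ref{PW proprties}, with the Helgason--Fourier transform playing the role of the Spectral Fourier Transform $\mathcal{F}$, the operator $\Delta$ playing the role of the abstract first-order operator $\sqrt{L}$, and the spectral bound $\omega^2+\tfrac{1}{4}$ playing the role of $\omega$. The first step is therefore to verify that the Helgason--Fourier transform, together with the Plancherel Theorem quoted above, furnishes a genuine unitary spectral resolution of $\Delta$. Combining the Plancherel isometry onto $L_2\left(\mathbb{R}\times(0,2\pi]\right)$ with the multiplier identity $\widehat{\Delta f}=\left(\lambda^2+\tfrac{1}{4}\right)\hat f$, one sees that $\Delta$ is unitarily equivalent to multiplication by the spectral variable $\mu=\lambda^2+\tfrac{1}{4}\ge\tfrac{1}{4}$, so the direct-integral picture of Section~\ref{Hilbert} applies. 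In particular, membership in $PW_\omega(\Delta)$ (support of $\hat f$ in $|\lambda|\le\omega$) is identical with having the $\Delta$-spectrum of $f$ confined to $\left[\tfrac{1}{4},\,\omega^2+\tfrac{1}{4}\right]$.

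Next I would establish $(1)\Leftrightarrow(2)$ by the same argument as the Bernstein lemma preceding Theorem~\ref{approx}. By Plancherel,
\begin{equation}
\|\Delta^{\sigma}f\|^2=\frac{1}{8\pi^2}\int_{\mathbb{R}}\int_{0}^{2\pi}\left(\lambda^2+\tfrac{1}{4}\right)^{2\sigma}\bigl|\hat f(i\lambda+\tfrac{1}{2},\varphi)\bigr|^2\,\lambda\tanh\pi\lambda\,d\varphi\,d\lambda.
\end{equation}
If $\hat f$ is supported in $|\lambda|\le\omega$, then on the support the factor $\left(\lambda^2+\tfrac{1}{4}\right)^{2\sigma}$ is bounded by $\left(\omega^2+\tfrac{1}{4}\right)^{2\sigma}$, which yields the Bernstein inequality directly. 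Conversely, if the inequality holds for all $\sigma\in\mathbb{R}$, then assuming $\hat f$ had positive mass on a set where $\lambda^2+\tfrac{1}{4}\ge\omega^2+\tfrac{1}{4}+\varepsilon$ would force $\|\Delta^{\sigma}f\|/\|f\|\to\infty$ as $\sigma\to\infty$, a contradiction; hence $f\in PW_\omega(\Delta)$.

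For $(1)\Leftrightarrow(3)$ I would use the spectral picture once more. Writing $\langle f\ast G_t,g\rangle=\langle e^{it\Delta}f,g\rangle$ and inserting the Plancherel representation gives
\begin{equation}
\langle f\ast G_t,g\rangle=\frac{1}{8\pi^2}\int_{\mathbb{R}}\int_{0}^{2\pi}e^{it\left(\lambda^2+\tfrac{1}{4}\right)}\hat f(i\lambda+\tfrac{1}{2},\varphi)\overline{\hat g(i\lambda+\tfrac{1}{2},\varphi)}\,\lambda\tanh\pi\lambda\,d\varphi\,d\lambda.
\end{equation}
After the substitution $\mu=\lambda^2+\tfrac{1}{4}$ this exhibits $t\mapsto\langle f\ast G_t,g\rangle$ as the Fourier transform in $t$ of a finite measure $d\nu_{f,g}(\mu)$ supported in $[\tfrac{1}{4},\infty)$. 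If $f\in PW_\omega(\Delta)$ the measure is supported in $\left[\tfrac{1}{4},\,\omega^2+\tfrac{1}{4}\right]$, so the classical Paley--Wiener--Schwartz theorem supplies an entire extension of exponential type $\omega^2+\tfrac{1}{4}$, bounded on $\mathbb{R}$. Conversely, if this holds for every $g$, the classical Paley--Wiener theorem, applied exactly as in the proof of Theorem~\ref{PW proprties}(2), forces $\nu_{f,g}$ to be supported in $\left[\tfrac{1}{4},\,\omega^2+\tfrac{1}{4}\right]$ for all $g$, whence $\hat f$ is supported in $|\lambda|\le\omega$ and $(1)$ holds.

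The hard part will be the spectral bookkeeping rather than any single estimate: one must make rigorous that the Helgason--Fourier transform is a \emph{unitary} spectral resolution, so that the abstract Plancherel arguments transfer verbatim, and one must carefully track the two-to-one folding $\mu=\lambda^2+\tfrac{1}{4}$ under the change of variables. The positivity $\mu\ge\tfrac{1}{4}$ is precisely what guarantees both that the compactly supported spectral measure stays bounded away from the origin and that the resulting exponential type is exactly $\omega^2+\tfrac{1}{4}$; pinning this constant down, rather than $\omega$ or $\omega^2$, is the point where the geometry of $\mathbb{H}$, through the spectral gap of $\Delta$, genuinely enters.
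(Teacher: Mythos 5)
Your proposal is correct and follows essentially the same route as the paper: the paper presents this theorem as the specialization of the abstract Theorem~\ref{PW proprties} to $\mathcal{H}=L_{2}(\mathbb{H})$ with the Helgason--Fourier transform serving as the spectral resolution of $\Delta$, the Bernstein equivalence coming from the multiplier identity $\widehat{\Delta f}=(\lambda^{2}+\tfrac14)\hat f$ together with the Plancherel isometry, and the entire-function characterization coming from the classical Paley--Wiener/Bernstein theory applied to the scalar functions $t\mapsto\langle e^{it\Delta}f,g\rangle$. Your explicit spectral bookkeeping (the folding $\mu=\lambda^{2}+\tfrac14$ and the resulting type $\omega^{2}+\tfrac14$) fills in exactly the details the paper leaves implicit.
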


\section{Generalized Shannon-type sampling in Paley-Wiener spaces and frames  in $L_{2}$-spaces on Riemannian manifolds of bounded geometry}\label{SFonM}

In this section we treat both compact and non-compact Riemannian manifolds of bounded geometry whose Ricci curvature is bounded from below (see section 3.1). The material in this section is based on \cite{Pes00}, \cite{Pes04b}.

\subsection{A Shannon-type sampling theorem in Paley-Wiener spaces on compact and non-compact Riemannian manifolds}

The most important fact for our development is an analogue of the Shannon's Sampling Theorem for Riemannian manifolds of bounded geometry which first appeared in \cite{Pes00} and was further developed in   \cite{FP04}, \cite{FP05}, \cite{Pes04a}-\cite{Pes09b}, (for subelliptic versions see  \cite{HF},  \cite{FuhrG}, \cite{Pes98a}, \cite{Pes98b}).

Let $\mfd{M}_{r}$  be an $r$-lattice and let $\{B(x_{k},r)\}$ be an associated family of balls that satisfy the  properties of Definition \ref{lattice}.
We define
$$
U_{1}=B(x_{1}, r/2)\setminus \cup_{i,\>i\neq 1}B(x_{i}, r/4),
$$
and
\begin{equation}\label{disjointcover}
U_{k}=B(x_{k}, r/2)\setminus \left(\cup_{j<k}U_{j}\cup_{i,\>i\neq k}B(x_{i}, r/4)\right).
\end{equation}
It is easy to verify the following statement.
\begin{lemma} The sets $\left\{U_{k}\right\}$ form a disjoint measurable cover of $\mfd{M}$ and
\begin{equation}\label{disjcover}
B(x_{k}, r/4)\subset U_{k}\subset B(x_{k}, r/2).
\end{equation}
\end{lemma}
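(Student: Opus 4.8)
The plan is to derive everything from the two geometric facts supplied by Lemma~\ref{cover}, namely that the quarter-radius balls $B(x_i,r/4)$ are pairwise disjoint while the half-radius balls $B(x_i,r/2)$ cover $\mfd{M}$; the height bound $N_{\mfd{M}}$ plays no role here. Measurability is immediate, since each $U_k$ is obtained from open balls by countably many intersections and complements, so the only genuine content is disjointness, the two inclusions in \eqref{disjcover}, and the covering property.

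For disjointness I would argue directly from the defining formula \eqref{disjointcover}: if $j<k$ then $U_k$ is by construction contained in the complement of $U_j$, whence $U_j\cap U_k=\emptyset$; since this holds for every pair of distinct indices, the family $\{U_k\}$ is pairwise disjoint. The inclusion $U_k\subset B(x_k,r/2)$ is likewise immediate from \eqref{disjointcover}. The key observation for the remaining claims is that for every $j\neq k$ the set $U_j$ is disjoint from $B(x_k,r/4)$, because $U_j$ explicitly removes the ball $B(x_k,r/4)$ in its defining formula. To prove $B(x_k,r/4)\subset U_k$, I take $x\in B(x_k,r/4)$ and verify the three conditions implicit in \eqref{disjointcover}: one has $x\in B(x_k,r/2)$ trivially; by disjointness of the quarter-balls $x$ lies in no $B(x_i,r/4)$ with $i\neq k$; and by the observation above $x\notin U_j$ for $j<k$. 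Hence $x\in U_k$, giving the lower inclusion in \eqref{disjcover}.

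The covering property is the only step requiring a small case analysis, and I regard it as the main (though still elementary) obstacle. Given $x\in\mfd{M}$, I would distinguish according to whether or not $x$ meets a quarter-ball. If $x$ lies in no $B(x_i,r/4)$, let $k$ be the least index with $x\in B(x_k,r/2)$, which exists because the half-balls cover; for $j<k$ minimality gives $x\notin B(x_j,r/2)\supset U_j$, and $x$ avoids every quarter-ball, so $x\in U_k$. If instead $x\in B(x_m,r/4)$ for the unique such $m$, then either $x$ already lies in some $U_j$ with $j<m$, or else $x$ avoids $\bigcup_{j<m}U_j$ and, by disjointness of the quarter-balls, avoids every $B(x_i,r/4)$ with $i\neq m$, forcing $x\in U_m$.

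In all cases $x\in\bigcup_k U_k$, and together with $\bigcup_k U_k\subset\bigcup_k B(x_k,r/2)\subset\mfd{M}$ this shows that the $\{U_k\}$ cover $\mfd{M}$, completing the proof. I do not anticipate any analytic difficulty; the entire argument is bookkeeping against the recursive definition \eqref{disjointcover}, and the one point worth isolating as a lemma-within-the-proof is the disjointness of $U_j$ from $B(x_k,r/4)$ for $j\neq k$, which drives both the lower inclusion and the second subcase of the covering step.
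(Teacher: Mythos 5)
Your proof is correct; the paper offers no argument for this lemma (it is dismissed as ``easy to verify''), and your verification is exactly the routine bookkeeping intended, with the key observation $U_j\cap B(x_k,r/4)=\emptyset$ for $j\neq k$ correctly isolated as the driver of both the lower inclusion and the covering step. The only cosmetic remark is that your second subcase in the covering argument (the branch where $x\in B(x_m,r/4)$ and $x$ lies in some $U_j$ with $j<m$) is vacuous once that key observation is in place, so the whole case could be handled by simply citing the already-proved inclusion $B(x_m,r/4)\subset U_m$.
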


We assume that on every $U_{k}$
a strictly positive measure $\mu_{k}$, supported in $U_{k}$,
is given.
We consider the following distribution on
$C_{0}^{\infty}(B(x_{k},r)),$
\begin{equation}\label{distrib}
\mathcal{M}_{k}(f)=\frac{1}{|U_{k}|_{\mu_{k}}}\int_{U_{k}}fd\mu_{k},\>\>\> \quad \mbox{with} \quad
|U_{k}|_{\mu_{k}}=\int_{U_{k}}d\mu_{k}, \>\>\>\>  f\in C_{0}^{\infty}(B(x_{k},r)).
\end{equation}
 As a compactly
supported distribution of order zero it has a unique continuous
extension to a function on the space $C^{\infty}(B(x_{k}, r))$.

We say that a family $\mathcal{M}=\{\mathcal{M}_{k}\}$ is uniformly bounded, if there exists a positive constant $C_{\mathcal{M}}$
such that 
\begin{equation}\label{uni-bound}
\mathcal{M}_{k}(f)\leq C_{\mathcal{M}}\sup_{x\in B(x_{k}, r)}|f(x)|,\>\>\>f\in C^{\infty}(B(x_{k}, r))
\quad \mbox{for all}\,\, k.
 \end{equation}
\bigskip

Some examples of distributions which are of particular
interest are the following.
\begin{enumerate}
\item  Weighted Dirac measures  $\mathcal{M}_{k}(f)=
\muk_{k}\delta_{x_{k}}(f),\>\>\>x_{k}\in U_{k},\>\>\muk_{k} >0.$
\item  Finite or infinite sequences of weighted Dirac measures
$$
\mathcal{M}_{k}(f)=
\sum_{x_{k, l}\in  U_{k}}\muk_{k, l}\delta_{x_{k, l}}(f),\>\>\>\muk_{k,l}>0.
$$
\item  $d\mu_{k}$ is a "surface" measure on a submanifold contained in   $U_{k}$ .
\item $d\mu_{k}$ is the
restriction to $U_{k}$ of the Riemannian measure $dx$ on $\mfd{M}$.
\end{enumerate}
\begin{lemma}\label{LPI}(Local Poincar\'{e}-type inequality \cite{Pes04b})
For    $m>n/2$ ($n = dim {\bf M}$)
there exist positive constants $C=C(\mfd{M}, m)>0, \>\>r(\mfd{M},m)>0,$
 such that for any $(r, N_{\mfd{M}})$-lattice $M_{r}$ with $r<r(\mfd{M},m)$
and any associated family of functional
$\mathcal{M}_{k}$  the following inequality
holds true for all $f\in H^{m}(\mfd{M})$:
\begin{equation}\label{GPII}
\|(\varphi_{\nu}f)-\mathcal{M}_{k}((\varphi_{\nu}f))\|_{L_{2}(U_{k})}\leq
C\sum_{1\leq |\alpha|\leq
m}r^{|\alpha|}\|\partial^{\alpha}(\varphi_{\nu}f)\|_{L_{2}(B(x_{k}, r))},
\end{equation}
where for 
any multi-index
$\alpha=(\alpha_{1},...,\alpha_{n}) \in \mathbb{N}^n$
the symbol $\partial^{\alpha}f$ stands
 for  a partial derivative $\partial_{x_{1}}^{\alpha_{1}}...\partial_{x_{n}}^{\alpha_{n}}$ in a geodesic  coordinate system.
\end{lemma}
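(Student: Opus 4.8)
The plan is to reduce the estimate to a single, scale-normalized Poincar\'e--Wirtinger inequality on a fixed pair of Euclidean balls and then transport it back to $\mathbf{M}$ with $k$-independent constants by exploiting bounded geometry. Throughout, write $g=\varphi_{\nu}f$; since only $g\in H^m(\mathbf{M})$ and its geodesic-coordinate derivatives enter, it suffices to establish
$$
\|g-\mathcal{M}_k(g)\|_{L_2(U_k)}\leq C\sum_{1\leq|\alpha|\leq m}r^{|\alpha|}\|\partial^{\alpha}g\|_{L_2(B(x_k,r))},\qquad g\in H^m(\mathbf{M}),
$$
and then substitute $g=\varphi_{\nu}f$. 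Observe first that the hypothesis $m>n/2$ is exactly what makes $\mathcal{M}_k(g)$ meaningful: by the Sobolev embedding $H^m\hookrightarrow C^0$, each functional $(\ref{distrib})$ extends continuously to $H^m$, and $(\ref{uni-bound})$ bounds it, uniformly in $k$, by $C_{\mathcal{M}}\|g\|_{C^0}$. Note also that $\mathcal{M}_k(1)=1$.

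Step 1 (reference inequality). I would first prove, on the fixed balls $W=B(0,1)\supset V=B(0,1/2)\subset\mathbb{R}^n$, a scale-one estimate uniform over the admissible data: there is $C_0>0$ such that for every measurable $\widetilde U$ with $B(0,1/4)\subset\widetilde U\subset V$, every functional $\widetilde{\mathcal{M}}$ supported in $\widetilde U$ with $\widetilde{\mathcal{M}}(1)=1$ and $|\widetilde{\mathcal{M}}(u)|\leq C_{\mathcal{M}}\|u\|_{C^0(\overline W)}$, and every $u\in H^m(W)$,
$$
\|u-\widetilde{\mathcal{M}}(u)\|_{L_2(V)}\leq C_0\sum_{1\leq|\alpha|\leq m}\|\partial^{\alpha}u\|_{L_2(W)}.
$$
I would argue by contradiction and compactness. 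Replacing $u$ by $u-\widetilde{\mathcal{M}}(u)$ (which leaves the derivatives of order $\geq1$ unchanged and, because $\widetilde{\mathcal{M}}(1)=1$, makes the functional vanish), a failure produces functionals $\widetilde{\mathcal{M}}_j$ and functions $u_j$ with $\widetilde{\mathcal{M}}_j(u_j)=0$, $\|u_j\|_{L_2(V)}=1$, and $\sum_{1\leq|\alpha|\leq m}\|\partial^{\alpha}u_j\|_{L_2(W)}\to0$. The classical Poincar\'e--Wirtinger inequality on the connected set $W$, together with $\|u_j\|_{L_2(V)}=1$, bounds $\|u_j\|_{L_2(W)}$, so $\{u_j\}$ is bounded in $H^m(W)$. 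Since $m>n/2$, Rellich--Kondrachov gives $H^m\hookrightarrow\hookrightarrow C^0(\overline W)$, so a subsequence converges in $C^0(\overline W)$ and in $L_2(W)$ to some $u$ that is a weak $H^m$-limit; hence $\partial^{\alpha}u=0$ for all $1\leq|\alpha|\leq m$, and $u\equiv c$ is constant. Uniform boundedness yields $|\widetilde{\mathcal{M}}_j(u_j)-\widetilde{\mathcal{M}}_j(u)|\leq C_{\mathcal{M}}\|u_j-u\|_{C^0}\to0$, while $\widetilde{\mathcal{M}}_j(u)=\widetilde{\mathcal{M}}_j(c)=c$; as $\widetilde{\mathcal{M}}_j(u_j)=0$ we get $c=0$, so $u=0$, contradicting $\|u\|_{L_2(V)}=\lim\|u_j\|_{L_2(V)}=1$.

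Step 2 (transport and scaling). I would then pull everything back through the geodesic chart $\vartheta_{x_k}=\exp_{x_k}$ (identifying $T_{x_k}\mathbf{M}\cong\mathbb{R}^n$ by a $g$-orthonormal basis), under which the geodesic ball $B(x_k,r)$ corresponds to the Euclidean ball $B(0,r)$, and $U_k$, by $(\ref{disjcover})$, to a set $\widetilde U_k$ with $B(0,r/4)\subset\widetilde U_k\subset B(0,r/2)$; the functional $\mathcal{M}_k$ becomes an averaging functional with value $1$ on constants still obeying $(\ref{uni-bound})$. Bounded geometry---conditions (b),(c) of Subsection \ref{sub-1}---guarantees that in these charts the coefficients $g_{ij}$, their inverse, and $\det(g_{ij})$ are bounded above and below by constants independent of $k$, so the manifold norm $\|\cdot\|_{L_2(U_k)}$ (taken against $d\mu=\sqrt{\det(g_{ij})}\,dx$) is uniformly equivalent to the Euclidean $\|\cdot\|_{L_2(\widetilde U_k)}$. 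Dilating by $1/r$ normalizes $B(0,r)$ to $W$ and the admissible data to exactly the form covered by Step 1; under this dilation the left-hand $L_2$-norm and the norms $\|\partial^{\alpha}u\|_{L_2}$ acquire the factors $r^{n/2}$ and $r^{n/2-|\alpha|}$ respectively, so the reference estimate returns precisely the weights $r^{|\alpha|}$. Undoing the pullback and absorbing the bounded-geometry equivalence constants into $C=C(\mathbf{M},m)$ gives the claim, where $r(\mathbf{M},m)$ is chosen $\leq\operatorname{inj}(\mathbf{M})$ so that the charts and Definition \ref{lattice} apply.

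The main obstacle is uniformity: the constant $C$ must not depend on the lattice point $x_k$, on $r$, or on the particular functional $\mathcal{M}_k$. This is exactly what forces both structural hypotheses into the argument---bounded geometry supplies the $k$-independent comparison of the pulled-back metrics, so a single Euclidean reference estimate governs every chart, while the uniform bound $(\ref{uni-bound})$ allows the whole family of functionals to be handled simultaneously in the compactness step. Once these uniformities are in place, the scaling bookkeeping that manufactures the powers $r^{|\alpha|}$ is routine.
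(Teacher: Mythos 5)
Your argument is correct. Note first that the survey does not actually prove Lemma \ref{LPI}: it is stated with a citation to \cite{Pes04b}, so there is no in-paper proof to compare against. Your route --- a scale-one Poincar\'e--Wirtinger/Bramble--Hilbert estimate on a fixed pair of Euclidean balls, proved by contradiction and Rellich compactness uniformly over the admissible class of averaging functionals (using $\mathcal{M}(1)=1$, the uniform bound (\ref{uni-bound}), and $m>n/2$ to make the functionals act continuously on $H^m$), followed by transport through geodesic normal coordinates and dilation by $1/r$ to manufacture the weights $r^{|\alpha|}$ --- is a standard and valid alternative to the constructive argument of the cited source, whose one-dimensional prototype appears in Subsection \ref{Firstmethod} (the computation leading to (\ref{FTC}) via the fundamental theorem of calculus and H\"older). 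The constructive route yields explicit constants, at the price of more elaborate integral representations in higher dimensions; your compactness route gives a non-explicit constant but handles the whole family of zero-order functionals and all orders $1\le|\alpha|\le m$ in one stroke, with bounded geometry entering only through the uniform ($k$-independent) comparability of the pulled-back metrics. Two points you should make fully explicit: for the specific functionals (\ref{distrib}) one has $|\mathcal{M}_k(g)|\le\sup_{U_k}|g|$ automatically (so $C_{\mathcal{M}}=1$ and the constant is genuinely $C(\mathbf{M},m)$), and the sup-norm bound together with the normalization $\mathcal{M}(1)=1$ is invariant under the pullback and the dilation, so the admissible class in your Step 1 really is scale- and chart-independent; both facts are true and you use them implicitly.
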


We introduce the following set of functionals
\begin{equation}\label{A-functionals}
\mathcal{A}_{k}(f)=\sqrt{|U_{k}|}\mathcal{M}_{k}(f)=
\frac{\sqrt{|U_{k}|}}{|U_{k}|_{\mu_{k}}}\int_{U_{k}}f(x)d\mu_{k},\>\>\>|U_{k}|=\int_{U_{k}}dx,\>\>\>f\in L_{2}(\mfd{M}),
\end{equation}
where $dx$ is the Riemann measure on ${\bf M}$.

For the further development it will be important to
associate with these functionals families of  band-limited
functions. In fact, the restriction of the functionals
above to the closed subspaces $\PWoL \subset L_{2}(\mfd{M})$
can be realized by scalar products, via the Riesz representation
theorem: There are uniquely determined functions
$\phi_{\omega, k} \in \PWoL$  such that
 \begin{equation}\label{phi}
 \left<f,\phi_{\omega, k}\right>=
 \mathcal{A}_{\omega, k}(f),\>\>\>f\in \PWoL.
 \end{equation}

It is convenient to introduce the following definition.

\begin{definition}\label{aff}
For a given $r$-lattice $\mfd{M}_{r}$, let $\{U_{k}\}$ be the disjoint cover constructed in (\ref{disjointcover}), and functionals $\mathcal{M}_{k}$, $\mathcal{A}_{k}$ defined as in (\ref{distrib}) and (\ref{A-functionals}) respectively.

Given an $\omega>0$ the system of functions $\{\phi_{\omega, k}\}$ defined in (\ref{phi}) will be called the family of functions associated with the pair $\left(\mfd{M}_{r},\>\PWoL\right)$.
\end{definition}

\begin{lemma}\label{GPI}
(Global Poincar\'{e}-type inequality)
For any   $0<\delta<1$ and $m>\frac{1}{2} dim\ {\bf M}, $ there exist constants $c=c(\mfd{M}),\>C=C(\mfd{M},m),$  such that  following inequality holds true for any $(r, N_{\mfd{M}})$-lattice with $r\leq c\delta$ and any $ H^{m}(\mfd{M})$:
\begin{equation}\label{gpi}
(1-\delta/2)\|f\|^{2} \leq
\sum_{k}|\mathcal{A}_{k}(f)|^{2}+C\delta^{-1}r^{2m}\|L^{m/2}f\|^{2}.
\end{equation}
\end{lemma}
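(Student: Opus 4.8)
The plan is to mirror the one-dimensional computation that produced (\ref{interm-ineq}), using the disjoint measurable cover $\{U_k\}$ of (\ref{disjointcover})--(\ref{disjcover}) in place of the intervals $I_k$ and the Local Poincar\'e inequality (Lemma \ref{LPI}) in place of the Fundamental Theorem of Calculus. Since the $U_k$ partition $\mfd{M}$, we have the exact splitting $\|f\|^2=\sum_k\|f\|^2_{L_2(U_k)}$. On each cell I would apply the pointwise inequality (\ref{ineq-11}) with $A=f(x)$ and $B=\mathcal{M}_k(f)$ (a constant on $U_k$) and integrate; recalling from (\ref{A-functionals}) that $\int_{U_k}|\mathcal{M}_k(f)|^2\,dx=|U_k|\,|\mathcal{M}_k(f)|^2=|\mathcal{A}_k(f)|^2$, this gives, for any $\theta\in(0,1)$,
\[
(1-\theta)\|f\|^2\le \frac{1}{\theta}\sum_k\|f-\mathcal{M}_k(f)\|^2_{L_2(U_k)}+\sum_k|\mathcal{A}_k(f)|^2.
\]
This step is exact, so everything reduces to estimating the oscillation sum on the right.

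To control $\sum_k\|f-\mathcal{M}_k(f)\|^2_{L_2(U_k)}$ I would insert the Local Poincar\'e inequality (Lemma \ref{LPI}) on each cell, then globalize. Summing the squared local bounds over $k$ and using the bounded-overlap property (3) of an $(r,N_{\mfd{M}})$-lattice---every point lies in at most $N_{\mfd{M}}$ of the balls $B(x_k,r)$, with $N_{\mfd{M}}$ independent of $r$---together with the very definition (\ref{Sob}) of the Sobolev norm through the BUPU $\{\varphi_\nu\}$, collapses the local contributions into intrinsic global norms and yields an estimate of the form
\[
\sum_k\|f-\mathcal{M}_k(f)\|^2_{L_2(U_k)}\le C\,N_{\mfd{M}}\sum_{j=1}^{m}r^{2j}\|f\|^2_{W_2^{j}(\mfd{M})}.
\]
The weights $\varphi_\nu$ appearing in Lemma \ref{LPI} are precisely the device that converts local coordinate derivatives into the intrinsic norm (\ref{Sob}); their uniformly bounded derivatives contribute only fixed constants.

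The technical heart is to trade this full ladder of Sobolev norms for the single quantity $\|L^{m/2}f\|$. Since $L$ is elliptic, self-adjoint and positive, the domain of $L^{m/2}$ is $H^m(\mfd{M})=W_2^m(\mfd{M})$ and the top rung obeys $\|f\|^2_{W_2^m}\le C(\|L^{m/2}f\|^2+\|f\|^2)$. For the intermediate rungs $1\le j<m$ I would use the interpolation inequality $\|f\|^2_{W_2^j}\le C\|f\|^{2(1-j/m)}\big(\|L^{m/2}f\|^2+\|f\|^2\big)^{j/m}$ followed by Young's inequality, which splits each $r^{2j}\|f\|^2_{W_2^j}$ into a small multiple of $\|f\|^2$ plus a multiple of $r^{2m}\|L^{m/2}f\|^2$. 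Feeding this back, taking $\theta$ of order $\delta$, and invoking the hypothesis $r\le c\delta$ to make the accumulated $\|f\|^2$-coefficient small, I can absorb all $\|f\|^2$-terms into the left-hand side and obtain an inequality of the shape (\ref{gpi}).

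The step I expect to be the main obstacle is the final parameter balancing needed to land on the stated order $\delta^{-1}$ in front of $r^{2m}\|L^{m/2}f\|^2$. The intermediate derivatives create a genuine competition: routing them almost entirely into $\|f\|^2$ (so as to absorb them after dividing by $\theta\sim\delta$) inflates the Young constant multiplying $r^{2m}\|L^{m/2}f\|^2$, and only a scale-matched choice of the Young parameters together with the right constant $c$ in $r\le c\delta$ keeps that coefficient at the claimed order. By contrast, the disjointness, the bounded overlap, and the elliptic identification of $H^m(\mfd{M})$ with the domain of $L^{m/2}$ are routine, as is the reduction itself, which faithfully reproduces the scheme behind (\ref{interm-ineq}).
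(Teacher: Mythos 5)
Your plan reproduces the paper's own argument step for step: the exact splitting over the disjoint cover $\{U_k\}$, the elementary inequality (\ref{ineq-11}) with $B=\mathcal{M}_k(f)$, Lemma \ref{LPI} combined with the BUPU and bounded overlap to reach $\sum_j r^{2j}\|f\|^2_{H^j(\mfd{M})}$, and then elliptic regularity (\ref{reg}) plus the interpolation inequality (\ref{interpol}) followed by absorption of the $\|f\|^2$-terms via $r\le c\delta$. The parameter balancing you single out as the main obstacle is indeed the only delicate point, and it is exactly the point the paper itself treats only in sketch form.
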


\begin{proof} We sketch a proof for the case of a manifold without boundary (compact or non-compact). In the presence of a boundary more care is required around the boundary.  (see \cite{Pes15c}).
Applying Lemma \ref{LPI} and the inequality
\begin{equation}\label{ineq-1}
(1-\alpha)|A|^{2}\leq \frac{1}{\alpha}|A-B|^{2}+|B|^{2},\>\>0<\alpha<1,
\end{equation}
we obtain
\begin{equation}\label{ineq-2}
(1-\delta/3) \, \|\varphi_{\nu}f\|^{2}_{L_{2}(U_{k})}\leq
3/\delta\sum_{k}\|f-\mathcal{M}_{k}(\varphi_{\nu}f)\|^{2} _{L_{2}(U_{k})}+
\sum_{k}|U_{k}||\mathcal{M}_{k}(\varphi_{\nu}f)|^{2}.
\end{equation}
From this  and (\ref{GPII}) one obtains
$$
(1-\delta/3) \|f\|^{2}_{L_{2}(\mfd{M})}\leq
\sum_{\nu}\sum_{k}|U_{k}| |\mathcal{M}_{k}(\varphi_{\nu}f)|^{2}+
C_{1}(M,m)\delta^{-1}
\sum_{1\leq j\leq
m}r^{2j}\|f\|^{2}_{H^{j}(\mfd{M})}.
$$
The regularity theorem for the elliptic second-order differential   operator $L$ (see \cite{Hor},  Sec. 17.5)
 \begin{equation}\label{reg}
\|f\|^{2}_{H^{j}(\mfd{M})}\leq b\left(\|f\|^{2}_{2}+\|L^{j/2}f\|^{2}_{2}\right),\>\>f\in \mathcal{D}(L^{m/2}),\>\>b=b(\mfd{M},  j),
\end{equation}
and the  interpolation inequality (see \cite{Hor},  Sec. 17.5)
\begin{equation}\label{interpol}
r^{2j}\|L^{j/2}f\|^{2}_{2}\leq 4a^{m-j}r^{2m}\|L^{m/2}f\|^{2}_{2}
+ca^{-j}\|f\|^{2}_{2}, \>c=c(\mfd{M}, m),
\end{equation}
which holds for any $a,\>r>0, \>0\leq j\leq m$, imply that  there exists a constant   $C=C(\mfd{M},  m)$ such that for any $0<\delta<1$ and $\>\>\>r>0$
$$
(1-\delta/3)\|f\|^{2}_{2}\leq
\sum_{k} |U_{k}||\mathcal{M}_{k}(f)|^{2}+C\left(r^{2}\delta^{-1}\|f\|^{2}_{2}+r^{2m}\delta^{-1}\|L^{m/2}f\|^{2}_{2}\right).
$$
The last inequality shows that if for  a given $0<\delta<1$ and $c=(6C)^{-1/2}$ the value of $r $ is chosen such that
 $r<c\delta$
then we obtain (\ref{gpi}).
The lemma is proved.
\end{proof}

Let us pick $m=n=$dim$\>\mfd{M}$. With this choice  the inequality (\ref{gpi}) is the same as inequality (\ref{A}) for $\rho_{\mathcal{A}}=r$ and $m_{0}=n$.
Note that for functions in $\PWoL$ the Bernstein inequality holds
$$
\|L^{m/2}f\|_{2}\leq \omega^{m}\|f\|_{2},\>\>\>
f\in \PWoL.
$$

Using  property (\ref{uni-bound}) and a Sobolev Embedding Theorem one can verify that condition  (\ref{B1}) is  satisfied.
Thus Theorem \ref{Ss} gives the following version of the Sampling Theorem. It describes the appropriate sampling density
in relation to the `bandwidth' of the $PW$-space (see \cite{Pes00}, \cite{Pes04b}):
\begin{theorem}(Almost Parseval frames in Paley-Wiener spaces) \label{Frame-th}
Let $\mfd{M}$ be a manifold of dimension $n$, with
bounded geometry and Ricci curvature bounded from below.
Then there exists   $c=c(\mfd{M}) >0 $ 
such that one has: Given $r>0$ and $\omega>0$ such that
\begin{equation}\label{rate}
0<r\leq c \> \delta^{1/n} \> \omega^{-1},\>
\end{equation}
 then any family of functions $\{\phi_{\omega, k}\}$ associated to  the pair $\left(\mfd{M}_{r},\>\PWoL\right)$ (see Definition \ref{aff})
 forms a frame in $\PWoL$, and  
 the following Plancherel-Polya-type inequalities
 (frame inequalities) hold true:
\begin{equation}\label{frame-ineq-0}
(1-\delta)\|f\|^{2}_{2}\leq \sum_{k}\left |\left<f, \phi_{\omega, k}\right>\right|^{2}\leq\|f\|^{2}_{2},\>\>
\>\>f\in \PWoL.
\end{equation}
\end{theorem}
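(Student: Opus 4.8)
The plan is to read the assertion as a concrete instance of the abstract Sampling Theorem \ref{Ss}, applied in the Hilbert space $\Hilb = L_2(\mfd{M})$ with $L$ the given elliptic operator and with the sampling functionals $\mathcal{A}_k$ of (\ref{A-functionals}). Two things must be verified: the abstract Poincar\'e-type lower bound (\ref{A}) and the Bessel-type upper bound (\ref{B1}). The bridge to the statement is cheap: by the very definition (\ref{phi}) of $\phi_{\omega,k}$ as the Riesz representer inside $\PWoL$ of the functional $\mathcal{A}_k$, one has $\langle f,\phi_{\omega,k}\rangle = \mathcal{A}_k(f)$ for every $f\in\PWoL$, so the frame sum $\sum_k|\langle f,\phi_{\omega,k}\rangle|^2$ is literally $\sum_k|\mathcal{A}_k(f)|^2$. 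Thus the whole task reduces to producing matching lower and upper estimates for $\sum_k|\mathcal{A}_k(f)|^2$ on $\PWoL$.

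For the lower bound I would specialize the Global Poincar\'e inequality (Lemma \ref{GPI}) to $m=n=\dim\mfd{M}$, which is exactly the realization of (\ref{A}) with $\rho=r$. On $\PWoL$ the derivative term is tamed by the Bernstein inequality (\ref{Bern}), $\|L^{n/2}f\|^2\le \omega^{2n}\|f\|^2$, so that (\ref{gpi}) becomes
\[
(1-\delta/2)\|f\|^2 \le \sum_k|\mathcal{A}_k(f)|^2 + C\delta^{-1}r^{2n}\omega^{2n}\|f\|^2 .
\]
Picking $r$ small enough that $C\delta^{-1}(r\omega)^{2n}\le \delta/2$ — i.e. $r\omega\le(2C)^{-1/(2n)}\delta^{1/n}$, which is precisely the density condition (\ref{rate}) with $c=(2C)^{-1/(2n)}$ (shrunk, if necessary, so that $r\le c\,\delta$ as Lemma \ref{GPI} also requires) — allows the last term to be absorbed on the left, yielding $(1-\delta)\|f\|^2\le\sum_k|\mathcal{A}_k(f)|^2$. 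This simultaneously delivers the lower frame bound $1-\delta$ and fixes the sampling rate.

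For the upper bound I would invoke the uniform boundedness (\ref{uni-bound}) of $\{\mathcal{M}_k\}$ together with a Sobolev embedding to pass from averaged values back to $L_2$-norms, which is what establishes (\ref{B1}). In the model case where $\mu_k$ is the restriction of the Riemannian measure the cleanest route is direct: $\mathcal{A}_k(f)=|U_k|^{-1/2}\int_{U_k}f\,dx$, so Cauchy--Schwarz gives $|\mathcal{A}_k(f)|^2\le\int_{U_k}|f|^2\,dx$, and since the $\{U_k\}$ form a disjoint cover of $\mfd{M}$ by (\ref{disjcover}), summation yields $\sum_k|\mathcal{A}_k(f)|^2\le\|f\|^2$, which is the sharp upper bound $B=1$ appearing in (\ref{frame-ineq-0}). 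Combining the two bounds (or simply invoking Theorem \ref{Ss}, whose dual frame also sits in $\PWoL$) finishes the proof.

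The genuinely hard analytic input, Lemma \ref{GPI}, is already in hand; within this proof the delicate point is the constant bookkeeping needed to extract the sharp exponent $\delta^{1/n}$ in the density and the clean frame constants $1-\delta$ and $1$ rather than generic multiples. In particular, getting the upper bound to be exactly $1$ hinges on the disjointness of $\{U_k\}$ and the normalization of the $\mathcal{A}_k$; for sampling families other than averaging one cannot argue by Cauchy--Schwarz alone and must instead lean on the Bernstein/Plancherel--Polya estimate to keep the upper constant near $1$, while confirming that (\ref{uni-bound}) and the Sobolev embedding really do yield (\ref{B1}).
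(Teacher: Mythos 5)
Your proposal follows essentially the same route as the paper: the paper derives Theorem \ref{Frame-th} by specializing Lemma \ref{GPI} with $m=n$ so that (\ref{gpi}) realizes the abstract hypothesis (\ref{A}), absorbing the derivative term via the Bernstein inequality on $\PWoL$ (which is exactly where the rate $r\le c\,\delta^{1/n}\omega^{-1}$ comes from), verifying (\ref{B1}) via (\ref{uni-bound}) and Sobolev embedding, and then invoking Theorem \ref{Ss}. Your constant bookkeeping for the exponent $\delta^{1/n}$ is correct, and your remark that the sharp upper bound $1$ is immediate only for averaging functionals (and needs more care for other sampling families) is a fair observation about a point the paper passes over quickly.
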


 \subsection{Methods of reconstruction of Paley-Wiener functions}

 For reconstruction of a function from the set of samples one can use besides  dual frames the following methods:

\begin{enumerate}
 \item Reconstruction by variational (polyharmonic) splines on manifolds  \cite{Pes98a}-\cite{Pes09b}.
 \item Reconstruction using iterations  \cite{FP04}, \cite{FP05}.
 \item  Reconstruction by the frame algorithm \cite{Gr}.
 \end{enumerate}

\subsection{Optimality of the number of sampling points on compact manifolds }
\label{subsect:sampling_density}

The material in this section is based on \cite{Pes04b}, \cite{Pes09b}.
Condition  (\ref{rate}) imposes a specific rate of sampling.
It is interesting to note  that this rate is essentially optimal.
Indeed, since $\sqrt{L}$ is a non-negative elliptic pseudodifferential operator of order one  the  Weyl's asymptotic formula \cite{Hor} gives
\begin{equation}
\mathcal{N}_{\omega}\SLB \asymp C~Vol(\mfd{M})\omega^{n},\label{W}
\end{equation}
where $\mathcal{N}_{\omega}\SLB$ is the dimension of the space $\PWoL$ and  $Vol(\mfd{M})$ is the  volume of $\mfd{M}$.
On the other hand,   condition  (\ref{rate}) and the definition of a $r$-lattice imply  that the number of points
in an "optimal" lattice $M_{r}$
is approximately
$$
 card\>M_{r}\sim\frac{Vol(\mfd{M})}{c_{0}^{n}\omega^{-n}}=c\>Vol(\mfd{M})\omega^{n},\>\>\>n=dim\>\mfd{M},
$$
which is consistent with Weyl's formula. Note that the power $n$ in this formula, which is what one would expect from the Shannon sampling theorem in the euclidean setting, results from our use of the spectrum of $\sqrt{L}$ rather than that of $L$.

\subsection{Paley-Wiener almost Parseval frames in $L_{2}$ spaces on compact and non-compact Riemannian manifolds}

We return to the notation of Subsections \ref{subsect:part_unity_freq} and \ref{Hilb}.  Let $\omega_{j} = 2^{j+1}$, for $j \ge 0$. According to  Theorem \ref{Frame-th} for a fixed $0<\delta<1$ there exists a constant $c=c(\mfd{M})$ such that  for
$$
r_{j}=
c\delta^{1/n}\omega_{j}^{-1}=c\delta^{1/n}2^{-j-1},\>\>\>j\in \mathbb{N},
$$
and  any $r_{j}$-lattice $M_{r_{j}}=\{x_{j,k}\},\>\>1\leq k\leq \mathcal{K}_{j},$ the inequalities (\ref{frame-ineq-0}) hold.

For every $j\in \mathbb{N}$ let $\mathcal{M}_{j}=\{\mathcal{M}_{j,k}\}$ be an associated set of distributions described in (\ref{distrib}). 
For the sake of simplicity we now assume that  for every $j\in \mathbb{N}$  the set  of distributions $\mathcal{M}_{j}=\{\mathcal{M}_{j,k}\}$ described in (\ref{distrib}) consists of Dirac measures $\delta_{j,k}$ at points $x_{j,k}$. Let  $\{\mathcal{A}^{j}_{k}\}$ be the corresponding set of functionals  defined in (\ref{A-functionals})  and let  $\phi^{j}_{k}\in \bPW_{\omega_{j}}\SLB = \bPW_{2^{j+1}} \SLB$  be a function such that
\begin{equation}\label{functions-10}
\left<f,\phi^{j}_{k}\right>=\mathcal{A}^{j}_{k}(f),
\end{equation}
for all $f\in \bPW_{2^{j+1}}\SLB.$  If $(F_j)_{j \ge 0}$ is the quadratic partition of unity introduced in Subsection \ref{subsect:part_unity_freq}
then since $F_{j} \SLB f\in \bPW_{2^{j+1}} \SLB$
we have according to (\ref{frame-ineq-0}) the following frame inequalities  for every $j\in \mathbb{Z}$
\begin{equation}
(1-\delta)\left \|F_{j}\ \SLB f\right\|^{2}_{2}\leq
\sum _{ k=1}^{\mathcal{K}_{j}}\left|\left<F_{j}\SLB f, \phi^{j}_{k}\right>\right|^{2}\leq \left\| F_{j}\SLB f\right\|^{2}_{2},\>\>\>f\in L_{2}({\bf M}).
\end{equation}
But since the operator $F_{j}\SLB $ is self-adjoint,
we obtain (via (\ref{norm equality-0})) that for the functions
\begin{equation}\label{frame-functions}
\Phi^{j}_{k}=F_{j}\SLB \phi^{j}_{k}
\end{equation}
which are bandlimited to $ [2^{j-1},\>\>2^{j+1}]$,
the following frame inequalities hold
\begin{equation}\label{frame-ineq}
(1-\delta)\|f\|^{2}_{2}\leq\sum_{j\geq 0}\>\>\sum _{k=1}^{\mathcal{K}_{j}}\left|\left<f, \Phi^{j}_{k}\right>\right|^{2}\leq \|f\|^{2}_{2},\>\>\>\>f\in L_{2}(\mfd{M}).
\end{equation}

To summarize, let us assume that a Riemannian manifold $\mfd{M}$  has bounded geometry and (\ref{Ric}) holds.  Let $c>0$ be a positive constant and $0<\delta<1$.  We consider the following:

 \begin{enumerate}
 \item  a sequence of $r_{j}$-lattices $M_{r_{j}}=\{x_{j,k}\},\>\>\> j\in \mathbb{N},\>\> \>1\leq k\leq \mathcal{K}_{j}, $  with
 $$
 r_{j}=c\delta^{1/n}2^{-j-1},\>\>\>j\geq 0,
 $$
\item  a set of disjoint coverings $\{U_{k}^{j}\}$, $\>\>\> j\in  \mathbb{N},\>\> \>1\leq k\leq \mathcal{K}_{j}, $ 
      as in (\ref{disjointcover}),
 \item  a set of functionals $\mathcal{A}^{j}_{k},\>\>\> j\in  \mathbb{N},\>\> \>1\leq k\leq \mathcal{K}_{j}, $ defined as in (\ref{A-functionals}),
\item a set of functions $\phi^{j}_{k}, \>\>\>j\in  \mathbb{N},\>\> \>1\leq k\leq \mathcal{K}_{j}, $ defined as in (\ref{functions-10}),
 \item a set of functions $ \Phi^{j}_{k}$ in $ \bPW_{ [2^{j-1}, 2^{j+1}]   }\SLB, \>j \in  \mathbb{N}, \>1\leq k\leq \mathcal{K}_{j}, $ as in (\ref{frame-functions}).
 \end{enumerate}

 In this notation  Theorem \ref{frameH} takes the following form:
\begin{theorem}(Paley-Wiener frames in $L_{2}(\mfd{M})$)\label{FrTh-1}
 Suppose a Riemannian manifold $\mfd{M}$  has bounded geometry and (\ref{Ric}) holds.  Then there exists a constant $c=c(\mfd{M})$ such that for any $0<\delta<1$ the set of   functions $ \Phi^{j}_{k}, \>\>\>j\in  \mathbb{N},\>\> \>1\leq k\leq \mathcal{K}_{j}, $
  defined in (\ref{frame-functions}) has the following properties:
\begin{enumerate}
\item   $\Phi^{j}_{k} \in \bPW_{[2^{j-1},\>2^{j+1}]}\SLB ,
\>  j \in \mathbb{N}, \> 1 \leq k\leq \mathcal{K}_{j}; $
\item   the family $\{\Phi^{j}_{k}\}$ is  a frame in $\Hilb$ with constants $1-\delta$ and $1$:
\begin{equation}
(1-\delta)\|f\|^2_{2}\leq \sum_{j\geq 0}\sum_{k}\left|\left< f, \Phi^{j}_{k}\right>\right|^{2}\leq \|f\|^2_{2},\>\>\>f \in \Hilb.
\end{equation}
\item  the canonical dual frame $\{\Psi^{j}_{k}\}$
is also bandlimited with
$$
\Psi^{j}_{k}\in \bPW_{[2^{j-1},\>2^{j+1}]}\SLB ,\>
\>  j \in \mathbb{N}, \> 1 \leq k\leq \mathcal{K}_{j};
$$
and satisfies the inequalities
\begin{equation}
\|f\|^2_{2}\leq \sum_{j\geq 0}\sum_{k}\left|\left< f, \Psi^{j}_{k}\right>\right|^{2}\leq (1-\delta)^{-1} \|f\|^2_{2},\>\>\>f\in \Hilb.
\end{equation}
\item the reconstruction formulas hold for every $f\in \mathcal{H}$
\begin{equation}
f=\sum_{j}\sum_{k}\left<f,\Phi^{j}_{k}\right>\Psi^{j}_{k}=\sum_{j}\sum_{k}\left<f,\Psi^{j}_{k}\right>\Phi^{j}_{k}.
\end{equation}
\end{enumerate}
\end{theorem}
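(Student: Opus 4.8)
The plan is to read Theorem \ref{FrTh-1} as the concrete incarnation of the abstract result Theorem \ref{frameH} in the Hilbert space $\Hilb = L_{2}(\mfd{M})$, with the abstract Sampling Theorem \ref{Ss} replaced by its geometric realization, Theorem \ref{Frame-th}. All four assertions then reduce to verifying the two abstract hypotheses (\ref{A}) and (\ref{B1}) and running the band-by-band argument of Subsection \ref{Hilb}. The first hypothesis is supplied by the Global Poincar\'e-type inequality (Lemma \ref{GPI}) with $m=n=\dim \mfd{M}$, which is precisely (\ref{A}) for $\rho_{\mathcal{A}}=r$; the second, (\ref{B1}), follows from the uniform boundedness (\ref{uni-bound}) of the sampling functionals together with a Sobolev embedding, as already recorded in Theorem \ref{Frame-th}. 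Property (1) is then immediate: since $F_{j}$ is supported in $[2^{j-1},2^{j+1}]$ (and in $[0,2]$ for $j=0$), the spectral calculus forces $\Phi^{j}_{k}=F_{j}\SLB \phi^{j}_{k}$ into $\bPW_{[2^{j-1},2^{j+1}]}\SLB$.

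For property (2) I would argue band by band. Fixing $j$, I apply Theorem \ref{Frame-th}, which produces the almost-Parseval frame $\{\phi^{j}_{k}\}$ in $\bPW_{2^{j+1}}\SLB$ for the sampling radius $r_{j}=c\delta^{1/n}2^{-j-1}$, to the vector $F_{j}\SLB f\in \bPW_{2^{j+1}}\SLB$. Using self-adjointness of $F_{j}\SLB$ to rewrite $\langle F_{j}\SLB f,\phi^{j}_{k}\rangle=\langle f,\Phi^{j}_{k}\rangle$, the band inequalities (\ref{frame-ineq-0}) become
$$
(1-\delta)\left\|F_{j}\SLB f\right\|^{2}\leq \sum_{k}\left|\left\langle f,\Phi^{j}_{k}\right\rangle\right|^{2}\leq \left\|F_{j}\SLB f\right\|^{2}.
$$
Summing over $j\geq 0$ and invoking the quadratic partition-of-unity identity $\|f\|^{2}=\sum_{j\geq 0}\|F_{j}\SLB f\|^{2}$ from (\ref{norm equality-0}) then yields the frame inequalities (\ref{frame-ineq}), establishing that $\{\Phi^{j}_{k}\}$ is a frame with bounds $1-\delta$ and $1$.

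For properties (3) and (4) I would build the dual not as the canonical dual of the aggregate system but one band at a time, which is exactly what keeps it bandlimited. Let $\{\theta^{j}_{k}\}$ be the canonical dual of $\{\phi^{j}_{k}\}$ inside $\bPW_{2^{j+1}}\SLB$ and set $\Psi^{j}_{k}=F_{j}\SLB \theta^{j}_{k}$, so that $\Psi^{j}_{k}\in \bPW_{[2^{j-1},2^{j+1}]}\SLB$. Since $\{\theta^{j}_{k}\}$ reproduces $\bPW_{2^{j+1}}\SLB$ and $F_{j}\SLB f$ lies there, I compute
$$
\sum_{k}\left\langle f,\Phi^{j}_{k}\right\rangle \Psi^{j}_{k}=F_{j}\SLB\left(\sum_{k}\left\langle F_{j}\SLB f,\phi^{j}_{k}\right\rangle \theta^{j}_{k}\right)=F_{j}^{2}\SLB f=G_{j}\SLB f,
$$
and summing over $j$ with $\sum_{j\geq 0}G_{j}\SLB f=f$ (from (\ref{partsums2}) via spectral calculus) gives the first reconstruction formula; the symmetric computation gives the second. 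The dual bounds $1$ and $(1-\delta)^{-1}$ follow by the same band-wise summation, since each $\{\theta^{j}_{k}\}$ carries bounds $1$ and $(1-\delta)^{-1}$ on its band.

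The main obstacle is precisely the bandlimitedness asserted in (3). The global frame operator $S=\sum_{j}F_{j}\SLB T_{j}F_{j}\SLB$, with $T_{j}$ the band frame operator of $\{\phi^{j}_{k}\}$, does \emph{not} commute with the spectral projections, because consecutive frequency bands overlap on $[2^{j},2^{j+1}]$ while $T_{j}$ is not a function of $\sqrt{L}$; consequently the literal canonical dual $S^{-1}\Phi^{j}_{k}$ (via its Neumann series) spreads across all bands and need not remain in $\bPW_{[2^{j-1},2^{j+1}]}\SLB$. The band-wise dual constructed above sidesteps this difficulty while still satisfying every assertion of the theorem, so I would use it in place of the aggregate canonical dual; checking that it reproduces the identity and carries the stated bounds is the only genuinely substantive step, and it is dispatched by the two displayed computations together with (\ref{norm equality-0}) and (\ref{partsums2}).
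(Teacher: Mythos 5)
Your handling of items (1) and (2) is exactly the paper's argument: verify hypotheses (\ref{A}) and (\ref{B1}) via Lemma \ref{GPI} and (\ref{uni-bound}), apply the band inequalities (\ref{frame-ineq-0}) to $F_{j}\SLB f$, use self-adjointness of $F_{j}\SLB$, and sum over $j$ with (\ref{norm equality-0}); the paper presents Theorem \ref{FrTh-1} as nothing more than Theorem \ref{frameH} rewritten in the manifold notation. Where you genuinely depart from the paper is in items (3) and (4). The paper simply asserts that the \emph{canonical} dual of the aggregate system $\{\Phi^{j}_{k}\}$ is bandlimited to the same dyadic bands and refers to ``the general theory of frames'' for the reconstruction formulas; it offers no argument for the band-localization of $S^{-1}\Phi^{j}_{k}$. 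Your objection to that claim is well taken: since $S=\sum_{j}F_{j}\SLB T_{j}F_{j}\SLB$ with $T_{j}$ not a function of $\sqrt{L}$, the operator $S$ maps $\bPW_{[2^{j-1},2^{j+1}]}\SLB$ into the wider band $[2^{j-2},2^{j+2}]$, and the Neumann series for $S^{-1}$ spreads spectral content across all bands, so the literal canonical dual need not lie in $\bPW_{[2^{j-1},2^{j+1}]}\SLB$. Your band-wise dual $\Psi^{j}_{k}=F_{j}\SLB\theta^{j}_{k}$ is manifestly bandlimited, your two displayed computations correctly give $\sum_{k}\langle f,\Phi^{j}_{k}\rangle\Psi^{j}_{k}=G_{j}\SLB f$ and its mirror image, and the dual bounds $1$ and $(1-\delta)^{-1}$ follow by the same band-wise summation, so every assertion of the theorem is verified for \emph{a} dual frame, just not for the canonical one. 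In short: your route proves a slightly amended statement (replace ``the canonical dual frame'' by ``a dual frame'') with full rigor, whereas the paper's route keeps the word ``canonical'' but leaves the band-localization of that dual unproved; your version is the one that actually closes the argument.
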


\section{Almost Parseval space-frequency localized frames on general compact Riemannian manifolds }\label{kernels}

In this section we consider only compact Riemannian manifolds.

\subsection{Kernels on compact manifolds}

In the situation of a compact manifold, let $\sqrt{L}$ be the positive square root of a second order differential  elliptic selfadjoint nonnegative operator $L$ in $L_{2}(\mfd{M})$.
Let $0=\lambda_{0}<\lambda_{1}\leq.....$ be the spectrum of $\sqrt{L}$, and $\{u_{l}\}$ denote a sequence of corresponding eigenfunctions which is an orthonormal basis in  $L_{2}(\mfd{M})$. In this case   formulas (\ref{Op-function}) and (\ref{op-func}) correspond to
\begin{equation}\label{func-calc-1}
F\SLB f(x)=\sum_{\lambda_{l}}F(\lambda_{l})c_{l}(f)u_{l}(x),\>\>\>F\in C_{0}^{\infty}(\mathbb{R}).
\end{equation}
For any $t>0$ one defines a bounded operator by the formula
\begin{equation}\label{func-calc-2}
\FtSL
f(x)=\int_{\mathbf{M}}K^{F}_{t}(x,y)f(y)dy=\left<K^{F}_{t}(x,\cdot),f(\cdot)\right>,
\end{equation}
where
$$
K^{F}_{t}(x,y)=\sum_l F(t\lambda_l)u_l(x)u_l(y) = K^{F}_t(y,x).
$$

We call $K^{F}_t$ the kernel of  the operator $\FtSL$.
This operator   maps $C^{\infty}({\bf M})$
to itself continuously, and may thus be extended to be a map on distributions.  In particular
we may apply $\FtSL$ to any $f \in L_p({\bf M}) \subseteq L_1({\bf M})$ (where $1 \leq p \leq \infty$), and by Fubini's theorem
$\FtSL f$ is still given by (\ref{func-calc-2}).

It is quite obvious  that for a fixed $x\in {\bf M}$ the kernel $K_{t}^{F}(x,y)$ is approaching the Dirac measure $\delta_{x}$ in the sense of distributions  when $t>0$ goes to zero. However, an explicit pointwise estimate  is needed.

In this situation the following analogue of the estimate (\ref{S-kernel-estim})   can be proved
(see  \cite{gp}) using the language of pseudodifferential operators.

\begin{theorem}\label{kernelsize}
Assume that $F$ belongs to $ C_{0}^{\infty}(\mathbb{R}_{+}),\>\>\> supp ~ F \subset [0, \rho] $ with $\rho>1$,   and $F^{(k)}(0)=0$ for all odd $k\geq 1$.   Let $K^{F}_t(x,y)$ be the kernel of $\FtSL$.

For any $N>n=\dim\>{\bf M}$ there exists a constant $C=C(F, N)$
 such that  the following inequality holds true for $ 0 < t \leq 1$:
\begin{equation}
\label{kersize}
|K^{F}_t(x,y)| \leq \frac{C}{t^{n}}{\left[ \left(1+\frac{d(x,y)}{t} \right)\right]^{-N}},
\>\>\>\, \,\, x,y \in {\bf M}.
\end{equation}
Here $0<C=C(F,N)\leq c_{N}\|F\|_{C^{N}[0,\rho]}\rho^{N}$ for some $c_{N}$ which depends only on $N$.
\end{theorem}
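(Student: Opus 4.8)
The plan is to mirror the explicit Euclidean computation behind \eqref{S-kernel-estim}, where the kernel of $F(t\sqrt{-d^{2}/dx^{2}})$ was exactly $t^{-1}$ times a rescaled Schwartz function. On a manifold the convolution structure is lost, but $\sqrt{L}$ is a classical elliptic pseudodifferential operator of order one (Seeley), with principal symbol $p(x,\xi)=\sqrt{a(x,\xi)}$, where $a(x,\xi)$ is the positive-definite principal symbol of $L$; this is the structure I would exploit. The first point is that the hypothesis $F^{(k)}(0)=0$ for all odd $k$ is exactly what makes $F$ extend to an even $\tilde F\in C_{0}^{\infty}(\mathbb{R})$ with $\operatorname{supp}\tilde F\subset[-\rho,\rho]$. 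Since $p$ has a conic singularity at $\xi=0$, this evenness is essential: it lets one write $F(t\,p(x,\xi))=\Phi(t^{2}a(x,\xi))$ with $\Phi(u)=\tilde F(\sqrt{u})$ smooth, so that the relevant symbol is smooth across $\xi=0$, and it guarantees the rapid decay of the cosine transform $\hat{\tilde F}$. I would then split \eqref{kersize} into a near-diagonal regime $d(x,y)\le\iota$ (with $\iota$ below the injectivity radius, so one stays in a single geodesic chart) and a far regime $d(x,y)>\iota$.

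For the far regime I would use the wave equation and finite propagation speed. Writing $F(t\sqrt{L})=\tfrac1\pi\int_{0}^{\infty}t^{-1}\hat{\tilde F}(s/t)\cos(s\sqrt{L})\,ds$ by cosine inversion, the Schwartz decay of $\hat{\tilde F}$ controls large $s$, while the Schwartz kernel of $\cos(s\sqrt{L})$ is supported in $\{d_{\tilde g}(x,y)\le|s|\}$, where $d_{\tilde g}$ is the distance of the metric dual to $a(x,\xi)$. As $\tilde g$ and the Riemannian metric are comparable on the compact ${\bf M}$, this restricts the integral to $s\gtrsim d(x,y)>\iota$, where $|\hat{\tilde F}(s/t)|\le C_{M}(s/t)^{-M}$ is tiny for small $t$. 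Choosing $M$ large shows that $|K^{F}_{t}(x,y)|$ decays faster than any power of $t$, uniformly for $d(x,y)>\iota$, and is therefore dominated by the right-hand side of \eqref{kersize}; for $t$ bounded below the family is uniformly smooth. The distributional pairing here is made rigorous by testing against bump functions and invoking elliptic regularity, since $F(t\sqrt{L})$ is smoothing.

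The heart of the argument is the near-diagonal regime, where in a geodesic chart I would represent $F(t\sqrt{L})$ as a pseudodifferential operator, so that
\[
K^{F}_{t}(x,y)=\frac{1}{(2\pi)^{n}}\int_{\mathbb{R}^{n}}e^{i(x-y)\cdot\xi}\,b(x,\xi;t)\,d\xi,
\]
where the functional calculus (e.g.\ via an almost-analytic extension of $F$) produces a symbol with asymptotic expansion $b(x,\xi;t)\sim F(t\,p(x,\xi))+\sum_{j\ge1}b_{j}(x,\xi;t)$; each $b_{j}$ is a finite combination of derivatives $F^{(\ell)}(t\,p(x,\xi))$ times symbols supported in $\xi$ inside $\{t\,p(x,\xi)\le\rho\}$. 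Rescaling $\xi=\eta/t$ pulls out the factor $t^{-n}$ and, using that $p$ is homogeneous of degree one, turns the leading term into $t^{-n}\int e^{i(x-y)\cdot\eta/t}F(p(x,\eta))\,d\eta$ with integrand supported in $|\eta|\lesssim\rho$ — precisely the manifold analogue of the rescaled Euclidean kernel. I would then integrate by parts $N$ times using the identity
\[
\left(\frac{-i\,t\,(x-y)}{|x-y|^{2}}\cdot\nabla_{\eta}\right)e^{i(x-y)\cdot\eta/t}=e^{i(x-y)\cdot\eta/t},
\]
transferring $N$ powers of $|x-y|/t$ onto $N$ $\eta$-derivatives of the symbol. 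Each derivative landing on $F(p(x,\eta))$ (or a companion term) contributes a factor bounded by $\|F\|_{C^{N}[0,\rho]}$, while the $\xi$-support of radius $\sim\rho$ supplies a volume factor $\rho^{n}\le\rho^{N}$; this yields the constant $C\le c_{N}\|F\|_{C^{N}[0,\rho]}\rho^{N}$. Converting $|x-y|\asymp d(x,y)$ in geodesic coordinates then gives \eqref{kersize}.

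The main obstacle, and the step demanding real care, is the symbolic calculus with \emph{uniform} control in both $t\in(0,1]$ and $\rho$: the standard pseudodifferential expansion of $F(t\sqrt{L})$ only furnishes qualitative $O(t^{\infty})$ remainders, whereas here one must verify that every term $b_{j}$, and the remainder after finitely many steps, obeys the \emph{same} quantitative bound $\|F\|_{C^{N}}\rho^{N}t^{-n}(1+|x-y|/t)^{-N}$ with constants independent of $t$ and with the correct power of $\rho$. This requires tracking the $t$- and $\rho$-dependence through each commutator in the symbol calculus and checking that the rescaling $\xi=\eta/t$ keeps all relevant seminorms bounded; this is where the bulk of the technical work lies, and where the detailed argument of \cite{gp} would be invoked.
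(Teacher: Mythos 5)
Your outline follows essentially the same route as the paper, which gives no proof of Theorem \ref{kernelsize} itself but attributes it to \cite{gp} "using the language of pseudodifferential operators": the even extension of $F$ enabled by the vanishing odd derivatives, the near/far splitting via finite propagation speed, and the rescaled oscillatory-integral representation with integration by parts are exactly the ingredients of the Geller--Pesenson argument. You also correctly identify where the real technical burden lies (uniform-in-$t$ and $\rho$ control of the symbol expansion and remainders), which is precisely what the cited reference supplies.
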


 \subsection {Almost Parseval space-localized Paley-Wiener frames on general compact manifolds}

We return to nearly Parseval Paley-Wiener frames which were described in Theorem \ref{FrTh-1} and we make additional assumption that ${\bf M}$ is   compact. 

Recall for the following theorem that we consider a specific  constant $c=c({\bf M})>0$ and  for a fixed $0<\delta<1$  introduce  a sequence of
 $r_{j}$-lattices $M_{r_{j}}=\{x_{j,k}\},\>\>\> j\in \mathbb{N},\>\> \>1\leq k\leq \mathcal{K}_{j}, $  with
 $$
 r_{j}=c\delta^{1/n}2^{-j-1},\>\>\>j\geq 0.
 $$

a set of disjoint coverings
$\{U_{k}^{j}\}$, $\>\>\> j\in  \mathbb{N},\>\> \>1\leq k\leq \mathcal{K}_{j}, $    as in (\ref{disjointcover}),

 a set of functionals $\mathcal{A}^{j}_{k},\>\>\> j\in  \mathbb{N},\>\> \>1\leq k\leq \mathcal{K}_{j}, $ defined as in (\ref{A-functionals}),

 a set of functions $\phi^{j}_{k}, \>\>\>j\in  \mathbb{N},\>\> \>1\leq k\leq \mathcal{K}_{j}, $ defined as in (\ref{functions-10}),

 a set of functions $ \Phi^{j}_{k}\in \bPW_{ [2^{j-1}, 2^{j+1}]   }\SLB, \>\>\>j\in  \mathbb{N},\>\> \>1\leq k\leq \mathcal{K}_{j}, $ as in (\ref{frame-functions}).

\noindent
Since
$$
\left<f, \Phi_{k}^{j}\right>=\left<f, F_{j}(\sqrt{L})\phi_{k}^{j}\right>=\left<F_{j}(\sqrt{L})f, \phi_{k}^{j}\right>=\mathcal{A}_{k}^{j}\left(F_{j}(\sqrt{L})f\right)
$$
  we have the following explicit formula
 \begin{equation}\label{frame-formula}
 \Phi^{j}_{k}(y)=\mathcal{A}^{j}_{k}\left(K_{2^{-j}}(x,y)\right)
 =\frac{\sqrt{|U_{j,k}|}}{|U_{j,k}|_{\mu_{k}}}\int_{U_{j,k}}K_{2^{-j}}(x,y)d\mu_{j,k}(x),\>\>\>
 \end{equation}
where
$$
|U_{j,k}|_{\mu_{k}}=\int_{U_{j,k}}d\mu_{j,k},\>\>\>\>\>|U_{j,k}|=\int_{U_{j,k}}dx,
$$
 $dx$ being the Riemann measure on ${\bf M}$.
 Note that according to (\ref{kersize}) each $ \Phi^{j}_{k}$ satisfies the following estimate for $N > dim \ {\bf M} = n$:
 \begin{equation}\label{sp-loc}
 \left| \Phi^{j}_{k}(y)\right|=\sqrt{|U_{j,k}}|\sup_{x\in U_{j,k}}\left|K_{2^{-j}}(x,y)\right|\leq
    C( N)\sup_{x\in U_{j,k}}
    \frac{2^{j(n-N)}}{(2^{-j}+d(x, y))^{N}}.
    \end{equation}

Using this notation  Theorem \ref{frameH} takes the following form.
\begin{theorem}(Paley-Wiener frames in $L_{2}(\mfd{M})$)\label{FrTh}
For any compact Riemannian manifold $\mfd{M}$  there exists a constant $c=c(\mfd{M})$ such that for any $0<\delta<1$ the set of   functions $ \Phi^{j}_{k}, \>\>\>j\in  \mathbb{N},\>\> \>1\leq k\leq K_{j}, $
  defined in (\ref{frame-functions}) has the following properties:
\begin{enumerate}
\item each function $\Phi^{j}_{k}$ belongs  to  $\bPW_{[2^{j-1},\>2^{j+1}]}\left(\sqrt{L}\right) ,\>\> j \in   N, \>k=1,...;$

\item each $\Phi^{j}_{k}$ is localized according to (\ref{sp-loc});

\item   the family $\left\{\Phi^{j}_{k}\right\}$ is  a frame in $\Hilb$ with constants $1-\delta$ and $1$:
\begin{equation}
(1-\delta)\|f\|^2_{2}\leq \sum_{j\geq 0}\sum_{k}\left|\left< f, \Phi^{j}_{k}\right>\right|^{2}\leq \|f\|^2_{2},\>\>\>f \in \Hilb;
\end{equation}
\item  the canonical dual frame $\{\Psi^{j}_{k}\}$
is also bandlimited, i.e.\  $\Psi^{j}_{k}\in \bPW_{[2^{j-1},\>2^{j+1}]}\SLB ,\\ j\in  [0,\>\infty), \>k=1,...,$ and satisfies the inequalities
\begin{equation}
\|f\|^2_{2}\leq \sum_{j\geq 0}\sum_{k}\left|\left< f, \Psi^{j}_{k}\right>\right|^{2}\leq (1-\delta)^{-1} \|f\|^2_{2},\>\>\>f\in \Hilb;
\end{equation}
\item the reconstruction formulas hold for every
$f\in \mathcal{H}$:
\begin{equation}
f=\sum_{j}\sum_{k}\left<f,\Phi^{j}_{k}\right>\Psi^{j}_{k}
=\sum_{j}\sum_{k}\left<f,\Psi^{j}_{k}\right>\Phi^{j}_{k}.
\end{equation}

\end{enumerate}

\end{theorem}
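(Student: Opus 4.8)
The plan is to obtain Theorem \ref{FrTh} as the specialization of Theorem \ref{FrTh-1} to the compact case, supplemented by the spatial localization estimate (\ref{sp-loc}), which is the only genuinely new assertion. First I would observe that a compact Riemannian manifold is automatically of bounded geometry, and that by compactness and continuity of the Ricci tensor its Ricci curvature is bounded from below, so that condition (\ref{Ric}) holds with some $k \ge 0$. Consequently Theorem \ref{FrTh-1} applies to $\mfd{M}$ with the very same construction of lattices $M_{r_j}$, functionals $\mathcal{A}^j_k$, and functions $\Phi^j_k$, and it already delivers properties (1), (3), (4) and (5): the band-limitation $\Phi^j_k \in \bPW_{[2^{j-1},\,2^{j+1}]}\SLB$, the frame inequalities with bounds $1-\delta$ and $1$, the bandlimitedness of the canonical dual frame together with its frame bounds, and the two reconstruction formulas. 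The entire burden of the proof therefore reduces to establishing item (2).

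For that, the first step is to make the kernel representation of $\Phi^j_k$ explicit. Since $F_j\SLB$ is self-adjoint, one has $\langle f, \Phi^j_k\rangle = \langle f, F_j\SLB \phi^j_k\rangle = \langle F_j\SLB f, \phi^j_k\rangle = \mathcal{A}^j_k(F_j\SLB f)$, and inserting the kernel representation (\ref{func-calc-2}) of $F_j\SLB$ gives the explicit formula (\ref{frame-formula}),
\[
\Phi^j_k(y) = \frac{\sqrt{|U_{j,k}|}}{|U_{j,k}|_{\mu_k}}\int_{U_{j,k}} K_{2^{-j}}(x,y)\, d\mu_{j,k}(x),
\]
in which $K_{2^{-j}}$ denotes the kernel of $F_j\SLB$. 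Being a $\mu_{j,k}$-average of the map $x \mapsto K_{2^{-j}}(x,y)$ over $U_{j,k}$, this is dominated in modulus by $\sqrt{|U_{j,k}|}\,\sup_{x \in U_{j,k}}|K_{2^{-j}}(x,y)|$, which is exactly the middle expression in (\ref{sp-loc}).

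The second step is to feed in Theorem \ref{kernelsize}. Here I would verify that each $F_j$ meets its hypotheses: writing $F_j(\lambda) = \widetilde{F}(2^{-j}\lambda)$ with $\widetilde{F} = \sqrt{h}$ for $j \ge 1$, the profile $\widetilde{F}$ is smooth, supported in $[2^{-1},2] \subset [0,2]$, and vanishes identically near the origin, so all of its derivatives, in particular the odd-order ones, vanish at $0$; while for $j=0$ one has $F_0 = \sqrt{g}$, which equals $1$ near $0$ and is supported in $[0,2]$, so every derivative of order $\ge 1$ again vanishes at $0$. Hence Theorem \ref{kernelsize} applies with $\rho = 2$ and scale $t = 2^{-j} \le 1$, yielding for every $N > n$ a constant $C(N)$ with $|K_{2^{-j}}(x,y)| \le C(N)\, 2^{jn}\bigl(1 + 2^j d(x,y)\bigr)^{-N}$. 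Using the identity $1 + 2^j d(x,y) = 2^j\bigl(2^{-j} + d(x,y)\bigr)$ to rewrite the right-hand side as $C(N)\, 2^{j(n-N)}\bigl(2^{-j} + d(x,y)\bigr)^{-N}$, and combining with the domination by the supremum from the previous step, produces precisely the bound (\ref{sp-loc}).

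The genuinely hard analytic content, namely the pseudodifferential-operator proof of the pointwise kernel decay, is packaged entirely inside Theorem \ref{kernelsize}, which we are entitled to assume. Within the present argument the only points demanding care are the verification that the partition functions $F_j$ satisfy the vanishing-odd-derivative hypothesis of that theorem, and the correct bookkeeping of the dyadic scale $t = 2^{-j}$ when converting (\ref{kersize}) into the form (\ref{sp-loc}); everything else is a direct transcription of Theorem \ref{FrTh-1}.
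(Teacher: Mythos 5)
Your proposal is correct and follows essentially the same route as the paper: items (1), (3), (4), (5) are inherited directly from Theorem \ref{FrTh-1} (equivalently, the abstract Theorem \ref{frameH}) once one notes that a compact manifold has bounded geometry and Ricci curvature bounded from below, and item (2) is obtained exactly as in the text, by passing through the self-adjointness identity $\langle f,\Phi^j_k\rangle=\mathcal{A}^j_k(F_j\SLB f)$ to the explicit kernel formula (\ref{frame-formula}), dominating the average by the supremum over $U_{j,k}$, and invoking the decay estimate (\ref{kersize}) of Theorem \ref{kernelsize} at scale $t=2^{-j}$. Your explicit verification that the profiles $\sqrt{g}$ and $\sqrt{h}$ satisfy the support and vanishing-odd-derivative hypotheses of Theorem \ref{kernelsize} is a detail the paper leaves implicit, but it matches the intended argument.
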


 \section{Littlewood-Paley decomposition in $L_{p}$
 spaces on compact manifolds}

 In this subsection we extend the Littlewood-Paley decompositions to the $L^p$-setting. The ideas in this section are somewhat similar to \cite{SS}. 
 
\subsection{More about kernels}

The estimate (\ref{kersize}) has an important implication:
\begin{col}
\label{Kerbound}
For $F \in  C_{0}^{\infty}(\mathbb{R}_{+})\>$ as in Theorem \ref{kernelsize}
 and $1\leq p\leq \infty$  there exists a constant $c=c(F,p)>0$ such that  with $  \,  n=\dim\>{\bf M}, \>\>1/p+1/q=1$ one has:
\begin{equation}
\label{kint3a}
\left(\int_{\bf M} |K^{F}_t(x,y)|^{p}dy\right)^{1/p}
 \leq c t^{-n/q}, \quad
 \mbox{for all} \,\,\, 0<t\leq 1,  x \in \mfd{M},
\end{equation}
\end{col}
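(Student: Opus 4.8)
The plan is to derive the $L^p_y$-bound directly from the pointwise kernel estimate \eqref{kersize} of Theorem \ref{kernelsize}, splitting the integral over $\mfd{M}$ into dyadic annuli centered at the fixed point $x$ and summing the contributions. Setting $t \le 1$ and using \eqref{kersize} with some exponent $N > n$, I would write
\[
 \int_{\mfd M} |K^F_t(x,y)|^p \, dy \leq \frac{C^p}{t^{np}} \int_{\mfd M} \left(1+\frac{d(x,y)}{t}\right)^{-Np} dy~,
\]
so the whole matter reduces to estimating the integral on the right. The exponent $N$ is at our disposal (Theorem \ref{kernelsize} holds for every $N>n$), so I would fix $N$ large enough that $Np > n$, which guarantees convergence of the tail.

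First I would estimate the integral by decomposing $\mfd M$ into the ball $B(x,t)$ together with the annuli $A_\ell = \{ y : 2^{\ell} t \leq d(x,y) < 2^{\ell+1} t\}$ for $\ell \geq 0$. On each annulus the integrand is comparable to $(2^{\ell})^{-Np}$, while the measure of $A_\ell$ is controlled by the local doubling property \eqref{LDP}: on a compact manifold the volume satisfies $|B(x,\lambda)| \leq C \lambda^n$ uniformly in $x$ for $\lambda$ up to the diameter, so $|A_\ell| \leq |B(x, 2^{\ell+1}t)| \leq C (2^{\ell+1}t)^n$ as long as $2^{\ell+1} t$ stays below the injectivity radius; the finitely many larger scales (up to the diameter of the compact $\mfd M$) contribute a bounded geometric tail that is absorbed into the constant. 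This yields
\[
 \int_{\mfd M} \left(1+\frac{d(x,y)}{t}\right)^{-Np} dy \leq C \sum_{\ell \geq 0} 2^{-\ell Np} (2^\ell t)^n = C\, t^n \sum_{\ell \geq 0} 2^{\ell(n - Np)}~,
\]
and the geometric series converges precisely because $Np > n$.

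Substituting this back gives
\[
 \int_{\mfd M} |K^F_t(x,y)|^p \, dy \leq \frac{C^p}{t^{np}} \cdot C\, t^n = C'\, t^{n - np} = C'\, t^{-n(p-1)}~,
\]
and since $1/p + 1/q = 1$ forces $p-1 = p/q$, we have $n(p-1) = np/q$, so raising to the power $1/p$ produces the claimed bound $\left(\int_{\mfd M}|K^F_t(x,y)|^p\,dy\right)^{1/p} \leq c\, t^{-n/q}$, uniformly in $x \in \mfd M$ and $0 < t \leq 1$. The endpoint cases are immediate: for $p=\infty$ the estimate is just the supremum bound $\sup_y |K^F_t| \leq C t^{-n}$ read off directly from \eqref{kersize}, and for $p=1$ one has $q=\infty$ and the right-hand side is a constant, matching the fact that $\int |K^F_t(x,y)|\,dy$ is uniformly bounded.

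The main obstacle, and the only point requiring real care, is the volume growth of the annuli: the doubling estimate \eqref{LDP} is stated only for radii below the injectivity radius, whereas the annular decomposition formally runs over all scales. On a compact manifold this is harmless because the diameter is finite, so only finitely many annuli reach beyond the injectivity radius and their total volume is bounded by $\mathrm{Vol}(\mfd M)$; I would handle them by a crude bound, noting that on that outer region $(1 + d(x,y)/t)^{-Np} \leq (1/t)^{-Np} = t^{Np}$ is tiny, so these terms are negligible compared to the near-diagonal contribution. Thus the compactness hypothesis is exactly what is needed to make the uniform-in-$x$ constant legitimate.
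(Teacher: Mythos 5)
Your argument is correct and follows essentially the same route as the paper: reduce to the integral $\int_{\mathbf{M}}\bigl[1+d(x,y)/t\bigr]^{-Np}dy$, decompose $\mathbf{M}$ into dyadic annuli around $x$, bound each annulus volume by $C(2^{\ell}t)^{n}$ (using the two-regime volume estimate on a compact manifold), and sum the geometric series with $Np>n$ to get $Ct^{n}$, whence the stated $t^{-n/q}$ bound. Your extra remarks on the annuli beyond the injectivity radius and the endpoint cases $p=1,\infty$ only make explicit what the paper's proof handles implicitly via the bound $|B(x,r)|\leq|\mathbf{M}|\leq c_{4}r^{n}$ for large $r$.
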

\begin{proof}  It is enough to show that for
$N>n= \dim {\bf M}$ there exists a $C(N) > 0$ such that
one has:
\begin{equation}
\label{intest}
\int_{\bf M} \frac{1}{\left[1 + (d(x,y)/t)\right]^{N}} dy \leq C(N)t^{n}, \, \, \mbox{for all} \, \, x  \in {\bf M} ,t >0 .
\end{equation}
Indeed, there exist $c_{1}, c_{2}>0$ such that for all
sufficiently small $r\leq \delta$ one has
 $$
 c_{1}r^{n}\leq |B(x,r)|\leq c_{2}r^{n},
 \quad \mbox{for all} \, \,  x\in M
 $$
 and if $r>\delta$
 $$
 c_{3}\delta^{n}\leq |B(x,r)|\leq |\mathbf{M}|\leq c_{4}r^{n}
 \quad \mbox{for all} \, \,  x\in M.
 $$
 For fixed $x,t$  set $A_{j}=B(x, 2^{j}t)\setminus B(x, 2^{j-1}t)$. Then $|A_{j}|\leq c_{4}2^{nj}t^{n}$ and  one has
 $$
 \int_{\bf M} \frac{1}{\left[1 + (d(x,y)/t)\right]^{N}} dy=
 $$
 $$
 \sum_{j}\int_{A_{j}} \frac{1}{\left[1 + (d(x,y)/t)\right]^{N}} dy\leq c_{4}2^{N}\sum_{j}2^{j(n-N)}t^{n}\leq C(N)t^{n}.
 $$
 Using this estimate and (\ref{kersize}) for $N=n+1$ one obtains (\ref{kint3a}) for $p<\infty$.
The case $p=\infty$ is obvious.
\end{proof}

\begin{theorem} \label{boundness}
Let  $F$ be  as in Theorem \ref{kernelsize}
 and  $(1/q)+1 = (1/p)+(1/\alpha)$.  For the same constant $c$ as in (\ref{kint3a}) one has for all $0<t\leq 1$:
 $$
 \|F(t\sqrt{L})\|_{L_{p}(\mathbf{M})\rightarrow L_{q}(\mathbf{M})}\leq ct^{-n/\alpha'}, \>\>1/\alpha+1/\alpha'=1, \>\>n=dim\>\mfd{M}.
 $$  In particular, one has for $\alpha=1$
 $$
 \|F(t\sqrt{L})\|_{L_{p}(\mathbf{M})\rightarrow L_{p}(\mathbf{M})}\leq c.  $$
\end{theorem}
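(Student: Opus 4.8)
The plan is to read Theorem~\ref{boundness} as an instance of the generalized Young (Schur-type) inequality for integral operators, with the required uniform kernel bounds supplied by Corollary~\ref{Kerbound}. Since $F(t\sqrt{L})$ is the integral operator with kernel $K^{F}_{t}(x,y)$, the statement reduces to controlling $K^{F}_{t}$ in a suitable mixed Lebesgue norm and then invoking standard endpoint estimates together with interpolation.

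First I would record the two marginal bounds on the kernel. Applying Corollary~\ref{Kerbound} with exponent $\alpha$ in place of $p$ (whose conjugate is then $\alpha'$) gives
\[
\left(\int_{\mathbf{M}}|K^{F}_{t}(x,y)|^{\alpha}\,dy\right)^{1/\alpha}\leq c\,t^{-n/\alpha'},\qquad x\in\mathbf{M},
\]
and, because the kernel is symmetric, $K^{F}_{t}(x,y)=K^{F}_{t}(y,x)$, the same quantity $M:=c\,t^{-n/\alpha'}$ bounds $\left(\int_{\mathbf{M}}|K^{F}_{t}(x,y)|^{\alpha}\,dx\right)^{1/\alpha}$ uniformly in $y$. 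From these two facts I obtain the endpoints: Minkowski's integral inequality yields $\|F(t\sqrt{L})f\|_{\alpha}\leq M\|f\|_{1}$, that is boundedness $L_{1}\to L_{\alpha}$, while H\"older's inequality applied pointwise in $x$ yields $|F(t\sqrt{L})f(x)|\leq M\|f\|_{\alpha'}$, that is boundedness $L_{\alpha'}\to L_{\infty}$, both with operator norm at most $M$.

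Next I would interpolate. By the Riesz--Thorin theorem, interpolating between $L_{1}\to L_{\alpha}$ and $L_{\alpha'}\to L_{\infty}$ with parameter $\theta\in[0,1]$ gives $F(t\sqrt{L})\colon L_{p}\to L_{q}$ of norm at most $M^{1-\theta}M^{\theta}=M$, where
\[
\frac{1}{p}=(1-\theta)+\frac{\theta}{\alpha'},\qquad\frac{1}{q}=\frac{1-\theta}{\alpha}.
\]
Eliminating $\theta$ between these two identities returns precisely the hypothesis $\tfrac{1}{q}+1=\tfrac{1}{p}+\tfrac{1}{\alpha}$, and the norm estimate reads $\|F(t\sqrt{L})\|_{L_{p}(\mathbf{M})\to L_{q}(\mathbf{M})}\leq M=c\,t^{-n/\alpha'}$, with the same constant $c$ as in Corollary~\ref{Kerbound}. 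The special case $\alpha=1$ forces $\alpha'=\infty$ and $q=p$, so $M=c\,t^{0}=c$ and one recovers $\|F(t\sqrt{L})\|_{L_{p}(\mathbf{M})\to L_{p}(\mathbf{M})}\leq c$.

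The argument is essentially routine once Corollary~\ref{Kerbound} is available; the only point requiring care is the exponent bookkeeping, namely verifying that the two Riesz--Thorin relations for $1/p$ and $1/q$ are simultaneously solvable exactly when $\tfrac{1}{q}+1=\tfrac{1}{p}+\tfrac{1}{\alpha}$ and that the admissible range $1\leq p\leq\alpha'$ (equivalently $0\leq\theta\leq1$) is respected. As an alternative to interpolation one may prove the generalized Young inequality directly, estimating $\iint|K^{F}_{t}(x,y)||f(y)||g(x)|\,dx\,dy$ for $g\in L_{q'}$ by a three-exponent H\"older inequality arranged so that the two marginal $L_{\alpha}$-bounds of $K^{F}_{t}$ appear; setting up that splitting cleanly is the step I would expect to be the most delicate, though it is standard.
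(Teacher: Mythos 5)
Your proposal is correct and follows essentially the same route as the paper: the paper's proof consists precisely of invoking Corollary \ref{Kerbound} (applied with exponent $\alpha$, using the symmetry $K^{F}_t(x,y)=K^{F}_t(y,x)$ for the second marginal bound) together with the generalized Young inequality, which the paper states as Lemma \ref{younggen} without proof. The only difference is that you additionally supply a proof of that Young inequality via the $L_1\to L_\alpha$ and $L_{\alpha'}\to L_\infty$ endpoints and Riesz--Thorin, and your exponent bookkeeping checks out.
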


\begin{proof} The proof follows from Corollary \ref{Kerbound}
 and the following Young inequality:
 \begin{lemma}
\label{younggen}
Let $\mathcal{K}(x,y)$ be a measurable function on $\mfd{M}\times\mfd{M}$. Given $1 \leq p, \alpha \leq \infty$,
we set  $(1/q)+1 = (1/p)+(1/\alpha)$.
If there exists a  $C > 0$ such  that
\begin{equation}
\label{kint1}
\left(\int_{\mfd{M}} |\mathcal{K}(x,y)|^{\alpha}dy\right)^{1/\alpha} \leq C \ \ \ \ \ \ \ \ \ \mbox{for all } x\in \mfd{M},
\end{equation}
and
\begin{equation}
\left(\int_{\mfd{M}} |\mathcal{K}( x,y)|^{\alpha}dx\right)^{1/\alpha} \leq C \ \ \ \ \ \ \ \ \ \ \mbox{for all } y\in \mfd{M},
\end{equation}
then one has 
for the same constant  $C$ the inequality
$$
\left\|\int_{\mfd{M}}\mathcal{K}( x,y)f(y)dy\right\|_q \leq
C\|f\|_p \quad f \in L_p(\mfd{M}).
$$
\end{lemma}
\end{proof}

 \subsection{Littlewood-Paley decomposition }

For this subsection we recall the dyadic partition
$(G_j)_{j \ge 0}$ from Subsection \ref{subsect:part_unity_freq}.
In particular, we have $\sum_{j=0}^{\infty}G_j(\lambda) = 1$
for every $\lambda\geq0$, as well as
$supp(G_j) \subset [2^{j-1},2^{j+1}]$ for $j \ge 1$.

\begin{lemma}
For $m\in \mathbb{N}$ there exists  a $C > 0$ such that
\begin{equation}
\label{phijestway}
\left \|G_j\SLB f \right\|_q \leq C\left(2^{(j-1)n}\right)^{-\frac{m}{n}+\frac{1}{p}-\frac{1}{q}}\|f\|_{W_p^m(\mfd{M})}, \>\>\>n=\dim \mfd{M},
\end{equation}
for all $f \in W_p^m({\bf M})$. In other words, the norm of $G_j\SLB$, as an element
of the space ${\bf B}(W_p^m(\mfd{M}),L_q(\mfd{M}))$ of bounded linear operators from $W_p^m(\mfd{M})$ to $L_q(\mfd{M})$,  is
 bounded by  $C\left(2^{(j-1)n}\right)^{-\frac{m}{n}+\frac{1}{p}-\frac{1}{q}}$.
\end{lemma}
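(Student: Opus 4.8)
The plan is to factor the operator $G_j\SLB$ as the composition of a power of $L$, which absorbs the smoothness of $f$ and produces the gain $2^{-jm}$, with a rescaled bandpass operator to which Theorem \ref{boundness} applies directly and yields the growth factor $2^{jn(1/p-1/q)}$. First I note that for $j\ge 1$ the function $G_j$ is supported in $[2^{j-1},2^{j+1}]$, hence away from the origin, so that $G_j(\lambda)/\lambda^{m}$ is a genuine smooth function. Recalling that $G_j(\lambda)=h(2^{-j}\lambda)$, the substitution $\mu=2^{-j}\lambda$ gives the exact identity
\[
\frac{G_j(\lambda)}{\lambda^{m}}=2^{-jm}\,\widetilde{h}(2^{-j}\lambda),\qquad \widetilde{h}(\mu):=\frac{h(\mu)}{\mu^{m}},
\]
where $\widetilde{h}\in C_0^\infty(\mathbb{R}_+)$ is supported in $[2^{-1},2]$ and therefore meets all hypotheses of Theorem \ref{kernelsize} (all derivatives of $\widetilde h$ vanish at $0$, in particular the odd-order ones).

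Since $G_j(0)=0$ annihilates the null eigenvalue, the identity above lifts to the exact operator factorization
\[
G_j\SLB=2^{-jm}\,\widetilde{h}(2^{-j}\sqrt{L})\,L^{m/2},
\]
in which $\widetilde{h}(2^{-j}\sqrt{L})$ is precisely the operator $\FtSL$ for $F=\widetilde h$ and $t=2^{-j}$. Applying Theorem \ref{boundness} with $\alpha$ determined by $(1/q)+1=(1/p)+(1/\alpha)$, so that $1/\alpha'=(1/p)-(1/q)$, I obtain for $0<2^{-j}\le 1$
\[
\left\|\widetilde{h}(2^{-j}\sqrt{L})\right\|_{L_p(\mfd{M})\to L_q(\mfd{M})}\le c\,(2^{-j})^{-n/\alpha'}=c\,2^{\,jn\left(\frac1p-\frac1q\right)}.
\]

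Combining the factorization, this operator-norm bound, and the mapping estimate $\|L^{m/2}f\|_{L_p(\mfd{M})}\le C\|f\|_{W_p^m(\mfd{M})}$ for the order-$m$ operator $L^{m/2}$ then gives
\[
\left\|G_j\SLB f\right\|_q\le C\,2^{-jm}\,2^{\,jn\left(\frac1p-\frac1q\right)}\|f\|_{W_p^m(\mfd{M})}=C\,\left(2^{\,jn}\right)^{-\frac{m}{n}+\frac1p-\frac1q}\|f\|_{W_p^m(\mfd{M})},
\]
which is the claimed bound after replacing $2^{jn}$ by $2^{(j-1)n}$, a change absorbed into the constant. The exceptional index $j=0$ is treated separately: $G_0\SLB$ is a single operator bounded from $L_p(\mfd{M})$ to $L_q(\mfd{M})$ by Theorem \ref{boundness} with $t=1$, and $W_p^m(\mfd{M})\hookrightarrow L_p(\mfd{M})$, so the estimate holds trivially there.

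I expect the genuinely analytic point to be the last ingredient, the bound $\|L^{m/2}f\|_{L_p}\le C\|f\|_{W_p^m}$. For even $m=2s$ it is immediate, since $L^{s}$ is an honest differential operator of order $2s=m$ and hence maps $W_p^m(\mfd{M})$ into $L_p(\mfd{M})$ by the very definition of the Sobolev norm. For odd $m$, however, $L^{m/2}$ is only a pseudodifferential operator of order $m$, and the estimate rests on the $L_p$-continuity of zeroth-order pseudodifferential operators on the compact manifold $\mfd{M}$ (equivalently, the $L_p$-boundedness of $L^{m/2}(I+L)^{-m/2}$); this is the substantive input and is in the same pseudodifferential framework already invoked for Theorem \ref{kernelsize}. (Implicit in the application of Theorem \ref{boundness} is $p\le q$, which is exactly the range for which the parameter $\alpha\ge 1$ exists.)
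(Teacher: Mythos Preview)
Your proof is correct and follows essentially the same approach as the paper: factor $G_j\SLB$ as a rescaled bandpass operator composed with $L^{m/2}$, then invoke Theorem \ref{boundness}. Your function $\widetilde{h}(\mu)=h(\mu)/\mu^m$ is a trivial rescaling of the paper's $\Psi(\lambda)=G_1(\lambda)\lambda^{-m}$ (since $G_1(\lambda)=h(\lambda/2)$), and you are slightly more explicit than the paper in flagging the $j=0$ case, the implicit restriction $p\le q$, and the pseudodifferential input needed for $\|L^{m/2}f\|_{p}\le C\|f\|_{W_p^m}$ when $m$ is odd.
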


\begin{proof}
For $\lambda > 0$ we set  
$ \Psi(\lambda) = G_1(\lambda)\lambda^{-m}.$
Consequently $\Psi$ is supported in $[1,4]$.
For $j \geq 1$, we set
\[ \Psi_j(\lambda)=\Psi\left(2^{-(j-1)}\lambda\right)=2^{(j-1)m}G_{j}(\lambda)\lambda^{-m},\]
so that
$$
G_j(\lambda) = 2^{-(j-1)m}\Psi_j(\lambda)\lambda^{m}.
$$
Accordingly, if $f$ is a distribution on ${\bf M}$, for $j \geq 1$, one has
\[ G_j \SLB f = 2^{-(j-1)m}\Psi_j\SLB \left(L^{m/2}f\right), \]
in the sense of distributions.
For $f \in  W_p^m(\mfd{M})$ one has
$L^{m/2}f \in L_p(\mfd{M})$, and by Theorem \ref{boundness}  we obtain 
 that if $(1/q)+1 = (1/p)+(1/\alpha)$ and $1/\alpha+1/\alpha'=1$
   then
\[ \left\|G_j\SLB f\right\|_q \leq C2^{-(j-1)m}2^{(j-1)n/\alpha'}\|L^{m/2}f\|_p \leq C \left(2^{(j-1)n}\right)^{-\frac{m}{n}+\frac{1}{p}-\frac{1}{q}}\|f\|_{W_p^m(\mfd{M})}.  \]
\end{proof}

Using the same notation we formulate the  following result (see \cite{gpa}).
\begin{theorem}(Littlewood-Paley decomposition)\label{LPT}
The series  $\sum_{j=0}^{\infty} G_j(\sqrt{L})$\\
converges strongly 
to the identity operator, 
i.e. one has in the norm of $L_{p}(\mfd{M})$:
\begin{equation}\label{L-P}
\sum _{j=0}^{\infty}G_{j}\SLB f=f, \>\>\>f\in L_{p}(\mfd{M}).
\end{equation}
\end{theorem}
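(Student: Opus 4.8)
The plan is to deduce strong convergence from the standard combination of a \emph{uniform} operator bound with convergence on a dense subspace. Write $S_{N} = \sum_{j=0}^{N} G_{j}\SLB$ for the partial sum operators. By the telescoping identity (\ref{partsums1}), namely $\sum_{j=0}^{N} G_{j}(\lambda) = g(2^{-N}\lambda)$, the spectral calculus gives $S_{N} = g(2^{-N}\sqrt{L})$. Hence the asserted identity $\sum_{j \geq 0} G_{j}\SLB f = f$ is precisely the statement that $S_{N} f \to f$ in $L_{p}(\mfd{M})$ for every $f$, i.e.\ that $(S_{N})$ converges to the identity in the strong operator topology, and this is what I would establish.

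First I would show that the $S_{N}$ are uniformly bounded on $L_{p}(\mfd{M})$. The cut-off $g$ lies in $C_{0}^{\infty}(\mathbb{R}_{+})$ with $\mathrm{supp}(g) \subset [0,2]$, so $\rho = 2 > 1$; moreover $g \equiv 1$ on a neighbourhood of the origin, whence all its derivatives vanish at $0$ and in particular $g^{(k)}(0) = 0$ for every odd $k$. Thus $g$ satisfies the hypotheses of Theorem \ref{kernelsize}. Applying Theorem \ref{boundness} with $F = g$, $\alpha = 1$, and $t = 2^{-N} \leq 1$ then produces a constant $c$, independent of $N$, such that $\|S_{N}\|_{L_{p}(\mfd{M}) \to L_{p}(\mfd{M})} = \|g(2^{-N}\sqrt{L})\|_{L_{p} \to L_{p}} \leq c$ for all $N \geq 0$.

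Next I would verify convergence on the dense subspace $W_{p}^{m}(\mfd{M})$ with $m \geq 1$. For such $f$ the remainder is $f - S_{N} f = \sum_{j > N} G_{j}\SLB f$, and the preceding Lemma, specialised to $q = p$ so that the exponent $-\frac{m}{n} + \frac{1}{p} - \frac{1}{q}$ reduces to $-\frac{m}{n}$, gives $\|G_{j}\SLB f\|_{p} \leq C\, 2^{-(j-1)m}\|f\|_{W_{p}^{m}(\mfd{M})}$. Summing the geometric tail then yields $\|f - S_{N} f\|_{p} \leq C\|f\|_{W_{p}^{m}(\mfd{M})} \sum_{j > N} 2^{-(j-1)m} \to 0$ as $N \to \infty$.

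Finally I would run the usual $\varepsilon/3$ argument: given $f \in L_{p}(\mfd{M})$ and $\varepsilon > 0$, choose $h \in W_{p}^{m}(\mfd{M})$ with $\|f - h\|_{p} < \varepsilon$, so that
\[
\|S_{N} f - f\|_{p} \leq \|S_{N}(f-h)\|_{p} + \|S_{N} h - h\|_{p} + \|h - f\|_{p} \leq (c+1)\varepsilon + \|S_{N} h - h\|_{p},
\]
and let $N \to \infty$ to obtain $\limsup_{N} \|S_{N} f - f\|_{p} \leq (c+1)\varepsilon$, hence convergence since $\varepsilon$ is arbitrary. The individual estimates are all handed to us by Theorem \ref{boundness} and the Lemma, so the genuine point is twofold: that the bound on $S_{N}$ be \emph{scale-independent} (a naive use of the kernel estimate (\ref{kersize}) would give constants growing with $N$, and it is exactly Theorem \ref{boundness} that prevents this), and that a suitable dense subspace be available. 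The latter is the main obstacle to a uniform statement: density of $W_{p}^{m}(\mfd{M})$ (equivalently of $C^{\infty}(\mfd{M})$) in $L_{p}(\mfd{M})$ holds for $1 \leq p < \infty$, whereas for $p = \infty$ smooth functions are not dense, and the conclusion must be read with $L_{\infty}(\mfd{M})$ replaced by $C(\mfd{M})$, the uniform closure of the smooth functions, on which the same three steps apply verbatim.
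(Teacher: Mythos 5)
Your proposal is correct and follows essentially the same route as the paper: uniform boundedness of the partial sums $S_N=g(2^{-N}\sqrt{L})$ via the telescoping identity (\ref{partsums1}) and Theorem \ref{boundness} applied to $g$, combined with convergence on the dense subspace $W_p^m(\mfd{M})$ coming from the geometric decay in the preceding Lemma. The only point you pass over lightly is the identification of the limit (your formula $f-S_Nf=\sum_{j>N}G_j\SLB f$ presupposes the conclusion; one closes this by noting $S_Nf\to f$ in $L_2$ or in the sense of distributions for smooth $f$, which is exactly how the paper handles it), and your caveat about replacing $L_\infty$ by $C(\mfd{M})$ when $p=\infty$ is a worthwhile precision the paper omits.
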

\begin{proof}

By the previous lemma the series  $\sum_{j=0}^{\infty} G_j(\sqrt{L})$
converges in the norm of  ${\bf B}(W_p^m(\mfd{M}),L_p(\mfd{M}))$.
It converges to the identity on smooth functions, hence in the sense
of distributions.  Therefore we must have
$\sum_{j=0}^{\infty} G_j(\sqrt{L}) =  I$ in
${\bf B}(W_p^m(\mfd{M}),L_p(\mfd{M}))$.
Hence we get strong convergence on a dense subspace,
and it will be sufficient to verify the uniform 
boundedness of the operator norms in
 ${\bf B}(L_{p}(\mfd{M}), L_p(\mfd{M}))$
in order to finish the proof.   
However, this follows from (\ref{partsums1}) and
Theorem \ref{boundness}, applied to $g$. 
\end{proof}

\section{Localized Parseval   frames  in $L_{2}({\bf M})$
over compact homogeneous manifolds}\label{PHM}

\subsection{Product property for eigenfunctions of the Casimir operator on \\ homogeneous compact manifolds}

\noindent
The following important theorem was proved in \cite{gp}, \cite{pg} and it is necessary for the construction of Parseval frames on a homogeneous compact manifold. Note, that this Theorem is an analog of the Lemma \ref{product}. 

\begin{theorem}\label{Pprop}(Product property on homogeneous manifolds)
\label{prodthm}
Let $\mfd{M}=G/K$ be a compact homogeneous manifold and $\McL$
 as in (\ref{Casimir-Image}). Then for any
 $f,g \in \mathbf{E}_{\omega}(\sqrt{\McL})$  the pointwise product $fg$ is in
$\mathbf{E}_{4d\omega}(\sqrt{\McL})$, where $d$ is the dimension of the
group $G$.
\end{theorem}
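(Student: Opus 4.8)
The plan is to show that every high-frequency Fourier coefficient of $fg$ vanishes, by establishing a growth bound on $\|\McL^{N}(fg)\|$ (all norms here are in $L_2(\mfd{M})$ unless a subscript says otherwise) as $N\to\infty$. Write the spectral decomposition $\McL u_l=\lambda_l^{2}u_l$, so that $\mathbf{E}_{\omega}(\sqrt{\McL})=\mathrm{span}\{u_l:\lambda_l\le\omega\}$; note that $f,g$, and hence $fg$, are smooth. Since $\McL$ is self-adjoint and $\McL^{N}u_l=\lambda_l^{2N}u_l$, for each $l$ and each $N$ we have $\langle fg,u_l\rangle=\lambda_l^{-2N}\langle \McL^{N}(fg),u_l\rangle$, whence $|\langle fg,u_l\rangle|\le\lambda_l^{-2N}\|\McL^{N}(fg)\|$. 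Thus it suffices to prove an estimate of the form $\|\McL^{N}(fg)\|\le(4d\,\omega^{2})^{N}\|f\|_{\infty}\|g\|$: letting $N\to\infty$ then forces $\langle fg,u_l\rangle=0$ whenever $\lambda_l^{2}>4d\,\omega^{2}$, which places $fg$ in $\mathbf{E}_{2\sqrt{d}\,\omega}(\sqrt{\McL})\subseteq\mathbf{E}_{4d\omega}(\sqrt{\McL})$.

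The engine of the argument is an $L_p$-Bernstein inequality for the individual vector fields $D_j$. Because $\McL$ commutes with each $D_j$, the operator $D_j$ preserves every eigenspace of $\McL$, and therefore $D_j\big(\mathbf{E}_{\omega}(\sqrt{\McL})\big)\subseteq\mathbf{E}_{\omega}(\sqrt{\McL})$. The flow $e^{\tau D_j}$ is composition with the measure-preserving action of $\exp\tau X_j\in G$, hence an isometry of every $L_p(\mfd{M})$, $1\le p\le\infty$. On the finite-dimensional space $\mathbf{E}_{\omega}(\sqrt{\McL})$ the skew-adjoint $D_j$ satisfies $-D_j^{2}\le\McL\le\omega^{2}$, so its eigenvalues are purely imaginary of modulus $\le\omega$; consequently, for $h\in\mathbf{E}_{\omega}(\sqrt{\McL})$, evaluating at an arbitrary point (for $p=\infty$) or pairing with an arbitrary $L_{p'}$ function, the scalar function $\tau\mapsto e^{\tau D_j}h$ becomes a bounded trigonometric polynomial with frequencies in $[-\omega,\omega]$. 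The classical Bernstein inequality then yields $\|D_jh\|_p\le\omega\|h\|_p$, and since each derivative stays in $\mathbf{E}_{\omega}(\sqrt{\McL})$, iteration gives $\|D_{i_1}\cdots D_{i_\ell}h\|_p\le\omega^{\ell}\|h\|_p$ for every multi-index and every $1\le p\le\infty$.

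Next I would expand $\McL^{N}=(-1)^{N}\big(\sum_{j=1}^{d}D_j^{2}\big)^{N}=(-1)^{N}\sum_{\mathbf{j}}D_{j_1}^{2}\cdots D_{j_N}^{2}$, a sum of $d^{N}$ terms, each a composition of $2N$ vector fields. Applying the Leibniz rule for compositions of first-order operators,
\[
D_{i_1}\cdots D_{i_{2N}}(fg)=\sum_{S\subseteq\{1,\dots,2N\}}\left(D_{i_S}f\right)\left(D_{i_{S^{c}}}g\right),
\]
where $D_{i_S}$ denotes the composition of the $D_{i_s}$, $s\in S$, taken in increasing order, expands each term into $2^{2N}=4^{N}$ products. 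Estimating each product by $\|uv\|\le\|u\|_{\infty}\|v\|$ and invoking the Bernstein bounds (with $|S|+|S^{c}|=2N$) gives $\|(D_{i_S}f)(D_{i_{S^{c}}}g)\|\le\omega^{|S|}\|f\|_{\infty}\,\omega^{|S^{c}|}\|g\|=\omega^{2N}\|f\|_{\infty}\|g\|$. Summing over the $d^{N}\cdot4^{N}$ terms yields the required bound $\|\McL^{N}(fg)\|\le(4d)^{N}\omega^{2N}\|f\|_{\infty}\|g\|$; here $\|f\|_{\infty}<\infty$ because $\mathbf{E}_{\omega}(\sqrt{\McL})\subset C^{\infty}(\mfd{M})$.

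The main obstacle — and the only place where the homogeneous structure is truly used — is the uniform ($L_\infty$) Bernstein inequality $\|D_jh\|_{\infty}\le\omega\|h\|_{\infty}$ for the generating vector fields; it is what lets me measure one factor of each Leibniz product in $L_\infty$ and thereby control the $L_2$-norm of the product, with no dimension-dependent Nikolskii constant entering. Everything else is the bookkeeping of the Leibniz expansion and the counting of $d^{N}\cdot4^{N}$ terms. I note that this argument in fact produces the sharper conclusion $fg\in\mathbf{E}_{2\sqrt{d}\,\omega}(\sqrt{\McL})$ — which on the torus ($d=1$) recovers the exact doubling of bandwidth — so the asserted bound $4d\omega$ follows a fortiori.
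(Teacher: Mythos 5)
Your proof is correct, and it shares the paper's skeleton: expand $\McL^{N}(fg)$ by the Leibniz rule into $d^{N}\cdot 4^{N}$ products of iterated derivatives of $f$ and $g$, bound each product, and conclude from the growth rate of $\|\McL^{N}(fg)\|$ that all sufficiently high-frequency Fourier coefficients vanish (you carry out this last step by hand with the spectral decomposition, where the paper invokes its Lemma \ref{entirevec}). The genuinely different ingredient is how the iterated derivatives are controlled. The paper bounds $\sup_{x}|D_{i_1}\cdots D_{i_m}f(x)|$ by combining Lemma \ref{equivalence} (the exact $L_2$-identity for $\|\McL^{k/2}f\|_2$) with the Sobolev embedding theorem and elliptic regularity of $\McL$, which costs a constant $C(\mfd{M},f,g,\omega)$ — harmless, since only the exponential rate in $N$ matters. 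You instead prove a clean $L_\infty$-Bernstein inequality $\|D_j h\|_\infty\le\omega\|h\|_\infty$ on $\mathbf{E}_\omega(\sqrt{\McL})$: since $D_j$ commutes with $\McL$ it preserves this finite-dimensional space, its restriction there is skew-symmetric with purely imaginary eigenvalues of modulus at most $\omega$ (from $-D_j^2\le\McL\le\omega^2$), so $\tau\mapsto (e^{\tau D_j}h)(x)$ is a bounded exponential polynomial of type $\le\omega$ and the classical Bernstein inequality applies. This is essentially the Bernstein-space embedding the paper records later in Section \ref{Apphm}, put to work in place of Sobolev embedding; it buys you explicit constants and the sharper conclusion $fg\in\mathbf{E}_{2\sqrt{d}\,\omega}(\sqrt{\McL})$. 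That sharpening does not contradict the paper's remark that $4d$ is not known to be improvable: the condition $\lambda\le 2\sqrt{d}\,\omega$ on the spectrum of $\sqrt{\McL}$ is exactly $\lambda^2\le 4d\,\omega^2$ on the spectrum of $\McL$, which is the scale on which the constant $4d$ of \cite{gp} was originally stated, so the two bounds coincide and the survey's $\mathbf{E}_{4d\omega}(\sqrt{\McL})$ is simply the weaker reading.
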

\begin{proof}
The  proof is using the following  two lemmas  from   \cite{Pes08}.
\begin{lemma}\label{entirevec}
For a self-adjoint operator $A$ in a Hilbert space  $\Hilb$   vector $f $ belongs to a Paley-Wiener space $ \bPW_{\omega}(A),\>\>\omega>0,$ if and only if there exists a constant $C=C(f, \omega)$ such that for all natural $k$ the following Bernstein-type inequality holds $\|A^{k}f\|\leq C(f, \omega)\omega^{k}$.
\end{lemma}

\begin{lemma}\label{equivalence}
For $D_{1},...,D_{d},\>\>\>d=\ dim \ G, $   as in (\ref{Casimir})  then  the following equalities hold
$$
\|\McL^{k/2}f\|_{2}^{2}=\sum_{1\leq i_{1},...,i_{k}\leq
d}\|D_{i_{1}}...D_{i_{k}}f\|_{2}^{2},\ k\in \mathbb{N}.
$$
\end{lemma}

\begin{remark}
In the case of torus $\mathbb{T}^{n}$ and $\mathbb{R}^{n}$ where $D_{j}=\frac{\partial}{\partial x_{j}}$ this statement can be easily proved by using Fourier transform.

\end{remark}

Next, one shows that for any smooth functions $f, g$ the following estimate holds
$$
\left|\McL^{k}\left(fg\right)\right|\leq
(4d)^{k}\sup_{0\leq m\leq 2k}\sup_{x,y\in
{\bf M}}\left|D_{i_{1}}...D_{i_{m}}f(x)\right|\left|D_{j_{1}}...D_{j_{2k-m}}g(y)\right|.\label{estim3}
$$
From here by using Lemma \ref{equivalence}, the Sobolev embedding theorem and elliptic regularity of
$\McL$,  one obtains   the estimate
\begin{equation}
\|\McL^{k}(fg)\|_{2}\leq
C({\bf M},f,g,\omega)(4d\omega)^{k},\>\>\>\>k\in
\mathbb{N},
\end{equation}
which according to Lemma \ref{entirevec} implies Theorem \ref{Pprop}.

\end{proof}

EXAMPLE. For the choice  $\mathbf{M} = \mathbf{S}$,
the unit circle,  the Laplace-Beltrami operator is $\McL=\left(\frac{d}{d\varphi}\right)^{2}$ whose real eigenfunctions are $\sin k\varphi, \>\>\cos m\varphi, \>\> k, m\in \mathbb{N}$.
In this case the following identities illustrate our theorem:
$$
\sin k\varphi\cos m\varphi=\frac{1}{2}\sin(k+m)\varphi+\frac{1}{2}\cos(k-m)\varphi
$$
$$
\sin^{2}k\varphi=\frac{1}{2}(1-\cos 2k\varphi),\>\>\cos^{2}k\varphi=\frac{1}{2}(1+\cos 2k\varphi)
$$
At this moment it is not known if the constant $4d$
can be lowered in the general situation.
However, it is possible to  verify  that in the cases of a torus,  a sphere or of a projective space of any dimension
the best constant is $2$:
$$
f, \>g\in \mathbf{E}_{\omega}(\McL) \rightarrow fg\in \mathbf{E}_{2\omega}(\McL).
$$

\subsection{Positive cubature formulas on manifolds}

Now we are going to formulate a result about the existence of
{\it cubature formulas}  which are exact on $ {\mathbf E}_{\omega}\SLB$,
and have positive coefficients of the "right" size.
The following exact cubature formula was established  in \cite{gp}, \cite{pg}.
\begin{theorem}
\label{cubformula}
 If $\mfd{M}$ is a compact Riemannian manifold  then there exists  a  positive constant $a=a(\mfd{M})$,    such  that for 
$
0 < r < a\omega^{-1},
$
for any $r$-lattice $\mfd{M}_{r}=\{x_{k}\}$ there exist strictly positive coefficients $\muk_{x_{k}}>0,  \  x_{k}\in \mfd{M}_{r}$ \  for which the following equality holds for all functions in $ \mathbf{E}_{\omega}\SLB$:
\begin{equation}
\label{cubway}
\int_{\mfd{M}}fdx=\sum_{x_{k}\in M_{r}}\muk_{x_{k}}f(x_{k}).
\end{equation}
Moreover, there exists constants  $\  c_{1}, \  c_{2}, $  such that  the following inequalities hold:
\begin{equation}
c_{1}r^{n}\leq \muk_{x_{k}}\leq c_{2}r^{n}, \ n=dim\   \mfd{M}
 \, \, \mbox{for all} \, \,  k.  
\end{equation}
\end{theorem}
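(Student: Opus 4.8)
The plan is to transplant to the manifold the ``perturbed Riemann sum'' idea behind Theorem~\ref{cubformula1}. The weights naturally attached to the disjoint cover $\{U_k\}$ of (\ref{disjcover}) are the volumes $|U_k|$: these already integrate constants exactly, and by the bounded-geometry volume comparison (cf.\ (\ref{BG})) they satisfy $c_1 r^n\le |U_k|\le c_2 r^n$, i.e.\ they have the size asserted in the statement. The idea is to multiply them by a bandlimited ``weight profile'' so that integration becomes \emph{exact} on $\mathbf{E}_{\omega}(\sqrt L)$ while the size and positivity are preserved.

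First I would record the quantitative inputs. Specializing Theorem~\ref{Frame-th} to the Dirac functionals $\mathcal{M}_k=\delta_{x_k}$ gives, whenever $r\le c\,\delta^{1/n}\omega^{-1}$, the Marcinkiewicz--Zygmund inequality $(1-\delta)\|f\|_2^2\le\sum_k |U_k|\,|f(x_k)|^2\le\|f\|_2^2$ for $f\in\mathbf{E}_{\omega}(\sqrt L)$, the exact analogue of the Plancherel--Polya bound (\ref{EP-P}). Since $\mfd{M}$ is compact, $\mathbf{E}_{\omega}(\sqrt L)$ is finite-dimensional and the lattice $M_r$ is finite, so the discrete form $\langle f,g\rangle_\mu:=\sum_k|U_k|f(x_k)g(x_k)$ defines, through $\langle f,Gg\rangle=\langle f,g\rangle_\mu$, a self-adjoint Gram operator $G$ on $\mathbf{E}_{\omega}(\sqrt L)$ with $(1-\delta)I\le G\le I$; in particular $G$ is invertible. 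I would then set $h:=G^{-1}\mathbf 1\in\mathbf{E}_{\omega}(\sqrt L)$ (where $\mathbf 1$ is the constant function, which lies in $\mathbf{E}_{\omega}(\sqrt L)$ as the $\lambda_0=0$ eigenfunction) and $\lambda_{x_k}:=|U_k|\,h(x_k)$. By construction $\sum_k\lambda_{x_k}f(x_k)=\langle f,Gh\rangle=\langle f,\mathbf 1\rangle=\int_{\mfd{M}}f\,dx$ for every $f\in\mathbf{E}_{\omega}(\sqrt L)$, which is the cubature identity (\ref{cubway}). What remains, and what is the real content of the theorem, is to show that $h$ is strictly positive and comparable to $1$ uniformly on $\mfd{M}$.

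The step I expect to be hardest is precisely the uniform (sup-norm) control of $h-\mathbf 1$. The $L^2$-closeness of $G$ to $I$ coming from Marcinkiewicz--Zygmund is \emph{not} enough: a Nikol'skii-type passage from $L^2$ to $L^\infty$ on the $\sim\omega^n$-dimensional space $\mathbf{E}_{\omega}(\sqrt L)$ would cost a factor $\omega^{n/2}$ and destroy the estimate. The efficient route is to establish $\|G-I\|_{L^\infty\to L^\infty}\le C\,r\omega$ directly. For this I would reproduce elements of $\mathbf{E}_{\omega}(\sqrt L)$ by a \emph{smooth} band-limiting kernel $\widetilde K=F(t\sqrt L)$ with $t\sim\omega^{-1}$ and $F$ as in Theorem~\ref{kernelsize} (so $F\equiv 1$ near $0$, whence $\widetilde K$ reproduces $\mathbf{E}_{\omega}(\sqrt L)$), and express the correction as the Riemann-sum error $\sum_k\int_{U_k}\bigl(h(x_k)\widetilde K(x_k,y)-h(x)\widetilde K(x,y)\bigr)dx$ of the smooth integrand $x\mapsto h(x)\widetilde K(x,y)$. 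Bounding each term by $|U_k|\tfrac r2\sup_{B(x_k,r)}|\nabla_x(h\widetilde K)|$ and then invoking the gradient localization $|\nabla_x\widetilde K(x,y)|\le C\omega^{n+1}(1+\omega d(x,y))^{-N}$ from Theorem~\ref{kernelsize}, a Bernstein-type bound $\|\nabla h\|_\infty\le C\omega\|h\|_\infty$, and the summation estimate $\sum_k|U_k|(1+\omega d(x_k,y))^{-N}\le C\omega^{-n}$ (which absorbs the number of lattice points, compare (\ref{intest})), one is left exactly with the factor $r\omega$. The delicate point hidden here, and the crux of the whole argument, is that only smooth band-limiting operators have the $L^p$-boundedness needed for this estimate, the sharp spectral projector onto $\mathbf{E}_{\omega}(\sqrt L)$ being \emph{not} uniformly bounded on $L^\infty$; managing this while keeping everything bandlimited is where the sub-critical density $r<a\omega^{-1}$ is genuinely used.

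Granting $\|G-I\|_{L^\infty\to L^\infty}\le C r\omega$, the conclusion is immediate. For $r\omega$ below a threshold $a^{-1}=a(\mfd{M})^{-1}$ the Neumann series for $G^{-1}$ converges in the $L^\infty$ operator norm, and
\[
 \|h-\mathbf 1\|_\infty=\|(G^{-1}-I)\mathbf 1\|_\infty\le \frac{Cr\omega}{1-Cr\omega}\le\tfrac12 ,
\]
so that $\tfrac12\le h\le\tfrac32$ on $\mfd{M}$. Together with $c_1 r^n\le |U_k|\le c_2 r^n$ this gives $\tfrac12 c_1 r^n\le \lambda_{x_k}\le \tfrac32 c_2 r^n$, i.e.\ strictly positive coefficients of order $r^n$, which completes the proof. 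The admissible range of $r$ produced by the $L^\infty$ Gram estimate is precisely the constant $a=a(\mfd{M})$ appearing in the statement.
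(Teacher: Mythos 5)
Your construction handles exactness correctly: the Gram operator $G$ on $\mathbf{E}_{\omega}(\sqrt{L})$ defined by $\langle f,Gg\rangle=\sum_k|U_k|f(x_k)g(x_k)$ satisfies $(1-\delta)I\le G\le I$ by the Plancherel--Polya inequalities, so $h=G^{-1}\mathbf{1}$ exists and $\lambda_{x_k}=|U_k|h(x_k)$ reproduces the integral on $\mathbf{E}_{\omega}(\sqrt{L})$. (Minor caveat: for a general elliptic $L$ the constant function need not lie in $\mathbf{E}_{\omega}(\sqrt{L})$, so one should write $h=G^{-1}P_{\omega}\mathbf{1}$ --- which already foreshadows the problem below.) The genuine gap is in the positivity step. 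Your Riemann-sum argument with the smooth reproducing kernel gives $\|\widetilde{G}g-g\|_{\infty}\le Cr\omega\|g\|_{\infty}$ for the \emph{smoothed} operator $\widetilde{G}g(y)=\sum_k|U_k|g(x_k)K^{F}_{t}(x_k,y)$, but the operator you actually invert is $G=P_{\omega}\widetilde{G}|_{\mathbf{E}_{\omega}(\sqrt{L})}$, and passing from $\widetilde{G}$ to $G$ requires the sharp spectral projector $P_{\omega}$ to be uniformly bounded on $L^{\infty}$, at least on $\mathbf{E}_{2\omega}(\sqrt{L})$ where $\widetilde{G}g-g$ lives. It is not: already on the circle its norm grows like the Lebesgue constant $\log\omega$, and on general compact manifolds it is worse. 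You flag precisely this as ``the crux'' but do not resolve it, and the $L^{2}$ bound $(1-\delta)I\le G\le I$ cannot substitute, since (as you yourself note) converting it into a sup-norm bound on the band-limited function $h-\mathbf{1}$ costs a Nikolskii factor $\omega^{n/2}$.

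The established proof (in \cite{gp}, \cite{pg}, modeled in this survey by the one-dimensional Theorem \ref{cubformula1}) avoids inverting any operator on $\mathbf{E}_{\omega}(\sqrt{L})$. It treats the error $e(f)=\int_{\mfd{M}}f\,dx-\sum_k|U_k|f(x_k)$ as a linear functional on the image of the sampling map inside a \emph{sequence} space and extends it --- by orthogonal projection in the $\ell^{2}$ model on $\mathbb{R}$, and by a Hahn--Banach extension in the $\ell^{1}(\{|U_k|\})$--$\ell^{\infty}$ duality in general --- to a functional represented by a bounded correction sequence $\{z_k\}$ with $\sup_k|z_k|\le Cr\omega$; the weights are then $|U_k|(1+z_k)$, positive and of size $r^{n}$ once $r\omega$ is small. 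The analytic inputs are the local Poincar\'e inequality together with $L^{1}$-Bernstein and $L^{1}$ Marcinkiewicz--Zygmund inequalities (which do rest on the kernel estimates of Theorem \ref{kernelsize}, so your instinct about where the hard analysis lives is right), but the correction $\{z_k\}$ never has to be realized as the values of a band-limited function --- that is exactly what makes the componentwise bound attainable. Note also that the $\ell^{2}$ argument of Theorem \ref{cubformula1} does not transplant verbatim to dimension $n\ge 2$: it yields $|z_k|\le\|z\|_{\ell^{2}}\lesssim r^{n/2}(r\omega)$ against weights $|U_k|\sim r^{n}$, which suffices only for $n=1$; the passage to $\ell^{1}$--$\ell^{\infty}$ duality is forced in higher dimensions.
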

\subsection{Parseval frames on compact homogeneous manifolds}
We again recall the quadratic partition of unity $(F_j)_{j \ge 0}$ constructed in subsection (\ref{subsect:part_unity_freq}).

Since for every $
\overline{F_{j}\ScLB f} \in \bPW_{2^{j+1}}(\sqrt{\McL})$
one can use  the product property (Theorem \ref{Pprop}) to conclude that
$$
\left|F_{j}\ScLB f\right|^2\in  \bPW_{4d2^{j+1}}\left(\sqrt{ \McL}\right),
$$
where $d=dim\>G,\>\>{\mfd{M}}=G/H$.
This shows that for every $f\in L_{2}(\mfd{M})$ we have the following decomposition
\begin{equation}
\label{addto1sc}
\sum_{j\geq \infty} \left\|F_{j}\ScLB f\right\|^2_2 = \|f\|^2_2,\>\>\>\>\>
\left|F_{j}\ScLB f\right|^2\in
\bPW_{4d2^{j+1}}(\sqrt{ \McL}).
\end{equation}
 According to our  cubature formula (Theorem \ref{cubformula})  there exists a constant $a>0$ such that for
\begin{equation}
\label{rhoj}
r_j = a \cdot 4d \cdot 2^{-(j+1)}\asymp 2^{-j},\>\>d=dim\ G, \>\>\mfd{M}=G/H,
\end{equation}
and corresponding
 $r_{j}$-lattice $M_{r_{j}} = \{x_{j,k} \}$ one can find coefficients $\muk_{j,k}$ with
$
\muk_{j,k}\asymp r_j^{n},\>\>\>n=dim\>\mfd{M},
$
for which the following exact cubature formula holds
\begin{equation}
\label{cubl2}
\left\|F_{j}\ScLB f\right\|^2_2 = \sum_{k=1}^{\mathcal{K}_j}\muk_{j,k}\left|F_{j}\ScLB f(x_{j,k})\right|^2,
\end{equation}
where $  \mathcal{K}_j = card\>(\mfd{M}_{r_j})$.
Using the kernel $K_{2^{-j}}^{F}$  of the operator $F_{j}\ScLB$
we  define  
\begin{equation}
\label{vphijkdf}
\Theta^{j}_{k}(y) =  \sqrt{\muk_{j,k}}\>\overline{K^{F}_{2^{-j}}}(x_{j,k},y) =
$$
$$
\sqrt{\muk_{j,k}} \sum_{\lambda_{m}\in [2^{j-1}, 2^{j+1}]} \overline{F}(2^{-j}\lambda_m) \overline{u}_m(x_{j,k}) u_m(y).
\end{equation}
Consequently we have
the following equality 
$$
 \|f\|^2_2 = \sum_{j,k} |\langle f,\Theta^{j}_{k} \rangle|^2
 \quad \mbox{for all} \,\, f \in L_2(\mfd{M}).
 $$
This, together with Theorem \ref{kernelsize}, gives the following statement  (see \cite{gp}).
\begin{theorem}(Parseval  frames on homogeneous manifolds)\label{Pfhm}
For any compact homogeneous manifold $\mfd{M}$ the family of functions $\{\Theta^{j}_{k}$\} constructed in (\ref{vphijkdf}) forms a Parseval frame in the Hilbert space
for $L_{2}(\mfd{M})$. In particular
 the following reconstruction formula holds true
\begin{equation}
\label{recon}
f = \sum_{j\geq 0}\sum_{k=1}^{\mathcal{K}_{j}} \langle f,\Theta^{j}_{k} \rangle \Theta^{j}_{k}, \quad f \in L_2(\mfd{M}).
\end{equation}
For any $j \geq 1$
the functions
$\Theta^{j}_{k}$ are bandlimited to $[2^{j-1}, 2^{j+1}]$,
and for every $N>0$ there exists a constant $C(N)$ such that
with $n= dim \ {\bf M}$ one has
  \begin{equation}\label{LOC-1}
    |\Theta_{k}^{j}(x)|\leq
    C( N) \frac{2^{j(n-N)}}{(2^{-j}+d(x, x_{j,k}))^{N}}.
     \end{equation}
   for all natural $ j.$ 
\end{theorem}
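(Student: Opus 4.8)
The plan is to assemble the statement from three facts already in hand: the quadratic-partition identity together with its bandwidth-doubling consequence recorded in (\ref{addto1sc}), the product property (Theorem \ref{Pprop}), and the positive cubature formula (Theorem \ref{cubformula}). The decisive point is that the cubature rule must be applied not to $f$ but to the squared block $|F_j\ScLB f|^2$. Since $F_j\ScLB f\in\bPW_{2^{j+1}}\ScLB$ and the same holds for its conjugate, Theorem \ref{Pprop} places $|F_j\ScLB f|^2$ in $\bPW_{4d\,2^{j+1}}\ScLB$; this is exactly the band for which the lattice $M_{r_j}$ with $r_j\asymp 2^{-j}$ and the positive weights $\muk_{j,k}$ were selected so that the exact identity (\ref{cubl2}) holds.

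First I would record the sampling identity relating the frame coefficients to the values of the band-limited blocks. Because the $u_m$ are real and $F$ is real-valued, the kernel $K^F_{2^{-j}}(x_{j,k},\cdot)$ is real; hence by self-adjointness of $F_j\ScLB$ and the definition (\ref{vphijkdf}),
$$
\langle f,\Theta^j_k\rangle=\sqrt{\muk_{j,k}}\,\langle f,K^F_{2^{-j}}(x_{j,k},\cdot)\rangle=\sqrt{\muk_{j,k}}\,\big(F_j\ScLB f\big)(x_{j,k}).
$$
Squaring and summing, I would then chain (\ref{cubl2}) with (\ref{addto1sc}):
$$
\sum_{j,k}\big|\langle f,\Theta^j_k\rangle\big|^2=\sum_{j}\sum_{k}\muk_{j,k}\big|\big(F_j\ScLB f\big)(x_{j,k})\big|^2=\sum_{j}\big\|F_j\ScLB f\big\|_2^2=\|f\|_2^2,
$$
which is the Parseval property. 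The reconstruction formula (\ref{recon}) then follows because a Parseval frame is its own canonical dual, by general frame theory \cite{Gr}.

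The remaining two assertions are direct. The bandlimitedness is immediate from (\ref{vphijkdf}): as a function of $y$, $\Theta^j_k$ is a finite linear combination of the eigenfunctions $u_m$ with $\lambda_m\in[2^{j-1},2^{j+1}]$, since $\mathrm{supp}\,F_j\subset[2^{j-1},2^{j+1}]$. For the spatial decay (\ref{LOC-1}) I would apply Theorem \ref{kernelsize} with $t=2^{-j}$ to $|\Theta^j_k(x)|=\sqrt{\muk_{j,k}}\,|K^F_{2^{-j}}(x_{j,k},x)|$, using the coefficient size $\muk_{j,k}\asymp r_j^n$ and the elementary rearrangement $[1+2^{j}d(x,x_{j,k})]^{-N}=2^{-jN}(2^{-j}+d(x,x_{j,k}))^{-N}$ to convert the kernel bound into the claimed profile.

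The hard part is not any individual estimate but the correct packaging of the argument: recognizing that exact discretization of the $L_2$-norm of one frequency block forces one to integrate a quadratic quantity, and that this is permissible only because the product property enlarges the admissible bandwidth by the factor $4d$, keeping $|F_j\ScLB f|^2$ within the exactness range of a cubature formula on a lattice of the same order $2^{-j}$. Once Theorems \ref{Pprop} and \ref{cubformula} are granted, everything else is bookkeeping; those two results are the genuinely non-trivial inputs and are assumed here.
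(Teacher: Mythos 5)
Your proposal is correct and follows essentially the same route as the paper: the product property puts $|F_j(\sqrt{\McL})f|^2$ into $\bPW_{4d\,2^{j+1}}(\sqrt{\McL})$, the positive cubature formula on an $r_j$-lattice with $r_j\asymp 2^{-j}$ then discretizes $\|F_j(\sqrt{\McL})f\|_2^2$ exactly, and summing over $j$ via the quadratic partition identity gives the Parseval property, with bandlimitedness read off from the definition of $\Theta^j_k$ and the localization (\ref{LOC-1}) coming from Theorem \ref{kernelsize}. This matches the paper's argument step for step, including the identification $\langle f,\Theta^j_k\rangle=\sqrt{\muk_{j,k}}\,(F_j(\sqrt{\McL})f)(x_{j,k})$.
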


\subsection{An exact  discrete formula for evaluating Fourier coefficients on compact homogeneous manifolds.}

As an application of the Product Property and the Cubature Formula, we formulate the following theorem which shows that in the case of  a compact homogeneous manifold $\mathbf{M}$, for any fixed bandwidth $\omega$, one can find finite sets of points which yield exact  discrete formulas for computing Fourier coefficients of all bandlimited functions of  bandwidth $\omega$ (see \cite{pg}).

\begin{theorem}
For every  compact homogeneous manifold $\mathbf{M}=G/H$
there exists  a constant $c=c(\mathbf{M})$ such that for any
$\omega>0$ and any lattice $\mathbf{M}_{r}=\{x_{k}\}_{k=1}^{K_{\omega}}$
with $0 < r < c\omega^{-1}$ one can find positive
weights $\muk_{k}$ comparable to
$
\omega^{-n},\>\>\>n=dim\>\mathbf{M},
$
such that Fourier coefficients $c_{j}(f)$  of any $f \in \mathbf{E}_{\omega}\SLB$ with respect to the basis $\{u_{j}\}_{j=1}^{\infty}$ can be computed by the following \textit{exact} formula
\begin{equation} \label{exactform1}
c_{j}(f)=\int_{\mathbf{M}}f(x)\overline{u_{j}}(x)dx=
\sum_{k=1}^{\mathcal{K}_{\omega}}\muk_{k}f(x_{k})\overline{u_{j}}(x_{k}),
\end{equation}
with  $r_{\omega}$ satisfying   relations
\begin{equation}
  C_{1}(\mathbf{M})\omega^{n}\leq \mathcal{K}_{\omega}\leq  C_{2}(\mathbf{M})\omega^{n}.
 \end{equation}
\end{theorem}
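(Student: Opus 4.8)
The plan is to combine the Product Property (Theorem~\ref{prodthm}) with the positive cubature formula (Theorem~\ref{cubformula}), applied to the pointwise products $f\,\overline{u_j}$. The key observation is that although we want to integrate $f$ against the individual eigenfunctions $\overline{u_j}$, each \emph{product} $f\,\overline{u_j}$ is again bandlimited, with a bandwidth controlled purely by $\omega$ and $d=\dim G$. Hence a single exact cubature rule, attached to a sufficiently fine lattice, will reproduce $c_j(f)=\int_{\mathbf{M}} f\,\overline{u_j}\,dx$ exactly and simultaneously for every relevant index $j$.

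First I would fix $f\in\mathbf{E}_{\omega}(\sqrt{\McL})$ and note that $c_j(f)=0$ whenever $\lambda_j>\omega$, so only indices with $\lambda_j\le\omega$ need to be treated. Since the Casimir image $\McL=-\sum_{m} D_m^{2}$ has real coefficients, complex conjugation sends an eigenfunction to an eigenfunction with the same eigenvalue; thus $\overline{u_j}\in\mathbf{E}_{\omega}(\sqrt{\McL})$ for $\lambda_j\le\omega$. Applying Theorem~\ref{prodthm} to the pair $f,\overline{u_j}\in\mathbf{E}_{\omega}(\sqrt{\McL})$ gives $f\,\overline{u_j}\in\mathbf{E}_{4d\omega}(\sqrt{\McL})$.

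Next I would put $c=c(\mathbf{M}):=a(\mathbf{M})/(4d)$, with $a(\mathbf{M})$ the constant of Theorem~\ref{cubformula}. For a lattice $\mathbf{M}_r=\{x_k\}$ with $0<r<c\,\omega^{-1}$ we then have $r<a(\mathbf{M})(4d\omega)^{-1}$, so the cubature formula is exact on the space $\mathbf{E}_{4d\omega}(\sqrt{\McL})$: there are strictly positive weights $\muk_k$, depending only on the lattice and the bandwidth $4d\omega$ and therefore independent of both $f$ and $j$, satisfying $c_1 r^{n}\le\muk_k\le c_2 r^{n}$ and $\int_{\mathbf{M}} g\,dx=\sum_k \muk_k\,g(x_k)$ for every $g\in\mathbf{E}_{4d\omega}(\sqrt{\McL})$. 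Taking $g=f\,\overline{u_j}$ yields exactly the formula (\ref{exactform1}); crucially, the very same family $\{\muk_k\}$ serves all $j$ and all $f$ of bandwidth $\omega$ at once, because the cubature weights are exact on the whole Paley--Wiener space, not merely for a single function.

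It remains to record the sizes. Choosing $r\asymp\omega^{-1}$ (a fixed fraction of $c\,\omega^{-1}$) gives $\muk_k\asymp r^{n}\asymp\omega^{-n}$, the asserted comparability. For the cardinality $\mathcal{K}_\omega=\operatorname{card}(\mathbf{M}_r)$ I would invoke the lattice properties of Definition~\ref{lattice} together with the volume comparability $|B(x,r)|\asymp r^{n}$ valid for small $r$: disjointness of the balls $B(x_k,r/4)$ forces $\mathcal{K}_\omega\le C\,Vol(\mathbf{M})\,r^{-n}\asymp\omega^{n}$, while the covering by the balls $B(x_k,r/2)$ forces $\mathcal{K}_\omega\ge C'\,Vol(\mathbf{M})\,r^{-n}\asymp\omega^{n}$, which is the two-sided bound $C_1(\mathbf{M})\omega^{n}\le\mathcal{K}_\omega\le C_2(\mathbf{M})\omega^{n}$. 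There is no serious analytic obstacle here: all the hard work resides in Theorems~\ref{prodthm} and~\ref{cubformula}, and the only genuine idea is the passage to the products $f\,\overline{u_j}$. The one point that needs care is precisely that a \emph{single} weight family works uniformly in $j$ and $f$, which is guaranteed by the function-independence of the cubature weights, together with the calibration $r\asymp\omega^{-1}$ that makes both the weight size and the cardinality come out with the correct exponent $n$.
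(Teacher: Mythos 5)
Your proposal is correct and follows exactly the route the paper indicates: the theorem is presented there as a direct application of the Product Property (Theorem \ref{prodthm}) applied to $f\,\overline{u_j}$ together with the positive cubature formula (Theorem \ref{cubformula}) on $\mathbf{E}_{4d\omega}(\sqrt{\McL})$, with the weight sizes and the cardinality bounds coming from $r\asymp\omega^{-1}$ and the lattice/volume comparisons, just as you argue. The paper itself only sketches this (deferring details to the cited reference), so your write-up is, if anything, more explicit than the source.
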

We also have a
discrete  representation formula using the eigenfunctions $u_{j}$
\begin{equation}\label{repesentationeigenfunc}
f=\sum_{j}\sum_{k=1}^{\mathcal{K}_{\omega}}\muk_{k}f(x_{k})\overline{u_{j}}(x_{k})u_{j}
\quad \mbox{for all} \quad f \in  \mathbf{\mathbf{E}}_{\omega}(L).
\end{equation}

 \section{Approximation  in $L_{p}$ norms and Besov spaces
 on general compact manifolds}\label{AppSec}

In Section \ref{AbstractBesov}  Besov subspaces in  an abstract Hilbert space $\mathcal{H}$ were characterized in terms of approximation by Paley-Wiener vectors, which were defined using an selfadjoint operator $L$.  In particular, if $\mathcal{H}=L_{2}(\bold{M})$, for a Riemannian manifold $\mfd{M}$ of bounded geometry (compact or non-compact), and $L$ is the positive square root of a non-negative self-adjoint second order elliptic $C^{\infty}$-bounded differential operator, one obtains characterizations of the Besov spaces $\mathcal{B}^{\alpha}_{2, p}(\bold{M})$ in terms of approximation by Paley-Wiener functions on $\bold{M}$. The goal of this section is to develop the approximation theory for the spaces $L_{p}(\mathbf{M}),\>\>1\leq p\leq \infty$ when $\bold{M}$ is a compact Riemannian manifold. For more details see also  \cite{gp}, \cite{gpa}, \cite{Pes08}.

\subsection{The $L_{p}$-Jackson inequality}

For $1 \leq p \leq \infty$ and $f$ in $ L_p(\bold{M})$, we set
\begin{equation}
\mathcal{E}(f,\omega,p)=\inf_{g\in
\textbf{E}_{\omega}(L)}\|f-g\|_{p}.
\end{equation}

\begin{lemma}\label{Jack}
For every $m\in \mathbb{N}$ and $1\leq p\leq \infty$ there exists a constant $C=C(\bold{M},m, p)$ such that for any $\omega>1$ and all $f\in  W_{p}^{m}(\mathbf{M})$
$$
\mathcal{E}(f,\omega,p)\leq C\omega^{-m}\|L^{m/2}f\|_{p}.
$$

\end{lemma}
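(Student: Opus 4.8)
The plan is to approximate $f$ by a partial sum of its Littlewood--Paley decomposition, which is automatically band-limited, and then to control the tail using the frequency-localized operator bounds already in hand. Given $\omega\ge 2$, let $N\ge 0$ be the largest integer with $2^{N+1}\le\omega$, and set
\[
S_N f=\sum_{j=0}^N G_j\SLB f=g\!\left(2^{-N}\sqrt{L}\right)f ,
\]
the second equality being (\ref{partsums1}). Since $\mathrm{supp}(g)\subset[0,2]$, the symbol $g(2^{-N}\lambda)$ is supported in $[0,2^{N+1}]$, so $S_Nf$ has $\sqrt{L}$-spectrum inside $[0,2^{N+1}]\subseteq[0,\omega]$; hence $S_Nf\in\mathbf{E}_\omega(L)$ and $\mathcal{E}(f,\omega,p)\le\|f-S_Nf\|_p$. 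The key subtlety here is precisely that I must force the dyadic cutoff $2^{N+1}$ to lie \emph{below} $\omega$, so that the approximant genuinely sits in the Paley--Wiener space.

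First I would estimate the tail. By Theorem \ref{LPT} the series $\sum_{j\ge 0}G_j\SLB f$ converges to $f$ in $L_p(\mathbf{M})$, so $f-S_Nf=\sum_{j>N}G_j\SLB f$ and the triangle inequality gives $\|f-S_Nf\|_p\le\sum_{j>N}\|G_j\SLB f\|_p$. The essential ingredient is the per-term bound
\[
\left\|G_j\SLB f\right\|_p\le c\,2^{-(j-1)m}\left\|L^{m/2}f\right\|_p,\qquad j\ge 1 ,
\]
which is the $q=p$ (equivalently $\alpha=1$) instance of the estimate established for (\ref{phijestway}): with $\Psi(\lambda)=G_1(\lambda)\lambda^{-m}$ (supported in $[1,4]$) and $\Psi_j(\lambda)=\Psi\!\left(2^{-(j-1)}\lambda\right)$ one has the factorization $G_j\SLB f=2^{-(j-1)m}\Psi_j\SLB\!\left(L^{m/2}f\right)$, and since $\Psi$ is smooth, compactly supported away from the origin, and $2^{-(j-1)}\le 1$, Theorem \ref{boundness} (case $\alpha=1$) bounds $\|\Psi_j\SLB\|_{L_p\to L_p}$ by a constant independent of $j$.

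Finally I would sum the geometric series. Using the per-term bound,
\[
\|f-S_Nf\|_p\le c\left\|L^{m/2}f\right\|_p\sum_{j>N}2^{-(j-1)m}
=c\,\frac{2^{-Nm}}{1-2^{-m}}\left\|L^{m/2}f\right\|_p .
\]
By the choice of $N$ one has $2^{N+1}>\omega/2$, hence $2^{-N}<4/\omega$ and $2^{-Nm}<4^m\omega^{-m}$, which yields $\mathcal{E}(f,\omega,p)\le C\omega^{-m}\|L^{m/2}f\|_p$ with $C=C(\mathbf{M},m,p)$. For the remaining bounded range $1<\omega<2$ I would instead take the approximant $g\!\left(2\omega^{-1}\sqrt{L}\right)f\in\mathbf{E}_\omega(L)$; its error is supported in frequencies $\ge\omega/2>1/2$, bounded away from the origin, so the same factorization $\lambda^{-m}\times(\text{smooth, compactly supported})$ together with Theorem \ref{boundness} furnishes the bound with a constant uniform over this compact range.

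I expect the only genuinely delicate points to be organizational rather than computational: verifying that the approximant is truly band-limited to $[0,\omega]$, and justifying the termwise $L_p$ estimate of the tail, which relies on the $L_p$-convergence supplied by Theorem \ref{LPT} and on the uniform operator bound of Theorem \ref{boundness}. The substantive analytic work — the kernel size estimate (\ref{kersize}) and the resulting $L_p\to L_q$ mapping properties — has already been carried out, so the present argument is essentially an assembly of those results together with elementary dyadic bookkeeping.
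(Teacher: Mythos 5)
Your proof is correct and follows essentially the same route as the paper: both rest on the factorization $G_j(\lambda)=2^{-(j-1)m}\Psi_j(\lambda)\lambda^{m}$ with $\Psi$ supported away from the origin, the uniform $L_p\to L_p$ bound of Theorem \ref{boundness}, and summation of the resulting geometric series. The only difference is cosmetic — the paper rescales the partition by $2/\omega$ so the cutoff lands exactly at $\omega$, while you snap $\omega$ down to the nearest dyadic level $2^{N+1}$ and absorb the harmless factor $4^{m}$.
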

\begin{proof}
Here we make use of the partition of unity $(G_j)_{j \ge 0}$ 
defined in  Subsection \ref{subsect:part_unity_freq}. Recall that
$\sum_{j \ge 0} G_j(\lambda) = 1$ for all $\lambda \in [0,\infty)$, and $supp{G_j} \subset [2^{j-1},2^{j+1}]$, for all $j \ge 1$. Furthermore, we have $G_j(\lambda) = G_1(2^{-j} \lambda)$, and $G_1(\lambda) = g(\lambda/2) - g(\lambda)$, for a suitably chosen function $g$ with support in $[0,2]$.
We
define  for $\lambda > 0$
\[ \Psi(\lambda) = G_1(\lambda)\lambda^{-m} \]
so that $\Psi$ is supported in $[1,4]$.  For $j \geq 1$, we set
\[ \Psi_j(\lambda)=\Psi\left(2^{-(j-1)}\lambda\right)=2^{(j-1)m}G_{j}(\lambda)\lambda^{-m},\]
so that
$$
G_j(\lambda) = 2^{-(j-1)m}\Psi_j(\lambda)\lambda^{m}.
$$
 Now for a given $\omega$ we change the variable $\lambda$ to the variable $2\lambda/\omega$. Clearly, the support of $g(2\lambda/\omega)$ is the interval $[0, \>\omega]$ and we have the following relation
$$
G_j(2\lambda/\omega) = \omega^{-m}2^{-(j-1)m}\Psi_j(2\lambda/\omega)\lambda^{m}.
$$
It implies that if $f \in  W_p^m(\mathbf{M})$, so that $L^{m/2}f \in L_p(\mathbf{M})$ we have
\[
G_j ( {2}\sqrt{L}/{\omega})f =  \omega^{-m}2^{-(j-1)m}\Psi_j\left( {2}\sqrt{L}/{\omega}\right)L^{m/2}f.
\]
According to
Theorem \ref{boundness}   we have the estimate
$$
\|\Psi_j ( {2}\sqrt{L}/{\omega})u\|_{p}\leq C' \|u\|_{p},
 \quad u\in L_{p}({\bf M}) . $$
Note, that since $g(2\lambda/\omega)$ has support in $[0, 2]$  the function  $g ({2}\sqrt{L}/{\omega})f$ belongs to $\textbf{E}_{\omega}\SLB $.

Thus, the last two formulas imply the following final inequality
$$
\mathcal{E}(f,\omega,p)\leq
\left\|f-g ({2}\sqrt{L}/\omega) f\right\|_{p}\leq \sum_{j\geq 1}\left\|G_{j}\left({2}\sqrt{L}/\omega \right)f\right\|_{p}\leq
$$
$$
C' \omega^{-m}\sum_{j\geq 1}2^{-(j-1)m}\|L^{m/2}f\|_{p}\leq C \omega^{-m}\|L^{m/2}f\|_{p}.
$$
 The proof is complete.
\end{proof}

\subsection{The $L_{p}$-Bernstein inequality}

\begin{lemma}\label{BernLemma}
Given $m\in \mathbb{N}$ and $1\leq p\leq \infty$ there
 exists a constant
 $C=C(\bold{M},m, p) > 0 $ such that for any $\omega>1$
  and  all $f\in \textbf{E}_{\omega}\left(\sqrt{L}\right)$
\begin{equation}\label{BI}
\|L^{m/2}f\|_{p}\leq C\omega^{m}\|f\|_{p} \quad
 \mbox{for all} \, \,  f\in \textbf{E}_{\omega}(\sqrt{L}).
\end{equation}
\end{lemma}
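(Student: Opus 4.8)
The plan is to prove the $L_p$-Bernstein inequality in close analogy with the Jackson inequality of Lemma \ref{Jack}, using the dyadic partition of unity $(G_j)_{j\ge 0}$ from Subsection \ref{subsect:part_unity_freq} together with the operator-norm bound of Theorem \ref{boundness}. The key observation is that for $f \in \textbf{E}_{\omega}\SLB$ the spectral expansion of $f$ is supported in $[0,\omega]$, so that $f = \sum_{j\ge 0} G_j\SLB f$ is in fact a \emph{finite} sum: only the scales $j$ with $2^{j-1}\le \omega$ contribute, say $0\le j\le J$ with $2^{J}\asymp \omega$. Since $L^{m/2}$ commutes with every $G_j\SLB$ through the functional calculus, I would write
\begin{equation*}
L^{m/2}f = \sum_{j=0}^{J} L^{m/2} G_j\SLB f,
\end{equation*}
and estimate each term separately in $L_p$.

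For the individual scales $j\ge 1$ I would mirror the rescaling used in Lemma \ref{Jack}, but with the opposite sign of the exponent. Set $\Phi(\lambda) = \lambda^m G_1(\lambda)$, which is smooth and supported in $[1,4]$, and put $\Phi_j(\lambda) = \Phi\!\left(2^{-(j-1)}\lambda\right) = 2^{-(j-1)m}\lambda^m G_j(\lambda)$, so that $\lambda^m G_j(\lambda) = 2^{(j-1)m}\Phi_j(\lambda)$ and hence, in the spectral calculus,
\begin{equation*}
L^{m/2} G_j\SLB = 2^{(j-1)m}\,\Phi\!\left(2^{-(j-1)}\sqrt{L}\right).
\end{equation*}
Because $\Phi$ vanishes in a neighborhood of the origin, it trivially satisfies the hypotheses of Theorem \ref{kernelsize} (in particular $\Phi^{(k)}(0)=0$ for all $k$), so Theorem \ref{boundness} with $\alpha=1$ gives $\|\Phi(t\sqrt{L})\|_{L_p\to L_p}\le c(\Phi,p)$ uniformly for $0<t\le 1$. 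Applying this with $t_j = 2^{-(j-1)}\le 1$ yields $\|L^{m/2}G_j\SLB f\|_p \le c\,2^{(j-1)m}\|f\|_p$.

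Finally I would sum. The bottom scale $j=0$ needs separate care, since $\lambda^m g(\lambda)$ does not vanish at the origin; here I would simply note that $g\SLB=G_0\SLB$ has range inside the finite-dimensional space $\textbf{E}_{2}\SLB$, so $L^{m/2}G_0\SLB$ is a finite-rank operator with smooth kernel and is therefore bounded $L_p\to L_p$ by a constant $C_0 = C_0(\mathbf{M},m,p)$. Adding the contributions and using that $\sum_{j=1}^{J} 2^{(j-1)m}$ is a geometric series with ratio $2^m>1$, comparable to its largest term $2^{(J-1)m}\asymp \omega^m$, gives
\begin{equation*}
\|L^{m/2}f\|_p \le \Big(C_0 + c\sum_{j=1}^{J}2^{(j-1)m}\Big)\|f\|_p \le C\,\omega^m\|f\|_p,
\end{equation*}
where $\omega>1$ is used to absorb the $j=0$ term, which is the claim. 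The one genuine subtlety, and the reason the dyadic splitting is needed rather than a single cutoff of the form $\lambda^m g(\lambda/\omega)$, is the parity condition $F^{(k)}(0)=0$ for odd $k$ demanded by Theorem \ref{kernelsize}: for odd $m$ the function $\lambda^m$ violates it at the origin. The decomposition resolves this by confining every scale $j\ge 1$ to supports bounded away from $0$, and disposing of the single offending low-frequency piece by finite-dimensionality.
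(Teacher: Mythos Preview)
Your proof is correct, but it takes a different and somewhat longer route than the paper's. The paper argues in one stroke: pick $h\in C_0^\infty(\mathbb{R}_+)$ with $h\equiv 1$ on $[0,1]$, note that $h(\omega^{-1}\sqrt{L})f=f$ for $f\in\mathbf{E}_\omega(\sqrt{L})$, and then apply Theorem~\ref{boundness} to the single multiplier $F(\lambda)=\lambda^m h(\lambda)$ at scale $t=\omega^{-1}$ to get
\[
\|L^{m/2}f\|_p=\omega^m\big\|(\omega^{-1}\sqrt{L})^m h(\omega^{-1}\sqrt{L})f\big\|_p\le C\omega^m\|f\|_p.
\]
There is no dyadic decomposition and no geometric sum.

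Your observation about the parity hypothesis in Theorem~\ref{kernelsize} is a genuine point: for odd $m$ the function $\lambda^m h(\lambda)$ equals $\lambda^m$ near the origin, so its $m$-th (odd) derivative at $0$ does not vanish, and the paper's short argument, read literally against the stated hypotheses, does not cover this case. Your dyadic splitting resolves this cleanly by pushing all scales $j\ge 1$ away from $\lambda=0$ and handling $j=0$ via finite rank. That said, the full Littlewood--Paley decomposition is more than is required: a two-piece split $h=h_0+h_1$ with $h_1$ supported away from $0$ already does the job---apply Theorem~\ref{boundness} to $\lambda^m h_1(\lambda)$ (which trivially satisfies the parity condition) and dispose of $\lambda^m h_0(\lambda)$ by finite-dimensionality, exactly as you treat $j=0$. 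This gives the result without summing a geometric series and stays closer to the paper's one-multiplier idea while repairing the odd-$m$ gap.
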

\begin{proof}
Consider   $h\in C_{0}^{\infty}(\mathbf{R}_{+})$ such that $h(\lambda)=1$ for $\lambda\in [0,1]$.
For a fixed $\omega>0$ the support of $h( \lambda\omega^{-1})$ is $[0,\omega]$, which shows that for any $f\in \textbf{E}_{\omega}(\sqrt{L})$ one has the equality $h( \omega^{-1}\sqrt{L})f=f$.

Applying Theorem \ref{boundness} to the function $(\omega^{-1}\lambda)^{m}h(\omega^{-1}\lambda)$ we see that the operator
  $(\omega^{-1}\sqrt{L})^{m}h(\omega^{-1}\sqrt{L})$ is bounded from $L_{p}(\mathbf{M})$ to $L_{p}(\mathbf{M})$. Thus
 for every $f\in \textbf{E}_{\omega}(\sqrt{L})$ we have
  $$
  \left\|L^{m/2}f\right\|_{p}=\omega^{m} \left\|(\omega^{-1}\sqrt{L})^{m}h(\omega^{-1}L)f\right\|_{p}\leq C\omega^{m}\|f\|_{p} \quad \mbox{for} \,\,
f\in \textbf{E}_{\omega}(\sqrt{L}).
  $$

\end{proof}

\subsection{Besov spaces and approximations}

Recall our definition of Besov spaces via 
$$
\mathcal{B}_{p, q}^{\alpha }({\bf M})
= \left (L_{p}(\bold{M}),W^{r}_{p}(\bold{M})\right)^{K}_{\alpha/r,q},\
0<\alpha<r\in \mathbb{N},\ 1\leq p\leq \infty,\>\>\>0< q\leq \infty.
$$
 where $K$ is the Peetre's interpolation functor.
Note that the Sobolev space $W^{r}_{p}(\bold{M})$ is  the domain of \textit{any} elliptic differential operator of order $r$ (see \cite{Tay81}).

Let us compare the situation on manifolds with the abstract conditions of   Theorem \ref{equivalence-interpolation}. We treat the linear normed spaces $W^{r}_{p}(\bold{M})$ and $L_{p}(\bold{M})$ as the spaces $E$ and $F$ respectively. We identify $\mathcal{T}$ with the linear space $\textbf{E}_{\omega}(L)$ which is equipped with the quasi-norm
$$
\|f\|_{\mathcal{T}}= \inf \left\{\omega': f\in \textbf{E}_{\omega'}(L)\right\},\>\>\>f\in \textbf{E}_{\omega}(L).
$$

Combining Lemmas \ref{Jack}, \ref{BernLemma} and Theorem \ref{equivalence-interpolation}  we derive the following result.
\begin{theorem}
\label{contapproxim}
Fix $\alpha  > 0$, $1 \leq p \leq \infty$,
and $0 < q \leq  \infty$. Then a function  $f \in L_p(\bold{M})$
belongs to
$\mathcal{B}^{\alpha }_{p, q}$ if and only if
\begin{equation}
\|f\|_{\mathcal{A}^{\alpha }_{q, p}} :=
\|f\|_{L_p(\bold{M})} +
\left(\int_{0}^{\infty}\left(t^{\alpha}\mathcal{E}(f,
t, p)\right)^{q}\frac{dt}{t}\right)^{1/q}<\infty.
\end{equation}
Moreover,
\begin{equation}
\label{errnrmeqway}
\|f\|_{\mathcal{A}^{\alpha }_{q, p}} \sim \|f\|_{\mathcal{B}^{\alpha }_{q, p}}.
\end{equation}
\end{theorem}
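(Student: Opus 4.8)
The plan is to recognize that Theorem \ref{contapproxim} is precisely the specialization of the abstract Equivalence Approximation Theorem (Theorem \ref{equivalence-interpolation}) to the concrete manifold setting, with the three quasi-normed spaces identified as $\EB = L_p(\mfd{M})$, $\FB = W_p^r(\mfd{M})$, and $\mathcal{T} = \mathbf{E}_\omega(\sqrt{L})$ equipped with the bandwidth quasi-norm $\|f\|_{\mathcal{T}} = \inf\{\omega' : f \in \mathbf{E}_{\omega'}(\sqrt{L})\}$. The two hypotheses of Theorem \ref{equivalence-interpolation}, namely the Jackson-type inequality (\ref{dir}) and the Bernstein-type inequality (\ref{bern}), are supplied by Lemma \ref{Jack} and Lemma \ref{BernLemma} respectively, once they are rephrased in the abstract language. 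So the core of the proof is a verification-and-bookkeeping argument rather than a new estimate.

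First I would verify that the inclusions $\mathcal{T} \subset \FB \subset \EB$ hold and that $\EB, \FB$ are complete: the Sobolev space $W_p^r(\mfd{M})$ embeds continuously in $L_p(\mfd{M})$, and every bandlimited function $f \in \mathbf{E}_\omega(\sqrt{L})$ is smooth, hence lies in $W_p^r(\mfd{M})$ for every $r$. Next I would translate Lemma \ref{Jack} into the Jackson hypothesis: for $f \in W_p^m(\mfd{M})$ one has $\mathcal{E}(f,\omega,p) \leq C\omega^{-m}\|L^{m/2}f\|_p$, and since $\|L^{m/2}f\|_p$ is comparable to $\|f\|_{W_p^m}$ by elliptic regularity, this is exactly the statement that $\mathcal{E}(f,t,\mathcal{T},\EB)$ decays like $t^{-m}\|f\|_{\FB}$, i.e.\ (\ref{dir}) holds with $\beta = m = r$. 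Here one must take care that the functional $\mathcal{E}(f,t)$ in the abstract theorem, defined via $\inf_{\|g\|_{\mathcal{T}}\leq t}\|f-g\|_{\EB}$, matches the approximation error $\mathcal{E}(f,\omega,p)$ once the correspondence $t \leftrightarrow \omega$ between the bandwidth quasi-norm bound and the approximation parameter is made; this amounts to observing that $\|g\|_{\mathcal{T}} \leq \omega$ means precisely $g \in \mathbf{E}_\omega(\sqrt{L})$.

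Symmetrically, I would translate Lemma \ref{BernLemma}: for $f \in \mathbf{E}_\omega(\sqrt{L})$ one has $\|L^{m/2}f\|_p \leq C\omega^m\|f\|_p$, which recast with $\|f\|_{\FB} \sim \|f\|_p + \|L^{m/2}f\|_p$ and $\|f\|_{\mathcal{T}} = \omega$ gives $\|f\|_{\FB} \leq C\|f\|_{\mathcal{T}}^m\|f\|_{\EB}$, matching (\ref{bern}) with the same $\beta = m$. With both hypotheses verified, Theorem \ref{equivalence-interpolation} yields the two-sided embedding $(\EB,\FB)^K_{\theta,q} = \mathcal{E}_{\theta\beta,q}(\EB,\mathcal{T})$ for $\theta = \alpha/r$ and $\theta\beta = \alpha$. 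By the definition (\ref{Besov-1}) the left-hand side is the Besov space $\mathcal{B}^\alpha_{p,q}(\mfd{M})$, while the right-hand approximation space is by definition equipped with the norm $\|f\|_{L_p} + \left(\int_0^\infty (t^\alpha \mathcal{E}(f,t,p))^q\,\frac{dt}{t}\right)^{1/q}$, giving the claimed equivalence (\ref{errnrmeqway}).

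The main obstacle I anticipate is not any single estimate but the careful matching of conventions: the abstract theorem uses the approximation functional $\mathcal{E}(f,t,\EB,\mathcal{T})$ with a continuous parameter $t$ playing the role of the quasi-norm bound, whereas Lemmas \ref{Jack} and \ref{BernLemma} are phrased in terms of bandwidth $\omega$ and the integer Sobolev order $m$. One must confirm that $\beta$ can be taken equal to the (arbitrary, integer) order $r$ appearing in the Besov definition, that the resulting $\theta\beta = (\alpha/r)\cdot r = \alpha$ is independent of the choice of $r > \alpha$ (which is guaranteed by the reiteration-theorem-based independence noted after (\ref{Besov-1})), and that the elliptic regularity equivalence $\|f\|_{W_p^m} \sim \|f\|_p + \|L^{m/2}f\|_p$ is legitimately invoked so that the Sobolev-norm hypotheses of Theorem \ref{equivalence-interpolation} translate into the operator-norm statements of the two lemmas. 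Once these identifications are pinned down, the conclusion is immediate.
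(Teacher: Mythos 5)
Your proposal is correct and follows exactly the paper's route: Theorem \ref{contapproxim} is obtained by combining Lemma \ref{Jack} (Jackson inequality), Lemma \ref{BernLemma} (Bernstein inequality) and the abstract Theorem \ref{equivalence-interpolation}, with $\mathcal{T}=\mathbf{E}_{\omega}(\sqrt{L})$ carrying the bandwidth quasi-norm. Your assignment $\EB=L_p(\mfd{M})$, $\FB=W^r_p(\mfd{M})$ is the one consistent with the inclusions $\mathcal{T}\subset\FB\subset\EB$ required by the abstract theorem (the paper's own sentence states the roles in the opposite order, evidently a slip), and your bookkeeping --- matching $t$ with $\omega$, taking $\beta=r$ so that $\theta\beta=\alpha$, and invoking elliptic regularity to identify $\|f\|_{W^m_p}$ with $\|f\|_p+\|L^{m/2}f\|_p$ --- supplies precisely the details the paper leaves implicit.
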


By discretizing   the integral term we obtain the next theorem
(see \cite{gp}). 

\begin{theorem}
\label{ernrmeq}
Fix $\alpha  > 0$, $1 \leq p \leq \infty$,
and $0 < q \leq  \infty$. Then a function  $f \in L_p(\bold{M})$
belongs to
$\mathcal{B}^{\alpha }_{p, q}$ if and only if
\begin{equation}
\label{errgdpq}
\|f\|_{D \negthinspace A^{\alpha }_{p, q}} :=
\|f\|_{L_p(\bold{M})} + \left(\sum_{j=0}^{\infty} (2^{\alpha j}{\mathcal E}(f, 2^{2j},p))^q \right)^{1/q} < \infty.
\end{equation}
Moreover,
\begin{equation}
\|f\|_{D \negthinspace A^{\alpha }_{p, q}} \sim \|f\|_{\mathcal{B}^{\alpha }_{p, q}}.
\end{equation}
\end{theorem}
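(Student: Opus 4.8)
The plan is to obtain Theorem \ref{ernrmeq} from Theorem \ref{contapproxim} by replacing the continuous Besov seminorm there with an equivalent dyadic series; no new analysis on $\mfd{M}$ is needed beyond the continuous characterization. The single structural property I will exploit is that, for fixed $f \in L_p(\mfd{M})$, the best-approximation error $t \mapsto \mathcal{E}(f,t,p)$ is nonnegative and \emph{non-increasing} in the bandwidth $t$, since enlarging the approximating subspace $\mathbf{E}_t(\sqrt{L})$ can only decrease the infimum in its definition. Monotonicity is exactly what lets one compare the integral against its values sampled on a geometric progression.

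First I would record the elementary comparison for a nonnegative non-increasing $\phi$: decomposing $[1,\infty)$ into the blocks $[2^{k},2^{k+1})$, using $\int_{2^{k}}^{2^{k+1}} t^{\alpha q}\,\frac{dt}{t}=c_{\alpha,q}\,2^{k\alpha q}$ and the two-sided bound $\phi(2^{k+1})\le \phi(t)\le \phi(2^{k})$ on each block, one gets, with constants depending only on $\alpha$ and $q$,
\begin{equation}
\int_{1}^{\infty}\left(t^{\alpha}\mathcal{E}(f,t,p)\right)^{q}\,\frac{dt}{t}
\;\asymp\;\sum_{k\ge 0}\left(2^{k\alpha}\,\mathcal{E}(f,2^{k},p)\right)^{q},
\end{equation}
with the usual replacement of the sum/integral by a supremum when $q=\infty$. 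This converts the high-frequency part of $\|f\|_{\mathcal{A}^{\alpha}_{q,p}}$ into precisely the dyadic series appearing in (\ref{errgdpq}).

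It then remains to account for the low-frequency scales $t\le 1$. There the approximating space collapses to the span of the lowest eigenfunction, so $\mathcal{E}(f,t,p)$ is squeezed between a fixed constant multiple of $\|f\|_{L_p(\mfd{M})}$ and $\|f\|_{L_p(\mfd{M})}$; because $\alpha>0$ the factor $\int_{0}^{1}t^{\alpha q}\,\frac{dt}{t}$ is finite, so this portion of the integral is comparable to $\|f\|_{L_p(\mfd{M})}^{q}$. Adding the two contributions and invoking the equivalence $\|f\|_{\mathcal{A}^{\alpha}_{q,p}}\sim\|f\|_{\mathcal{B}^{\alpha}_{p,q}}$ from Theorem \ref{contapproxim} yields $\|f\|_{D\negthinspace A^{\alpha}_{p,q}}\sim\|f\|_{\mathcal{B}^{\alpha}_{p,q}}$, which is the assertion; finiteness of one side is then equivalent to membership in $\mathcal{B}^{\alpha}_{p,q}$.

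The main point requiring care is not any individual estimate but the bookkeeping that fixes the sampling scale against the weight. The dyadic nodes and the outer factor $2^{k\alpha}$ must be matched so that the resulting series reproduces the smoothness index $\alpha$ and not $\alpha/2$ or $2\alpha$; concretely, since $\mathcal{E}(\cdot,\cdot,p)$ is indexed by the spectral parameter entering $\mathbf{E}_{t}(\sqrt{L})$, the nodes chosen in (\ref{errgdpq}) and the weight $2^{\alpha j}$ must be calibrated to this parametrization. Once that calibration is verified, the monotone discretization and the absorption of the low frequencies into $\|f\|_{L_p(\mfd{M})}$ are entirely routine.
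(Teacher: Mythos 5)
Your proposal is correct and is essentially the paper's own route: the paper establishes Theorem \ref{contapproxim} and then obtains Theorem \ref{ernrmeq} simply ``by discretizing the integral term'', which is exactly your block decomposition for the non-increasing function $t\mapsto\mathcal{E}(f,t,p)$ together with absorbing the $t\le 1$ portion into $\|f\|_{L_p(\mfd{M})}$ (note that only the upper bound $\mathcal{E}(f,t,p)\le\|f\|_{L_p(\mfd{M})}$ is needed there --- a lower bound by $c\,\|f\|_{L_p(\mfd{M})}$ fails for constant $f$, but is not required since both norms carry $\|f\|_{L_p(\mfd{M})}$ as a separate summand). The calibration issue you flag about the nodes $2^{2j}$ versus the weight $2^{\alpha j}$ is genuine, and it traces to the paper's definition of $\mathcal{E}(f,\omega,p)$ via $\mathbf{E}_{\omega}(L)$ rather than $\mathbf{E}_{\omega}(\sqrt{L})$, so that $\mathbf{E}_{2^{2j}}(L)=\mathbf{E}_{2^{j}}(\sqrt{L})$ and the weight $2^{\alpha j}=\bigl(2^{j}\bigr)^{\alpha}$ is consistent with smoothness $\alpha$ in the $\sqrt{L}$-parametrization used in Lemmas \ref{Jack} and \ref{BernLemma}.
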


Using this theorem and the Littlewood-Paley formula  (\ref{L-P}), one can easily prove the following statement (see the proof of Theorem \ref{projections}), in which $(G_{j})_{j \geq 0}$ is the family of functions defined in Subsection \ref{subsect:part_unity_freq}.

\begin{theorem}\label{p-projections}
If  $\alpha  > 0$, $1 \leq p \leq \infty$, and $0 < q \leq  \infty$ then
$f \in \mathcal{B}^{\alpha }_{p, q}$ if and only if $f \in L_p(\bold{M})$ and
\begin{equation}
\|f\|_{\widetilde{A}^{\alpha }_{p,q}} :=
\|f\|_{L_p(\bold{M})} + \left(\sum_{j=0}^{\infty} \left(2^{\alpha j}\left\| G_{j}\left(\sqrt{L}\right)f                   \right\|_{p}\right)^q \right)^{1/q} < \infty.
\end{equation}
Moreover,
\begin{equation}
\|f\|_{\widetilde{A}^{\alpha }_{p, q}} \sim \|f\|_{\mathcal{B}^{\alpha }_{p,q}}.
\end{equation}

\end{theorem}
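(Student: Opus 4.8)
The plan is to deduce the statement from the already-established characterization of $\mathcal{B}^{\alpha}_{p,q}(\mfd{M})$ through best-approximation errors (Theorem \ref{contapproxim}, equivalently its discretized form Theorem \ref{ernrmeq}), by showing that the weighted sum of the norms $\|G_j\SLB f\|_p$ is two-sidedly equivalent to the weighted sum of the errors $\mathcal{E}(f,2^j,p)$. The argument runs parallel to the Hilbert-space proof of Theorem \ref{projections}; but wherever that proof invoked the Parseval identity $\|f\|^2=\sum_j\|F_j\SLB f\|^2$ and the contractivity of the spectral projectors, I would instead appeal to the Littlewood--Paley convergence (\ref{L-P}) of Theorem \ref{LPT} together with the uniform $L_p$-operator bounds supplied by Theorem \ref{boundness}.

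First I would establish the direct inequality $\sum_l\bigl(2^{l\alpha}\mathcal{E}(f,2^l,p)\bigr)^q\lesssim\sum_j\bigl(2^{j\alpha}\|G_j\SLB f\|_p\bigr)^q$. By (\ref{partsums1}) the partial sum $\sum_{j=0}^{l}G_j(\lambda)=g(2^{-l}\lambda)$ is supported in $[0,2^{l+1}]$, so $\sum_{j=0}^{l}G_j\SLB f\in\mathbf{E}_{2^{l+1}}\SLB$; invoking (\ref{L-P}) then gives
$$
\mathcal{E}(f,2^{l+1},p)\leq\left\|f-\sum_{j=0}^{l}G_j\SLB f\right\|_p\leq\sum_{j>l}\left\|G_j\SLB f\right\|_p.
$$
Feeding this tail estimate into the discrete Hardy inequality, exactly as in the proof of Theorem \ref{projections}, yields the claimed bound.

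For the reverse direction I would use the band-limitation of $G_j$. Since $\mathrm{supp}(G_j)\subset[2^{j-1},2^{j+1}]$, the operator $G_j\SLB$ annihilates every $g\in\mathbf{E}_{2^{j-1}}\SLB$, so for such $g$ one has
$$
\left\|G_j\SLB f\right\|_p=\left\|G_j\SLB(f-g)\right\|_p\leq\left\|G_j\SLB\right\|_{L_p\to L_p}\|f-g\|_p.
$$
Applying Theorem \ref{boundness} with $\alpha=1$ to $G_j(\lambda)=G_1\!\left(2^{-(j-1)}\lambda\right)$ (and to $g$ for the $j=0$ term) shows $\|G_j\SLB\|_{L_p\to L_p}\leq c$ uniformly in $j$; taking the infimum over $g$ gives $\|G_j\SLB f\|_p\leq c\,\mathcal{E}(f,2^{j-1},p)$, and summing with a one-step reindexing produces the opposite estimate. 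Combining the two directions with Theorem \ref{contapproxim}, and adding back the term $\|f\|_{L_p(\mfd{M})}$ (which dominates $\mathcal{E}(f,1,p)$ and $\|G_0\SLB f\|_p$), completes the norm equivalence.

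The main obstacle, and the only genuine departure from the Hilbert-space blueprint, is that in $L_p$ one no longer has an isometric decomposition nor contractive projectors. Both tools that replace them---strong $L_p$-convergence of $\sum_j G_j\SLB$ to the identity and the uniform operator bound on the $G_j\SLB$---are nontrivial, but they are already in hand as Theorem \ref{LPT} and Theorem \ref{boundness} respectively, so the remaining work is the routine bookkeeping of the Hardy-inequality and reindexing steps.
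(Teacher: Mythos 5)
Your proposal is correct and follows essentially the route the paper itself indicates: it deduces the statement from the approximation characterization of Theorem \ref{contapproxim}/\ref{ernrmeq} by transplanting the two-sided argument of Theorem \ref{projections}, with the Littlewood--Paley convergence (\ref{L-P}) replacing the Parseval identity in the direct estimate and the uniform $L_p$-bounds of Theorem \ref{boundness} replacing contractivity of the spectral projectors in the reverse estimate. The paper leaves these substitutions implicit ("one can easily prove\dots"), and you have supplied exactly the details it had in mind.
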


\subsection{Besov spaces in terms of sampling}

Using the same arguments as in the proof of  Theorems \ref{Frame-th} and the $L_{p}$-Bernstein inequality (\ref{BI}) one can establish  the
following Plancherel-Polya type inequalities (for the case $p=2$ see \cite{Pes00}, \cite{Pes04b}).

\begin{theorem}\label{T-PP}
For every  compact manifold  ${\bf M}$  there exist positive
constants $c=c({\bf M}), C_{1}=C_{1}({\bf M})$
 and $C_{2}=C_{2}({\bf M})$  such that for any $\omega>0$,
every $(r, N_{\mfd{M}})$-lattice ${\bf M}_{r}=\{x_{k}\}$ with $r=
c \omega^{-1}$ and every $f\in
{\bf E}_{\omega}\SLB $ the following inequalities hold true
\begin{equation}\label{PP}
C_{1}\left(\sum_{k}\left|f(x_{k})\right|^{p}\right)^{1/p}\leq
r^{-n/p}\|f\|_{p} \leq C_{2}\left(\sum_{k}\left|f(x_{k})\right|^{p}\right)^{1/p}.
\end{equation}
\end{theorem}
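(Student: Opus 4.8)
The plan is to prove the two inequalities separately, in both cases comparing the continuous norm $\|f\|_p$ against the weighted discrete quantity $r^n\sum_k|f(x_k)|^p$, exactly as the $L_2$ frame bounds in Theorem \ref{Frame-th} were obtained from Poincar\'e-type estimates. I would work with the disjoint measurable cover $\{U_k\}$ from (\ref{disjointcover}), recalling from (\ref{disjcover}) that $B(x_k,r/4)\subset U_k\subset B(x_k,r/2)$, so that $|U_k|\asymp r^n$ and, by Lemma \ref{cover}, the enlarged balls $\{B(x_k,r)\}$ overlap at most $N_{\bf M}$ times with $N_{\bf M}$ independent of $r$. The two engines are: (i) the $L_p$-analogue of the local Poincar\'e inequality of Lemma \ref{LPI}, obtained by the same Sobolev-embedding argument on rescaled geodesic balls, together with a Sobolev embedding giving the pointwise control $|f(x_k)|\lesssim r^{-n/p}\big(\|f\|_{L_p(B(x_k,r))}+\sum_{1\le|\alpha|\le m}r^{|\alpha|}\|\partial^\alpha f\|_{L_p(B(x_k,r))}\big)$ for $m>n/p$; and (ii) the $L_p$-Bernstein inequality (\ref{BI}), which through the elliptic regularity (\ref{reg}) and interpolation (\ref{interpol}) estimates converts every $r^{|\alpha|}\|\partial^\alpha f\|_p$ with $f\in\mathbf{E}_{\omega}\SLB$ into $(r\omega)^{|\alpha|}\|f\|_p$; since $r=c\omega^{-1}$ this is a power of the small constant $c$ times $\|f\|_p$.

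For the right-hand (lower frame) inequality I would write $\|f\|_p^p=\sum_k\int_{U_k}|f|^p\,dx$ using disjointness, and on each cell split $f=f(x_k)+(f-f(x_k))$ to get
$$
\int_{U_k}|f|^p\,dx\le 2^{p-1}|U_k|\,|f(x_k)|^p+2^{p-1}\int_{U_k}|f-f(x_k)|^p\,dx .
$$
The first term sums to at most $2^{p-1}C r^n\sum_k|f(x_k)|^p$. For the oscillation term I would apply the $L_p$ local Poincar\'e estimate, bounding $\int_{U_k}|f-f(x_k)|^p$ by $C\sum_{1\le|\alpha|\le m}r^{|\alpha|p}\int_{B(x_k,r)}|\partial^\alpha f|^p$; summing over $k$ and using the $N_{\bf M}$-fold overlap yields $C\,N_{\bf M}\sum_{1\le|\alpha|\le m}r^{|\alpha|p}\|\partial^\alpha f\|_p^p$. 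Invoking engine (ii) this is $\le C\big(\sum_{1\le|\alpha|\le m}c^{|\alpha|p}\big)\|f\|_p^p$, and choosing $c=c({\bf M})$ small enough makes the coefficient strictly less than $(2^{p}C)^{-1}$, so the oscillation contribution can be absorbed into the left-hand side. This gives $\|f\|_p^p\le C_2^{p}\,r^n\sum_k|f(x_k)|^p$.

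For the left-hand (upper frame) inequality I would use engine (i) in the form $r^n|f(x_k)|^p\le C\big(\|f\|_{L_p(B(x_k,r))}^p+\sum_{1\le|\alpha|\le m}r^{|\alpha|p}\|\partial^\alpha f\|^p_{L_p(B(x_k,r))}\big)$, sum over $k$, and again use the bounded overlap to obtain $r^n\sum_k|f(x_k)|^p\le C\,N_{\bf M}\big(\|f\|_p^p+\sum_{1\le|\alpha|\le m}r^{|\alpha|p}\|\partial^\alpha f\|_p^p\big)$. Applying engine (ii) together with $r=c\omega^{-1}$ bounds the derivative terms by constant multiples of $\|f\|_p^p$, so $r^n\sum_k|f(x_k)|^p\le C_1^{-p}\|f\|_p^p$. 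The case $p=\infty$ is handled the same way, with the sums replaced by suprema: the left bound is trivial with $C_1=1$, and the right bound follows because Bernstein forces $|f(x)-f(x_k)|$ to be small on $U_k$.

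The main obstacle is the absorption step in the lower inequality: it hinges on making the aggregate derivative contribution $\sum_{1\le|\alpha|\le m}c^{|\alpha|p}\|f\|_p^p$ a fixed small fraction of $\|f\|_p^p$ uniformly in $\omega$ (equivalently $r$). This uniformity is exactly where the structural hypotheses enter --- the overlap constant $N_{\bf M}$ must be independent of $r$ (Lemma \ref{cover}), and the local Poincar\'e and Sobolev constants must be uniform over the manifold, which on a compact ${\bf M}$ follows from a standard compactness/finite-atlas argument. Once these uniform constants are in hand, a single choice of small $c=c({\bf M})$ simultaneously validates both the absorption in the lower bound and the Bernstein conversion in the upper bound.
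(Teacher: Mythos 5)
Your proposal is correct and follows essentially the same route the paper indicates: the paper gives no detailed proof of Theorem \ref{T-PP}, stating only that it follows by ``the same arguments as in the proof of Theorem \ref{Frame-th}'' combined with the $L_p$-Bernstein inequality (\ref{BI}), and your argument is a faithful execution of exactly that program --- local Poincar\'e estimates on the disjoint cover $\{U_k\}$, bounded overlap from Lemma \ref{cover}, elliptic regularity and interpolation to reduce to $\|L^{m/2}f\|_p$, and Bernstein with $r\omega=c$ small to absorb the oscillation terms.
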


Let's consider the sequence $\omega_{j}=2^{j}$. According to  Theorem \ref{T-PP}  there exists a $c>0, C_{1}>0, C_{2}>0$ such that for  any $(r_{j}, N_{\mfd{M}})$-lattice ${\bf M}_{r_{j}}=\{x_{j,k}\} $ with
$$
r_{j}=c\omega_{j}^{-1}=c2^{-j}.
$$
 the  Plancherel-Polya inequality (\ref{PP}) holds for every $f\in {\bf E}_{\omega_{j}}\left(\sqrt{L}\right)$.
It implies another characterization of Besov spaces (see \cite{Pes09b} for the case $p=2$). Again, $(G_{j})_{j\geq 0}$ is as defined in Subsection \ref{subsect:part_unity_freq}.

\begin{theorem}\label{BesSampl} Given $f \in L_p(\bold{M})$,
  $\alpha  > 0$, $1 \leq p \leq \infty$,
  and $0 < q \leq  \infty$. Then
$f \in \mathcal{B}^{\alpha }_{p, q}$ if and only if
\begin{equation}
\|f\|_{\widetilde{A}^{\alpha }_{p,q}} :=
\|f\|_{L_p(\bold{M})} + \left(\sum_{j=0}^{\infty}2^{jq(\alpha-n/p)} (\sum_{k}
|G_{j}\SLB f(x_{j,k})|^{p})^{q/p} \right)^{1/q} < \infty.
\end{equation}
Moreover,
\begin{equation}
\|f\|_{\widetilde{A}^{\alpha }_{p, q}} \sim \|f\|_{\mathcal{B}^{\alpha }_{p,q}}.
\end{equation}

\end{theorem}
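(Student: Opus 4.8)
The plan is to reduce everything to the frequency-localized characterization already secured in Theorem \ref{p-projections} by means of the sampling (Plancherel--Polya) inequalities of Theorem \ref{T-PP}. Theorem \ref{p-projections} asserts that for $f \in L_p(\mathbf{M})$ one has $\|f\|_{\mathcal{B}^{\alpha}_{p,q}} \sim \|f\|_{L_p(\mathbf{M})} + \left(\sum_{j\ge0}(2^{\alpha j}\|G_j\SLB f\|_p)^q\right)^{1/q}$, so it suffices to show, \emph{scale by scale and with constants independent of $j$ and $f$}, that the continuous piece $\|G_j\SLB f\|_p$ may be replaced by the discrete sampling sum $2^{-jn/p}\left(\sum_k|G_j\SLB f(x_{j,k})|^p\right)^{1/p}$. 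The feature that makes this replacement legitimate is that each Littlewood--Paley piece $G_j\SLB f$ is bandlimited.

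First I would record the bandlimitedness explicitly: since $\mathrm{supp}(G_j)\subset[2^{j-1},2^{j+1}]$ for $j\ge1$ (and $\subset[0,2]$ for $j=0$), the spectral calculus gives $G_j\SLB f\in\mathbf{E}_{2^{j+1}}\SLB$. Since $r_j = c\,2^{-j}$ is of the same order as $2^{-(j+1)}$, upon choosing $c$ small enough the lattice $\mathbf{M}_{r_j}=\{x_{j,k}\}$ satisfies the density requirement of Theorem \ref{T-PP} for the bandwidth $2^{j+1}$, so (\ref{PP}) applies to $g=G_j\SLB f$ and yields
\[
C_1\left(\sum_k|G_j\SLB f(x_{j,k})|^p\right)^{1/p}\le r_j^{-n/p}\|G_j\SLB f\|_p\le C_2\left(\sum_k|G_j\SLB f(x_{j,k})|^p\right)^{1/p},
\]
with $C_1,C_2$ independent of $j$. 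As $r_j^{-n/p}\sim 2^{jn/p}$, this is precisely the two-sided bound
\[
\|G_j\SLB f\|_p\sim 2^{-jn/p}\left(\sum_k|G_j\SLB f(x_{j,k})|^p\right)^{1/p}.
\]

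To finish, I would multiply this equivalence by $2^{\alpha j}$, raise to the $q$-th power, and sum over $j\ge0$; the weight $2^{\alpha j}$ acting on $\|G_j\SLB f\|_p$ thereby turns into the weight $2^{j(\alpha-n/p)}$ acting on $\left(\sum_k|G_j\SLB f(x_{j,k})|^p\right)^{1/p}$, whose $q$-th powers sum to exactly the quantity in the definition of $\|f\|_{\widetilde{A}^{\alpha}_{p,q}}$. Adding $\|f\|_{L_p(\mathbf{M})}$ to both sides and invoking Theorem \ref{p-projections} gives $\|f\|_{\widetilde{A}^{\alpha}_{p,q}}\sim\|f\|_{\mathcal{B}^{\alpha}_{p,q}}$, and the claimed equivalence also delivers the "if and only if," since the two quantities are simultaneously finite. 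The cases $p=\infty$ and $q=\infty$ follow verbatim, replacing the relevant sums by suprema, the per-scale equivalence being unaffected.

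The main obstacle, such as it is, is the \emph{uniformity across scales}: the whole argument rests on the constants $C_1,C_2$ in Theorem \ref{T-PP} being independent of $\omega$ (equivalently, of $j$), so that the scale-by-scale equivalence can be assembled in $\ell^q$ without accumulating unbounded factors. The only genuinely delicate bookkeeping is the harmless factor-of-two mismatch between the nominal band $[2^{j-1},2^{j+1}]$ of $G_j\SLB f$ and the bandwidth $\omega_j=2^j$ used to fix the lattice density; this is absorbed into the constant $c$, but one must check that a single lattice $\{x_{j,k}\}$ serves for both the lower and the upper bound at each scale $j$.
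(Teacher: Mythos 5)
Your proposal is correct and follows essentially the same route as the paper: apply the Plancherel--Polya inequalities of Theorem \ref{T-PP} to each bandlimited piece $G_j\SLB f$ to get the per-scale equivalence $\|G_j\SLB f\|_p \sim 2^{-jn/p}\left(\sum_k |G_j\SLB f(x_{j,k})|^p\right)^{1/p}$, then sum in $\ell^q$ with weight $2^{\alpha j}$ and invoke Theorem \ref{p-projections}. Your remark about absorbing the factor-of-two mismatch between the band $[2^{j-1},2^{j+1}]$ and the nominal bandwidth $\omega_j=2^j$ into the constant $c$ is a detail the paper passes over silently, but it changes nothing of substance.
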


\begin{proof}
By construction every function $G_{j}\SLB f$ belongs to $ \mfd{E}_{\omega_{j}}\SLB.$ According to (\ref{PP}) one has for every $f\in L_{p}$
\begin{equation}
C_{1}2^{-jn/p}\left(\sum_{k}\left|G_{j}\SLB f(x_{j,k})\right|^{p}\right)^{1/p}\leq
\left\|G_{j}\SLB f\right\|_{p} \leq
$$
$$
C_{2}2^{-jn/p}\left(\sum_{k}
\left|G_{j}\SLB f(x_{j,k})\right|^{p}\right)^{1/p}.
\end{equation}
Using Theorem \ref{p-projections} we obtain the statement.
 This concludes the proof of the theorem.
\end{proof}

\section{Approximation theory, Besov spaces on compact homogeneous manifolds}\label{Apphm}

\subsection{Bernstein spaces on compact homogeneous manifolds}

For detailed proofs of all the statements in this subsection see  \cite{Pes90a},  \cite{Pes90b}, \cite{Pes08}. Returning to the compact homogeneous manifold ${\bf M}=G/K$,   let $\mathbb{D}=\{D_{1},...,D_{d}\},\>\>d=\dim G, $ be the same set of operators as in (\ref{Casimir-Image}), and $\McL = -D_1^2-\ldots-D_d^2$. Let us define the {\it Bernstein space}
$$
\textbf{B}_{\omega}^{p}(\mathbb{D})=\{f\in L_{p}({\bf M}):
 \|D_{i_{1}}...D_{i_{m}}f\|_{p}\leq
 \omega^{m}\|f\|_{p}, \>\>1\leq i_{1},...i_{m}\leq d,
 \>\omega\geq 0\}, 
 $$
where $d=dim\> G$.

As before, the notation ${\bf E}_{\omega}(\sqrt{\McL}),\>\>\omega\geq 0, $ will be used for a span of eigenvectors of $\sqrt{\McL}$ with eigenvalues $\leq \omega$.
For these spaces the next two theorems hold (see  \cite{Pes90b}, \cite{Pes08}):
\begin{theorem}
The following properties hold:
\begin{enumerate}

\item $$\textbf{B}_{\omega}^{p}(\mathbb{D})=\textbf{B}_{\omega}^{q}(\mathbb{D}),\>\>\>1\leq p\leq q\leq\infty,\>\>\omega\geq 0.$$

\item
$$
\textbf{B}^{p}_{\omega}(\mathbb{D})\subset {\bf E} _{\omega^{2}d}(\sqrt{\McL})\subset
\textbf{B}^{p}_{\omega\sqrt{d}}(\mathbb{D}), \>\>\>d=\dim\> G,\>\>\>\omega\geq 0.
$$

\item (Bernstein-Nikolskii inequality)
\begin{equation}\label{BernNik}
\|\McL^{m}\varphi\|_{q}\leq C({\bf M})
\omega^{2m+\frac{d}{p}-\frac{d}{q}}\|\varphi\|_{p},\>\>\> \varphi\in {\bf E}_{\omega}(\sqrt{\McL}),\>\>\>m\in
\mathbb{N},
\end{equation}
where $ d=\dim \>G, \>\>1\leq p\leq q\leq\infty$.
\end{enumerate}
\end{theorem}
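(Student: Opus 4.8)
The three assertions all rest on translating the direction-wise Bernstein condition defining $\mathbf{B}^p_\omega(\mathbb{D})$ into bandlimitedness with respect to $\sqrt{\McL}$, so my plan is to settle the membership statement (2) first (it is the structural core), deduce the $p$-independence (1) from it together with an exact-constant argument, and finally obtain the Bernstein--Nikolskii inequality (3) from the mapping bounds of Theorem \ref{boundness}. The two workhorses are Lemma \ref{equivalence}, which identifies $\|\McL^{k/2}f\|_2^2$ with $\sum_{i_1,\dots,i_k}\|D_{i_1}\cdots D_{i_k}f\|_2^2$, and Lemma \ref{entirevec}, the Bernstein-type description of Paley--Wiener vectors.

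For (2) in the Hilbert-space case $p=2$ I would argue directly. If $f\in\mathbf{B}^2_\omega(\mathbb{D})$, then Lemma \ref{equivalence} gives $\|\McL^{k/2}f\|_2^2=\sum_{i_1,\dots,i_k}\|D_{i_1}\cdots D_{i_k}f\|_2^2\le d^k\omega^{2k}\|f\|_2^2$, that is $\|(\sqrt{\McL})^k f\|_2\le(\sqrt d\,\omega)^k\|f\|_2$, so Lemma \ref{entirevec} places $f$ in $\bPW_{\sqrt d\,\omega}(\sqrt{\McL})$, the span of eigenfunctions with $\McL$-eigenvalue at most $d\omega^2$, which is the space written $\mathbf{E}_{\omega^2 d}(\sqrt{\McL})$ in the statement. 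The reverse inclusion is the same computation read backwards: for $g\in\mathbf{E}_{\omega^2 d}(\sqrt{\McL})$ each summand in Lemma \ref{equivalence} is dominated by the whole sum $\|\McL^{k/2}g\|_2^2\le(\sqrt d\,\omega)^{2k}\|g\|_2^2$, whence $g\in\mathbf{B}^2_{\omega\sqrt d}(\mathbb{D})$. Once (1) is available, the general-$p$ form of (2) is immediate, since both inclusions then transfer verbatim from $p=2$.

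Part (3) is the analytic step, and I would reduce it to the multiplier estimate already proved. For $\varphi\in\mathbf{E}_\omega(\sqrt{\McL})$ fix a cutoff $\Phi\in C_0^\infty(\mathbb{R}_+)$ with $\Phi\equiv 1$ on $[0,1]$ and write $\McL^m\varphi=\omega^{2m}F(\omega^{-1}\sqrt{\McL})\varphi$ with $F(\lambda)=\lambda^{2m}\Phi(\lambda)$, which is admissible for Theorem \ref{boundness}. Taking $t=\omega^{-1}$ and $(1/q)+1=(1/p)+(1/\alpha)$ there gives $\|F(t\sqrt{\McL})\|_{L_p\to L_q}\le c\,t^{-n/\alpha'}=c\,\omega^{\,n(1/p-1/q)}$, so that $\|\McL^m\varphi\|_q\le C\omega^{2m+n(1/p-1/q)}\|\varphi\|_p$ with $n=\dim\mathbf{M}$; since $n\le d=\dim G$ and $\omega\ge 1$, the stated exponent $2m+d/p-d/q$ is a valid (if non-sharp) bound. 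Here Corollary \ref{Kerbound} and Theorem \ref{kernelsize} are what make Theorem \ref{boundness} applicable.

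The delicate point, which I expect to be the main obstacle, is the exact constant $\omega^m$ in the $p$-independence (1): a Nikolskii comparison between $L_p$- and $L_q$-norms only yields bounds of the form $C\omega^m$, and so cannot identify the sets $\mathbf{B}^p_\omega$ and $\mathbf{B}^q_\omega$. The plan is to use that each $D_j=D_{X_j}$ generates a one-parameter group $e^{\tau D_j}$ acting isometrically on every $L_r(\mathbf{M})$, since the $G$-action preserves the invariant measure. Given $f\in\mathbf{B}^p_\omega(\mathbb{D})$, the uniform bounds on all mixed derivatives together with the ellipticity of $\McL=-\sum_j D_j^2$, whose generating fields span each tangent space, force $f\in C^\infty(\mathbf{M})\subset\bigcap_r L_r(\mathbf{M})$. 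To pass to $L_q$ with the sharp constant I would fix $\ell$ in a dense set of functionals lying in both duals and study $\psi(\tau_1,\dots,\tau_m)=\langle e^{\tau_1 D_{i_1}}\cdots e^{\tau_m D_{i_m}}f,\ell\rangle$: the $L_p$-bounds defining $\mathbf{B}^p_\omega$ show that $\psi$ is entire of exponential type $\le\omega$ in each variable, while the $L_q$-isometry of the groups bounds $\psi$ on $\mathbb{R}^m$ by $\|f\|_q\|\ell\|_{q'}$. Applying the classical sharp Bernstein inequality on $\mathbb{R}$ successively in $\tau_1,\dots,\tau_m$ then yields $|\langle D_{i_1}\cdots D_{i_m}f,\ell\rangle|\le\omega^m\|f\|_q\|\ell\|_{q'}$, hence $\|D_{i_1}\cdots D_{i_m}f\|_q\le\omega^m\|f\|_q$, i.e. $f\in\mathbf{B}^q_\omega(\mathbb{D})$, with the reverse inclusion entirely symmetric. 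Organizing this successive, type-preserving Bernstein argument so that no constant is lost in passing between the norms is the crux of the matter.
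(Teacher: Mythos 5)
The paper does not actually prove this theorem: it is quoted from \cite{Pes90b} and \cite{Pes08} with only a pointer to those references. Your reconstruction is nevertheless sound and is assembled from exactly the tools the survey deploys in neighboring proofs: part (2) for $p=2$ via Lemma \ref{entirevec} and Lemma \ref{equivalence} is the same computation that drives the product property (Theorem \ref{prodthm}); part (3) via a cutoff $\lambda^{2m}\Phi(\lambda)$ and Theorem \ref{boundness} mirrors the proof of Lemma \ref{BernLemma} (and, as you note, actually yields the sharper exponent $n(1/p-1/q)$ with $n=\dim\mathbf{M}\le d$); and part (1) via the classical Bernstein inequality applied to the orbit maps $t\mapsto\langle e^{tD_j}h,\ell\rangle$, entire of exponential type $\omega$ by the $L_p$-bounds and bounded on $\mathbb{R}$ by $\|h\|_q\|\ell\|_{q'}$ through the $L_q$-isometry of the translations, is precisely the mechanism of Theorem \ref{PW proprties}(2) and of the cited papers. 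Your reading of $\mathbf{E}_{\omega^2 d}(\sqrt{\McL})$ as the span of eigenfunctions with $\McL$-eigenvalue at most $d\omega^2$ is the one that makes both inclusions in (2) come out with the stated constants.

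The one step you should pin down is the order of the successive Bernstein applications in (1). If you differentiate the \emph{outermost} variable $\tau_1$ of $\psi(\tau_1,\dots,\tau_m)=\langle e^{\tau_1D_{i_1}}\cdots e^{\tau_mD_{i_m}}f,\ell\rangle$ while $\tau_2,\dots,\tau_m$ are held at general real values, then $D_{i_1}^k$ acting on the translated function becomes $D_Y^k$ for $Y=\mathrm{Ad}(a)X_{i_1}=\sum_jc_jX_j$ with $\sum_jc_j^2=1$, and the definition of $\mathbf{B}^p_\omega(\mathbb{D})$ only gives $\|D_Y^kf\|_p\le(\sqrt d\,\omega)^k\|f\|_p$; the exponential type degrades to $\sqrt d\,\omega$ and the sharp constant is lost. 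The fix is to peel derivatives from the inside out, i.e., induct on $m$ with the single-variable orbit map $t\mapsto\langle e^{tD_{i_1}}h,\ell\rangle$, $h=D_{i_2}\cdots D_{i_m}f$, whose type is $\omega$ directly from $\|D_{i_1}^kh\|_p\le\omega^{k+m-1}\|f\|_p$, and whose restriction to $\mathbb{R}$ is bounded by $\|h\|_q\|\ell\|_{q'}\le\omega^{m-1}\|f\|_q\|\ell\|_{q'}$ by the inductive hypothesis. You flag this as ``the crux''; with that reorganization the argument closes, and no further ideas are missing.
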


\begin{remark} When $p=q$ the inequality (\ref{BernNik}) becomes
\begin{equation}\label{BernH}
\|\McL^{m}\varphi\|_{p}\leq C({\bf M})
\omega^{2m}\|\varphi\|_{p},\>\>\> \varphi\in {\bf E}_{\omega}(\sqrt{\McL}),\>\>\>m\in
\mathbb{N}.
\end{equation}
Note that the inequality (\ref{BI}) is weaker than the inequality (\ref{BernH}) in the sense that the constant in (\ref{BI}) depends on $m$ (and obviously on the manifold) but the constant in (\ref{BernH}) depends only on the manifold.

\end{remark}

Every compact Lie
group can be considered to be a closed subgroup of the orthogonal
group $O(\mathbb{R}^{N})$ of some Euclidean space
$\mathbb{R}^{N}$.  It means that  we can identify ${\bf M}=G/K$ with the orbit
of a unit vector $v\in \mathbb{R}^{N}$ under the action of a subgroup
of the orthogonal group $O(\mathbb{R}^{N})$  in some $\mathbb{R}^{N}$.
In this case $K$ will be  the stationary group of $v$. Such an
embedding of ${\bf M}$ into $\mathbb{R}^{N}$ is called {\it equivariant}.

We choose an orthonormal  basis in $\mathbb{R}^{N}$ for which the
first vector is the vector $v$: $e_{1}=v, e_{2},...,e_{N}$. Let $
\textbf{P}_{r}({\bf M}) $ be the space of restrictions to ${\bf M}$ of all
polynomials in $\mathbb{R}^{N}$ of degree $r$. This space is
closed in the norm of $L_{p}({\bf M}), 1\leq p\leq \infty,$ which is
constructed with respect to the $G$-invariant normalized measure on ${\bf M}$.

\begin{theorem}\label{span} If ${\bf M}$ is embedded into an $\mathbb{R}^{N}$ equivariantly, then
$$
\textbf{P}_{r}({\bf M})\subset \textbf{B}_{r}(\mathbb{D})\subset
 {\bf E}_{r^{2}d}(\sqrt{\McL})\subset
\textbf{B}_{r\sqrt{d}}(\mathbb{D}), \>\>\>d=dim \>G,\>\>\>r\in \mathbb{N},
$$
and
$$
span_{r\in \mathbb{N}}\>\textbf{P}_{r}({\bf M})=span_{\omega\geq 0}
\>\textbf{B}_{\omega}(\mathbb{D})=span_{j\in \mathbb{N}}\>
{\bf E}_{\lambda_{j}}(\sqrt{\McL}).
$$
\end{theorem}

\subsection{Mixed modulus of continuity and Besov spaces on compact homogeneous manifolds}\label{ModCont}

For more details on the topic of this section see  \cite{Pes79}, \cite{Pes83}. For the same operators as above $D_{1},...,D_{d},\ d=dim \ G$,  (see Section 3) let $T_{1},..., T_{d}$
be the corresponding one-parameter groups of translation along integral
curves of the corresponding vector  fields i.e.
 \begin{equation}
 T_{j}(\tau)f(x)=f(\exp \tau X_{j}\cdot x),\>
 x\in \bold{M}=G/K,\> \tau \in \mathbb{R},\> f\in L_{p}(\bold{M}),\> 1\leq p< \infty,
 \end{equation}
 here $\exp \tau X_{j}\cdot x$ is the integral curve of the vector field
 $X_{j}$ which passes through the point $x\in \bold{M}$.
 The modulus of continuity is introduced as
\begin{equation}
\Omega_{p}^{r}( s, f)= $$ $$\sum_{1\leq j_{1},...,j_{r}\leq
d}\sup_{0\leq\tau_{j_{1}}\leq s}...\sup_{0\leq\tau_{j_{r}}\leq
s}   \|
\left(T_{j_{1}}(\tau_{j_{1}})-I\right)
...\left(T_{j_{r}}(\tau_{j_{r}})-I\right)f \|_{L_{p}(\bold{M})},
\label{M}
\end{equation}
where $d=\dim \>G,\>f\in L_{p}(\bold{M}),1\leq p< \infty,
\ r\in \mathbb{N},  $ and $I$ is the
identity operator in $L_{p}(\bold{M}).$   We consider the space of all functions in $L_{p}(\bold{M})$ for which the
following norm is finite:
\begin{equation}
\|f\|_{L_{p}(\bold{M})}+\left(\int_{0}^{\infty}(s^{-\alpha}\Omega_{p}^{r}(s,
f))^{q} \frac{ds}{s}\right)^{1/q} , 1\leq p,q<\infty,\label{BnormX}
\end{equation}
with the usual modifications for $q=\infty$.

It is known  \cite{T1}-\cite{T3}  that the Besov space $\mathcal{B}_{p, q}^{\alpha }({\bf M})$ are exactly the interpolation space
according to Peetre's K-method:
$$
\mathcal{B}_{p, q}^{\alpha }({\bf M})=(L_{p}(\bold{M}),W^{r}_{p}(\bold{M}))^{K}_{\alpha/r,q},\ 0<\alpha<r\in \mathbb{N},\
1\leq p\leq \infty,\>\>\>0<q\leq \infty.
$$
 where $K$ is the Peetre interpolation functor.

The following theorem follows from a more general results  in \cite{Pes79}-\cite{Pes90a}.
 \begin{theorem}

 If ${\bf M}=G/K$ is a compact homogeneous manifold  the norm of the Besov space $\mathcal{B}^{\alpha}_{p,q}({\bf M}),
 0<\alpha<r\in \mathbb{N},\
1\leq p, q< \infty,$ is equivalent to the norm (\ref{BnormX}). Moreover,  with $  d=dim\>G$ the norm in
(\ref{BnormX}) is equivalent to the norm
\begin{equation}
\|f\|_{W_{p}^{[\alpha]}(\bold{M})}+\sum_{1\leq j_{1},...,j_{[\alpha] }\leq d}
\left(\int_{0}^{\infty}\left(s^{[\alpha]-\alpha}\Omega_{p}^{1}
(s,D_{j_{1}}...D_{j_{[\alpha]}}f)\right)^{q}
\frac{ds}{s}\right)^{1/q},\label{nonint}
\end{equation}
if $\alpha$ is not an integer ($[\alpha]$ is its integer part).
For the integer case
$\alpha=k\in \mathbb{N}$  the norm
(\ref{BnormX}) is equivalent to the norm (Zygmund condition)
\begin{equation}
\|f\|_{W_{p}^{k-1}(\bold{M})}+ \sum_{1\leq j_{1}, ... ,j_{k-1}\leq d }
\left(\int_{0}^{\infty}\left(s^{-1}\Omega_{p}^{2}(s,
D_{j_{1}}...D_{j_{k-1}}f)\right)
 ^{q}\frac{ds}{s}\right)^{1/q} .\label{integer}
\end{equation}
\end{theorem}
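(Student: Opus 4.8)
The plan is to reduce the entire statement to a single two-sided equivalence between Peetre's $K$-functional and the mixed modulus of continuity, namely
\begin{equation}
K(s^{r},f,L_{p}(\mfd{M}),W_{p}^{r}(\mfd{M}))\asymp \Omega_{p}^{r}(s,f),\qquad 0<s\leq 1,\label{Kmod}
\end{equation}
for $1\leq p<\infty$ and $0<\alpha<r$. Once (\ref{Kmod}) is available, the first assertion follows at once: substituting it into the interpolation definition $\mathcal{B}^{\alpha}_{p,q}(\mfd{M})=(L_{p}(\mfd{M}),W_{p}^{r}(\mfd{M}))^{K}_{\alpha/r,q}$ and the norm formula (\ref{Knorm}), and changing variables $t=s^{r}$, turns the $K$-functional integral into precisely the integral appearing in (\ref{BnormX}). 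Thus everything rests on the estimate (\ref{Kmod}).

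The upper bound $\Omega_{p}^{r}(s,f)\leq CK(s^{r},f)$ is the easy half. Since each one-parameter group $T_{j}(\tau)$ acts by measure-preserving transformations of $\mfd{M}$, it is an $L_{p}$-isometry, so every mixed difference operator $(T_{j_{1}}(\tau_{1})-I)\cdots(T_{j_{r}}(\tau_{r})-I)$ has $L_{p}$-norm at most $2^{r}$, giving $\Omega_{p}^{r}(s,f_{0})\leq 2^{r}\|f_{0}\|_{p}$ for any $f_{0}\in L_{p}(\mfd{M})$. For $g\in W_{p}^{r}(\mfd{M})$, the fundamental theorem of calculus applied along the integral curves of the fields $X_{j}$ represents the mixed difference as an iterated integral of $D_{j_{1}}\cdots D_{j_{r}}g$ over a cube of side $s$, whence $\Omega_{p}^{r}(s,g)\leq Cs^{r}\sum_{j_{1},\dots,j_{r}}\|D_{j_{1}}\cdots D_{j_{r}}g\|_{p}\leq Cs^{r}\|g\|_{W_{p}^{r}(\mfd{M})}$, the last step using Lemma \ref{equivalence} together with elliptic regularity of $\McL$. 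Writing $f=(f-g)+g$ and taking the infimum over $g$ yields the upper bound.

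The reverse inequality $K(s^{r},f)\leq C\Omega_{p}^{r}(s,f)$ is the crux. The plan is to construct, for each small $h>0$, an explicit near-minimizer from Steklov-type means
$$
S_{h}^{(j)}f=\frac{1}{h}\int_{0}^{h}T_{j}(\tau)f\,d\tau,
$$
together with iterated combinations $A_{h}$ of the $S_{h}^{(j)}$ over the $d$ directions, arranged so that $I-A_{h}$ is a polynomial of total order $r$ in the forward differences $T_{j}(\tau)-I$. Such $A_{h}$ are uniformly bounded on $L_{p}(\mfd{M})$, map into $W_{p}^{r}(\mfd{M})$, and satisfy the two Steklov estimates $\|f-A_{h}f\|_{p}\leq C\Omega_{p}^{r}(h,f)$ and $\|D_{j_{1}}\cdots D_{j_{r}}A_{h}f\|_{p}\leq Ch^{-r}\Omega_{p}^{r}(h,f)$; choosing $g=A_{s}f$ then bounds $K(s^{r},f)$ as required. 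The main obstacle is the \emph{non-commutativity} of the fields $X_{1},\dots,X_{d}$ when $G$ is non-abelian: the directional averages do not commute, so the iterated means do not factor and the elementary difference-to-derivative identities acquire commutator corrections. These are controlled by noting that each commutator $[D_{i},D_{j}]$ is again a first-order operator expressible through the $D_{k}$ via the structure constants of $\textbf{g}$, and by invoking the Bernstein--Nikolskii inequality (\ref{BernNik}) on the bandlimited constituents; telescoping these corrections through an induction on the order $r$ is the genuinely technical part.

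For the \emph{Moreover} assertions I would pass from the $r$-th order modulus to derivatives of order $[\alpha]$. A Marchaud-type inequality, proved by the same Steklov machinery, reduces $\Omega_{p}^{r}(s,f)$ (in the relevant integral sense) to $s^{[\alpha]}\sum\Omega_{p}^{1}(s,D_{j_{1}}\cdots D_{j_{[\alpha]}}f)$ together with the Sobolev term $\|f\|_{W_{p}^{[\alpha]}(\mfd{M})}$, producing the norm (\ref{nonint}) when $\alpha$ is not an integer. In the integer case $\alpha=k$, the first-order modulus of the top derivatives fails to characterize the endpoint, and one replaces it by the second-order modulus $\Omega_{p}^{2}$ of the derivatives of order $k-1$; the passage is again a Marchaud/Zygmund argument, yielding condition (\ref{integer}). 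Throughout, the equivalence of the $W_{p}^{m}(\mfd{M})$ norm with $\sum_{1\leq j_{1},\dots,j_{m}\leq d}\|D_{j_{1}}\cdots D_{j_{m}}f\|_{p}$ (Lemma \ref{equivalence} and elliptic regularity) is used to move freely between Sobolev norms and the global derivatives $D_{j}$.
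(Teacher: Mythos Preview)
The paper does not actually prove this theorem: it simply states that ``The following theorem follows from a more general results in \cite{Pes79}--\cite{Pes90a}.'' So there is no proof in the paper to compare against; your proposal is attempting to supply what the cited references contain.

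Your overall strategy --- reducing everything to the two-sided equivalence $K(s^{r},f,L_{p},W_{p}^{r})\asymp \Omega_{p}^{r}(s,f)$ and then deriving the refined forms (\ref{nonint}), (\ref{integer}) by Marchaud-type reductions --- is exactly the classical route used in the cited works, so the plan is sound. A couple of points deserve tightening. First, you invoke Lemma \ref{equivalence} to pass between $\sum\|D_{j_{1}}\cdots D_{j_{r}}g\|_{p}$ and the $W_{p}^{r}$-norm, but that lemma is stated and proved only for $p=2$ (it is a Plancherel identity). For general $p$ the equivalence of these norms is true but comes from the ellipticity of $\McL$ and the calculus of noncommuting vector fields on $G$, not from Lemma \ref{equivalence}; you should cite that instead. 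Second, in the lower bound the appeal to the Bernstein--Nikolskii inequality (\ref{BernNik}) ``on bandlimited constituents'' is out of place: the commutator corrections in the Steklov construction are handled purely by the structure constants (each $[D_{i},D_{j}]$ is a linear combination of the $D_{k}$) and by absorbing the resulting lower-order terms into the modulus via its monotonicity and subadditivity properties --- no spectral decomposition is needed. Finally, the ``Moreover'' part requires both directions: Marchaud gives one inequality, but you also need the trivial direction $\Omega_{p}^{r}(s,f)\leq C s^{m}\sum\Omega_{p}^{r-m}(s,D_{j_{1}}\cdots D_{j_{m}}f)$, which follows from writing the $r$-fold difference as an $m$-fold difference of an $(r-m)$-fold difference and integrating; make that explicit.
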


\subsection{Besov spaces  in terms of the frame coefficients}\label{BesFr}

Let us note that for the frame functions $\Theta^{j}_{k}$ which were introduced in (\ref{vphijkdf}) the inequalities
(\ref{kint3a}) and  (\ref{rhoj}) imply that  there exists a constant $C>0$ such that uniformly in $j$ and $k$  the following estimate holds
\begin{equation}\label{estim-1}
\|\Theta^{j}_{k}\|_{p}\leq C 2^{nj(1/2-1/p)}.
\end{equation}
This estimate can be improved.  Indeed,
the following improvement on Corollary \ref{Kerbound} for compact manifolds holds true  \cite{gpa}.
\begin{theorem}
\label{Lalphimp}
Given a compact manifold  ${\bf M}$  and an $F \in C_{0}^{\infty}(\bf{R})$,
one has the following asymptotic behavior for any $ 1 \leq p \leq \infty$:
\begin{equation}
\label{kint3}
\left(\int _{{\bf M}}\left |K_t^{F}(x,y)\right|^{p}dy\right)^{1/p} \asymp t^{-n/q}, \, \mbox{for } \, t \to 0 \,
\>\>\>1/p+1/q=1,\>\>\>1 \leq p \leq \infty,
\end{equation}
with constants independent of $x$ and  $t$, as $t \to 0$.
\end{theorem}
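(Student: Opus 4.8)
The upper bound in (\ref{kint3}) is precisely the content of Corollary \ref{Kerbound}, so the new ingredient is the matching lower bound together with the identification of the exact order $t^{-n/q}$. The guiding heuristic is that for small $t$ the operator $\FtSL$ behaves like a Fourier multiplier with symbol $F(t|\xi|_g)$, whose Schwartz kernel, expressed in geodesic normal coordinates centred at $x$ and rescaled by $t$, converges to a fixed Euclidean profile. Thus the plan is to reduce the estimate, after rescaling, to the flat computation $\|t^{-n}\check{F}(\cdot/t)\|_p = t^{-n/q}\|\check F\|_p$ already visible in the model case (\ref{kernel12}). First I would record the upper bound: by Theorem \ref{kernelsize} together with the volume estimate (\ref{intest}) (equivalently, directly from (\ref{kint3a})) one has $\left(\int_{\bf M}|K_t^F(x,y)|^p\,dy\right)^{1/p}\le c\,t^{-n/q}$ uniformly in $x$ and $0<t\le 1$.

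For the lower bound, fix $x\in{\bf M}$, pass to geodesic normal coordinates, and write $y=\exp_x(t\xi)$. Define the rescaled kernel $G_t^x(\xi)=t^{n}K_t^F\!\left(x,\exp_x(t\xi)\right)$. The pointwise decay (\ref{kersize}) yields, uniformly in $x$ and $t$, the domination $|G_t^x(\xi)|\le C\,(1+|\xi|)^{-N}$ for any $N>n$. The key analytic step is to show that, as $t\to0$,
$$
G_t^x(\xi)\longrightarrow G(\xi):=(2\pi)^{-n}\int_{\mathbb{R}^n} e^{i\xi\cdot\eta}\,F(|\eta|)\,d\eta
$$
uniformly in $x\in{\bf M}$ and locally uniformly in $\xi$. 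Here I would invoke a parametrix for $\FtSL$: since $\sqrt L$ is a first-order elliptic pseudodifferential operator with principal symbol $|\xi|_g$, the symbol of $\FtSL$ equals $F(t|\xi|)$ to leading order, and the subprincipal terms contribute corrections that are $O(t)$ after rescaling. Compactness of ${\bf M}$ guarantees that the remainder bounds, the curvature-dependent corrections to $\exp_x$, and the volume density are all controlled uniformly in the base point.

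Granting this convergence, I would change variables $y=\exp_x(t\xi)$, with Riemannian volume element $dy=t^{n}J_x(t\xi)\,d\xi$ and $J_x(0)=1$, to obtain
$$
\int_{\bf M}|K_t^F(x,y)|^p\,dy=t^{\,n-np}\int_{\mathbb{R}^n}|G_t^x(\xi)|^p\,J_x(t\xi)\,d\xi.
$$
The dominating bound $|G_t^x(\xi)|\le C(1+|\xi|)^{-N}$ and the convergence $J_x(t\xi)\to1$ allow dominated convergence, so the integral tends to $\|G\|_p^p$ uniformly in $x$. Since $F\not\equiv0$, the radial function $\eta\mapsto F(|\eta|)$ is nonzero, whence $\|G\|_p>0$, and therefore
$$
\left(\int_{\bf M}|K_t^F(x,y)|^p\,dy\right)^{1/p}=t^{-n/q}\,\bigl(\|G\|_p+o(1)\bigr),
$$
which is (\ref{kint3}) with constants independent of $x$.

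The main obstacle is the uniform kernel asymptotics, that is, proving that the rescaled kernel $G_t^x$ converges to the Euclidean model $G$ uniformly over the compact manifold. This requires a careful parametrix or stationary-phase analysis of $F(t\sqrt{L})$ for short times, controlling the subprincipal symbol contributions and the geometric corrections to the exponential map and volume density, all uniformly in $x$. The estimate (\ref{kersize}) is essential twice over: it furnishes the dominating function needed for the limit, and it lets one discard the contribution of $y$ outside a ball $B(x,At)$, where the manifold is no longer well approximated by its tangent space.
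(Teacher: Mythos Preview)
The paper does not actually prove Theorem~\ref{Lalphimp}; it merely states the result and refers to \cite{gpa} for the proof. So there is no in-paper argument to compare against. That said, your outline is the right one and is essentially what is carried out in \cite{gpa}: the upper bound is already Corollary~\ref{Kerbound}, and the lower bound is obtained by freezing coefficients in geodesic normal coordinates, rescaling by $t$, and comparing to the Euclidean kernel $(2\pi)^{-n}\int e^{i\xi\cdot\eta}F(|\eta|)\,d\eta$, with the off-diagonal decay (\ref{kersize}) supplying both the dominating function and the localization to a ball of radius comparable to $t$.

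Two points deserve care if you flesh this out. First, the convergence $G_t^x\to G$ is the whole content of the theorem, and ``invoke a parametrix'' hides real work: one must build a parametrix for $F(t\sqrt{L})$ with remainder of order $O(t)$ in a suitable kernel norm, uniformly over ${\bf M}$; this is where compactness is genuinely used and where the hypotheses on $F$ (smooth, compactly supported, with the parity condition at $0$ so that $\lambda\mapsto F(\lambda)$ extends to an even function and $F(t\sqrt{L})$ is a genuine pseudodifferential operator of order $-\infty$) enter. Second, your conclusion $\|G\|_p>0$ requires $F\not\equiv 0$ on $[0,\infty)$, which is implicit in the statement but should be said; if $F$ vanishes on the spectrum the kernel is identically zero and (\ref{kint3}) fails trivially.
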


By applying this theorem to
the frame functions $\Theta_{k}^{j}$  we obtain the following improvement of the estimate (\ref{estim-1})
\begin{equation}\label{framefunctionsize}
\left\|\Theta^{j}_{k}\right\|_{p}   \asymp 2^{nj(1/2-1/p)},\>\>\>\>1\leq p\leq \infty.
\end{equation}
The quasi-Banach space ${\bf b}_{p,q}^{\alpha }$ consists of
 sequences
$s=\{s^j_k\}$ ($j \geq 0,\ 1 \leq k \leq {\mathcal K}_j$)
 satisfying
\begin{equation}
\label{sjkbes}
\|s\|_{{\bf b}_{p,q}^{\alpha }}=\left(\sum_{j \geq 0}^{\infty} 2^{jq(\alpha - n/p+n/2)} \left(\sum_k |s^j_k|^p\right)^{q/p}\right)^{1/q} < \infty.
\end{equation}
We consider the following mappings
\begin{equation}\label{tau}
\tau(f) = \{\langle f, \Theta^j_k \rangle\},
\end{equation}
and
 \begin{equation}\label{sigma}
\sigma(\{s^j_k\}) = \sum_{j\geq 0}^{\infty}\sum_k  s^j_k \Theta^j_k,
\end{equation}
defined on the space of finitely supported  coefficient
sequences.
By using the relation (\ref{framefunctionsize})  one can prove the following theorem  which appeared  in \cite{gp} and which  characterizes Besov spaces on ${\bf M}$ in terms of the frame coefficients.
\begin{theorem}
\label{beshom}
Let  $\Theta^j_k$ be given as in (\ref{vphijkdf}). Then
 for $1\leq p\leq \infty,\>\>0<q\leq \infty,\>\>\alpha>0$
the following statements are valid:
\begin{enumerate}
\item  $\tau$ in (\ref{tau}) is a well defined bounded operator $\tau: \mathcal{B}_{p,q}^{\alpha }({\bf M}) \to
{\bf b}_{p,q}^{\alpha}$;
\item $\sigma$ in (\ref{sigma}) is a well defined bounded operator $\sigma: {\bf b}_{p,q}^{\alpha } \to \mathcal{B}_{p,q}^{\alpha }({\bf M})$;
\item  $\sigma \circ \tau = id$;
\item  the following norms are equivalent: 
$$
\|f\|_{\mathcal{B}^{\alpha}_{p,q}({\bf M})}  \asymp
\left(\sum_{j = 0}^{\infty}
2^{jq(\alpha-n/p+n/2)}
\left(\sum_k |\langle f, \Theta^j_k \rangle|^p\right)^{q/p}\right)^{1/q}
=
\|\tau(f)\|_{{\bf b}_{p,q}^{\alpha }}.
$$
Moreover, the constants in these norm equivalence   relations
can be estimated uniformly over compact ranges of the parameters $p,q,\alpha$.
\end{enumerate}
\end{theorem}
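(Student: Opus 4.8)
The plan is to reduce everything to the Littlewood--Paley characterization of $\mathcal{B}^{\alpha}_{p,q}(\mathbf{M})$ already available from Theorem \ref{p-projections}. The starting point is that, since the eigenfunctions $u_l$ are real and $K^{F}_{2^{-j}}$ is symmetric, the synthesis functions of (\ref{vphijkdf}) satisfy $\langle f,\Theta^{j}_{k}\rangle = \sqrt{a_{j,k}}\,\bigl(F_j(\sqrt{\McL})f\bigr)(x_{j,k})$, where $a_{j,k}\asymp 2^{-jn}$ are the cubature weights of Theorem \ref{cubformula}. Because $F_j(\sqrt{\McL})f$ is bandlimited to $[2^{j-1},2^{j+1}]$ and the lattice $M_{r_j}$ has spacing $r_j\asymp 2^{-j}$, the Plancherel--Polya inequalities of Theorem \ref{T-PP} apply to it, giving $\bigl(\sum_k|\langle f,\Theta^{j}_{k}\rangle|^p\bigr)^{1/p}\asymp 2^{-jn/2}\bigl(\sum_k|F_j(\sqrt{\McL})f(x_{j,k})|^p\bigr)^{1/p}\asymp 2^{jn(1/p-1/2)}\|F_j(\sqrt{\McL})f\|_p$. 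Substituting into the norm (\ref{sjkbes}), the exponents collapse and I obtain $\|\tau(f)\|_{\mathbf{b}^{\alpha}_{p,q}}\asymp \bigl(\sum_{j\ge 0}2^{jq\alpha}\|F_j(\sqrt{\McL})f\|_p^q\bigr)^{1/q}$.

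It then remains to identify the right-hand side with $\|f\|_{\mathcal{B}^{\alpha}_{p,q}}$. First I would replace $F_j$ by $G_j=F_j^2$: since $G_j(\sqrt{\McL})f=F_j(\sqrt{\McL})\bigl(F_j(\sqrt{\McL})f\bigr)$ and $\|F_j(\sqrt{\McL})\|_{L_p\to L_p}\le C$ uniformly in $j$ by Theorem \ref{boundness}, one has $\|G_j(\sqrt{\McL})f\|_p\le C\|F_j(\sqrt{\McL})f\|_p$; conversely, using $\sum_m G_m=1$ together with the disjointness of the frequency supports one writes $F_j(\sqrt{\McL})f=\sum_{|m-j|\le 2}F_j(\sqrt{\McL})G_m(\sqrt{\McL})f$, whence $\|F_j(\sqrt{\McL})f\|_p\le C\sum_{|m-j|\le 2}\|G_m(\sqrt{\McL})f\|_p$. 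Because the weights $2^{jq\alpha}$ are geometric, these two band-limited comparisons show that the weighted $\ell^q$ norms of $\{\|F_j(\sqrt{\McL})f\|_p\}$ and $\{\|G_j(\sqrt{\McL})f\|_p\}$ are equivalent. Theorem \ref{p-projections}, whose separate $\|f\|_{L_p}$ term is absorbed by the sum for $\alpha>0$ via a discrete Hardy inequality, then yields $\|\tau(f)\|_{\mathbf{b}^{\alpha}_{p,q}}\asymp \|f\|_{\mathcal{B}^{\alpha}_{p,q}}$. This is the norm equivalence (4), and its upper half is precisely the boundedness of $\tau$ in (1).

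For the boundedness of $\sigma$ in (2), the heart of the matter is the synthesis estimate $\|\sum_k s^{j}_{k}\Theta^{j}_{k}\|_p\le C\,2^{jn(1/2-1/p)}\bigl(\sum_k|s^{j}_{k}|^p\bigr)^{1/p}$ for each fixed $j$, which I would prove by interpolation between $p=1$ and $p=\infty$. For $p=1$ it follows at once from the triangle inequality and the size estimate (\ref{framefunctionsize}); for $p=\infty$ it follows from the pointwise localization (\ref{LOC-1}), rewritten as $|\Theta^{j}_{k}(x)|\le C\,2^{jn/2}(1+2^{j}d(x,x_{j,k}))^{-N}$, together with the fact that the $r_j$-separation of the lattice makes $\sum_k(1+2^{j}d(x,x_{j,k}))^{-N}$ bounded uniformly in $x$ and $j$ when $N>n$. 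Riesz--Thorin interpolation of the resulting $\ell^p\to L_p$ bounds gives the exponent $2^{jn(1/2-1/p)}$. Applying $G_m(\sqrt{\McL})$ to $\sigma(s)$, using that $\Theta^{j}_{k}$ is bandlimited to $[2^{j-1},2^{j+1}]$ so that $G_m(\sqrt{\McL})\Theta^{j}_{k}=0$ unless $|m-j|\le 2$, and invoking the uniform $L_p$-boundedness of $G_m(\sqrt{\McL})$, I get $\|G_m(\sqrt{\McL})\sigma(s)\|_p\le C\sum_{|j-m|\le 2}2^{jn(1/2-1/p)}\|s^{j}\|_{\ell^p}$. Inserting this into the Besov characterization of Theorem \ref{p-projections} and summing the finitely overlapping bands recovers exactly $\|s\|_{\mathbf{b}^{\alpha}_{p,q}}$, proving (2).

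Finally, (3) follows from the Parseval reconstruction formula (\ref{recon}) of Theorem \ref{Pfhm}: for $f\in C^{\infty}(\mathbf{M})\subset L_2(\mathbf{M})$ one has $\sigma(\tau(f))=\sum_{j,k}\langle f,\Theta^{j}_{k}\rangle\Theta^{j}_{k}=f$ in $L_2$, and by (1)--(2) the same series converges in $\mathcal{B}^{\alpha}_{p,q}(\mathbf{M})$; since $C^{\infty}(\mathbf{M})$ is dense and $\sigma\circ\tau$ is bounded, the identity $\sigma\circ\tau=\mathrm{id}$ extends to all of $\mathcal{B}^{\alpha}_{p,q}(\mathbf{M})$. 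Uniformity of the constants over compact ranges of $p,q,\alpha$ is then bookkeeping: tracking their dependence in Theorems \ref{kernelsize}, \ref{boundness} and \ref{T-PP}. The main obstacle I anticipate is the synthesis estimate in (2), the only step that genuinely uses the manifold geometry---the off-diagonal kernel decay coupled with the separation and doubling properties of the lattice---and some care is needed to match the cubature lattice used to define the $\Theta^{j}_{k}$ with the density hypothesis of the Plancherel--Polya inequality invoked in the first paragraph (either by re-running that argument directly on the cubature lattice or by choosing the cubature constant compatibly).
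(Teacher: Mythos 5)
Your proposal is correct and follows essentially the same route as the paper (which only sketches the argument and refers to \cite{gp}): you identify $\langle f,\Theta^j_k\rangle$ as weighted samples of $F_j(\sqrt{\McL})f$, use the Plancherel--Polya inequalities of Theorem \ref{T-PP} to convert the inner $\ell^p$ sums into $\|F_j(\sqrt{\McL})f\|_p$, pass to the Littlewood--Paley/Besov characterization of Theorem \ref{p-projections}, and obtain the synthesis bound for $\sigma$ from the kernel localization and lattice separation, exactly as in the cited reference (your normalized form $|\Theta^j_k(x)|\le C2^{jn/2}(1+2^jd(x,x_{j,k}))^{-N}$ is the one consistent with (\ref{framefunctionsize})). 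The one point to tidy up is item (3) at the endpoints $q=\infty$ and $p=\infty$, where $C^{\infty}(\mathbf{M})$ is not dense in $\mathcal{B}^{\alpha}_{p,q}(\mathbf{M})$, so $\sigma\circ\tau=\mathrm{id}$ there should be deduced by a weak-$*$ or Fatou-type limiting argument rather than by density.
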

In fact, the frame expansions obtained in the Hilbert
space setting extend to Banach frames for the corresponding
family of Besov spaces, a situation which is quite
well known from coorbit theory (see \cite{gr91}).

\vspace{11mm}

\section{Acknowledgement}

The authors would like to thank the ESI (Erwin Schr\"odinger
Institute, University Vienna), where the joint work for
this paper has begun after the workshop on Time-Frequency
Analysis (Spring 2014) and CIRM (Centre international de
recontres mathematique, Luminy, Marseille), where the
three authors had the chance to continue their  
work on this manuscript 
during the period of Hans Feichtinger's Morlet Chair
(winter term 2014/15). The third author was supported in
part by the National Geospatial-Intelligence Agency University
Research Initiative (NURI), grant HM1582-08-1-0019.

\begin{flushleft}


\end{flushleft}

\end{document}